\newtheorem{lemma}{Lemma}[section]
\newtheorem{prop}[lemma]{Proposition}
\newtheorem{theo}[lemma]{Theorem}
\newtheorem{rem}[lemma]{Remark}
\DeclareMathOperator{\divv }{div}
\DeclareMathOperator{\poule }{span}
\begin{document}

\title[Multicomponent incompressible flow models]{Well-posedness analysis of multicomponent incompressible flow models}

\author[D. Bothe]{Dieter Bothe}
\address{Mathematische Modellierung und Analysis, Technische Universit\"at Darmstadt, Alarich-Weiss-Str. 10, 64287 Darmstadt, Germany}
\email{bothe@mma.tu-darmstadt.de}

\author[P.-E. Druet]{Pierre-Etienne Druet}
\address{Weierstrass Institute, Mohrenstr. 39, 10117 Berlin, Germany}
\email{pierre-etienne.druet@wias-berlin.de}


\date{\today}

\subjclass[2010]{35M33, 35Q30, 76N10, 35D35, 35B65, 35B35, 35K57, 35Q35, 35Q79, 76R50, 80A17, 80A32, 92E20}	
\keywords{Multicomponent flow, complex fluid, fluid mixture, incompressible fluid, low Mach-number, strong solutions}

\thanks{This research was supported by the grant DR1117/1-1 of the German Science Foundation}
\maketitle

\begin{abstract} 
In this paper, we extend our study of mass transport in multicomponent isothermal fluids to the incompressible case. For a mixture, incompressibility is defined as the independence of average volume on pressure, and a weighted sum of the partial mass densities stays constant. In this type of models, the velocity field in the Navier-Stokes equations is not solenoidal and, due to different specific volumes of the species, the pressure remains connected to the densities by algebraic formula. By means of a change of variables in the transport problem, we equivalently reformulate the PDE system as to eliminate positivity and incompressibility constraints affecting the density, and prove two type of results: the local--in--time well--posedness in classes of strong solutions, and the global--in--time existence of solutions for initial data sufficiently close to a smooth equilibrium solution. 
\end{abstract}

\section{Multicomponent diffusion in an incompressible fluid}

In this paper we study the well-posedness analysis in classes of strong solutions of \emph{class-one models}\footnote{\emph{Class-one} is a terminology that we adopt from the paper \cite{bothedreyer} to describe the class of multicomponent flow models with single common velocity and temperature. The concept goes back to the work of C.~Hutter.} of mass transport in isothermal, \emph{incompressible} multicomponent fluids.
This investigation is a direct continuation of results obtained recently concerning the compressible case in \cite{bothedruet}, and the weak solvability of the incompressible model in \cite{druetmixtureincompweak}. Performing the incompressible limit (the low-Mach number limit) in models for fluid mixtures and for multicomponent fluids is desirable both from the practical and the theoretical viewpoint. On the one hand, fluid mixtures occurring in applications are often incompressible, and the limit passage reduces the stiffness of the models by eliminating the parameter which is practically infinite. On the other hand, the low-Mach number limit leads to a type of incompressibility condition which has not yet been studied in the context of mathematical analysis for fluid dynamical equations.

We are interested in the second type of issue, that is, the theoretical issues of unique solvability and continuous dependence in classes of strong solutions for the underlying PDEs. The model class for multicomponent transport in fluids here under study is the one proposed in \cite{bothedreyer}, also applied to mixtures with charged constituents in \cite{dreyerguhlkemueller}, \cite{dreyerguhlkemueller19}. Concerning the fundamentals of thermodynamics for fluid mixtures, the reader is referred to these papers, or to the book \cite{giovan}. The model for Mach-number zero (incompressibility constraint) is based on I.~M\"uller's definition of incompressibility as invariance of the volume under pressure variations \cite{mueller}, \cite{gouin}. More directly, we follow the recent example of \cite{dreyerguhlkemueller} (formal limit), and the more general road map proposed in the Section 16 of \cite{bothedreyer}. In \cite{bothedruetFE} we propose a derivation of the incompressible limit starting from a few postulates of mathematical nature about the structure of the Helmholtz free energy. Similar concepts have been exposed and discussed in a few research papers like \cite{mills,josef,donevetal}. Incompressible mixtures are also conceptualised in the book \cite{pekarsamohyl}. The corner stone of these works is that incompressibility for a multicomponent system means the invariance of \emph{average} volume under pressure variations. For a fluid mixture of $N\geq 2$ chemical species $\ce{A}_1,\ldots, \ce{A}_N$, it assumes the form of a \emph{volume constraint}
\begin{align}\label{VOLUME}
\sum_{i=1}^N \rho_i \, \bar{V}_i = 1 \, ,
\end{align}
where $\bar{V}_1,\ldots,\bar{V}_N > 0$ are partial specific volumes of the molecules at reference temperature and pressure. The relation generalises the assumption of a constant mass density considered in other analytical investigations, a. o. \cite{chenjuengel}, \cite{mariontemam}, \cite{herbergpruess}, \cite{bothepruess}.
In the present paper we are interested only in the general case that at least two indices exist such that $\bar{V}_{i_1} \neq \bar{V}_{i_2}$ or, in vectorial notation, that $ \bar{V} \neq \lambda \, 1^N$ for all $\lambda \in \mathbb{R}$, where $\bar{V} =(\bar{V}_1,\, \bar{V}_2,\ldots, \bar{V}_N)$ and $1^N = (1, \, 1,\ldots,1) \in \mathbb{R}^N$. 

\textbf{Bulk.} The convective and diffusive mass transport of these species and the momentum balance are described by the partial differential equations
\begin{alignat}{2}
\label{mass}\partial_t \rho_i + \divv( \rho_i \, v + J^i) & = r_i & &  \text{ for } i = 1,\ldots,N \, ,\\
\label{momentum}\partial_t (\varrho \, v) + \divv( \varrho \, v\otimes v - \mathbb{S}(\nabla v)) + \nabla p & = \sum_{i=1}^N \rho_i \, b^i(x, \, t)  & & \, .
\end{alignat}
The physical system is assumed isothermal with absolute temperature $\theta > 0$. The partial mass densities of the species are denoted $\rho_1,\ldots,\rho_N$. Throughout the paper we shall use the abbreviation $\varrho := \sum_{i=1}^N \rho_i$ for the total mass density. The barycentric velocity of the fluid is called $v$ and the thermodynamic pressure $p$.
In the Navier-Stokes equations, $\mathbb{S}(\nabla v)$ denotes the viscous stress tensor, which we assume for simplicity of Newtonian form. The vector fields $b^1,\ldots,b^N$ are the external body forces. The \emph{diffusions fluxes} $J^1,\ldots,J^N$, that are defined to be the non-convective part of the mass fluxes, must satisfy by definition the necessary side-condition $\sum_{i=1}^N J^i = 0$. A thermodynamic consistent Fick--Onsager closure respecting this constraint is assumed. This approach is described in great generality among others by \cite{bothedreyer}, \cite{dreyerguhlkemueller19} following older ideas by \cite{MR59}, \cite{dGM63}. The diffusions fluxes $J^1,\ldots,J^N$ obey
\begin{align}\label{DIFFUSFLUX}
J^i = - \sum_{j=1}^N M_{i,j}(\rho_1, \ldots, \rho_N) \, (\nabla \mu_j - b^j) \, \text{ for } i =1,\ldots,N \, .
\end{align}
The \emph{Onsager matrix} $M(\rho_1, \ldots,\rho_N)$ is a symmetric, positive semi-definite $N\times N$ matrix for every $(\rho_1,\ldots,\rho_N) \in \mathbb{R}^N_+$. In all known linear closure approaches, this matrix satisfies
\begin{align}\label{CONSTRAINT}
\sum_{i=1}^N M_{i,j}(\rho_1,\ldots,\rho_N) = 0 \text{ for all } (\rho_1,\ldots,\rho_N) \in \mathbb{R}^N_+ \, .
\end{align}
One possibility to compute the special form of $M$ is for instance to invert the Maxwell-Stefan balance equations. For the mathematical treatment of this algebraic system, the reader can consult \cite{giovan}, \cite{bothe11}, \cite{justel13}, \cite{herbergpruess}, \cite{mariontemam}. Or $M$ is constructed directly in the form $P^{\sf T} \, M_0 \, P$, where $M_0$ is a given matrix of full rank, and $P$ is a projector guaranteeing that \eqref{CONSTRAINT} is valid. The paper \cite{bothedruetMS} establishes equivalence relations between the Fick--Onsager and the Maxwell-Stefan constitutive approaches, proposing moreover a novel unifying approach to close the diffusion model.

The quantities $\mu_1,\ldots,\mu_N$ are the \emph{chemical potentials} from which the thermodynamic driving forces for the diffusion phenomena are inferred. For an incompressible system, they are related to the mass densities $\rho_1, \ldots,\rho_N$ and to the pressure via
\begin{align}\label{CHEMPOT}
\mu_i = \bar{V}_i \, p + \partial_{\rho_i}k(\theta, \, \rho_1, \ldots, \rho_N) \, .
\end{align}
Here the function $k$ denotes the the positively homogeneous part of the free energy, which is independent of thermodynamical pressure. A typical choice discussed in \cite{bothedreyer} is
\begin{align}\label{kfunktion}
k(\theta, \, \rho) = & \sum_{i=1}^N \mu_{i}^{\text{ref}} \, \rho_i+ k_B \, \theta \, \sum_{i=1}^N n_i \, \ln y_i   \, ,
\end{align}
where $n_i := \rho_i/m_i$ are the number densities with the molecular masses $m_1,\ldots,m_N > 0$, $y_i = n_i/\sum_{j=1}^N n_j$ are the number fractions, and $\mu_{i}^{\text{ref}} $ are reference values of the chemical potentials. For the mathematical theory in this paper, more general structures in \eqref{kfunktion} will however be admitted. The isothermal Gibbs-Duhem equation: $dp = \sum_{i=1}^N \rho_i \, d\mu_i$ defines the intrinsic relationship between \eqref{VOLUME}, \eqref{CHEMPOT}, and the pressure field. The paper \cite{bothedruetFE} shows that the relation \eqref{CHEMPOT} indeed occurs in the limit case when the bulk free energy density of the system adopts the singular form
\begin{align}\label{hinfty}
\varrho\psi = h^{\infty}(\theta, \, \rho) := \begin{cases}
                     k(\theta, \, \rho) & \text{ if } \sum_{i=1}^N \rho_i \, \bar{V}_i = 1 \, ,\\
                     + \infty & \text{ otherwise.}
                    \end{cases} \, .
\end{align}
The relation \eqref{CHEMPOT} is an equivalent expression of $\mu \in \partial h^{\infty}(\theta, \, \rho)$, where $\partial$ denote the subdifferential of the convex function $h^{\infty}(\theta, \cdot)$, and the function $p = - h^{\infty}(\theta, \, \rho) + \sum_{i=1}^N \rho_i \, \mu_i$ can be understood as a 'Lagrange multiplier' associated with the constraint \eqref{VOLUME}. 

We notice that, multiplying the equations \eqref{mass} with the constants $\bar{V}_i$ and summing up, the local change of volume is described by the equation
\begin{align}\label{isochoric}
 \divv v = - \divv( \sum_{i=1}^N \bar{V}_i \, J^i) + \sum_{i=1}^N \bar{V}_i \, r_i \, .
\end{align}
Effects like diffusion and chemical reactions will induce a local change in the molecular composition, implying a net local change of the volume, independent of a mechanical compression or expansion.

Concerning the presence of reaction terms in \eqref{mass}, we have to mention in respect with the compressible systems considered in \cite{bothedruet} a subtle difference of the incompressible models. For the compressible case, the reactions densities $r_i$ in \eqref{mass} are allowed to be general functions $r_i = r_i(\rho_1, \ldots,\rho_N)$, without influencing \emph{qualitatively} the well--posedness results or the mathematical methods. This is different in the incompressible case. At first, the restriction \eqref{VOLUME} implies that $\mu$ does not depend on $\rho$ only, so that the structure $r = r(\rho)$ does not comply with standard thermodynamically consistent reaction terms. At second, the 'elliptic equation' \eqref{isochoric} defines a differential operator acting on a certain relative chemical potential (variable $\zeta$, details below). This elliptic operator is linear for the pure diffusion case, but turns to non-linear in the presence of reactions of the general form $r = r(\mu)$. 
In this paper, we treat incompressible multicomponent diffusion in itself. We shall address the specific problems raised by chemical reactions in further research. Thus, allowing -- as we shall do -- for certain source terms $r = r(\rho)$ in \eqref{mass} means a bit more mathematical generality, but it remains clear that realistic models of \emph{chemical reactions} require non trivial modifications of the methods used here.

As to the stress tensor $\mathbb{S}$ we shall restrict for simplicity to the standard Newtonian form with constant coefficients. We, however, present methods which are sufficient to extend the results to the case of density and composition dependent viscosity coefficients.

\textbf{Boundary and initial conditions.} We investigate the problem \eqref{mass}, \eqref{momentum} in a cylindrical domain $Q_T := \Omega \times ]0,T[$ where $T$ is a finite time and $\Omega \subset \mathbb{R}^3$ a bounded domain. It is possible to treat the case $\Omega \subset \mathbb{R}^d$ for general $d\geq 2$ with similar methods. We consider initial conditions
\begin{alignat}{2}\label{initial0rho}
\rho_i(x,\, 0) & = \rho^0_i(x) & & \text{ for } x \in \Omega, \, i = 1,\ldots, N \, ,\\
\label{initial0v} v_j(x, \, 0) & = v^0_j(x) & & \text{ for } x \in \Omega, \, j = 1,2,3\, .
\end{alignat}
For simplicity, we consider the linear homogeneous boundary conditions
\begin{alignat}{2}
\label{lateral0v} v & = 0 & &\text{ on } S_T := \partial \Omega \times ]0,T[ \, ,\\
\label{lateral0q} \nu \cdot J^i & = 0 & &\text{ on } S_T \text{ for } i = 1,\ldots,N \, .
\end{alignat}
As a matter of fact, these simplifying choices oblige us to make a further restriction. To see this, we recall the relation \eqref{isochoric}, that we integrate over $\Omega$. If there is no mass flux through the boundary, we see that $\int_{\Omega} \sum_{i=1}^N \bar{V}_i \, r_i(x, \, t) \, dx = 0$. This condition cannot be enforced for a general $r = r(\rho)$, unless we assume that $r$ takes values in $\{\bar{V}\}^{\perp}$. Recalling that realistic models for chemical reactions are to be treated in an upcoming paper, we here restrict to the case that $r(\rho) \cdot \bar{V} = 0$ for all $\rho$.

\section{State of the art and our main result}

\subsection{A review of prior investigations and our method}

Up to few exceptions, models for incompressible multicomponent fluids have not been investigated in mathematical analysis. For a mathematical treatment in the case of the constraint $\varrho = {\rm const}$, which corresponds to choosing $\bar{V}_1 = \ldots = \bar{V}_N$ in \eqref{VOLUME}, the reader might consult the papers \cite{chenjuengel} and  \cite{mariontemam} (global weak solution analysis) and \cite{herbergpruess}, and \cite{bothepruess} (local--in--time well-posedness).\footnote{In the latter paper the phase change liquid/gas is actually in the focus. All references are based on the equivalent Maxwell-Stefan structure for the diffusion fluxes, rather than the Fick-Onsager one.} 
From the viewpoint of the mathematical structure, the case $\varrho = {\rm const}$ exhibits profoundly different features than the general relation \eqref{VOLUME}. The principal difference is that \eqref{DIFFUSFLUX}, \eqref{CONSTRAINT} and \eqref{CHEMPOT} imply the decoupling of the pressure and of the diffusion fluxes. The Navier-Stokes equations reduce to their single component solenoidal variant and can be solved independently. Of course, this does not mean that $\varrho = {\rm const}$ cannot be a good approximation under special circumstances. In \cite{bothesoga} for instance, a class of multicomponent mixtures has been introduced for which the use of the incompressible Navier-Stokes equation is realistic: Incompressibility is assumed for the solvent only, and diffusion is considered against the solvent velocity. See also the discussion in the paragraph 4.8 of \cite{pekarsamohyl} on incompressible mixtures.

In the case that $\bar{V}$ is not parallel to $1^N$, \eqref{DIFFUSFLUX} implies that the pressure affects the diffusion fluxes via the chemical potentials. A corollary of this fact is that if we multiply the equations \eqref{mass} with the constants $\bar{V}_i$ and sum up, we obtain \eqref{isochoric} for the local change of volume. Moreover,
\begin{enumerate}[(a)]
 \item \label{featurea} the viscous stress tensor does not simplify to the symmetric velocity gradient;
 \item \label{featureb} the total mass density is calculated from the continuity equation $\partial_t \varrho + \divv(\varrho \, v) = 0$;
 \item \label{featurec} the pressure remains partly connected to the other variables by an algebraic formula.
\end{enumerate}
Our main method to approach the PDE problem is a switch of variables in the transport problem as already applied in \cite{bothedruet}. Instead of the original variables $(\rho_1,\ldots,\rho_N)$ and $(p, \, v_1,\,v_2,\, v_3)$, we regard $N-1$ linear combinations of the chemical potentials $(\mu_1, \ldots,\mu_N)$, the mass density $\varrho$ and the velocity field as main variables. After the transformation we obtain for the new free variables $(\varrho, \, q_1, \ldots, q_{N-2}, \, \zeta, \, v_1, \, v_2, \, v_3)$ -- instead of \eqref{mass}, \eqref{momentum} -- the equations (here without external forcing and chemical reactions)
\begin{alignat*}{2}
\partial_t R_{k}(\varrho, \, q) + \divv( R_{k}(\varrho, \, q) \, v - \widetilde{M}_{k,\ell}(\varrho, \, q) \, \nabla q_{\ell} - A_k(\varrho, \, q)  \, \nabla \zeta) & = 0 & & \text{ for } k = 1,\ldots,N-2 \, ,\\
\divv(v -\, A(\varrho, \, q) \cdot \nabla q - d(\varrho, \, q) \, \nabla \zeta)  & = 0 & & \, ,\\
\partial_t \varrho + \divv(\varrho \, v) & = 0 & &  \, ,\\
\partial_t (\varrho \, v) + \divv( \varrho \, v\otimes v - \mathbb{S}(\nabla v)) + \nabla P(\varrho, \, q) + \nabla \zeta & = 0  & & \, .
\end{alignat*}
The nonlinear field $R$ and the function $P$, the vector field $A$, the positive matrix $\widetilde{M}$, and the positive coefficient function $d$ will be constructed below, combining certain linear projection operators with the inverse map for the algebraic equations $\mu = \bar{V} \, p + \nabla_{\rho} k(\rho)$. 
We are then faced with a nonlinear PDE system of mixed parabolic--elliptic--hyperbolic type. All variables are \emph{unconstrained}, but for the restriction $\varrho_{\min} < \varrho < \varrho_{\max}$ on the total mass density. Here, the constants $0< \varrho_{\min} < \varrho_{\max} < + \infty$ are the thresholds of the total mass for states $\rho_1,\ldots,\rho_N$ that satisfy the constraint \eqref{VOLUME}: 
\begin{align*}
 \varrho_{\min} := \min \{ \sum_{i=1}^N \rho_i \, : \, \rho_i \geq 0, \, \sum_{i=1}^N \rho_i \, \bar{V}_i = 1\} = & \frac{1}{\max \bar{V}}, \\
 \varrho_{\max} := \max \{ \sum_{i=1}^N \rho_i \, : \, \rho_i \geq 0, \,  \sum_{i=1}^N \rho_i \, \bar{V}_i = 1\} = & \frac{1}{\min \bar{V}} \, .
\end{align*}
Comparing with the paper \cite{bothedruet} on compressible class-one models based on a similar reformulation, we see that the incompressible limit corresponds structurally to the case that one of the relative chemical potentials is subject to an elliptic -- instead of a parabolic -- equation, and the total mass density is confined to a bounded interval.

For an overview of possible methods to study the transformed PDE system, we refer to our study \cite{bothedruet}. We shall follow here the same principal road map, but profound transformations are necessary to deal with the constraint on $\varrho$, since it implies that the nonlinear functions occurring in the transformed system are singular for ${\rm dist}(\varrho, \, \{\varrho_{\min}, \, \varrho_{\max}\}) \rightarrow 0$. The solution operator to the continuity equation, however, does not 'see' these thresholds, which is the source of additional problems when we attempt to linearise. Moreover, we must construct a solution operator for the parabolic--elliptic subsystem of general form for $(q, \, \zeta)$, while the reduced transport problem in \cite{bothedruet} was purely parabolic. Nontrivial extensions of the method are therefore necessary to deal with the incompressible case.

We shall study the problem in the class proposed in the paper \cite{solocompress} for Navier-Stokes: $W^{2,1}_p$ with $p$ larger than the space dimension for the velocity and $W^{1,1}_{p,\infty}$ for the density. For the variable $q_1, \ldots, q_{N-2}$, we also choose the parabolic setting of $W^{2,1}_p$. For the elliptic component $\zeta$, we choose the state space $W^{2,0}_p$. In these classes, we are able to prove the local existence for strong solutions. In general, we obtain only a short--time well--posedness result, and boundedness in the state space is not sufficient to guarantee that the solution can be extended to a larger time interval. This is due to the constraint $\varrho_{\min} < \varrho < \varrho_{\max}$: A strong solution with bounded state space norm might break down if the density reaches the thresholds. 
However, it is to note that for choices of the tensor $M$ reflecting the physically expected behaviour that, in the dilute limit, a diffusion flux is linearly proportional with the mass density of the vanishing species, we are able to show that a sufficiently smooth solution ($p > 5$) bounded in the state space cannot reach the critical values in finite time. Thus, a kind of maximum principle is available for the system.

We shall also prove the global existence under the condition that the initial data are sufficiently near to an equilibrium (stationary) solution. However, since this result relies on stability estimates in the state space, we need to assume higher regularity of the initial data in order to obtain some stability from the continuity equation. Therefore, these solutions exist on arbitrary large time intervals, but do not enjoy the extension property. 
We shall not make use of the Lagrangian coordinates but employ the approach of controlled growth in time of the solution by means of \emph{a priori} estimates.

Let us finally mention also the paper \cite{feima16}, devoted to binary mixtures. Starting from different modelling principles in the spirit of \cite{josef}, the authors derive for $N = 2$ a similar PDE system. The variable $q$ does not occur, and the coefficient $d$ is assumed constant. The authors prove for this system the global existence of weak solutions if the singularity of $P(\varrho)$ at the thresholds is sufficiently strong. 

The weak solution analysis for the general system is considered in the paper \cite{druetmixtureincompweak}.

\subsection{Main results}\label{MATHEMATICS}

We denote $Q= Q_T = \Omega \times ]0,T[$ with a bounded domain $\Omega \subset \mathbb{R}^3$ and $T >0$ a finite time. We use the standard Sobolev spaces $W^{m,p}(\Omega)$ for $m \in \mathbb{N}$ and $1\leq p\leq +\infty$, and the Sobolev-Slobodecki spaces $W^s_p(\Omega)$ for $s >0$ non-integer. If $\Omega$ is a domain of class $\mathcal{C}^2$, the spaces $W^{s}_p(\partial \Omega)$ are well defined for $0 \leq s \leq 2$.

With a further index $1 \leq r \leq +\infty$, we use the parabolic Lebesgue spaces $L^{p,r}(Q)$ (space index first: $L^p(Q) = L^{p,p}(Q)$). For $\ell = 1, \, 2, \ldots$ and $1 \leq p \leq +\infty$ we introduce the parabolic Sobolev spaces
\begin{align*}
W^{2\ell, \ell}_p(Q) := & \{ u \in L^p(Q) \, : \, D_t^{\beta}D^{\alpha}_x u \in L^p(Q) \, \forall\,\, 1 \leq 2 \, \beta + |\alpha| \leq 2 \, \ell \} \, ,\\
\|u\|_{W^{2\ell,\ell}_p(Q)} := & \sum_{0\leq 2 \, \beta + |\alpha| \leq 2 \, \ell} \|D_t^{\beta}D^{\alpha}_x u \|_{L^p(Q)} \, ,
\end{align*}
and, with a further index $1 \leq r < \infty$, the spaces
\begin{align*}
W^{1}_{p,r}(Q) = W^{1,1}_{p,r}(Q) := & \{u \in L^{p,r}(Q) \, : \,\sum_{0\leq \beta + |\alpha| \leq \ell} D^{\alpha}_x \, D^{\beta}_t u \in L^{p,r}(Q) \} \,  ,\\
\|u\|_{W^{\ell,\ell}_{p,r}(Q)} := & \sum_{0\leq \beta + |\alpha| \leq \ell} \|D_t^{\beta}D^{\alpha}_x u \|_{L^{p,r}(Q)} \, .
\end{align*}
In these notations, the space integrability index always comes first. For $r = + \infty$, $W^{\ell,\ell}_{p,\infty}(Q)$ denotes the closure of $C^{\ell}(\overline{Q})$ with respect to the norm above and, thus,
\begin{align*}
W^{1,1}_{p,\infty}(Q) := & \{u \in L^{p,\infty}(Q) \, : \,\sum_{0\leq \beta + |\alpha| \leq 1} D^{\alpha}_x \, D^{\beta}_t u \in C([0,T]; \, L^p(\Omega)) \} \, .
\end{align*}
We also encounter, for $\ell = 1,2$ and $1 \leq p < + \infty$,
\begin{align*}
 W^{\ell,0}_{p}(Q) := & \{u \in L^{p}(Q) \, : \,\sum_{0\leq|\alpha| \leq \ell} D^{\alpha}_x u \in L^{p}(Q) \} \, , \\
\|u\|_{W^{\ell,0}_{p}(Q)} := & \sum_{0\leq |\alpha| \leq \ell} \|D^{\alpha}_x u \|_{L^{p}(Q)}  \, .
\end{align*}
We denote by $C(\overline{Q}) = C^{0,0}(\overline{Q})$ the space of continuous functions over $\overline{Q}$ and, for $\alpha, \, \beta \in [0, \, 1]$, the H\"older spaces are defined by $C^{\alpha, \, \beta}(\overline{Q}) :=  \{ u \in C(\overline{Q}) \, : \, [ u ]_{C^{\alpha,\beta}(\overline{Q})} < + \infty\}$ with
\begin{align*}
& [u]_{ C^{\alpha, \, \beta}(\overline{Q})} =  \sup_{t \in [0, \, T], \, x,y \in \Omega} \frac{|u(t, \, x) - u(t, \, y)|}{|x-y|^{\alpha}} + \sup_{x \in \Omega, \, t,s \in [0,\, T]} \frac{|u(t, \, x) - u(s, \, x)|}{|t-s|^{\beta}} \, .
\end{align*}
Some brief remarks on notation: 
\begin{enumerate}[(1)]
\item All H\"older continuity properties are global. For the sake of notation we identify $C^{\alpha, \, \beta}(Q)$ with $C^{\alpha, \, \beta}(\overline{Q})$. 
\item Whenever confusion is impossible, we shall also employ for a function $f$ of the variables $x \in \Omega$ and $t \geq 0$ the notations $f_x = \nabla f$ for the spatial gradient, and $f_t$ for the time derivative.
\item For maps like $R$, $\widetilde{M}$ which depend on $\varrho$ and $q$, the derivatives are denoted by $R_{\varrho}$, $\widetilde{M}_q$.
\end{enumerate}


Due to \eqref{CONSTRAINT}, the matrix $M(\rho)$ possesses only $N-1$ positive eigenvalues that moreover might degenerate for vanishing species. The orthogonal projection on the $N-1$ dimensional linear space $\poule\{1^N\}^{\perp}$ in $\mathbb{R}^N$ is defined via
\begin{align*}
\mathcal{P}_{\{1^N\}^{\perp}}: \mathbb{R}^N \rightarrow \{1^N\}^{\perp}, \quad \mathcal{P}_{\{1^N\}^{\perp}} = \text{Id}_{\mathbb{R}^N} - \frac{1}{N} \, 1^N \otimes 1^N \, .
\end{align*}
The vector $\bar{V}$ occurring in \eqref{VOLUME} defines another singular direction in the model preventing parabolicity. We denote by $ \mathcal{P}_{\{1^N, \, \bar{V}\}^{\perp}}$ the orthogonal projection onto the $N-2$ dim. space $\{1^N, \, \bar{V}\}^{\perp}$. We also introduce the notations
\begin{align}
\begin{split}\label{S0}
 \mathbb{R}^N_+ := & \{\rho = (\rho_1, \ldots, \rho_N) \in \mathbb{R}^N \, : \, \rho_i > 0 \text{ for } i = 1,\ldots,N\} \, , \\
  \overline{\mathbb{R}}^N_+ := & \{\rho = (\rho_1, \ldots, \rho_N) \in \mathbb{R}^N_+ \, : \, \rho_i \geq 0 \text{ for } i = 1,\ldots,N\} \, ,\\
 S_1  := & \{\rho = (\rho_1, \ldots, \rho_N) \in \mathbb{R}^N_+ \, : \, \sum_{i=1}^N \rho_i = 1 \}  \, , \\
  S_{\bar{V}} := & \{\rho = (\rho_1, \ldots, \rho_N) \in \mathbb{R}^N_+ \, : \, \sum_{i=1}^N \bar{V}_i \, \rho_i =1\} \, .
\end{split}
  \end{align}
The surface $S_{\bar{V}}$ is the domain of existence for the incompressible state. It is readily seen that $\rho \in S_{\bar{V}}$ implies for the variable $\varrho := \sum_{i=1}^N \rho_i$ the inequalities
\begin{align}\label{thresholds}
 \varrho_{\min} = \frac{1}{\max_{j=1,\ldots,N} \bar{V}_j} < \varrho <  \varrho_{\max} = \frac{1}{\min_{j=1,\ldots,N} \bar{V}_j} \text{ for all } \rho \in S_{\bar{V}} \, .
\end{align}
Our first main result is devoted to the short-time existence of a strong solution. (In order to avoid notational confusion with the pressure field, the integrability index is called $s$ in the next statements.)
\begin{theo}\label{MAIN}
We fix $s > 3$ and $T > 0$ and assume that 
\begin{enumerate}[(a)]
             \item $\Omega \subset \mathbb{R}^3$ is a bounded domain of class $\mathcal{C}^2$;
             
             \item $M: \, \mathbb{R}^N_+ \rightarrow \mathbb{R}^{N\times N}$ is a mapping of class $C^2(\mathbb{R}^N_+; \, \mathbb{R}^{N\times N})$ into the positive semi-definite matrices of rank $N-1$ with constant kernel $\poule\{1^N\} = \{(1, \ldots, 1)\}$;

\item $k: \, \mathbb{R}^N_+ \rightarrow \mathbb{R}$ is of class $C^3(\mathbb{R}^{N}_+)$, positively homogeneous, convex in its domain $\mathbb{R}^N_+$, and $\liminf_{m \rightarrow +\infty} |\nabla_{\rho} k(y^m)| = +\infty$ for all sequences $\{y^m\} \subset S_1$ approaching the relative boundary of $S_1$;

\item $r: \, \mathbb{R}^N_+ \rightarrow \mathbb{R}^{N}$ is a mapping of class $C^1(\mathbb{R}^{N}_+)$ into $\poule\{1^N, \, \bar{V}\}^{\perp}$;

\item \label{force} The forcing $b$ satisfies $\mathcal{P}_{\{1^N\}^{\perp}}  \, b \in W^{1,0}_{s}(Q_T; \, \mathbb{R}^{N\times 3})$ and $b -  \mathcal{P}_{\{1^N\}^{\perp}} \, b \in L^s(Q_T; \, \mathbb{R}^{N\times 3})$. For simplicity, we assume $\nu(x) \cdot \mathcal{P}_{\{1^N\}^{\perp}} \,b(x, \, t) = 0$ for $x \in \partial \Omega$ and $\lambda_1-$almost all $t \in ]0, \, T[$.
\item The initial data $\rho^0_{1}, \ldots \rho^0_{N}: \, \overline{\Omega} \rightarrow S_{\bar{V}}$ are positive measurable functions satisfying the following conditions:
\begin{itemize}
 \item The initial total mass density $\varrho_0 := \sum_{i=1}^N \rho_{i}^0$ is of class $W^{1,s}(\Omega)$;
\item The vector field $e^{0} := \partial_{\rho}k(\rho^0_{1}, \ldots \rho^0_{N})$ satisfies $\mathcal{P}_{\{1^N, \, \bar{V}\}^{\perp}} \,e^{0} \in W^{2-2/s}_s(\Omega; \, \mathbb{R}^N)$;
\item The compatibility condition $\nu(x) \cdot \mathcal{P}_{\{1^N, \, \bar{V}\}^{\perp}}\nabla e^0(x) = 0$ is valid in $W^{1-3/s}_s(\partial \Omega; \, \mathbb{R}^N)$ in the sense of traces;
\end{itemize}
\item The initial velocity $v^0$ belongs to $W^{2-2/s}_{s}(\Omega; \, \mathbb{R}^3)$ with $v^0 = 0$ in $W^{2-3/s}_{s}(\partial \Omega; \, \mathbb{R}^3)$.
\end{enumerate}
Then, there exists $T^* \in (0, \,  T]$ such that the problem \eqref{mass}, \eqref{momentum} with closure relations \eqref{DIFFUSFLUX}, \eqref{CHEMPOT}, incompressibility constraint \eqref{VOLUME} and boundary conditions \eqref{initial0rho}, \eqref{initial0v},  \eqref{lateral0v}, \eqref{lateral0q} possesses a unique solution $(\rho, \, p, \, v)$ of class
\begin{align*}
 \rho \in W^{1}_{s}(Q_{T^*}; \, S_{\bar{V}}), \quad p \in W^{1,0}_s(Q_{T^*}), \, \quad v \in W^{2,1}_s(Q_{T^*}; \, \mathbb{R}^3) \, ,
\end{align*}
such that $\mu := p \, \bar{V} + \partial_{\rho}k(\rho)$ satisfies $\mathcal{P}_{\{1^N\}^{\perp}}\mu \in  W^{2,0}_s(Q_{T^*}; \, \mathbb{R}^N)$. The solution can be uniquely extended to a larger time interval whenever the two following conditions are fulfilled: 
\begin{enumerate}[(i)]
\item \label{firstcondi} $\varrho_{\min} < \inf\{\varrho(x, \, t) \, : \, x \in \overline{\Omega}, \, t \in [0, \, T^*[\}$ {\rm and} $ \sup\{\varrho(x, \, t) \, : \, x \in \overline{\Omega}, \, t \in [0, \, T^*[\} < \varrho_{\max}$;
\item \label{secondcondi} There is $\alpha > 0$ such that the quantity $$\|\mathcal{P}_{\{1^N, \, \bar{V}\}^{\perp}} \mu\|_{C^{\alpha,\frac{\alpha}{2}}(Q_{t})} + \|\nabla \mathcal{P}_{\{1^N, \, \bar{V}\}^{\perp}} \mu\|_{L^{\infty,s}(Q_{t})} + \|v\|_{L^{z\, s,s}(Q_{t})} + \int_{0}^{t} [\nabla v(\tau)]_{C^{\alpha}(\Omega)} \, d\tau < \infty $$ stays finite as $t \nearrow T^*$. Here $z = z(s)$ is defined via $z = 3/(s-2)$ for $3 < s < 5$, $z > 1$ arbitrary for $s = 5$ and $z = 1$ if $s > 5$.  
\end{enumerate}
\end{theo}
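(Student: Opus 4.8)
The plan is to pass to the unconstrained variables $(\varrho, q, \zeta, v)$ announced above and then run a contraction argument based on maximal--regularity estimates for the linearised blocks. \emph{Change of variables.} Since by hypothesis (c) the function $k$ is convex, positively homogeneous and $|\nabla_{\rho}k|\to\infty$ at the relative boundary of $S_1$, the map $\rho\mapsto\nabla_{\rho}k(\rho)$ is a $C^2$--diffeomorphism of the cone over $S_1$ onto its image; combined with the incompressibility constraint \eqref{VOLUME} this inverts $\mu = \bar{V} p + \nabla_{\rho}k(\rho)$ and expresses $(\rho, p)$ through $\varrho := \sum_i\rho_i$ and $\mathcal{P}_{\{1^N\}^{\perp}}\mu$. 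Putting $q := \mathcal{P}_{\{1^N, \bar{V}\}^{\perp}}\mu$ and letting $\zeta$ be the remaining scalar component of $\mathcal{P}_{\{1^N\}^{\perp}}\mu$, one builds the maps $R$, $P$, $A$, $\widetilde{M}$, $d$, which are $C^1$ (some $C^2$) on $\{\varrho_{\min} < \varrho < \varrho_{\max}\}\times\mathbb{R}^{N-2}$, with $\widetilde{M}$ positive definite and $d$ positive there, degenerating only as $\dist(\varrho, \{\varrho_{\min}, \varrho_{\max}\})\to 0$. Inserting these into \eqref{mass}, \eqref{momentum} --- with the forcing reinstated, the splitting in hypothesis (e) separating the part of the body force that drives diffusion (needing one spatial derivative) from the part acting only in the momentum balance, and with the reaction term lying in $\poule\{1^N, \bar{V}\}^{\perp}$ by hypothesis (d), which keeps the transformed $q$--equation consistent and the volume relation \eqref{isochoric} compatible with the no--flux conditions \eqref{lateral0q} --- yields the parabolic--elliptic--hyperbolic system displayed in the introduction, with homogeneous Neumann data for $q$ and for the isochoricity relation and homogeneous Dirichlet data for $v$. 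Note that $q^0 = \mathcal{P}_{\{1^N, \bar{V}\}^{\perp}}e^0$ because $\bar{V}$ is annihilated by $\mathcal{P}_{\{1^N, \bar{V}\}^{\perp}}$, so hypotheses (f), (g) supply exactly the initial values and compatibility conditions required below.

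\emph{Linear theory.} On a short interval $(0, \tau)$ I would solve the three linearised blocks separately. (i) For the continuity equation $\partial_t\varrho + \divv(\varrho v) = 0$, $\varrho(0) = \varrho_0 \in W^{1,s}(\Omega)$, with $v \in W^{2,1}_s$: as $s > 3$ the field $v$ is Lipschitz in space uniformly in $t$, so the transport operator is well defined, one obtains $\varrho \in W^{1,1}_{s,\infty}(Q_\tau)$, and the representation along characteristics gives two--sided bounds confining $\varrho$ to a compact subinterval of $(\varrho_{\min}, \varrho_{\max})$ for $\tau$ small, since $\varrho_0$ already satisfies the strict inequalities \eqref{thresholds} on the compact set $\overline{\Omega}$. (ii) For the momentum equation, with $\varrho, q, \zeta$ treated as data so that $\nabla P(\varrho, q) + \nabla\zeta$ is a known forcing and no Lagrange multiplier is present, the system $\partial_t(\varrho v) - \divv\mathbb{S}(\nabla v) = g$, $v|_{S_\tau} = 0$, is parabolic of compressible Navier--Stokes type and is solved in $W^{2,1}_s$ by the maximal--regularity result of \cite{solocompress}. (iii) For the parabolic--elliptic block in $(q, \zeta)$: the isochoricity relation is, at each fixed time, a Neumann problem $-\divv(d\nabla\zeta) = \divv(v - A\nabla q)$ (compatible by the divergence theorem), solvable for $\zeta \in W^{2,0}_s$ because $d > 0$; substituting $\zeta = \zeta(q, v)$ into the $q$--equation yields a nonlocal second--order perturbation of its principal part, and one verifies that positivity of both $\widetilde{M}$ and $d$ keeps the reduced operator parabolic, whence $L^s$--maximal regularity gives $q \in W^{2,1}_s$ using the compatible initial datum $q^0$.

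\emph{Fixed point, uniqueness, extension.} I would then define the solution map $\Phi$ on a ball in $W^{1,1}_{s,\infty}\times W^{2,1}_s\times W^{2,0}_s\times W^{2,1}_s$ (for $\varrho, q, \zeta, v$) by composing the three solution operators and evaluating the nonlinear coefficients at the previous iterate. On a small enough $[0, T^*]$ the iterates stay in a fixed neighbourhood of the initial state, so $\varrho$ remains in a compact subinterval of $(\varrho_{\min}, \varrho_{\max})$, all coefficients and their derivatives stay bounded, and the algebra and embedding properties available for $s > 3$ make $\Phi$ a contraction after possibly shrinking $T^*$; Banach's theorem gives the unique local solution, and inverting the change of variables returns $(\rho, p, v)$ of the stated regularity with $\mathcal{P}_{\{1^N\}^{\perp}}\mu = (q, \zeta) \in W^{2,0}_s$. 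Uniqueness in the original class follows since any such solution keeps $\varrho$ in a compact subinterval of $(\varrho_{\min}, \varrho_{\max})$ on a possibly shorter interval and hence solves the same fixed--point equation. For the extension statement, conditions (i) and (ii) of the theorem are precisely what bound the right--hand sides of the three linear blocks in their norms uniformly on $[0, t]$ as $t \nearrow T^*$ --- condition (i) keeps the coefficients non--degenerate, and the four quantities in condition (ii) control the transport, Navier--Stokes and parabolic--elliptic data --- so a Gronwall argument gives uniform $W^{2,1}_s$ and $W^{1,1}_{s,\infty}$ bounds up to $T^*$; the traces at $T^*$ are then admissible initial data and one restarts the local existence theorem.

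\emph{Main obstacle.} The delicate point is the interplay between the singular coefficients and the purely hyperbolic $\varrho$--equation: the transport semigroup does not ``see'' the thresholds $\varrho_{\min}, \varrho_{\max}$, so the PDE structure alone cannot keep $\varrho$ admissible --- only the short--time propagation of the strict inequalities of the initial datum does, which is why state--space bounds do not by themselves yield global extension. Technically the heart of the proof is step (iii): establishing maximal $L^s$--regularity for the general parabolic--elliptic $(q, \zeta)$ system uniformly in the frozen coefficients as they range over the admissible non--degenerate set, and controlling the back--substitution $\zeta = \zeta(q, v)$ in the same norms that feed the continuity and Navier--Stokes blocks.
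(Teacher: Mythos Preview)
Your plan is correct and follows the same overall strategy as the paper: pass to the unconstrained variables $(\varrho,q,\zeta,v)$, solve the three linearised blocks (continuity, parabolic--elliptic, momentum), and close by a fixed-point argument on a short interval. A few technical choices differ from the paper and are worth noting.

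First, the paper sets up the fixed-point map only on the \emph{parabolic} pair $(q^*,v^*)\in\mathcal{Y}_T=W^{2,1}_p\times W^{2,1}_p$, with $\varrho$ and $\zeta$ recovered as dependent quantities at each step; you propose to iterate on the full quadruple. Both work, but the reduced map makes the self-mapping argument cleaner: the paper proves an estimate $\mathscr{V}(t;q)+\mathscr{V}(t;v)\le\Psi(t,R_0,\mathscr{V}^*(t))$ with the crucial structural property that $\Psi(0,R_0,\eta)$ is \emph{independent of} $\eta$ (Lemma~\ref{estimfinale}), which yields the existence of an invariant ball without ever needing smallness of the data---only smallness of time. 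Second, for convergence the paper does \emph{not} prove contraction in the state-space norm; instead it proves contraction of the iterates in a lower-order energy norm (Lemma~\ref{iter}), relying on the uniform $\mathcal{X}_T$-bound to pass to the limit. Your high-norm contraction would also work but requires more bookkeeping. Third, your treatment of the parabolic--elliptic block as ``substitute $\zeta=\zeta(q,v)$, obtain a nonlocal perturbation'' is viable but the paper uses a sharper local trick: from \eqref{ellipticsystem} one eliminates $\triangle\zeta$ pointwise, which turns \eqref{parabolicsystem} into a genuinely local parabolic system with diffusion matrix $K^*=\widetilde{M}^*-A^*\otimes A^*/d^*$, the Schur complement, whose positive definiteness is checked by hand (proof of Prop.~\ref{qzetasystem}). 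This avoids any nonlocal analysis and gives the maximal-regularity estimate directly. Finally, your claim that ``positivity of both $\widetilde{M}$ and $d$ keeps the reduced operator parabolic'' is exactly the statement that $K^*>0$, which is not automatic from $\widetilde{M}>0$ and $d>0$ separately; the paper verifies it using the specific structure $\widetilde{M}=\Pi^{\sf T}M\Pi$, $A=\Pi^{\sf T}M\bar V$, $d=\bar V\cdot M\bar V$ and the Cauchy--Schwarz inequality for the semidefinite $M$.
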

 It is to note that the possibility to extend the solution is not -- like in the compressible case -- reducible to the smoothness criterion \eqref{secondcondi}. If \eqref{firstcondi} is failing, even a smooth solution can break down if its total mass density reaches the critical values $\{\varrho_{\min}, \, \varrho_{\max}\}$. This singularity plays an important role also in the context of the weak solution analysis (see \cite{druetmixtureincompweak}). However, we provide an important complement for \emph{physically motivated choices} of the mobility matrix $M$ and of the function $k$. Here the boundedness in the natural state space norm is sufficient to guarantee the extension property.
 \begin{theo}\label{MAINPr}
 In the situation of Theorem \ref{MAIN} we assume, in addition, that $s > 5$ and that $k$ is the function defined in \eqref{kfunktion}. We define a matrix $B_{i,j}(\rho) := M_{i,j}(\rho)/\rho_j$ for $i,j=1,\ldots,N$, and we assume that there is a continuous function $C = C(|\rho|)$, bounded on compact subsets of $\overline{\mathbb{R}}^N_+ \setminus \{0\}$, such that
\begin{align*}
|B_{i,j}(\rho)| + \rho_k \, |\partial_{\rho_k} B_{i,j}(\rho)| \leq C(|\varrho|) \text{ for all } i, \, j, \, k \in \{1,\ldots,N\} \text{ and all } \rho \in \mathbb{R}^N_+\, .
\end{align*}
Then the strong solution of Theorem \ref{MAIN} can be extended beyond $T^*$ whenever $$\lim_{t \nearrow T^*} \|\mathcal{P}_{\{1^N, \, \bar{V}\}^{\perp}} \mu\|_{W^{2,1}_s(Q_t; \, \mathbb{R}^{N})} + \|\mathcal{P}_{\{1^N\}^{\perp}} \, \mu\|_{W^{2,0}_s(Q_t; \, \mathbb{R}^{N})}  + \|v\|_{W^{2,1}_s(Q_t; \, \mathbb{R}^{3})} < + \infty \, .$$
 \end{theo}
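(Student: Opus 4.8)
The plan is to obtain Theorem \ref{MAINPr} as a corollary of the continuation statement already contained in Theorem \ref{MAIN}. Let $[0,T^{*})$ be the maximal interval of existence of the strong solution $(\rho,p,v)$ and put
\[
\Lambda(t):=\|\mathcal{P}_{\{1^N, \, \bar{V}\}^{\perp}} \mu\|_{W^{2,1}_s(Q_t; \, \mathbb{R}^{N})}+\|\mathcal{P}_{\{1^N\}^{\perp}} \mu\|_{W^{2,0}_s(Q_t; \, \mathbb{R}^{N})}+\|v\|_{W^{2,1}_s(Q_t; \, \mathbb{R}^{3})}.
\]
By Theorem \ref{MAIN} it suffices to show that the hypothesis $\sup_{t<T^{*}}\Lambda(t)<\infty$ forces both continuation conditions \eqref{firstcondi} and \eqref{secondcondi} to hold uniformly on $[0,T^{*})$.

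Verifying \eqref{secondcondi} is routine and is exactly where $s>5$ is used. The parabolic dimension of $Q_t\subset\mathbb{R}^{3}\times\mathbb{R}$ being $5$, one has the embedding $W^{2,1}_s(Q_t)\hookrightarrow C^{1+\alpha,\frac{1+\alpha}{2}}(\overline{Q_t})$ with $\alpha:=1-5/s\in(0,1)$ and with an embedding constant uniform for $t\in[T^{*}/2,T^{*})$. Applied to $\mathcal{P}_{\{1^N,\bar{V}\}^{\perp}}\mu$ and to $v$ this controls $\|\mathcal{P}_{\{1^N,\bar{V}\}^{\perp}}\mu\|_{C^{\alpha,\alpha/2}(Q_t)}$, the norm $\|\nabla\mathcal{P}_{\{1^N,\bar{V}\}^{\perp}}\mu\|_{L^{\infty,s}(Q_t)}$ (even $\|\cdot\|_{L^{\infty,\infty}}$, since $\nabla\mathcal{P}_{\{1^N,\bar{V}\}^{\perp}}\mu\in C(\overline{Q_t})$), and the seminorm $[\nabla v(\tau)]_{C^{\alpha}(\Omega)}$ uniformly in $\tau$, so that $\int_0^t[\nabla v(\tau)]_{C^{\alpha}(\Omega)}\,d\tau\lesssim T^{*}\,\Lambda(t)$. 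Finally $z=1$ for $s>5$, hence $\|v\|_{L^{z\,s,s}(Q_t)}=\|v\|_{L^{s}(Q_t)}\lesssim\Lambda(t)$. Thus \eqref{secondcondi} holds with this $\alpha$.

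The substance of the proof is \eqref{firstcondi}, i.e.\ the uniform separation of $\varrho$ from $\{\varrho_{\min},\varrho_{\max}\}$ on $[0,T^{*})$, and here the extra hypotheses enter decisively. Since the reaction term lies in $\{1^N,\bar{V}\}^{\perp}\subset\{1^N\}^{\perp}$, summing \eqref{mass} gives $\partial_t\varrho+\divv(\varrho v)=0$; transporting $\varrho_0$ along the flow of $v$ together with $\int_0^{T^{*}}\|\divv v\|_{L^{\infty}}\lesssim\Lambda(T^{*})$ yields $0<c_1\le\varrho\le c_2<\infty$ on $Q_{T^{*}}$, so that $C(|\varrho|)$ in the structural bound is uniformly bounded. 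The algebraic heart of the argument is the following: the hypotheses $M=M^{\sf T}$, $M_{i,j}=\rho_j B_{i,j}$ and \eqref{CONSTRAINT} give $M_{i,j}=\rho_i B_{j,i}$ and $\sum_{j=1}^N B_{j,i}=0$, whence
\[
J^i=-\sum_{j=1}^{N}M_{i,j}(\nabla\mu_j-b^j)=-\rho_i\,w^i,\qquad w^i:=\sum_{j=1}^{N}B_{j,i}\,(\nabla\mu_j-b^j),
\]
where, because $\sum_j B_{j,i}=0$, the field $w^i$ depends only on the controlled quantities $\mathcal{P}_{\{1^N\}^{\perp}}(\nabla\mu-b)$, i.e.\ on $\mathcal{P}_{\{1^N\}^{\perp}}\mu\in W^{2,0}_s$ and $\mathcal{P}_{\{1^N\}^{\perp}}b\in W^{1,0}_s$. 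Thus \eqref{mass} becomes the continuity-type equation $\partial_t\rho_i+\divv\bigl(\rho_i(v-w^i)\bigr)=r_i$ with no-flux data ($v=0$ and, from \eqref{lateral0q}, $w^i\cdot\nu=0$ on $\partial\Omega$): the flux of species $i$ is \emph{proportional to} $\rho_i$. Renormalising with $\beta(s)=s^{-\kappa}$ and integrating over $\Omega$ gives $\frac{d}{dt}\int_{\Omega}\rho_i^{-\kappa}=(\kappa+1)\int_{\Omega}\rho_i^{-\kappa}\,\divv(v-w^i)-\kappa\int_{\Omega}\rho_i^{-\kappa-1}r_i$; estimating the right-hand side by Hölder in space (using that $\divv v,\divv w^i\in L^{s}(\Omega)$ for a.e.\ $t$, where $\divv w^i$ involves $\Delta\mathcal{P}_{\{1^N\}^{\perp}}\mu$ and the term $\nabla B_{j,i}$ with $|\nabla B_{j,i}|\lesssim\sum_k\rho_k^{-1}|\nabla\rho_k|$ by the structural bound) leads to a Moser/Alikakos iteration in $\kappa$ whose limit furnishes a uniform lower bound $\rho_i\ge\delta>0$ on $[0,T^{*})$, depending only on $\min_i\inf_{\Omega}\rho^0_i$, on $\|\varrho_0\|_{W^{1,s}(\Omega)}$ and on $\sup_{t<T^{*}}\Lambda(t)$. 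Combined with the elementary identities $\varrho_{\max}-\varrho=\varrho_{\max}\sum_i(\bar{V}_i-\min_j\bar{V}_j)\rho_i$ and $\varrho-\varrho_{\min}=\varrho_{\min}\sum_i(\max_j\bar{V}_j-\bar{V}_i)\rho_i$, valid on $S_{\bar{V}}$, this gives \eqref{firstcondi}; Theorem \ref{MAIN} then supplies the continuation of $(\rho,p,v)$ past $T^{*}$.

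The main obstacle is to make this last iteration rigorous. As signalled in the introduction, the continuity equation for $\rho_i$ ``does not see'' the thresholds $\{\varrho_{\min},\varrho_{\max}\}$, so no scalar comparison principle is available, and the coefficients $v-w^i$ and $\divv(v-w^i)$ of the renormalised equation have only $L^{s}$-in-space integrability --- through the second spatial derivatives of $\mathcal{P}_{\{1^N\}^{\perp}}\mu$ (which, being merely in $W^{2,0}_s$ and not $W^{2,1}_s$, are only in $L^{s}(Q_{T^{*}})$), and through the factor $\sum_k\rho_k^{-1}|\nabla\rho_k|$ in $\nabla B$. Hence the simple renormalised-estimate route (with $L^1_t L^\infty_x$ divergence) is not available; the iteration has to be coupled with a propagation of the spatial regularity of $\rho$ along the transport equation and with the energy/dissipation bound (which, for the present $M$, controls $\int_{Q_{T^{*}}}\sum_j|\nabla\sqrt{\rho_j}|^{2}$ and provides the needed gain of integrability), and every constant must be tracked against $\sup_{t<T^{*}}\Lambda(t)$. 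The reaction term, of class $C^{1}$ only on the open orthant $\mathbb{R}^N_+$, is absorbed within the same bootstrap once the a priori lower bound on $\rho$ is in force.
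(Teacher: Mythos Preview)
Your verification of condition \eqref{secondcondi} via the parabolic embedding $W^{2,1}_s\hookrightarrow C^{1+\alpha,(1+\alpha)/2}$ for $s>5$ is correct and in line with the paper.

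For condition \eqref{firstcondi} your route and the paper's diverge substantially. The paper does \emph{not} attempt a Moser--Alikakos iteration on the individual densities $\rho_i$; instead it bounds the pressure function $P(\varrho,q)$ directly in $L^{\infty}(Q_t)$, independently of $M(\varrho,t)$, and then invokes the logarithmic blow-up $|P(\varrho,q)|\geq c\ln\max\{(\varrho_{\max}-\varrho)^{-1},(\varrho-\varrho_{\min})^{-1}\}-C(1+|q|)$ specific to the choice \eqref{kfunktion}. The $L^{\infty}$ bound on $P$ comes from controlling its full space--time gradient: $\nabla P$ is read off from the momentum equation \eqref{momentumprime} and lies in $L^s(Q_t)$; for $\partial_t P=P_{\varrho}\,\partial_t\varrho+P_q\,\partial_t q$ one uses the continuity equation together with the divergence relation \eqref{massprime2}. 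The decisive algebraic consequence of the structural hypothesis on $B$ is Lemma~\ref{special}: the coefficients $d,A,d_q,A_q$ all degenerate like $m(\varrho)$ near the thresholds, so that $m(\varrho)^{-1}|\divv v|$ is bounded in $L^{s,s/2}$ by quantities controlled through $\Lambda(t)$. Combined with $|P_{\varrho}|\lesssim m(\varrho)^{-1}$ and $m(\varrho)^{-1}|\nabla\varrho|\lesssim|\nabla P|+|\nabla q|$, this places $\partial_t P$ in $L^{s,s/2}$, and an anisotropic Sobolev embedding (this is where $s>5$ enters for \eqref{firstcondi}) yields $P\in L^{\infty}$.

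Your iteration sketch, by contrast, has a genuine gap that you yourself flag but do not close. The divergence $\divv w^i$ contains the term $\sum_j\nabla B_{j,i}\cdot(\nabla\mu_j-b^j)$, and the structural bound gives only $|\nabla_x B_{j,i}|\lesssim\sum_k\rho_k^{-1}|\nabla\rho_k|$. This is circular: you are trying to show that $\rho_k^{-1}$ stays bounded, yet the coefficient in your renormalised equation already contains $\rho_k^{-1}$. The proposed cure (coupling with the energy dissipation $\int|\nabla\sqrt{\rho_j}|^2$ and with propagation of $W^{1,s}$-regularity of $\rho$) is not carried out, and it is far from clear that it can be: the dissipation is an $L^2$-type control, whereas the iteration requires $L^s$-type bounds on $\divv w^i$ with $s>5$, and the $\rho_k^{-1}|\nabla\rho_k|$ factor reappears at every step. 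The paper's pressure argument sidesteps this circularity entirely, because the degeneracy $|d|,|A|\lesssim m(\varrho)$ established in Lemma~\ref{special} means the singular factor $m(\varrho)^{-1}$ in $P_{\varrho}$ is exactly compensated when it multiplies $\divv v$.
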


Our second main result concerns global existence under suitable restrictions on the data. An equilibrium solution for \eqref{mass}, \eqref{momentum} is defined as a vector $(\rho_1^{\text{eq}}, \ldots, \rho_N^{\text{eq}}, \, p^{\text{eq}}, \, v_1^{\text{eq}}, \, v_2^{\text{eq}}, \, v_3^{\text{eq}})$ of functions defined in $\Omega$ such that
\begin{align*}
 \rho^{\text{eq}} \in W^{1,s}(\Omega; \, S_{\bar{V}}), \quad p^{\text{eq}} \in W^{1,s}(\Omega),  \quad v^{\text{eq}} \in W^{2,s}(\Omega; \, \mathbb{R}^3) \, ,
\end{align*}
the vector $\mu^{\text{eq}} := p^{\text{eq}} \, \bar{V} + \nabla_{\rho}k(\theta, \, \rho^{\text{eq}})$ satisfies $ \mathcal{P}_{\{1^N\}^{\perp}} \,\mu^{\text{eq}} \in  W^{2,s}(\Omega; \, \mathbb{R}^N)$ and the relations
\begin{align}
\label{massstat} \divv( \rho^{\text{eq}}_i \, v^{\text{eq}} - \sum_{j=1}^N M_{i,j}(\rho^{\text{eq}}) \, (\nabla \mu_j^{\text{eq}} - b^j(x))) =  0 \text{ for } i = 1,\ldots,N 
\end{align}
and
\begin{align}
\label{momentumsstat}  \divv( \varrho^{\text{eq}} \, v^{\text{eq}}\otimes v^{\text{eq}} - \mathbb{S}(\nabla v^{\text{eq}})) + \nabla p^{\text{eq}} =  \sum_{i=1}^N \rho_i^{\text{eq}} \, b^i(x)  \, 
\end{align}
are valid in $\Omega$. The boundary conditions are $$v^{\text{eq}} = 0 \text{ and } \nu(x) \cdot M_{i,j}(\rho^{\text{eq}}) \, (\nabla \mu_j^{\text{eq}} - b^j(x)) = 0 \text{ on }\partial \Omega \, .$$ 
We show that the problem \eqref{mass}, \eqref{momentum} possesses a unique strong solution on an arbitrary large, but finite time interval if the distance of the initial data to an equilibrium solution is sufficiently small, and if both initial conditions and equilibrium solution are smooth enough.  
\begin{theo}\label{MAIN3}
We adopt the assumptions of Theorem \ref{MAIN}, but assume also that $r \equiv 0$ and that $b = b(x)$ does not depend on time with $b \in W^{1,s}(\Omega; \, \mathbb{R}^{N\times 3})$. In addition, we assume that an equilibrium solution $(\rho^{\text{eq}}, \, p^{\text{eq}}, \,  v^{\text{eq}}) \in W^{1,s}(\Omega; \, S_{\bar{V}}) \times W^{1,s}(\Omega) \times  W^{2,s}(\Omega; \, \mathbb{R}^3)$ is given. The associated total mass $\varrho^{\text{eq}} := \sum_{i=1}^N \rho^{\text{eq}}_i$ and the velcocity  possess the additional regulartiy $\varrho^{\text{eq}} \in  W^{2,s}(\Omega)$ and $v^{\text{eq}} \in W^{3,s}(\Omega; \, \mathbb{R}^3)$. Assume that the initial data satisfies $\varrho^0 \in  W^{2,s}(\Omega)$ and $v^0 \in W^{2,s}(\Omega; \, \mathbb{R}^3)$. Then, for every $0 < T < + \infty$, there exists $R_1 > 0$, depending on $T$ and all data in their respective norms, such that under the condition
\begin{align*}
 \|\mathcal{P}_{\{1^N, \, \bar{V}\}^{\perp}}\, (e^0 - \mu^{\text{eq}})\|_{W^{2-\frac{2}{s}}_s(\Omega; \, \mathbb{R}^N)} + \|\varrho^0 - \varrho^{\text{eq}}\|_{W^{1,s}(\Omega)} +  \|v^0 - v^{\text{eq}}\|_{W^{2-\frac{2}{s}}_s(\Omega; \, \mathbb{R}^3)} \leq R_1
\end{align*}
the problem \eqref{mass}, \eqref{momentum} with incompressibility constraint \eqref{VOLUME}, closure relations \eqref{DIFFUSFLUX}, \eqref{CHEMPOT} and the initial and boundary conditions \eqref{initial0rho}, \eqref{initial0v},  \eqref{lateral0v}, \eqref{lateral0q} possesses a global unique solution of the same class as in Theorem \ref{MAIN}.
\end{theo}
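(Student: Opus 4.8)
The plan is to run a continuation argument: take the local solution furnished by Theorem~\ref{MAIN}, derive an a priori estimate for its deviation from the given equilibrium that is valid on the whole interval $[0,T]$ once the data are sufficiently close, and then use the extension criterion \eqref{firstcondi}--\eqref{secondcondi} to push the existence time up to $T$. All estimates are carried out in the reformulated variables $(\varrho,q,\zeta,v)$. Write $(\varrho^{\mathrm{eq}},q^{\mathrm{eq}},\zeta^{\mathrm{eq}},v^{\mathrm{eq}})$ for the transform of the equilibrium and introduce the perturbations $\sigma:=\varrho-\varrho^{\mathrm{eq}}$, $\tilde q:=q-q^{\mathrm{eq}}$, $\tilde\zeta:=\zeta-\zeta^{\mathrm{eq}}$, $u:=v-v^{\mathrm{eq}}$. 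Since $\rho^{\mathrm{eq}}\in S_{\bar V}$ and $\overline\Omega$ is compact, \eqref{thresholds} gives a fixed $\delta_0>0$ with $\varrho_{\min}+2\delta_0\le\varrho^{\mathrm{eq}}\le\varrho_{\max}-2\delta_0$ on $\overline\Omega$; hence, as long as $\|\sigma\|_{C(\overline{Q_t})}\le\delta_0$, the density stays in a fixed compact subinterval of $(\varrho_{\min},\varrho_{\max})$ on which the nonlinear maps $R,P,\widetilde M,A,d$ and their derivatives are bounded, and condition \eqref{firstcondi} is met.

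First I would subtract the stationary relations \eqref{massstat}--\eqref{momentumsstat} (in transformed form) from the evolution system to obtain the system governing $(\sigma,\tilde q,\tilde\zeta,u)$: it has the same principal part — a parabolic system for $\tilde q$, an elliptic equation for $\tilde\zeta$, a transport equation for $\sigma$, and a density-dependent Stokes-type system for $u$ — and right-hand sides which split into a part linear in the equilibrium data times the perturbation and a genuinely quadratic (or higher) remainder; the extra regularity assumed on the equilibrium ($\varrho^{\mathrm{eq}}\in W^{2,s}$, $v^{\mathrm{eq}}\in W^{3,s}$) and on $\varrho^0$ is exactly what makes these right-hand sides lie in $L^s(Q_t)$ and makes the transported quantity $\nabla\sigma$ pick up only $L^s$-sources such as $u\cdot\nabla^2\varrho^{\mathrm{eq}}$. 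Set
\[
Y(t):=\|\sigma\|_{W^{1,1}_{s,\infty}(Q_t)}+\|\mathcal P_{\{1^N,\bar V\}^{\perp}}(\mu-\mu^{\mathrm{eq}})\|_{W^{2,1}_s(Q_t;\mathbb R^N)}+\|\mathcal P_{\{1^N\}^{\perp}}(\mu-\mu^{\mathrm{eq}})\|_{W^{2,0}_s(Q_t;\mathbb R^N)}+\|u\|_{W^{2,1}_s(Q_t;\mathbb R^3)}.
\]
The core step is to prove that there is a nondecreasing function $K(\cdot)$ such that, whenever $Y(t)\le 1$ and $\|\sigma\|_{C(\overline{Q_t})}\le\delta_0$ on $[0,t]\subset[0,T]$,
\[
Y(t)\le K(T)\,\big(\mathcal D+Y(t)^2\big),\qquad \mathcal D:=\|\mathcal P_{\{1^N,\bar V\}^{\perp}}(e^0-\mu^{\mathrm{eq}})\|_{W^{2-2/s}_s(\Omega)}+\|\varrho^0-\varrho^{\mathrm{eq}}\|_{W^{1,s}(\Omega)}+\|v^0-v^{\mathrm{eq}}\|_{W^{2-2/s}_s(\Omega)}.
\]
This is obtained by applying the maximal $L^s$-regularity estimate for the linearized parabolic--elliptic--Stokes block constructed in the earlier sections (with coefficients frozen at the equilibrium, the difference to the true coefficients being absorbed into the quadratic remainder because they remain in a fixed good set), together with a transport/Gronwall estimate for $\sigma$ and $\nabla\sigma$ along the characteristics of $v$; the latter produces factors $\exp\!\big(\int_0^t(\|\nabla v\|_{\infty}+\|\divv v\|_{\infty})\,d\tau\big)$, controlled via $W^{1,s}(\Omega)\hookrightarrow C^{0,1-3/s}(\overline\Omega)$ and H\"older's inequality in time by $Y(t)$ and $t$, which is precisely why $K$ — and hence $R_1$ — depends on $T$.

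Then I would close the argument by a bootstrap. Choose $R_1=R_1(T)$ so small that $\mathcal D\le R_1$ forces, along any interval on which $Y\le1$ and $\|\sigma\|_{C}\le\delta_0$, the improved bounds $Y(t)\le 2K(T)\mathcal D\le\tfrac12$ and (using $W^{1,1}_{s,\infty}(Q_t)\hookrightarrow C(\overline{Q_t})$ and the transport estimate) $\|\sigma\|_{C(\overline{Q_t})}\le\delta_0/2$. A continuity argument then shows that the set of $t\in[0,T]$ for which both bounds hold is open, closed and nonempty, hence equals $[0,\min(T,T_{\max})]$, where $T_{\max}$ is the maximal existence time. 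On this interval $Y$ is bounded, which by Sobolev embedding (for $3<s<5$ using the interpolation bound $v\in L^{zs,s}(Q_t)$ with $z=3/(s-2)$ available for $W^{2,1}_s$) yields all the quantities entering criterion \eqref{secondcondi}, while $\|\sigma\|_{C}\le\delta_0$ yields \eqref{firstcondi}; by Theorem~\ref{MAIN} the solution then extends, so $T_{\max}>T$ and the solution exists on all of $[0,T]$ in the stated class. Uniqueness follows from the same type of estimate applied to the difference of two solutions with identical data.

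The main obstacle is the quantitative estimate $Y(t)\le K(T)(\mathcal D+Y(t)^2)$ with a constant uniform as long as the density deviation is small, and the difficulty is twofold. First, the continuity equation is only transported, not smoothed, so $\sigma$ — and $\nabla\sigma$, needed because the parabolic and Stokes coefficients depend on $\varrho\in W^{1,s}$ — must be controlled purely by the characteristics of $v$; keeping $\sigma$ small enough that $\varrho$ never approaches $\{\varrho_{\min},\varrho_{\max}\}$, where the coefficient functions are singular, is the genuinely new phenomenon absent from the compressible theory of \cite{bothedruet} and from standard near-equilibrium Navier--Stokes, and it is what forces the higher regularity of $\varrho^0,\varrho^{\mathrm{eq}},v^{\mathrm{eq}}$ and the $T$-dependence of $R_1$. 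Second, the linearized operator genuinely couples a parabolic system, the (non-coercive on all of $\mathbb R^N$) elliptic equation for $\zeta$, and the variable-coefficient Stokes system, and one must show its solution operator is bounded on the relevant anisotropic Sobolev scale with a constant depending only on the fixed equilibrium and on $T$; this rests on the solvability theory for the parabolic--elliptic subsystem and the Solonnikov-type estimates for the density-dependent Navier--Stokes part developed earlier in the paper.
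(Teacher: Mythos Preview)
Your proposal is correct in spirit and would lead to a valid proof, but it takes a genuinely different route from the paper's own argument.

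The paper does \emph{not} run a continuation/bootstrap on the local solution of Theorem~\ref{MAIN}. Instead it constructs the global solution directly by a second fixed-point map $\mathcal{T}^1$ (Section~\ref{contiT1}). Concretely: the initial data $(q^0,\varrho^0,v^0)$ are extended to functions $(\hat q^0,\hat\varrho^0,\hat v^0,\hat\zeta^0)$ on all of $Q_T$ by writing them as equilibrium plus small correction (the correction being extended via heat semigroup and $\hat\varrho^0$ via the continuity equation with velocity $\hat v^0$); the smallness condition on $R_1$ together with Lemma~\ref{PerturbConti} forces $\hat\varrho^0$ and the iterated density $\varrho^*=\mathscr C(v^*)$ to stay in a compact subinterval of $I$ \emph{globally on $[0,T]$}, so the coefficients are uniformly good. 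One then linearises the perturbation system around this extension (not around the equilibrium itself), and shows via a Gronwall argument (Proposition~\ref{fixednew}) an estimate of the form $\mathscr V(T)\le\Psi_7(T,R_0,\mathscr V^*(T))\,(\|\hat g^0\|_{L^p}+\|\hat f^0\|_{L^p})$, where the last factor is bounded by $\bar C\,R_1$ because $\widetilde{\mathscr A}(u^{\mathrm{eq}})=0$. This gives self-mapping of a ball of radius $\eta_0$ in $\phantom{}_0\mathcal Y_T$ for $R_1$ small, and a contraction in a lower norm yields the fixed point.

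What each approach buys: your continuation scheme is conceptually cleaner and closer to the textbook ``small-data global existence'' paradigm; it uses Theorem~\ref{MAIN} and its extension criterion as black boxes, and the quadratic inequality $Y\le K(T)(\mathcal D+Y^2)$ isolates exactly where smallness enters. The paper's fixed-point construction is more self-contained---it does not invoke the extension criterion at all---and by linearising around the moving extension $\hat u^0$ rather than the static equilibrium it avoids having to split the right-hand side into ``linear times perturbation'' plus ``quadratic remainder''; instead the whole right-hand side is handled via the operators $(g^1)',(h^1)',(f^1)'$ of Lemma~\ref{RightHandControlsecond}, which feed directly into a Gronwall estimate. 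Both approaches need, and use in the same way, the extra regularity $\varrho^0,\varrho^{\mathrm{eq}}\in W^{2,s}$ and $v^{\mathrm{eq}}\in W^{3,s}$ to control $\nabla\sigma$ through the perturbed continuity equation (Lemma~\ref{PerturbConti}), which you correctly identify as the crux.
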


\subsection{Road map}

In sections \ref{PrelimFE} and \ref{changevarsec} we show how to reformulate the original system such that it becomes easier to tackle via functional analytic methods. The functional setting is discussed in section \ref{funcsetting}. In section \ref{twomaps}, we introduce two ways to linearise the PDE system and reformulate the initial--boundary--value problem as a fixed point problem in the state space. Both fixed point equations exploit the parabolic substructure for the variables $(q, \, v)$ and treat the linear equations for $(\zeta, \,\varrho)$ as side conditions. In the first method, used to prove the short-time well posedness, all lower--order nonlinearities are frozen. For the proof of Theorem \ref{MAIN3} on small perturbations, a somewhat more elaborated linearisation principle is used in order to exhibit some stability estimates. 

The estimates for the linearised principal part of the system are presented in section \ref{ESTI}. Here we can rely partly on our work in \cite{bothedruet} for the compressible system, but have to discuss the additional problems caused by the presence of an elliptic equation and of a density constraint in the continuity equation. Section \ref{contiT} shows the self mapping estimate for the first fixed point equation, which yields the well posedness result in section \ref{FixedPointIter}. The extension criteria proved for the solution in the same section \ref{FixedPointIter} deserve attention in their own right. The proof of the global well-posedness result for small data, or rather small perturbations, is given in section \ref{contiT1}. Finally, some reminder, tools, and purely technical statements are compiled in the Appendix.

\section{The singular free energy function and its conjugate} \label{PrelimFE}

In comparison to the analysis of compressible models in \cite{bothedruet}, a main specificity of the incompressible model concerns the bulk free energy density and the definition \eqref{CHEMPOT} of the chemical potentials. With $k: \, \mathbb{R}^N_+ \rightarrow \mathbb{R}$ given, we introduce a bulk free energy density defined for $\rho \in \mathbb{R}^N_+$ of the form
\begin{align*}
h^{\infty}(\rho) := \begin{cases}
k(\rho) & \text{ if } \sum_{i=1}^N \rho_i \, \bar{V}_i = 1\, ,\\
+ \infty & \text{ otherwise.}
\end{cases}
\end{align*}
The function $h^{\infty}$ is singular, but the subdifferential $\partial h^{\infty}$ is non-empty for every $\rho$ satisfying the incompressiblity constraint $\sum_{i=1}^N \rho_i \, \bar{V}_i = 1$. If the function $k$ is continuously differentiable, it can be shown that $\mu \in \partial h^{\infty}(\rho)$ if and only if there exists $p \in \mathbb{R}$ such that $\mu_i = p \, \bar{V}_i + \partial_{\rho_i}k(\rho)$ for $i=1,\ldots,N$. It can easily be verified that the number $p$ can be characterised as follows:
\begin{align*}
p & = \sup_{\rho \in \mathbb{R}^N_+} \{\mu \cdot \rho - h^{\infty}(\rho)\} = \sup_{\rho \in \mathbb{R}^N_+, \sum_{i=1}^N \rho_i \, \bar{V}_i = 1} \{\mu \cdot \rho - k(\rho)\} = (h^{\infty})^*(\mu) \, ,
\end{align*}
where $(h^{\infty})^*$ is the convex conjugate of $h^{\infty}$. For systematic discussions and a proof of these elementary statements, we refer to \cite{bothedruetFE}.

Our approach essentially relies on the properties of the dual free energy function $f := (h^{\infty})^*$ on $\mathbb{R}^N$. We shall recall three statements of the paper \cite{bothedruetFE}. Proofs are provided in the Appendix, Section \ref{FE} for the reader's convenience. In the special case that the gradient of $k$ is explicitly invertible on $S_1$ (see \eqref{S0}), the statements can also be proved by direct algebraic computations yielding in many cases explicit formulae; see the Section 4 in \cite{druetmixtureincompweak} for a complete characterisation of the example \eqref{kfunktion}.
\begin{lemma}\label{smoothness}
We assume that $k: \, \mathbb{R}^N_+ \rightarrow \mathbb{R}$ is a positively homogeneous convex function of class $C^3(\mathbb{R}^N_+)$. We moreover assume that the restriction of $k$ to the surface $S_1$ is essentially smooth, meaning that $|\nabla_{\rho} k(y^m)| \rightarrow +\infty$ for sequences $\{y^m\}_{m \in \mathbb{N}} \subset S_1$ such that $\min_{i=1,\ldots,N} y^m_i \rightarrow 0$ as $m \rightarrow +\infty$. For $\mu \in \mathbb{R}^N$, we define $f(\mu) := \sup_{\rho \in S_{\bar{V}}} \{\mu \cdot \rho - k(\rho)\}$. Then the function $f$ belongs to $C^3(\mathbb{R}^N)$, and $\nabla_{\mu} f$ maps onto $S_{\bar{V}}$.
\end{lemma}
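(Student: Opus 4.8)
The plan is to exploit the duality between the singular energy $h^\infty$ and its conjugate $f$, reducing everything to the essential smoothness (in the sense of convex analysis) of the restriction $k|_{S_1}$. First I would reparametrise the constraint surface $S_{\bar V}$: since $\bar V_i > 0$, the map $\rho \mapsto (\sum_i \bar V_i \rho_i)^{-1}\rho$ is a smooth diffeomorphism between $S_1$ and $S_{\bar V}$, and $k$ is positively homogeneous, so $k|_{S_{\bar V}}$ inherits essential smoothness from $k|_{S_1}$. Thus without loss of generality one may work on the relative interior of either simplex-like surface. The key structural observation is that $f = (h^\infty)^*$ is the Legendre transform of a function which, restricted to the $(N-1)$-dimensional affine hull of $S_{\bar V}$, is a proper, convex, essentially smooth, strictly convex-in-the-relevant-directions function with the degenerate direction $\bar V$ corresponding to a linear dependence. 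The two-dimensional kernel phenomenon ($\{1^N\}$ from homogeneity of $k$ via Euler's relation, and $\bar V$ from the constraint) must be tracked carefully.

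Next I would establish that $\nabla_\mu f$ maps onto $S_{\bar V}$. For a given $\rho^* \in S_{\bar V}$ with all components positive, essential smoothness of $k|_{S_1}$ guarantees that the subdifferential $\partial h^\infty(\rho^*)$ is non-empty (this is exactly the statement recalled before the lemma: $\mu \in \partial h^\infty(\rho^*)$ iff $\mu_i = p\bar V_i + \partial_{\rho_i}k(\rho^*)$ for some $p$); picking any such $\mu$, the conjugate subgradient inversion $\rho^* \in \partial f(\mu)$ holds, and once $f \in C^1$ this reads $\rho^* = \nabla_\mu f(\mu)$. Surjectivity onto $S_{\bar V}$ follows. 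Conversely $\nabla_\mu f(\mu)$ always lies in $S_{\bar V}$ because the supremum defining $f(\mu)$ is attained on $S_{\bar V}$ (coercivity of $k$ on the compact-up-to-boundary surface plus the blow-up of $\nabla k$ pushing the maximiser into the interior), so any maximiser $\rho$ satisfies $\rho \in S_{\bar V}$ and equals $\nabla_\mu f(\mu)$ by the envelope theorem once differentiability is in hand.

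The substantive part is the $C^3$-regularity of $f$ on all of $\mathbb{R}^N$. I would argue as follows. Fix $\mu$ and let $\rho = \rho(\mu)$ be the (unique, after quotienting by the neutral direction $\bar V$ which does not affect $k$ on $S_{\bar V}$) maximiser; the first-order condition is $\mathcal{P}_{\{1^N,\bar V\}^\perp}(\mu - \nabla k(\rho)) = 0$ together with $\rho \in S_{\bar V}$, plus the scalar relation fixing $p$. Because $\nabla k$ blows up at the relative boundary of $S_1$ (hence of $S_{\bar V}$), the maximiser stays in a compact subset of $\mathbb{R}^N_+$ locally uniformly in $\mu$, so one is away from the singular locus; there $k \in C^3$ and $D^2 k$ restricted to the tangent space $\{1^N,\bar V\}^\perp$ (on which $k|_{S_{\bar V}}$ is genuinely convex — this needs to be extracted from convexity of $k$ together with $\bar V \not\parallel 1^N$) is invertible. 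The implicit function theorem applied to the first-order system then yields $\mu \mapsto \rho(\mu)$ of class $C^2$, and since $\nabla_\mu f(\mu) = \rho(\mu)$, we get $f \in C^3$. The main obstacle I anticipate is precisely verifying the non-degeneracy of the reduced Hessian: convexity of $k$ on the open cone $\mathbb{R}^N_+$ only gives $D^2 k \geq 0$, with the homogeneity direction $\rho$ (equivalently, after Euler, $1^N$-related directions) automatically in the kernel, so one must show that the $(N-2)$-dimensional quadratic form $\xi \mapsto \xi^{\sf T} D^2k(\rho)\,\xi$ on $\{1^N,\bar V\}^\perp$ is strictly positive — or handle the general degenerate case by a further quotient — and relate this cleanly to the strict convexity implicit in "$k|_{S_1}$ essentially smooth". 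This is the step where the hypotheses on $k$ and the assumption $\bar V \neq \lambda 1^N$ are genuinely used, and where I would spend most of the effort; everything else is bookkeeping with projections and standard conjugate-function calculus, for which I would refer to \cite{bothedruetFE}.
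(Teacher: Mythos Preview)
Your overall strategy matches the paper's proof: transfer essential smoothness from $S_1$ to $S_{\bar V}$ via homogeneity, show the maximiser in the definition of $f(\mu)$ lies in the interior of $S_{\bar V}$ and is unique, read off $\nabla_\mu f(\mu)=\rho(\mu)$, and then invoke the implicit function theorem on the first-order system to upgrade to $C^3$.

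There is, however, a concrete error in how you set up the first-order condition and the non-degeneracy check. Maximising $\mu\cdot\rho-k(\rho)$ over the affine surface $S_{\bar V}$ gives the Lagrange condition $\mu-\nabla_\rho k(\rho)\in\poule\{\bar V\}$, i.e.\ $\mathcal{P}_{\{\bar V\}^{\perp}}(\mu-\nabla_\rho k(\rho))=0$. The vector $1^N$ plays no role here: it is the kernel direction of the Onsager matrix $M$, not of $D^2k$. Your projection onto $\{1^N,\bar V\}^{\perp}$ loses one equation and leads you to the wrong tangent space for the Hessian check. The paper works on $\{\bar V\}^{\perp}$ (dimension $N-1$): it writes the system $\xi^j\cdot\mu=\xi^j\cdot\nabla_\rho k(\rho)$ for an orthonormal basis $\xi^1,\ldots,\xi^{N-1}$ of $\{\bar V\}^{\perp}$, together with $\bar V\cdot\rho=1$, and shows the Jacobian is invertible.

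Once you fix the space, the ``main obstacle'' you anticipate dissolves. By positive homogeneity of degree one, $D^2k(\rho)\rho=0$, so $\ker D^2k(\rho)\supset\poule\{\rho\}$; the paper asserts (from strict convexity of $k$ on the simplex) that this kernel is exactly one-dimensional. Then for any nonzero $\xi\in\{\bar V\}^{\perp}$ one has $\xi\cdot\bar V=0$ while $\rho\cdot\bar V=1$, so $\xi$ cannot be parallel to $\rho$, and hence $D^2k(\rho)\xi\cdot\xi>0$. No further quotient or ``general degenerate case'' is needed; the assumption $\bar V\not\parallel 1^N$ is in fact not used at this step (it matters later, for the change of variables in Section~\ref{changevarsec}).
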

\begin{lemma}\label{Hessianandgradient}
We adopt the same assumptions as in Lemma \ref{smoothness}. Then
\begin{enumerate}[(1)]
\item $f(\mu + s \, \bar{V}) = f(\mu) + s$ {\rm and} $\nabla_{\mu} f(\mu + s \, \bar{V}) =\nabla_{\mu} f(\mu)$ for all $\mu \in \mathbb{R}^N$ and all $s \in \mathbb{R}$; 
\item The Hessian $D^2f(\mu)$ is positive semi-definite for all $\mu \in \mathbb{R}^N$, with ${\rm ker}(D^2f(\mu)) = {\rm span}\{\bar{V}\}$;
\end{enumerate}
\end{lemma}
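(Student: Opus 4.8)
The plan is to reduce everything to the Legendre-duality identity relating $f$, $k$ and the Lagrange multiplier of the constraint $\bar V\cdot\rho=1$; once that identity is differentiated, both assertions fall out by elementary linear algebra. For part (1), the first identity I would read straight off the definition of $f$: since every $\rho\in S_{\bar V}$ satisfies $\bar V\cdot\rho=1$, one has $(\mu+s\bar V)\cdot\rho-k(\rho)=\big(\mu\cdot\rho-k(\rho)\big)+s$, so taking the supremum over $\rho\in S_{\bar V}$ yields $f(\mu+s\bar V)=f(\mu)+s$. Differentiating this relation in $\mu$ gives $\nabla_\mu f(\mu+s\bar V)=\nabla_\mu f(\mu)$; equivalently, by Lemma~\ref{smoothness} the supremum defining $f(\mu)$ is attained exactly at $\rho(\mu):=\nabla_\mu f(\mu)\in S_{\bar V}$, and this maximiser is unaffected by replacing $\mu$ with $\mu+s\bar V$, because the added term $s\,\bar V\cdot\rho\equiv s$ is constant on $S_{\bar V}$. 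This also records the useful fact $\bar V\cdot\nabla_\mu f(\mu)\equiv1$.

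For part (2) I would first set up the duality identity. By Lemma~\ref{smoothness}, $f\in C^3(\mathbb{R}^N)$ and $\rho(\mu):=\nabla_\mu f(\mu)\in S_{\bar V}\subset\mathbb{R}^N_+$, so the constrained maximiser lies in the \emph{open} positive cone and the only active constraint is the linear one $\bar V\cdot\rho=1$. The first-order (Lagrange) condition, recalled in the preliminary discussion and reproved in Section~\ref{FE}, then furnishes a multiplier $p(\mu)\in\mathbb{R}$ with $\mu=\nabla_\rho k(\rho(\mu))+p(\mu)\,\bar V$. Taking the inner product with $\rho(\mu)$ and using $\bar V\cdot\rho(\mu)=1$ together with Euler's relation $\nabla_\rho k(\rho)\cdot\rho=k(\rho)$ (positive $1$-homogeneity of $k$), one gets $p(\mu)=\mu\cdot\rho(\mu)-k(\rho(\mu))=f(\mu)$, hence $p=f\in C^3$. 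Thus
\begin{align*}
\mu=\nabla_\rho k\big(\nabla_\mu f(\mu)\big)+f(\mu)\,\bar V ,
\end{align*}
with both sides of class $C^2$ in $\mu$. Differentiating in $\mu$ and writing $G:=D^2f(\mu)$, $H:=D^2k(\nabla_\mu f(\mu))$, this becomes the matrix identity
\begin{align*}
\text{Id}_{\mathbb{R}^N}=H\,G+\bar V\otimes\nabla_\mu f(\mu)=H\,G+\bar V\otimes\rho(\mu).
\end{align*}
Since $f$ is a supremum of affine functions it is convex, so $G$ is positive semi-definite. Moreover $G\,\bar V=0$, by differentiating $\nabla_\mu f(\mu+s\bar V)=\nabla_\mu f(\mu)$ in $s$ at $s=0$ (equivalently, $\bar V\cdot\nabla_\mu f=1$ in $\mu$), so ${\rm span}\{\bar V\}\subseteq\ker G$. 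Conversely, if $\xi\in\ker G$, applying the matrix identity to $\xi$ gives $\xi=H\,G\,\xi+(\rho(\mu)\cdot\xi)\,\bar V=(\rho(\mu)\cdot\xi)\,\bar V\in{\rm span}\{\bar V\}$. Therefore $\ker G={\rm span}\{\bar V\}$, which is the claim.

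The only genuinely delicate point is the step asserting that the constrained maximiser $\rho(\mu)$ is interior to $\mathbb{R}^N_+$, so that the single-multiplier rule applies and $p$ inherits the regularity of $f$; this is exactly where the essential-smoothness hypothesis on $k|_{S_1}$ is used, and it is the content of the preparatory material imported from \cite{bothedruetFE} and collected in Section~\ref{FE}. Granting this and $f\in C^3$ from Lemma~\ref{smoothness}, the remainder is the one-line differentiation and the short kernel computation above; no PDE estimates enter. In the case where $\nabla_\rho k$ is explicitly invertible on $S_1$, e.g.\ $k$ as in \eqref{kfunktion}, one may alternatively verify the identity and the kernel statement by a direct algebraic computation, cf.\ Section~4 of \cite{druetmixtureincompweak}.
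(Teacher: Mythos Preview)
Your proof is correct and follows essentially the same route as the paper: both rest on the duality identity $\mu=\nabla_\rho k(\nabla_\mu f(\mu))+f(\mu)\,\bar V$ together with $\bar V\cdot\nabla_\mu f(\mu)=1$, and the paper merely states that the claims are ``readily established'' from these relations (cf.\ \eqref{mch}, \eqref{mch2}). Your direct differentiation yielding $\mathrm{Id}=H\,G+\bar V\otimes\rho(\mu)$ and the ensuing two-line kernel computation is a clean way to fill in what the paper leaves implicit.
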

The next Lemma is a main tool for our reformulation of the PDE system.
\begin{lemma}\label{functionf2}
We adopt the assumptions of Lemma \ref{smoothness}. If $\mu \in \mathbb{R}^N$, $\rho \in S_{\bar{V}}$ and $p$ are related via \eqref{CHEMPOT}, then $p = f(\mu)$ and $\rho = \nabla_{\mu} f(\mu)$.
\end{lemma}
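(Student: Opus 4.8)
\textbf{Proof proposal for Lemma \ref{functionf2}.}

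The plan is to identify the supremum defining $f(\mu)$ with the pair $(\rho, p)$ furnished by the relation \eqref{CHEMPOT}, using convexity and the first-order optimality conditions on the constrained problem. First I would recall that, by Lemma \ref{smoothness}, the map $\nabla_{\mu} f$ is well-defined (since $f \in C^3(\mathbb{R}^N)$) and takes values in $S_{\bar V}$; in particular the supremum $f(\mu) = \sup_{\rho \in S_{\bar V}} \{\mu\cdot\rho - k(\rho)\}$ is attained at some $\rho^* \in S_{\bar V}$, and by the envelope theorem (differentiating under the sup, justified by the $C^3$ regularity) one has $\nabla_\mu f(\mu) = \rho^*$. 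The essential smoothness hypothesis on $k|_{S_1}$, together with positive homogeneity, ensures the maximizer lies in the relative interior of $S_{\bar V}$, so there is no boundary contribution and the Lagrange condition is the exact characterization.

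Next I would write the Lagrange optimality condition for the constrained maximization $\max\{\mu\cdot\rho - k(\rho) : \bar V\cdot\rho = 1\}$ at the interior maximizer $\rho^*$: there exists a multiplier $\lambda \in \mathbb{R}$ with $\mu - \nabla_\rho k(\rho^*) = \lambda\, \bar V$, i.e. $\mu_i = \lambda\, \bar V_i + \partial_{\rho_i} k(\rho^*)$ for $i=1,\dots,N$. Comparing with \eqref{CHEMPOT}, this is precisely the statement that $(\rho^*, \lambda)$ solves the same algebraic system that $(\rho, p)$ solves. The remaining point is uniqueness: I must argue that the pair $(\rho, p)$ satisfying \eqref{CHEMPOT} with $\rho \in S_{\bar V}$ is unique, so that necessarily $\rho = \rho^* = \nabla_\mu f(\mu)$ and $p = \lambda = f(\mu)$. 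For $\rho$, uniqueness follows from strict convexity of $k$ transverse to the homogeneity direction — on $S_1$ (equivalently $S_{\bar V}$ after rescaling) two distinct points with the same gradient of $k$ would contradict convexity combined with essential smoothness; alternatively one invokes Lemma \ref{Hessianandgradient}(2), which says $D^2 f$ has kernel exactly $\mathrm{span}\{\bar V\}$, giving injectivity of $\nabla_\mu f$ modulo $\bar V$ and hence a well-defined inverse on $S_{\bar V}$. Once $\rho$ is pinned down, $p$ is determined by any single component of \eqref{CHEMPOT} for which $\bar V_i \neq 0$ (all are positive), so $p = \lambda$.

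Finally I would close the loop by verifying $p = f(\mu)$ directly from the value of the objective at the maximizer: $f(\mu) = \mu\cdot\rho^* - k(\rho^*)$, and substituting $\mu = \lambda \bar V + \nabla_\rho k(\rho^*)$ gives $f(\mu) = \lambda\, (\bar V\cdot\rho^*) + \nabla_\rho k(\rho^*)\cdot\rho^* - k(\rho^*) = \lambda + (\nabla_\rho k(\rho^*)\cdot\rho^* - k(\rho^*))$, and by Euler's relation for the positively homogeneous (degree one) function $k$ one has $\nabla_\rho k(\rho^*)\cdot\rho^* = k(\rho^*)$, so $f(\mu) = \lambda = p$. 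This also matches the characterization $p = (h^\infty)^*(\mu)$ recalled just before the lemma.

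The main obstacle I anticipate is the rigorous justification that the supremum is attained at an \emph{interior} point of $S_{\bar V}$ (so that the unconstrained-on-the-manifold Lagrange condition genuinely holds with no boundary term) and the associated uniqueness of the maximizer; this is exactly where the essential smoothness assumption $|\nabla_\rho k(y^m)| \to \infty$ near $\partial S_1$ does the work, and it is cleanest to simply cite the corresponding results of \cite{bothedruetFE} (reproved in Appendix \ref{FE}) rather than redo the convex-analytic argument — in particular Lemma \ref{smoothness} already encodes that $\nabla_\mu f$ maps onto $S_{\bar V}$, which is the substantive half of the claim.
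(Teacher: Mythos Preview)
Your proposal is correct and follows essentially the same route as the paper's Appendix~\ref{FE}: attainment of the supremum at an interior point of $S_{\bar V}$ via essential smoothness, the Lagrange condition $\mu = \nabla_\rho k(\bar r) + p\,\bar V$, Euler's relation $\nabla_\rho k(\bar r)\cdot\bar r = k(\bar r)$ to conclude $f(\mu)=p$, and the envelope argument for $\nabla_\mu f(\mu)=\bar r$. The paper's proof is organized as a joint proof of Lemmas~\ref{smoothness}--\ref{functionf2} and is slightly terser on the uniqueness step (invoking strict convexity of $k$ on $S_{\bar V}$ directly), but the substance is identical.
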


\section{Change of variables for the incompressible model}\label{changevarsec}

We propose a reformulation of the equations \eqref{mass}, \eqref{momentum} subject to the constitutive equations \eqref{DIFFUSFLUX}, \eqref{CHEMPOT} and to the volume constraint \eqref{VOLUME} in order to eliminate the positivity constraints on $\rho$, the singularity due to $M \, 1^N = 0$ (cf. \eqref{CONSTRAINT}), and the singularity direction due to the incompressibility \eqref{VOLUME} -- equivalently, the fact that the function $f$, interpreted as the dual of the free energy, is affine in the direction of $\bar{V}$ ($D^2f \, \bar{V} = 0$, Lemma \ref{Hessianandgradient}). Like in the investigations in \cite{dredrugagu17a}, \cite{bothedruet}, \cite{druetmixtureincompweak}, the idea is to invert the algebraic relations \eqref{CHEMPOT} for $\mu, \, p, \, \rho$ and to combine this procedure with appropriate linear projections.

\subsection{General ideas}\label{changevariables}

We choose a basis of $\mathbb{R}^N$: $\{\xi^1,\ldots, \xi^{N-2}, \, \xi^{N-1}, \, \xi^N\}$ with $\xi^N = 1^N$ and $\xi^{N-1} = \bar{V}$. We then choose $\eta^1,\ldots,\eta^N$ to be the dual basis, i. e. $\xi^i \cdot \eta^j = \delta^{i}_{j}$ for $i,j = 1,\ldots,N$. We define variables $q^1, \ldots, q^{N-2}$ and $\zeta$ via
\begin{align}\label{lesqs}
q_{\ell} & := \eta^{\ell} \cdot \mu := \sum_{i=1}^N \eta^{\ell}_i \, \mu_i \text{ for } \ell = 1,\ldots,N-2 \, ,\\
\label{leszetas} \zeta (=q_{N-1}) & := \eta^{N-1} \cdot \mu = \sum_{i=1}^N \eta^{N-1}_i \, \mu_i \, .
\end{align}
For $\rho \in \mathbb{R}^N_+$ such that $\sum_{i=1}^{N} \rho_i \, \bar{V}_i = 1$, we want to invert the relation $\mu_i = \bar{V}_i \, p + \partial_{\rho_i} k(\rho)$ for $i=1,\ldots,N$. We exploit the result of Lemma \ref{functionf2} saying that \eqref{CHEMPOT} implies $\rho_i = \partial_{\mu_i} f(\mu_{1},\ldots,\mu_N)$ for $i=1,\ldots,N$. The vector $\mu$ is then decomposed according to $$\mu = \sum_{\ell=1}^{N-2} q^{\ell} \, \xi^{\ell} + \zeta \, \bar{V} + \mu\cdot \eta^N \, 1^N $$ into its projection onto $\{1^N\}^{\perp}$, expressed by the variables $q$ and $\zeta$, and its projection on $\poule\{1^N\}$. 

Next, the last coordinate $\mu\cdot \eta^N$ is eliminated using the equation
\begin{align*}
\varrho = \sum_{i=1}^N \rho_i & = 1^{N} \cdot \nabla_{\mu} f(\mu_1,\ldots,\mu_N) = 1^N \cdot \nabla_{\mu}f(\sum_{\ell = 1}^{N-2} q_{\ell} \, \xi^{\ell} + \zeta \, \bar{V} + (\mu \cdot \eta^N) \, 1^N) \, .
\end{align*}
The gradient $\nabla_{\mu} f$ is invariant in the direction $\bar{V}$ (cf.\ Lemma \ref{Hessianandgradient}) and, therefore, the variable $\zeta$ decouples from the latter equation, that now reads
\begin{align*}
\varrho - 1^N \cdot \nabla_{\mu} f(\sum_{\ell = 1}^{N-2} q_{\ell} \, \xi^{\ell} + (\mu \cdot \eta^N) \, 1^N) = 0 \, .
\end{align*}
This representation is an algebraic equation $F(\mu \cdot \eta^N, \, q_1, \ldots, q_{N-2}, \, \varrho) = 0$. In view of Lemma \ref{Hessianandgradient}, note that $\partial_{\mu \cdot \eta^N} F(\mu \cdot \eta^N, \, q_1, \ldots, q_{N-2}, \, \varrho) = - D^{2}f(\mu) 1^N \cdot 1^N < 0$, due the fact that $1^N$ is not parallel to $\bar{V}$. Thus, the last component $\mu \cdot \eta^N$ is defined implicitly as a differentiable function of $\varrho$ and $q$. We call this function $\mathscr{M}$ and obtain the equivalent formulation
\begin{align}
\mu = & \sum_{\ell=1}^{N-2} q_{\ell} \, \xi^{\ell} + \zeta \, \bar{V} + \mathscr{M}(\varrho, \, q_1,\ldots,q_{N-2}) \, 1^N\, ,\nonumber\\
\rho = &  \nabla_{\mu} f( \sum_{\ell=1}^{N-2} q_{\ell} \, \xi^{\ell} + \mathscr{M}(\varrho, \, q_1,\ldots,q_{N-2}) \, 1^N) =: \mathscr{R}(\varrho, \, q) \, ,\label{rhomap}
\end{align}
where only the total mass density $\varrho$ and the \emph{relative chemical potentials} $q_1,\ldots,q_{N-2}$ and $\zeta$ occur as free variables. Note, moreover, that $\zeta$ and $\rho$ decouple. Similarly, we obtain a representation of the pressure as
\begin{align}
\begin{split}\label{pressuredef}
p = f(\mu) = & f( \sum_{\ell=1}^{N-2} q_{\ell} \, \xi^{\ell} + \zeta \, \bar{V} + \mathscr{M}(\varrho, \, q_1,\ldots,q_{N-2}) \, 1^N) \\
=  & f( \sum_{\ell=1}^{N-2} q_{\ell} \, \xi^{\ell} + \mathscr{M}(\varrho, \, q_1,\ldots,q_{N-2}) \, 1^N) + \zeta 
=: P(\varrho, \, q) + \zeta \, . 
\end{split}
\end{align}
All this is summarised in the following Lemma, the proof of which is direct in view of the Lemmas \ref{smoothness} and \ref{functionf2}.
\begin{lemma}\label{changecoor}
We adopt the assumptions of Theorem \ref{MAIN} for the function $k$. Let $I = ]\varrho_{\min}, \, \varrho_{\max}[$ with $\varrho_{\min} = \min_{i=1,\ldots,N} 1/\bar{V}_i$ and $\varrho_{\max} = \max_{i=1,\ldots,N} 1/\bar{V}_i$. Then there exist a function $\mathscr{M} \in C^2(I \times \mathbb{R}^{N-2})$ and a field $\mathscr{R} \in C^2(I \times \mathbb{R}^{N-2}; \, S_{\bar{V}})$ such that the equations $\rho = \nabla_{\mu} f(\mu)$ are valid if and only if there are $\varrho \in I$, $q \in \mathbb{R}^{N-2}$ and $\zeta \in \mathbb{R}$ such that
\begin{align*}
\sum_{i=1}^N \rho_i = \varrho, \, \rho = \mathscr{R}(\varrho, \, q), \qquad \mu = \sum_{j=1}^{N-2} q_j \, \xi^j + \zeta \, \bar{V} + \mathscr{M}(\varrho, \, q) \, 1^N =: \mu(\varrho, \, q, \, \zeta) \, .
\end{align*}
If, moreover, $\mu = \bar{V} \, p + \partial_{\rho} k(\rho)$ then $p = P(\varrho, \, q) + \zeta$ with $P \in C^2(I \times \mathbb{R}^{N-2})$ defined by \eqref{pressuredef}.
\end{lemma}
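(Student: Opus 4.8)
The plan is to assemble Lemma \ref{changecoor} directly from the three lemmas on the dual free energy function $f = (h^\infty)^*$ together with the implicit function theorem, essentially retracing the informal derivation that precedes the statement but making the regularity bookkeeping explicit. First I would invoke Lemma \ref{smoothness} to record that, under the hypotheses of Theorem \ref{MAIN} on $k$, the function $f$ is of class $C^3(\mathbb{R}^N)$ with $\nabla_\mu f$ mapping onto $S_{\bar V}$; in particular $\nabla_\mu f \in C^2(\mathbb{R}^N; S_{\bar V})$, which is the source of the claimed $C^2$-regularity of $\mathscr{R}$, $\mathscr{M}$ and $P$. Then I would fix the basis $\{\xi^1,\dots,\xi^{N-2},\xi^{N-1}=\bar V,\xi^N=1^N\}$ and its dual $\{\eta^1,\dots,\eta^N\}$ as in Section \ref{changevariables}, and write the tautological decomposition $\mu = \sum_{\ell=1}^{N-2} (\eta^\ell\cdot\mu)\,\xi^\ell + (\eta^{N-1}\cdot\mu)\,\bar V + (\eta^N\cdot\mu)\,1^N$, setting $q_\ell := \eta^\ell\cdot\mu$ and $\zeta := \eta^{N-1}\cdot\mu$.

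The heart of the argument is the implicit construction of $\mathscr{M}$. Using Lemma \ref{Hessianandgradient}(1), $\nabla_\mu f$ is invariant under shifts in the $\bar V$ direction, so $\varrho = 1^N\cdot\nabla_\mu f(\mu)$ does not depend on $\zeta$ and reduces to the scalar equation
\begin{align*}
F(t,q,\varrho) := \varrho - 1^N\cdot\nabla_\mu f\Bigl(\sum_{\ell=1}^{N-2} q_\ell\,\xi^\ell + t\,1^N\Bigr) = 0 ,
\end{align*}
with $t$ standing for $\eta^N\cdot\mu$. Here $F \in C^2$ because $\nabla_\mu f \in C^2$, and $\partial_t F(t,q,\varrho) = -D^2 f(\mu)\,1^N\cdot 1^N$. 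By Lemma \ref{Hessianandgradient}(2), $\ker D^2 f(\mu) = \mathrm{span}\{\bar V\}$, and since $1^N$ is \emph{not} parallel to $\bar V$ (this is precisely the standing hypothesis $\bar V \neq \lambda\,1^N$), we get $\partial_t F < 0$ everywhere. The implicit function theorem then yields $t = \mathscr{M}(\varrho,q)$ of class $C^2$ on the relevant open set; one checks that the natural domain is exactly $I\times\mathbb{R}^{N-2}$ with $I = ]\varrho_{\min},\varrho_{\max}[$, because Lemma \ref{smoothness} tells us the range of $1^N\cdot\nabla_\mu f$ is $\{1^N\cdot\rho : \rho\in S_{\bar V}\} = ]\varrho_{\min},\varrho_{\max}[$ (the latter identity being \eqref{thresholds}). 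Substituting back defines $\mathscr{R}(\varrho,q) := \nabla_\mu f(\sum_\ell q_\ell\xi^\ell + \mathscr{M}(\varrho,q)\,1^N)$, which lands in $S_{\bar V}$ by Lemma \ref{smoothness} and is $C^2$ as a composition of $C^2$ maps, and $\mu(\varrho,q,\zeta) := \sum_\ell q_\ell\xi^\ell + \zeta\bar V + \mathscr{M}(\varrho,q)\,1^N$.

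It remains to check the equivalence in both directions. If $\rho = \nabla_\mu f(\mu)$ with $\rho\in\mathbb{R}^N_+$ (so automatically $\rho\in S_{\bar V}$ by the range statement), set $\varrho := \sum_i\rho_i\in I$, $q_\ell := \eta^\ell\cdot\mu$, $\zeta := \eta^{N-1}\cdot\mu$; then $\eta^N\cdot\mu$ solves $F(\cdot,q,\varrho)=0$, hence equals $\mathscr{M}(\varrho,q)$ by uniqueness, giving $\mu = \mu(\varrho,q,\zeta)$ and $\rho = \mathscr{R}(\varrho,q)$. Conversely, if $\varrho\in I$, $q\in\mathbb{R}^{N-2}$, $\zeta\in\mathbb{R}$ and $\rho = \mathscr{R}(\varrho,q)$, $\mu = \mu(\varrho,q,\zeta)$, then the shift-invariance of $\nabla_\mu f$ gives $\rho = \nabla_\mu f(\mu)$ and $\sum_i\rho_i = \varrho$ by the defining equation $F(\mathscr{M}(\varrho,q),q,\varrho)=0$. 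Finally, the pressure formula follows from Lemma \ref{functionf2} ($p = f(\mu)$) combined with Lemma \ref{Hessianandgradient}(1) ($f(\mu+\zeta\bar V) = f(\mu)+\zeta$): we get $p = f(\sum_\ell q_\ell\xi^\ell + \mathscr{M}(\varrho,q)\,1^N) + \zeta =: P(\varrho,q)+\zeta$, and $P\in C^2(I\times\mathbb{R}^{N-2})$ since $f\in C^3$ and $\mathscr{M}\in C^2$. I do not anticipate a genuine obstacle here — the lemma is essentially a careful repackaging of Lemmas \ref{smoothness}--\ref{functionf2}; the only point requiring a little care is identifying the maximal domain of $\mathscr{M}$ with $I\times\mathbb{R}^{N-2}$, i.e.\ verifying that the implicitly defined branch extends over all of that set, which rests on the surjectivity of $\nabla_\mu f$ onto $S_{\bar V}$ and the strict monotonicity $\partial_t F < 0$.
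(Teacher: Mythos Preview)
Your proposal is correct and follows essentially the same approach as the paper: the paper's proof is stated as ``direct in view of the Lemmas \ref{smoothness} and \ref{functionf2},'' with the actual argument being the informal derivation in Section \ref{changevariables} that you have faithfully reproduced (basis decomposition, $\bar V$-invariance of $\nabla_\mu f$, implicit function theorem applied to $F$ with $\partial_t F = -D^2 f\,1^N\cdot 1^N < 0$, then the pressure identity via $f(\mu+\zeta\bar V)=f(\mu)+\zeta$). If anything, you are more explicit than the paper about the regularity bookkeeping and about identifying the maximal domain of $\mathscr{M}$ with $I\times\mathbb{R}^{N-2}$ via the surjectivity of $\nabla_\mu f$ onto $S_{\bar V}$.
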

In order to deal with the right-hand side (external forcing), we define in the same spirit: 
\begin{align*}
\tilde{b}^{\ell}(x, \, t) := & \sum_{i=1}^N b^i(x, \, t) \, \eta^{\ell}_i \text{ for } \ell = 1,\ldots,N-2 \, ,\\
\hat{b}(x, \, t) := & \sum_{i=1}^N b^i(x, \, t) \, \eta^{N-1}_i \, , \quad \bar{b}(x, \, t) := \sum_{i=1}^N b^i(x, \, t) \, \eta^{N}_i \, . 
\end{align*}
This allows to express $$b^i(x, \, t) := \sum_{\ell=1}^{N-2} \tilde{b}^{\ell}(x, \, t) \, \xi^{\ell}_i + \hat{b}(x, \, t) \, \bar{V}_i + \bar{b}(x, \, t) \text{ for } i =1,\ldots,N \, .$$ For the reaction terms, we define $\tilde{r}_{\ell}(\varrho, \, q) := \sum_{i=1}^N \xi^{\ell}_i \, r_i(\mathscr{R}(\varrho, \, q))$ for $\ell = 1, \ldots, N-2$.

\subsection{Reformulation of the partial differential equations and of the main theorem}

The relation \eqref{CONSTRAINT} and the equivalence of Lemma \ref{changecoor} show that
\begin{align*}
& J^i = - \sum_{j=1}^N M_{i,j}(\rho_1,\ldots,\rho_N) \, (\nabla \mu_j - b^j)\\
& =  - \sum_{j=1}^N M_{i,j}(\rho_1,\ldots,\rho_N) \, \left[\sum_{\ell = 1}^{N-2}  \xi^{\ell}_j \, (\nabla q_{\ell} - \tilde{b}^{\ell}) + \, \bar{V}_j \, (\nabla \zeta - \hat{b}) + (\nabla \mathscr{M}(\varrho, \, q) - \bar{b})\right] \\
& = - \sum_{\ell = 1}^{N-2}  \sum_{j=1}^N M_{i,j}(\rho_1,\ldots,\rho_N) \, \xi^{\ell}_j \, (\nabla q_{\ell}- \tilde{b}^{\ell}) -  \sum_{j=1}^N M_{i,j}(\rho_1,\ldots,\rho_N) \, \bar{V}_j \, (\nabla \zeta - \hat{b}) \, .
 \end{align*}
If we introduce the rectangular projection matrix $\Pi_{j,\ell} = \xi^{\ell}_j$ for $\ell = 1,\ldots,N-2$ and $j = 1,\ldots,N$, then $J = - M \, \Pi (\nabla q - \tilde{b}) - M \, \bar{V} \, (\nabla \zeta - \hat{b})$. Thus, we consider equivalently
\begin{alignat*}{2}
\partial_t \rho + \divv( \rho \, v - M \, \Pi\, (\nabla q - \tilde{b}) - M\, \bar{V} \, (\nabla \zeta- \hat{b})) & = r\, , & & \\
\partial_t (\varrho \, v) + \divv( \varrho \, v\otimes v - \mathbb{S}(\nabla v)) + \nabla P(\varrho, \, q) + \nabla \zeta & = \rho\cdot b  \, .& &  
\end{alignat*}
In the latter system, we have $\rho = \mathscr{R}(\varrho, \, q)$ and $(\varrho, q_1,\ldots,q_{N-2}, \zeta,\, v_1,v_2,v_3)$ are the independent variables. Next, we define for $k = 1,\ldots, N-2$ the maps
\begin{align*}
R_{k}(\varrho, \, q) := & \sum_{j=1}^N \xi^{k}_j \, \rho_j = \Pi^{\sf T} \, \rho\\
= & \sum_{j=1}^N \xi^{k}_j \, f_{\mu_j}( \sum_{\ell=1}^{N-2} q_{\ell} \, \xi^{\ell} + \mathscr{M}(\varrho, \, q_1,\ldots,q_{N-2}) \, 1^N) \, .
\end{align*}
Multiplying the mass transfer equations with $\xi^{k}_i$, we obtain that
\begin{align*}
\partial_t R_k(\varrho, \, q) +\divv\Big(R_k(\varrho, \, q) \, v - [\Pi^{\sf T} \, M(\rho) \, \Pi]_{k,\ell}\,  (\nabla q_{\ell} - \tilde{b}^{\ell}) - [\Pi^{\sf T} \, M(\rho) \, \bar{V}]_k \, (\nabla \zeta - \hat{b})\Big) = \tilde{r}_k\, .
\end{align*}
It can be checked easily that the matrix $\Pi^{\sf T} \, M(\rho) \, \Pi \in \mathbb{R}^{(N-2)\times (N-2)}$ is symmetric and strictly positive definite on all states $\rho \in S_{\bar{V}}$.
The Jacobian
\begin{align*}
R_{q} = \Pi^{\sf T} \, D^{2}f \, \Pi - \frac{\Pi^{\sf T} \, D^2f 1^N \otimes \Pi^{\sf T} \, D^2f 1^N}{D^2f 1^N \cdot 1^N} \, ,
\end{align*}
of size $(N-2)\times (N-2)$ is also strictly positive definite. Indeed, vectors of the form $\Pi \, a$ in $\mathbb{R}^N$ with nonzero $a \in \mathbb{R}^{N-2}$ can by construction never belong to $\poule\{1^N, \, \bar{V}\}$. 
We next multiply the mass balance equations with $\bar{V}_i$. Making use of the constraint \eqref{VOLUME} yields
\begin{align*}
\divv(v -\, \bar{V} \cdot M(\rho)\, \Pi\, (\nabla q - \tilde{b}) - \bar{V} \cdot M(\rho)\, \bar{V} \, (\nabla \zeta - \hat{b}))  = \bar{V} \cdot r = 0 \, ,
\end{align*}
where we use the additional assumption that $r$ maps into $\{\bar{V}\}^{\perp}$. Using that $\rho = \mathscr{R}(\varrho, \, q)$, we define
\begin{align}\label{trafom}
\widetilde{M}(\varrho, \, q) := & \Pi^{\sf T} \, M(\mathscr{R}(\varrho, \, q)) \, \Pi \in \mathbb{R}^{(N-2)\times (N-2)}\, ,\\
\label{trafoa} A(\varrho, \,q) := & \Pi^{\sf T} \, M(\mathscr{R}(\varrho, \, q)) \, \bar{V} \in \mathbb{R}^{N-2}\, ,\\
\label{trafod} d(\varrho, \,q) := & \bar{V} \cdot M(\mathscr{R}(\varrho, \, q))\, \bar{V} \, .
\end{align}
Overall, we get for the variables $(\varrho, \, q_1, \ldots, q_{N-2}, \, \zeta, \, v)$ -- instead of \eqref{mass}, \eqref{momentum} -- the equations
\begin{alignat}{2}
 \label{massprime}\partial_t R(\varrho, \, q)  + \divv( R(\varrho, \, q) \, v - \widetilde{M}(\varrho, \, q) \, \nabla q -  A(\varrho, \, q)  \, \nabla \zeta)  
  = &  \nonumber\\
\tilde{r}(\varrho, \, q) -\divv(\widetilde{M}(\varrho, \, q) \, \tilde{b} +A(\varrho, \, q)  \,\hat{b}) \, , \qquad  & \\[0.1cm]
\label{massprime2} \divv(v   -\, A(\varrho, \, q) \cdot \nabla q  -  d(\varrho, \, q) \, \nabla \zeta)   = - \divv(A(\varrho, \, q) \cdot \tilde{b} + d(\varrho, \, q) \, \hat{b})  & \, , \\[0.1cm]
  \label{massprime3} \partial_t \varrho  + \divv(\varrho \, v)  =  0 & \, , \\[0.1cm]
\label{momentumprime}\partial_t (\varrho \, v)  + \divv( \varrho \, v\otimes v - \mathbb{S}(\nabla v)) + \nabla P(\varrho, \, q) +  \nabla \zeta 
   =  & \nonumber\\
   R(\varrho, \, q) \cdot \tilde{b}(x, \, t)  + \hat{b}(x, \, t) + \varrho \, \bar{b}(x, \, t)  \, .\qquad &
\end{alignat}
The problem $(P^{\prime})$ consisting of \eqref{massprime}, \eqref{massprime2}, \eqref{massprime3} and \eqref{momentumprime} for the variables $(\varrho, \, q, \, \zeta, \, v)$ might seem to exhibit more nonlinearities than the original problem for $\rho, \, p$ and $v$. However, it has the advantage that -- up to the restriction on the total mass density $\varrho_{\min} < \varrho < \varrho_{\max}$ -- it is completely free of constraints. Furthermore, the differential operator is linear in the variable $\zeta$, which occurs only under spatial differentiation.

Our first aim is now to show that, at least locally in time, the system  \eqref{massprime}, \eqref{massprime2}, \eqref{massprime3} and \eqref{momentumprime} for the variables $(\varrho, \, q_1, \ldots, q_{N-2}, \, \zeta, \, v)$ is well posed. We consider initial conditions
\begin{align}\label{initialall}
q(x, \, 0) & = q_0(x)\, ,  \varrho(x, \, 0)  = \varrho_0(x) \, , v(x, \, 0)  = v_0(x)  \quad \text{ for } x \in \Omega \, .
\end{align}
Due to the preliminary considerations in section \ref{changevariables}, prescribing these variables is completely equivalent to prescribing initial values for the mass densities $\rho_i$ and the velocity. It suffices to define $q^0_k = \eta^k \cdot \partial_{\rho}k(\rho^0)$ for $k =1,\ldots,N-2$.

For simplicity, we consider the linear homogeneous boundary conditions
\begin{alignat}{2}
\label{lateralv} v & = 0 & &\text{ on } S_T\, ,\\
\label{lateralq} \nu \cdot \nabla \zeta, \quad \nu \cdot \nabla q_{k} & = 0 & &\text{ on } S_T \text{ for } k = 1,\ldots,N-2 \, .
\end{alignat}
The conditions \eqref{lateralq} and \eqref{lateral0q} are equivalent, because we assume throughout  that the given forcing $b$ satisfies $\nu(x) \cdot \mathcal{P}_{\{1^N\}^{\perp}} \,b(x, \, t) = 0$ for $x \in \partial \Omega$ (see assumption \eqref{force} in the statement of Theorem \ref{MAIN}).

Under the assumptions of Theorem \ref{MAIN} for the function $k$, the coefficient functions $R$, $\widetilde{M}, \, A, \, d$ and $P$ are of class $C^2$ in the domain of definitions $I \times \mathbb{R}^{N-2}$ as shown in the Lemma \ref{changecoor}. We reformulate the Theorem \ref{MAIN} for the new variables. Since the thermodynamic pressure does not occur explicitly as a variable, we now switch to denoting $p >3$ the integrability exponent (denoted $s$ in the statement \ref{MAIN}).
\begin{theo}\label{MAIN2}
Assume that the coefficient functions $R$, $\widetilde{M}$, $A$, $d$ and $P$ are of class $C^2$, and $\tilde{r}$ is of class $C^1$ in the domain of definition $I \times \mathbb{R}^{N-2}$. Let $\Omega$ be a bounded domain with boundary $\partial \Omega$ of class $\mathcal{C}^2$. Suppose that, for some $p > 3$, the initial data are of class
\begin{align*}
q^0 \in W^{2-\frac{2}{p}}_p(\Omega; \, \mathbb{R}^{N-2}), \, \varrho_0 \in W^{1,p}(\Omega), \, v^0 \in W^{2-\frac{2}{p}}_p(\Omega; \, \mathbb{R}^{3}) \, ,
\end{align*}
satisfying $\varrho_{\min} < \varrho^0(x) < \varrho_{\max}$ in $\overline{\Omega}$ and the compatibility conditions $\nu(x) \cdot \nabla q^0(x) = 0$ and $v^0(x) = 0$ on $\partial \Omega$. Assume that $\tilde{b} \in W^{1,0}_p(Q_T; \, \mathbb{R}^{(N-2)\times 3}) $, $\hat{b} \in W^{1,0}_p(Q_T; \, \mathbb{R}^{3})$ and $\bar{b} \in L^p(Q_T; \,\mathbb{R}^3)$. Then there is $0< T^* \leq T$, depending only on these data, such that the problem \eqref{massprime}, \eqref{massprime2}, \eqref{massprime3} and \eqref{momentumprime} with boundary conditions \eqref{initialall}, \eqref{lateralv} and \eqref{lateralq} is uniquely solvable in the class
\begin{align*}
(q, \, \zeta, \, \varrho,\,v) \in W^{2,1}_p(Q_{T^*}; \, \mathbb{R}^{N-2}) \times W^{2,0}_p(Q_{T^*})  \times W^{1,1}_{p,\infty}(Q_{T^*}; \, S_{\bar{V}}) \times W^{2,1}_p(Q_{T^*}; \, \mathbb{R}^{3})  \, .
\end{align*}
The solution can be uniquely extended within this class to a larger time interval whenever at least one of the following holds:
\begin{enumerate}[(1)]
 \item $p>5$ and the state space norm stays finite as $t \nearrow T^*$;
 \item The two following conditions are valid as $t \nearrow T^*$
 \begin{itemize}
\item  $\varrho_{\min} < \varrho(x, \, t) < \varrho_{\max}$ for all $x \in \overline{\Omega}$;
\item $\|q\|_{C^{\alpha,\frac{\alpha}{2}}(Q_{t})} + \|\nabla q\|_{L^{\infty,p}(Q_{t})} + \|v\|_{L^{z \, p, \, p}(Q_{t})} + \int_{0}^{t} [\nabla v(\tau)]_{C^{\alpha}(\Omega)} \, d\tau < + \infty$, with $\alpha > 0$ and $z = z(p)$ defined by Theorem \ref{MAIN},  
 \end{itemize}
\end{enumerate}
\end{theo}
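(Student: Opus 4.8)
The plan is to solve the system $(P')$ by a fixed point argument, exploiting the parabolic substructure in $(q,v)$ while treating the elliptic equation for $\zeta$ and the transport equation for $\varrho$ as side conditions to be resolved at each iteration. First I would fix an open bounded set $U \subset\subset I\times\mathbb{R}^{N-2}$ containing the range of the initial data $(\varrho^0,q^0)$, so that on $U$ the coefficient maps $R, \widetilde M, A, d, P$ are $C^2$ with all derivatives bounded, $\widetilde M$ and $R_q$ are uniformly positive definite, and $\varrho$ stays away from $\{\varrho_{\min},\varrho_{\max}\}$. I would then choose $T^*$ small enough that a solution obtained on $[0,T^*]$ has its $(\varrho,q)$-trace trapped in $U$; this is the only place the density restriction enters the \emph{local} result, since on $U$ there is no singularity. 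For the linearised problem I would freeze the lower-order nonlinearities, i.e. given $(\bar q,\bar\varrho,\bar v)$ in a ball $\mathcal{B}$ of the state space with the right initial trace, I would first solve the continuity equation $\partial_t\varrho+\divv(\varrho\,\bar v)=0$ for $\varrho$ by the standard transport/characteristics theory (yielding $\varrho\in W^{1,1}_{p,\infty}$ with $\varrho$ still in $U$ for $t\le T^*$, using $\bar v\in W^{2,1}_p\hookrightarrow C([0,T^*];C^1(\overline\Omega))$ since $p>3$, plus the slab estimate for the lower regularity of $\varrho_t$), then solve the coupled parabolic system for $q$ together with the elliptic equation for $\zeta$ with coefficients evaluated at $(\varrho,\bar q)$, then finally solve the Navier–Stokes-type momentum equation for $v$ with $\varrho$ as the (variable but now known) density coefficient, using the Solonnikov-type $W^{2,1}_p$ theory cited from \cite{solocompress}. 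The core a priori estimates for this linear principal part — parabolic maximal regularity for $q$, elliptic $W^{2,0}_p$ regularity for $\zeta$ coupled to $\divv v$, and the density-dependent Stokes estimate for $v$ — are exactly the content of the estimates section referenced as \ref{ESTI}, which I would invoke; these give that the solution map $(\bar q,\bar\varrho,\bar v)\mapsto(q,\varrho,v)$ sends $\mathcal{B}$ into itself for $T^*$ small, with the key smallness coming from the factors of $T^*$ (or $(T^*)^{1/p'}$) multiplying the terms that are not already controlled by the initial data in the trace spaces $W^{2-2/p}_p$.

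**Self-mapping and contraction.**
With the self-mapping estimate in hand (this is the content of section \ref{contiT}), I would then prove that for $T^*$ possibly smaller the map is a contraction in a weaker norm — typically $L^p$ or $W^{1,1}_p$ rather than the full $W^{2,1}_p$ — which together with the closed-ball property in the strong norm gives, by the usual Banach fixed point argument on a complete metric space, a unique fixed point $(\varrho,q,\zeta,v)$ solving $(P')$ on $[0,T^*]$ in the stated class. Here I need to be a little careful: the elliptic equation for $\zeta$ determines $\zeta$ only up to an additive constant (or, more precisely, the natural compatibility condition coming from integrating \eqref{isochoric} over $\Omega$ must be checked), so I would fix the gauge by normalising $\int_\Omega\zeta\,dx=0$, and verify that the compatibility condition $\int_\Omega\divv v\,dx=0$ implied by the boundary data and the structure $r\cdot\bar V=0$ is consistent with solvability of the $\zeta$-equation at each iteration. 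Uniqueness in the full class follows from the contraction estimate applied to two solutions with the same data, and equivalence with the original formulation \eqref{mass}, \eqref{momentum} is read off from Lemma \ref{changecoor}, reconstructing $\rho=\mathscr R(\varrho,q)$ and $p=P(\varrho,q)+\zeta$ — with the regularity $\mathcal P_{\{1^N\}^\perp}\mu\in W^{2,0}_p$ obtained from $q\in W^{2,1}_p$, $\zeta\in W^{2,0}_p$ and $\mathscr M(\varrho,q)$ spatial gradients via the chain rule (note $\mathscr M$ only contributes through $1^N$, which dies under $\mathcal P_{\{1^N\}^\perp}$).

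**Extension criteria.**
For the continuation statement I would argue by contradiction: suppose the maximal existence time $T^*$ is finite and the listed quantities stay bounded as $t\nearrow T^*$. In case (2), the hypothesis $\varrho_{\min}<\varrho<\varrho_{\max}$ uniformly keeps $(\varrho,q)$ in a fixed compact $U'\subset I\times\mathbb{R}^{N-2}$ on which the coefficients and their derivatives remain bounded and $\widetilde M, R_q$ uniformly elliptic; then the Hölder bound on $q$, the $L^{\infty,p}$ bound on $\nabla q$, the $L^{zp,p}$ bound on $v$ and the integrated $C^\alpha$-seminorm bound on $\nabla v$ are precisely what is needed to feed back into the maximal regularity estimates — the parabolic Schauder/$L^p$ machinery for $q$, the transport estimate for $\varrho$ (which needs $\nabla v\in L^1(0,T^*;C^\alpha)$), and the Stokes estimate for $v$ (which needs $\varrho$ Hölder in space-time and the convective term $v\cdot\nabla v$ controlled via $v\in L^{zp,p}$, the exponent $z=z(p)$ chosen so that $v\otimes v$ lands in the right parabolic Lebesgue space by Gagliardo–Nirenberg) — to obtain a uniform bound on the full state space norm on $[0,T^*)$, which then allows restarting the local existence theorem from a time close to $T^*$ and extending. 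Case (1) follows because for $p>5$ the state space norm bound \emph{alone} already implies the bounds listed in (2) — in particular $W^{2,1}_p\hookrightarrow C^{1+\alpha,(1+\alpha)/2}$ so $\nabla v$ is globally Hölder and the $v\otimes v$ term is trivially controlled with $z=1$ — the only genuinely new input being the first bullet of (2), i.e. that $\varrho$ does not reach the thresholds, which must still be assumed separately. \textbf{The main obstacle} I anticipate is not the parabolic $(q,v)$ part, which parallels \cite{bothedruet}, but the interplay between the transport equation for $\varrho$ and the density constraint: the solution operator for $\partial_t\varrho+\divv(\varrho v)=0$ does not "know" about $\{\varrho_{\min},\varrho_{\max}\}$, so in the self-mapping step one must quantitatively control how long $\varrho$ remains inside $U$ purely in terms of $\|v\|_{W^{2,1}_p}$ on a short interval — and in the extension step one cannot avoid hypothesis (i)/(2)-first-bullet, since a smooth velocity can still push $\varrho$ to a threshold in finite time, where the coefficients blow up. Handling the coupled elliptic-parabolic block $(q,\zeta)$ with the general (non-constant) coefficient $d(\varrho,q)$ in front of $\nabla\zeta$ — so that the $\zeta$-equation is $\divv(d\nabla\zeta)=\divv(\cdots)$ with $d$ only $W^{1,1}_{p,\infty}$-and-Hölder regular in $(x,t)$ — is the second delicate point, requiring the $W^{2,0}_p$-elliptic regularity to be uniform in $t$ and compatible with the parabolic scaling of the $q$-estimates.
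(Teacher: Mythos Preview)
Your overall architecture --- freeze lower-order terms, solve the continuity equation for $\varrho$ from $v^*$, then the parabolic--elliptic block for $(q,\zeta)$, then the momentum equation for $v$, and close via a self-map on a ball in $\mathcal{Y}_T$ for small $T$ with contraction in a weaker ($L^2$-energy) norm --- matches the paper's first fixed-point map $\mathcal{T}$ and the estimates of Sections~\ref{ESTI}--\ref{FixedPointIter}. One technical point you do not make explicit but will need: the $(q,\zeta)$ block is handled by substituting the elliptic equation for $\triangle\zeta$ back into the parabolic equation, yielding a \emph{decoupled} parabolic system for $q$ with the modified diffusion matrix $K = \widetilde{M} - A\otimes A/d$, which one checks is still positive definite (Proposition~\ref{qzetasystem}). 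Without this reduction the coupled estimate does not close cleanly.

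There is, however, a genuine gap in your treatment of extension criterion~(1). You write that for $p>5$ the state space bound implies the quantities in~(2), ``the only genuinely new input being the first bullet of~(2), i.e.\ that $\varrho$ does not reach the thresholds, which must still be assumed separately.'' This is precisely backwards: the entire content of criterion~(1) (equivalently, of Theorem~\ref{MAINPr}) is that the density threshold condition is \emph{not} assumed but \emph{derived} from the state space bound. The argument is nontrivial and uses the special structure of the coefficients (Lemma~\ref{special}): from the momentum equation one controls $\nabla P(\varrho,q)$ in $L^p$; from the continuity equation and the structural bounds $|d|,|A|\le c\,m(\varrho)$, $c_3\,m(\varrho)^{-1}\le P_\varrho\le c_4\,m(\varrho)^{-1}$ one controls $m(\varrho)^{-1}\,\divv v$ and hence $\partial_t P(\varrho,q)$; for $p>5$ these space--time gradient bounds embed into $L^\infty$, and since $|P(\varrho,q)|$ blows up logarithmically as $\varrho$ approaches $\{\varrho_{\min},\varrho_{\max}\}$, a uniform bound on $\|P\|_{L^\infty}$ forces $M(\varrho,t)<\infty$. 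Your proposal contains no hint of this mechanism, and without it criterion~(1) as stated is simply false for generic $C^2$ coefficients --- this is why the additional structural hypotheses on $M$ and $k$ from Theorem~\ref{MAINPr} are essential here.
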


\section{Functional analytic approach}\label{funcsetting}

For functions $q_1,\ldots, q_{N-2}$, $\zeta$, $\varrho$ and $v_1, \, v_2, \, v_3$ defined in $\overline{\Omega} \times [0,T]$, we introduce 
\begin{align*}
\mathscr{A}(q, \, \zeta, \, \varrho, \, v) = & (\mathscr{A}^1(q, \, \zeta, \, \varrho, \, v) , \, \mathscr{A}^2(q, \, \zeta, \,  \varrho, \, v), \, \mathscr{A}^3(\varrho, \, v) , \, \mathscr{A}^4(q, \, \zeta, \,  \varrho, \, v) ) \, ,\\
\mathscr{A}^1(q, \, \zeta, \, \varrho, \, v)  := & \partial_t R(\varrho, \, q) + \divv( R(\varrho, \, q) \, v)
\\ & -\divv( \widetilde{M}(\varrho, \, q) \, (\nabla q-\tilde{b}) + A(\varrho, \, q) \, (\nabla \zeta-\hat{b})) - \tilde{r}(\varrho, \, q) \, ,\\
\mathscr{A}^2(q, \, \zeta, \, \varrho, \, v)  := & \divv(v - d(\varrho, \, q) \, (\nabla \zeta-\hat{b}) - A(\varrho, \, q) \cdot (\nabla q -\tilde{b}))\, , \\
\mathscr{A}^3(\varrho, \, v)  := & \partial_t \varrho + \divv(\varrho \, v)\, ,\\
\mathscr{A}^4(q, \, \zeta, \, \varrho, \, v)  := & \varrho \, (\partial_t v + (v\cdot\nabla) v) - \divv \mathbb{S}(\nabla v) + \nabla P(\varrho, \, q) + \nabla \zeta\\
& - R(\varrho, \, q) \cdot \tilde{b} - \hat{b} - \varrho \, \bar{b} \, .
\end{align*}
Recall that $\tilde{b}$, $\hat{b}$ and $\bar{b}$ are given coefficients.

To get rid of the highest-order coupling in the time derivative of $\varrho$, we shall employ the same approach as in \cite{bothedruet}, which is sketched below. Consider a solution $u = (q, \, \zeta,\, \varrho, \, v)$ to $\mathscr{A}(u) = 0$. Computing time derivatives in the equation $\mathscr{A}^1(u) = 0$, we obtain that
\begin{align*}
& R_{\varrho} \, (\partial_t \varrho + v \cdot \nabla \varrho) + \sum_{j=1}^{N-1} R_{q_j} \, (\partial_t q_j + v \cdot \nabla q_j) + R \, \divv  v - \divv (\widetilde{M} \, \nabla q) + A\, \nabla \zeta)  \\
& = - \divv(\widetilde{M} \, \tilde{b} + A \, \hat{b}) + \tilde{r}  \, .
\end{align*}
Here the nonlinear functions $R, \, R_{\varrho}, \, R_{q}, \, A$ and $\widetilde{M}$, $\tilde{r}$ etc. are evaluated at $(\varrho, \, q)$. Under the side-condition $\mathscr{A}^3(\varrho, \, v) = 0$, the equation $\mathscr{A}^1(u) = 0$ is equivalent to 
\begin{align}\label{a1equiv}
&  R_{q}(\varrho, \, q) \, \partial_t q - \divv (\widetilde{M}(\varrho, \, q) \, \nabla q + A(\varrho, \, q) \, \nabla \zeta)  =  (R_{\varrho}(\varrho, \, q) \, \varrho\nonumber\\
 & \qquad - R(\varrho, \, q)) \, \divv v - R_q(\varrho, \, q) \, v \cdot \nabla q  - \divv(\widetilde{M}(\varrho, \, q) \, \tilde{b} + A(\varrho, \, q) \, \hat{b})+ \tilde{r}(\varrho, \, q)   \, .
\end{align}
We introduce $\widetilde{\mathscr{A}}(q, \, \zeta, \, \varrho, \, v) := (\widetilde{\mathscr{A}}^1(q, \, \zeta, \, \varrho, \, v) , \, \mathscr{A}^2(q, \, \zeta,\, \varrho, \, v) , \, \mathscr{A}^3(\varrho, \, v), \, \mathscr{A}^4(q, \, \zeta,\, \varrho, \, v)  )$, the first component being the differential operator defined by \eqref{a1equiv}. Clearly, $\mathscr{A}(u) = 0$ if and only if $\widetilde{\mathscr{A}}(u) = 0$.

The functional setting was introduced in Section \ref{MATHEMATICS}. Similar spaces were used in \cite{bothedruet} to study the compressible system and, in order to save room, we shall refer to this paper for the trace and embedding theorems needed in the present analysis. For $p > 3$ and $\alpha := 1/2+3/(2p)$, we recall the interpolation inequality (see \cite{nirenberginterpo}, Theorem 1)
\begin{align}
\label{gagliardo}\|\nabla f\|_{L^{\infty}(\Omega)} \leq & C_1 \, \|D^2f\|_{L^p(\Omega)}^{\alpha} \, \|f\|_{L^p(\Omega)}^{1-\alpha} + C_2 \, \|f\|_{L^p(\Omega)} \, , 
\end{align}
valid for any function $f$ in $W^{2,p}(\Omega)$, with certain constants $C_1, \, C_2$ depending only on $\Omega$. We consider the operator $(q, \, \zeta, \, \varrho,\, v) \mapsto \mathscr{A}(q, \, \zeta, \, \varrho,\, v)$ acting on 
\begin{align}\label{STATESPACE}
\mathcal{X}_T := W^{2,1}_p(Q_T; \, \mathbb{R}^{N-2}) \times W^{2,0}_p(Q_T) \times W^{1,1}_{p,\infty}(Q_T) \times W^{2,1}_p(Q_T; \, \mathbb{R}^3) \, .
\end{align}
The natural trace space at time zero is denoted $\text{Tr}_{\Omega\times\{0\}} \, \mathcal{X}_T$. The functional setting does not allow to introduce traces for the variable $\zeta$. Therefore, $ u(0) \in \text{Tr}_{\Omega\times\{0\}} \, \mathcal{X}_T$ means that $(q(0), \, \varrho(0) , \, v(0)) \in W^{2-2/p}_p(\Omega; \, \mathbb{R}^{N-2}) \times W^{1,p}(\Omega) \times W^{2-2/p}_p(\Omega; \, \mathbb{R}^{3})$. We denote by $\phantom{}_{0}\mathcal{X}_T$ the space of functions fulfilling zero initial conditions. This only makes sense, of course, for the variables having traces at $\Omega \times \{0\}$. Thus
\begin{align}\label{spaceX0T}
\phantom{}_{0}\mathcal{X}_T := \{\bar{u} = (r, \, \chi, \, \sigma, \, w) \in \mathcal{X}_T \, : \, r(0) = 0, \, \sigma(0) = 0, \, w(0) = 0\} \, \, . 
\end{align}
Since the coefficients of $\mathscr{A}$ are defined only if $\varrho$ has range in $I$, the domain of the operator is contained in the subset
\begin{align}\label{SPACECONE}
 \mathcal{X}_{T,I} := W^{2,1}_p(Q_T; \, \mathbb{R}^{N-2})\times W^{2,0}_p(Q_T) \times W^{1,1}_{p,\infty}(Q_T; \, I) \times W^{2,1}_p(Q_T; \, \mathbb{R}^3) \, .
\end{align}
We shall moreover make use of a reduced state space containing only the parabolic components $(q, \, v)$, namely
\begin{align}\label{STATESPACE2}
\mathcal{Y}_T := W^{2,1}_p(Q_T; \, \mathbb{R}^{N-2}) \times W^{2,1}_p(Q_T; \, \mathbb{R}^3) \, .
\end{align}
The operator $\mathscr{A}$ is the composition of differentiation, multiplication and Nemicki operators. Therefore, the properties of the coefficients $R, \, \tilde{M}$ etc. allow to show that $\mathscr{A}$ is continuous and bounded from $\mathcal{X}_{T,I}$ into
\begin{align}\label{IMAGESPACE}
\mathcal{Z}_T = L^p(Q_T; \, \mathbb{R}^{N-2}) \times L^p(Q_T) \times L^{p,\infty}(Q_T) \times L^p(Q_T; \, \mathbb{R}^{3}) \, .
\end{align}
Since the coefficients $R$, $\widetilde{M}, \, A, \, d$ and $P$ are twice continuously differentiable in their domain of definition $I \times \mathbb{R}^{N-2}$, the operator $\mathscr{A}$ is even continuously differentiable at every point of $\mathcal{X}_{T,I}$. We spare the proof of these rather obvious statements. 

\section{Linearisation and reformulation as a fixed-point equation}\label{twomaps}

We shall present two different manners to linearise the equation $\mathscr{A}(u) = 0$ for $u \in \mathcal{X}_T$ with initial condition $u(0) = u_0$ in $\text{Tr}_{\Omega\times\{0\}} \, \mathcal{X}_T$. They correspond to the two main Theorems \ref{MAIN}, \ref{MAIN3} respectively. In both cases, we start considering the problem to find $u =(q, \, \zeta, \, \varrho ,\, v) \in \mathcal{X}_{T,I}$ such that $\widetilde{\mathscr{A}}(u) = 0$ and $u(0) = u_0$, which after permuting rows, possesses the following structure
\begin{align}\label{linearT1gen}
\partial_t \varrho + \divv (\varrho \, v) = & 0\, ,\\
\label{linearT2gen} R_{q}(\varrho, \, q) \, \partial_t q - \divv (\widetilde{M}(\varrho, \, q) \, \nabla q + A(\varrho, \, q) \, \nabla \zeta)  = & g(x, \, t, \, q, \, \varrho,\, v, \, \nabla q, \, \nabla \varrho, \, \nabla v)\,  ,\\
\label{linearT3gen} -\divv( d(\varrho, \, q) \, \nabla \zeta + A(\varrho, \, q) \, \nabla q-v) = & - \divv h(x, \,t,  \, \varrho, \, q) \, , \\
\label{linearT4gen} \varrho \, \partial_t v - \divv \mathbb{S}(\nabla v) +\nabla \zeta = &  f(x, \, t, \, q, \, \varrho,\, v, \, \nabla q, \, \nabla \varrho, \, \nabla v) \, .
\end{align}
The functions $g$, $h$ and $f$ stand for the following expressions:
\begin{align}\label{gri}
g := &  (R_{\varrho}(\varrho, \, q) \, \varrho - R(\varrho, \, q)) \, \divv v - R_q(\varrho, \, q) \, v \cdot \nabla q \nonumber \\
& - \divv(\widetilde{M}(\varrho, \, q)  \, \tilde{b} + A(\varrho, \, q)  \, \hat{b})+ \tilde{r}(\varrho, \, q) \, , \\
 \label{hri} h := &  d(\varrho, \, q) \, \hat{b}+  A(\varrho, \, q) \, \tilde{b} \, ,\\
 \label{fri} f := & - P_{\varrho}(\varrho,\, q) \, \nabla \varrho -  P_{q}(\varrho,\, q) \, \nabla q  - \varrho \, (v\cdot \nabla)v + R(\varrho, \,q) \cdot \tilde{b} + \hat{b}+ \varrho \, \bar{b} \, .
\end{align}
These expressions are independent on the component $\zeta$. We can regard $g$, $h$ and $f$ as functions of $x, \,t$ and of the vectors $u$ and $D_x u$ and write $g(x, \,t, \, u, \, D_xu)$. 

\subsection{The first fixed-point equation}

For $(q^*, \, v^*)$ given in $W^{2,1}_p(Q_T; \, \mathbb{R}^{N-2}) \times W^{2,1}_p(Q_T; \, \mathbb{R}^{3}) $ and for unknowns $u = (q, \, \zeta, \, \varrho, \, v)$, we consider the following system of equations
\begin{align}\label{linearT1}
\partial_t \varrho + \divv (\varrho \, v^*) = & 0 \, ,\\
\label{linearT2} R_{q}(\varrho, \,q^*) \, \partial_t q - \divv (\widetilde{M}(\varrho, \, q^*) \, \nabla q + A(\varrho, \, q^*) \, \nabla \zeta)  = & g(x, \, t, \, q^*, \, \varrho,\, v^*, \, \nabla q^*, \, \nabla \varrho,\, \nabla v^*)\, ,\\
\label{linearT3} - \divv(d(\varrho, \, q^*) \, \nabla \zeta + A(\varrho, \, q^*) \, \nabla q) = & - \divv (v^* + h(x, \, t, \, q^*, \, \varrho))\, ,\\
\label{linearT4} \varrho \, \partial_t v - \divv \mathbb{S}(\nabla v) + \nabla \zeta  = & f(x, \, t, \, q^*, \, \varrho,\, v^*, \, \nabla q^*, \, \nabla \varrho,\, \nabla v^*) \, ,
\end{align}
together with the initial conditions \eqref{initialall}, \eqref{initialall}, \eqref{initialall} and the homogeneous boundary conditions \eqref{lateralv}, \eqref{lateralq}. Note that the continuity equation can be solved independently for $\varrho$. Once $\varrho$ is given, we solve the linear parabolic--elliptic system \eqref{linearT2}, \eqref{linearT3} for $q$ and $\zeta$. Here we must be careful, since the coefficients of this system are only defined as long as $\varrho(x, \, t)$ takes values in $I$. Thus, the solution $(q, \, \zeta)$ might exist only on a shorter time interval. We can solve the problem \eqref{linearT4}, which is linear in $v$, under the same restriction. 

We will show that the solution map $(q^*, \, v^*) \mapsto (q, \, v)$, denoted $\mathcal{T}$, is well defined from $\mathcal{Y}_T$ into itself for $T$ fixed and suitably small. The solutions are unique in the class $\mathcal{Y}_T$. Clearly, a fixed point of $\mathcal{T}$ is a solution to $\widetilde{\mathscr{A}}(q, \, \zeta, \, \varrho, \, v) = 0$.

\subsection{The second fixed-point equation}

Here we construct the fixed-point map comparing the solutions to a given reference vector $(\hat{q}^0, \, \hat{v}^0) \in \mathcal{Y}_T$ that extends the initial data. We assume that $\hat{q}^0$ and $\hat{v}^0$ satisfy the initial compatibility conditions. 
In order to find an extension for $\varrho_0 \in W^{1,p}(\Omega)$, we solve the problem
\begin{align}\label{Extendrho0}
 \partial_t \hat{\varrho}_0 + \divv(\hat{\varrho}_0 \, \hat{v}^0) =0, \quad \hat{\varrho}_0(0) = \varrho_0 \, .
\end{align}
For this problem, Theorem 2 of \cite{solocompress} establishes unique solvability in $W^{1,1}_{p,\infty}(Q_T)$ and, in particular, the strict positivity $\hat{\varrho}_0 \geq c_0(\Omega, \, \|\hat{v}^0\|_{W^{2,1}_p(Q_T; \, \mathbb{R}^3)}) \, \inf_{x \in \Omega} \varrho_0(x)$.

We find the extension $\hat{\zeta}^0$ by solving, for all values of $t$ such that the coefficients $\tilde{b}(t)$ and $\hat{b}(t)$ are defined, the elliptic problem
\begin{align}\label{Extendzeta0}
- \divv (d(\hat{\varrho}^0, \, \hat{q}^0) \, \nabla \hat{\zeta}^0) = \divv(- \hat{v}^0 - d(\hat{\varrho}^0, \, \hat{q}^0) \, \hat{b}(t) + A(\hat{\varrho}^0, \, \hat{q}^0) \, \nabla (\hat{q}^0 - \tilde{b}(t))) \,,
\end{align}
with homogeneous Neumann boundary conditions and zero mean--value side--condition.

Consider a solution $u = (q, \, \zeta, \, \varrho, \, v) \in \mathcal{X}_T$ to $\widetilde{\mathscr{A}}(u) = 0$. We introduce the differences $r := q - \hat{q}^0$, $\chi = \zeta-\hat{\zeta}^0$, $w := v - \hat{v}^0$ and $\sigma := \varrho- \hat{\varrho}^0$, and their vector $\bar{u} := (r, \, \chi, \, \sigma, \, w)$. Clearly, $\bar{u}$ belongs to the space $\phantom{}_{0}\mathcal{X}_T$ of homogeneous initial conditions. Recall that this does not imply a trace condition for $\chi$, cp.\  \eqref{spaceX0T}. The equations $\widetilde{\mathscr{A}}(u) = 0$ mean, equivalently, that $\widetilde{\mathscr{A}}(\hat{u}^0 +\bar{u}) = 0$. The vector $\bar{u} = (r, \, \chi, \, \sigma, \, w)$ satisfies
\begin{align}\label{difference1}
  R_q \, \partial_t r - \divv (\widetilde{M} \, \nabla r + A \, \nabla \chi) =& g^1 :=  g -  R_q \, \partial_t \hat{q}^0 +\divv( \widetilde{M} \, \nabla  \hat{q}^0  + A \, \nabla \hat{\zeta}^0)\, ,\\
\label{difference2}  - \divv (d\, \nabla \chi + A \, \nabla r-w) = & -\divv h^1 := - \divv( h + \hat{v}^0 - d \, \nabla \hat{\zeta}^0 - A \, \nabla \hat{q}^0) \, ,\\ 
\label{difference3}
  \partial_t \sigma + \divv(\sigma \, v) =& - \divv(\hat{\varrho}_0 \, w) \, ,\\
\label{difference4}
  \varrho \, \partial_t w - \divv \mathbb{S}(\nabla w) +\nabla \chi = &  f^1 =:  f - \varrho \partial_t \hat{v}^0 + \divv \mathbb{S}(\nabla \hat{v}^0) - \nabla \hat{\zeta}^0 \, . 
\end{align}
Herein, all non-linear coefficients $R, \, R_q$, etc.\ are evaluated at $(\varrho, \, q)$, while $g$, $h$ and $f$ correspond to \eqref{gri}, \eqref{hri} and \eqref{fri}.

We next want to construct a fixed-point map to solve \eqref{difference1}, \eqref{difference2}, \eqref{difference3}, \eqref{difference4} by linearising $g^1$, $h^1$ and $f^1$ defined in \eqref{difference1}, \eqref{difference2} and \eqref{difference4}. First, we expand as follows:
\begin{align}
 g =  g(x, \, t,\, u^*, \, D_xu^*) + \int_{0}^1 & \{ (g_{q})^{\theta} \, (q-q^*) + (g_{\varrho})^{\theta} \, (\varrho-\varrho^*) + (g_v)^{\theta} \, (v-v^*) \nonumber \\
 & + (g_{q_x})^{\theta} \cdot (q_x-q^*_x) + (g_{\varrho_x})^{\theta} \, (\varrho_x- \varrho^*_x) + (g_{v_x})^{\theta} \cdot (v_x-v^*_x)\} \, d\theta \, .
 \end{align}
 Here, $( \cdot )^{\theta}$ applied to a function of $x, \, t$, $u$ and $D^1_x u$ stands for the evaluation at $(x, \, t,\, (1-\theta) \, u^* + \theta \, u, \, (1-\theta) \, D_xu^* + \theta \, D_xu)$. In short, in order to avoid the integral and the parameter $\theta$, we write
 \begin{align}\label{glinear1}
  g = &g(x, \, t,\, u^*, \, D_xu^*) + g_{q}(u, \, u^*) \, (q-q^*) + g_{\varrho}(u, \, u^*) \, (\varrho-\varrho^*)+ g_v(u, \, u^*)  \, (v-v^*)  \nonumber\\
  & + g_{q_x}(u, \, u^*) \cdot (q_x-q^*_x) + g_{\varrho_x}(u, \, u^*)  \, (\varrho_x- \varrho^*_x) + g_{v_x}(u, \, u^*) \cdot (v_x-v^*_x) \nonumber\\
 =: & g(x, \, t,\, u^*, \, D_xu^*) + g^{\prime}(u, \, u^*) \, (u - u^*) \, . 
\end{align}
Obviously, the latter expressions make sense only if $u, \, u^*$ both belong to $\mathcal{X}_{T,I}$, in which case the entire convex hull $\{\theta \, u + (1-\theta) \, u^* \, : \, \theta \in [0, \,1]\}$ is in $\mathcal{X}_{T,I}$. Following the same scheme as for \eqref{glinear1}, we write in short
\begin{align}\label{glinear2}
  g^1 = & g^1(x, \, t,\, \hat{q}^0, \, \hat{\varrho}^0, \, \hat{v}^0, \, \hat{q}_x^0, \, \hat{\varrho}_x^0, \, \hat{v}_x^0) + g^1_{q}(u, \, \hat{u}^0) \, r + g^1_{\varrho}(u, \, \hat{u}^0) \, \sigma + g^1_v(u, \, \hat{u}^0)\, w \nonumber\\
 &  + g^1_{q_x}(u, \, \hat{u}^0) \, r_x + g^1_{\varrho_x}(u, \, \hat{u}^0) \, \sigma_x + g^1_{v_x}(u, \, \hat{u}^0) \, w_x \, \nonumber\\
  =: &  \hat{g}^0 + (g^1)^{\prime}(u, \, \hat{u}^0) \, \bar{u} \, .
\end{align}
Similar expressions are obtained for $h^1$ and $f^1$. In the case of $h^1$, note however that $\divv \hat{h}^0 = \divv (h^1(x, \, t,\, \hat{q}^0, \, \hat{\varrho}^0, \, \hat{v}^0, \, \hat{q}_x^0, \, \hat{\varrho}^0_x, \, \hat{v}_x^0) = 0$ due to the construction \eqref{Extendzeta0} of $\hat{\zeta}^0$.

Now we construct the fixed-point map to solve \eqref{difference1}, \eqref{difference2}, \eqref{difference3} and \eqref{difference4}. For a given vector $(r^*, \, w^*) \in \phantom{}_{0}{\mathcal{Y}}_T$, we define $q^* := \hat{q}^0 + r^*$ and $v^* := \hat{v}^0 + w^*$. Then we define $\varrho^*$ to be the unique solution to 
\begin{align}\label{equadiff0}
\partial_t \varrho^* + \divv(\varrho^* \, v^*) = 0, \quad \varrho^*(x, \, 0) = \varrho^0(x) \, . 
\end{align}
We thus write $\varrho^* := \mathscr{C}(v^*)$ where $\mathscr{C}$ is the solution operator to the continuity equation with initial data $\varrho_0$. 
We employ the abbreviation
\begin{align}\label{ustar}
u^* := & (q^*, \, 1, \, \varrho^*, \, v^*) = (q^*,\, 1, \, \mathscr{C}(v^*), \, v^*) \in \mathcal{X}_T \, .
 \end{align}
For $\bar{u} := (r, \, \chi, \, \sigma, \, w)$, we next consider the linear problem
\begin{align} 
\label{equadiff1}  R_q^* \, \partial_t r - \divv \big(\widetilde{M}^* \, \nabla r + A^* \, \nabla \chi\big) = &  \hat{g}^0 + (g^{1})^{\prime}(u^*, \, \hat{u}^0) \, \bar{u} \, , \\
\label{equadiff2}  - \divv \big(d^* \, \nabla \chi + A^* \, \nabla r-w\big) = & -\divv ( (h^{1})^{\prime}(u^*, \, \hat{u}^0) \, \bar{u})\, , \\ 
 \label{equadiff3}  \partial_t \sigma + \divv(\sigma \, v^*) = &  - \divv(\hat{\varrho}_0 \, w) \, , \\
\label{equadiff4} \mathscr{C}(v^*) \, \partial_t w - \divv \mathbb{S}(\nabla w)  + \nabla \chi = & \hat{f}^0 + (f^1)^{\prime}(u^*, \, \hat{u}^0) \, \bar{u} \, ,  
\end{align}
with the boundary conditions $\nu \cdot \nabla r = 0 = \nu \cdot \nabla \chi$ on $S_T$ and $w = 0$ on $S_T$, and with zero initial conditions for $r, \, \sigma$ and $w$. The superscript $*$ on a coefficient means evaluation at $(\mathscr{C}(v^*), \, q^* )$.

We will show that the solution map $\mathcal{T}^1:\, (r^*, \, w^*) \mapsto (r, \, w)$ is well defined from $\phantom{}_{0}{\mathcal{Y}}_T$ into itself for $T > 0$ arbitrary, provided that the distance of the initial data to an equilibrium solution is sufficiently small. As to the latter restriction, note that the expressions $(g^{1})^{\prime}(u^*, \, \hat{u}^0)$ make sense only if the density components in both $u^*$ and $\hat{u}^0$ map into the interior of the critical interval, which cannot be expected globally for the solutions to \eqref{Extendrho0} and \eqref{equadiff0}. If $\bar{u} = (r, \, w)$ is a fixed point of $\mathcal{T}^1$, then we can show that $u := \hat{u}^0 + \bar{u}$ is a solution to $\widetilde{\mathscr{A}}(u) = 0$. This is verified exactly as in \cite{bothedruet}, Remark 6.1. 

\subsection{The self-mapping property}

Assume that the map $\mathcal{T}: \, (q^*, \, v^*) \mapsto (q, \, v)$ via the solution to \eqref{linearT1}, \eqref{linearT2}, \eqref{linearT3}, \eqref{linearT4} is well defined in $\mathcal{Y}_{T}$, with image in $\mathcal{Y}_{\tilde{T}}$ for some $\tilde{T} = \tilde{T}(q^*, \, v^*) >0$. Then, we want to show that $\mathcal{T}$ maps some closed bounded set of $\mathcal{Y}_{T_0}$ into itself for a fixed $T_0 > 0$. 
Here, a major change occurs in comparison to the compressible case, since we do not expect that the linearised map $\mathcal{T}$ produces a solution defined globally up to $T$. This is due to the constraint $\varrho \in ]\varrho_{\min}, \, \varrho_{\max}[$ which can by nature be enforced only locally for solutions to the continuity equation \eqref{linearT1}.

We shall rely on continuous estimates expressing the controlled growth of the solution in time. We will show that there is a parameter $a_0$ depending on the distance of the initial density to the singular values $\{\varrho_{\min}, \, \varrho_{\max}\}$ such that, whenever $t  >0$ satisfies $t^{1 - \frac{1}{p}} \, \|(q^*, \, v^*)\|_{\mathcal{Y}_t} < a_0$, the pair $(q, \, v) = \mathcal{T}(q^*, \, v^* )$ is well defined in $\mathcal{Y}_t$ and satisfies the estimate
\begin{align}\label{continuity1}
 \|(q, \, v)\|_{W^{2,1}_p(Q_t; \, \mathbb{R}^{N-2}) \times W^{2,1}_p(Q_t; \, \mathbb{R}^{3})} \leq \Psi(t, \, R_0, \, \|(q^*, \, v^*)\|_{W^{2,1}_p(Q_t; \, \mathbb{R}^{N-2}) \times W^{2,1}_p(Q_t; \, \mathbb{R}^{3})}) \, .
\end{align}
Here $R_0$ stands for the magnitude of the initial data $q^0$, $\varrho_0$ and $v^0$, and of the external forces $b$ in their respective norms. The function $\Psi$ is continuous, increasing in all arguments, and finite for $t^{1 - \frac{1}{p}} \, \|(q^*, \, v^*)\|_{\mathcal{Y}_t} < a_0$. Hence we obtain a self mapping property with the help of the following Lemma.
\begin{lemma}\label{estimfinale}
Suppose that $R_0$ is fixed. Suppose that there is $a_0 > 0$ such that the inequality \eqref{continuity1} is valid with a continuous function $\Psi = \Psi(t, \, R_0, \, \eta)$ satisfying the properties:
\begin{itemize}
 \item $\Psi(\cdot, \, R_0, \, \cdot)$ is finite for all $t \geq 0$ and $\eta \geq 0$ satisfying $t^{1 - \frac{1}{p}} \, \eta < a_0$;
\item $t \mapsto \Psi(t, \, R_0, \, \eta)$ is nondecreasing for all $0 \leq \eta$, and $\eta \mapsto \Psi(t, \, R_0, \, \eta)$ is nondecreasing for all $t$ as long as $t^{1 - \frac{1}{p}} \, \eta < a_0$;
\item The value of $\Psi(0, \, R_0, \, \eta) = \Psi^0(R_0) > 0$ is independent on $\eta$.
\end{itemize}
Then there is $t_0 = t_0(R_0) > 0$ such that the map $\mathcal{T} \, (q^*, \, v^*) := (q, \, v)$ maps a ball of $\mathcal{Y}_{t_0}$ into itself. 
\end{lemma}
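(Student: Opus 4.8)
The plan is to run a Banach fixed-point--style self-mapping argument by choosing the radius and the time small in a coordinated way, using the structure of $\Psi$ near $t=0$. Fix $R_0$. Set $M_0 := \Psi^0(R_0) = \Psi(0, R_0, \eta)$, which by hypothesis is a positive number independent of $\eta$. The candidate invariant set will be the closed ball
\begin{align*}
\mathcal{B}_{t_0} := \{(q^*, v^*) \in \mathcal{Y}_{t_0} \, : \, \|(q^*, v^*)\|_{\mathcal{Y}_{t_0}} \leq 2 M_0\} \, .
\end{align*}
First I would record the two constraints that $t_0$ must satisfy. Constraint (I): the admissibility/finiteness condition $t_0^{1-1/p} \cdot (2M_0) < a_0$, which guarantees that for every $(q^*, v^*) \in \mathcal{B}_{t_0}$ we have $t_0^{1-1/p}\|(q^*,v^*)\|_{\mathcal{Y}_{t_0}} < a_0$, so that $\mathcal{T}(q^*, v^*)$ is well defined in $\mathcal{Y}_{t_0}$ and the estimate \eqref{continuity1} applies with a finite right-hand side. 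Constraint (II): a smallness condition on $t_0$ coming from continuity of $\Psi$ in its first slot, to be specified next.

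Next I would exploit the behaviour of $\Psi$ at $t = 0$. For any $(q^*, v^*) \in \mathcal{B}_{t_0}$ satisfying (I), \eqref{continuity1} together with monotonicity of $\Psi$ in the last argument gives
\begin{align*}
 \|\mathcal{T}(q^*, v^*)\|_{\mathcal{Y}_{t_0}} \leq \Psi(t_0, R_0, \|(q^*,v^*)\|_{\mathcal{Y}_{t_0}}) \leq \Psi(t_0, R_0, 2M_0) \, .
\end{align*}
Since $\Psi(\cdot, R_0, \cdot)$ is continuous and $\Psi(0, R_0, 2M_0) = M_0$, there exists $\tau_1 > 0$ such that $\Psi(t, R_0, 2M_0) \leq 2M_0$ for all $0 \leq t \leq \tau_1$ with $t^{1-1/p}(2M_0) < a_0$; this is constraint (II). Now choose
\begin{align*}
 t_0 := \min\Big\{ \tau_1, \, \tfrac{1}{2}\big(a_0/(2M_0)\big)^{p/(p-1)} \Big\} \, ,
\end{align*}
which simultaneously enforces (I) and (II). For this $t_0$ and any $(q^*, v^*) \in \mathcal{B}_{t_0}$, the chain of inequalities above yields $\|\mathcal{T}(q^*,v^*)\|_{\mathcal{Y}_{t_0}} \leq 2M_0$, i.e.\ $\mathcal{T}(\mathcal{B}_{t_0}) \subseteq \mathcal{B}_{t_0}$, which is the claim. (One subtlety: a priori $\mathcal{T}(q^*,v^*)$ is only asserted to live in $\mathcal{Y}_{\tilde T}$ for some $\tilde T = \tilde T(q^*,v^*) > 0$; the role of the \emph{continuous} estimate \eqref{continuity1}, valid as long as $t^{1-1/p}\|(q^*,v^*)\|_{\mathcal{Y}_t} < a_0$, is precisely to rule out earlier breakdown and propagate existence up to $t_0$, since the norm cannot blow up while the right-hand side of \eqref{continuity1} stays finite. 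I would spell this continuation argument out: on the maximal subinterval of existence $[0,\tilde T)$ the bound $\|(q,v)\|_{\mathcal{Y}_t}\le \Psi(t,R_0,2M_0)\le 2M_0$ holds, so the solution does not escape the admissible density range, hence $\tilde T \geq t_0$.)

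The main obstacle is not the fixed-point bookkeeping — which, as above, is an elementary consequence of the stated properties of $\Psi$ (finiteness in the subcritical region, monotonicity, and the $\eta$-independent positive value at $t=0$) — but rather the \emph{verification} that an estimate of the form \eqref{continuity1} holds with such a $\Psi$: that is where the real analysis lies (solving the continuity equation for $\varrho^*$ and controlling $\dist(\varrho^*,\{\varrho_{\min},\varrho_{\max}\})$ from below on a time interval governed by $a_0$, then feeding this into the maximal-regularity estimates for the parabolic--elliptic block $(q,\zeta)$ and the Navier--Stokes-type block $v$, with the singular coefficients $R, \widetilde M, A, d, P$ evaluated at $(\varrho^*, q^*)$). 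That verification is carried out in the preceding sections and is assumed here; the present Lemma only packages its output into a self-mapping statement. A secondary technical point to handle carefully is that $\zeta$ has no trace at $t=0$ and enters only through $\nabla\zeta$, so the fixed-point map is posed on the reduced space $\mathcal{Y}_{t_0}$ of the parabolic components $(q,v)$ alone, with $\varrho$ and $\zeta$ reconstructed as (nonlinear, resp.\ linear) functionals of $v^*$ and $(q^*,\varrho^*)$ — consistent with the way $\mathcal{T}$ was defined above.
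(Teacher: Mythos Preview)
Your proof is correct. Both your argument and the paper's rest on the same key observation: since $\Psi(0,R_0,\eta)=\Psi^0(R_0)$ is a fixed positive number independent of $\eta$, one can pick a radius strictly larger than $\Psi^0(R_0)$ and then, by continuity and monotonicity of $\Psi$, a small enough time so that $\Psi(t_0,R_0,\text{radius})\leq\text{radius}$ while staying in the admissible region $t^{1-1/p}\eta<a_0$.

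The routes differ slightly in style. You proceed constructively: set the radius to $2\Psi^0(R_0)$, invoke continuity of $t\mapsto\Psi(t,R_0,2\Psi^0(R_0))$ at $t=0$ to get $\tau_1$, and take $t_0$ as the minimum of $\tau_1$ and an explicit bound enforcing admissibility. The paper instead argues by contradiction: it shows that the set $\{\eta>0:\Psi(t_0,R_0,\eta)\leq\eta,\ \eta<a_0t_0^{1/p-1}\}$ is nonempty for some $t_0>0$ (else $\Psi(0,R_0,\eta)\geq\eta$ for all $\eta$, impossible), takes $\eta_0$ to be its infimum, and checks $\eta_0>0$. Your approach is arguably cleaner and yields explicit choices; the paper's is slightly more abstract but equivalent. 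Your parenthetical remark on continuation (ruling out earlier breakdown via the a priori bound) and the comment on why the fixed-point map lives on $\mathcal{Y}_{t_0}$ are accurate and go a bit beyond what the paper spells out at this point.
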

\begin{proof}
In $\{(t, \, \eta) \in [\mathbb{R}_+]^2 \, : \, t^{1 - 1/p} \, \eta < a_0\}$,
the function $(t, \, \eta) \mapsto \Psi(t, \, R_0, \, \eta)$ is continuous and finite. Then, there is a first $t_0 > 0$ depending only on $R_0$ such that $$\{\eta > 0 \, : \, \Psi(t_0, \, R_0, \, \eta) \leq \eta \text{ and } \eta < a_0 \, t_0^{\frac{1}{p}-1}\} \neq \emptyset \, .$$
Otherwise, for all $t > 0$ and $\eta < a_0 \, t^{1/p - 1}$, we would have that $ \Psi(t, \, R_0, \, \eta) > \eta$. Thus, $\Psi(0, \, R_0, \, \eta) = \lim_{t\rightarrow 0} \Psi(t, \, R_0, \, \eta) \geq \eta$ for all $\eta >0$. Since $\Psi(0, \, R_0, \, \eta) = \Psi^0(R_0)$ is strictly positive, every choice of $\eta > \Psi^0(R_0)$ then yields a contradiction.
 
We can further show that $$0< \eta_0 := \inf  \{\eta > 0 \, : \, \Psi(t_0, \, R_0, \, \eta) \leq \eta \text{ and } \eta < a_0 \, t_0^{\frac{1}{p}-1}\} \, .$$
Otherwise, there are positive $\{\eta_k\}_{k\in\mathbb{N}}$, $\eta_k \searrow 0$, such that $\Psi(t_0, \, R_0, \, \eta_k) \leq \eta_k$ for all $k$. 
Then $0 \geq \lim_{k\rightarrow \infty} \Psi(t_0, \, R_0, \, \eta_k) = \Psi(t_0, \, R_0, \, 0)$. Since $\Psi(t_0, \, R_0, \, 0) \geq \Psi(0, \, R_0, \, 0) = \Psi^0(R_0) >0$, this is again a contradiction.

Consider $M := \{(q^*, \, v^*) \in \mathcal{Y}_{t_0} \, : \, \|(q^*, \, v^*)\|_{\mathcal{Y}_{t_0}} \leq \eta_0\}$. Since $\eta_0 < a_0\, t_0^{1/p-1}$, it follows that $t_0^{1-1/p} \, \|(q^*, \, v^*)\|_{\mathcal{Y}_{t_0}} < a_0$. The inequality \eqref{continuity1} is valid by assumption and it yields $\|(q, \, v)\|_{\mathcal{Y}_{t_0}} \leq \Psi(t_0, \, R_0, \, \eta_0) \leq \eta_0$, hence $(q, \, v) \in M$.
\end{proof}
In the case of the map $\mathcal{T}^1:\,(r^*, \,  w^*) \mapsto (r, \,  w)$ defined via solution to \eqref{equadiff0}, \eqref{equadiff1},\eqref{equadiff2}, \eqref{equadiff3}, \eqref{equadiff4}, we look for a fixed-point in the space $\phantom{}_{0}{\mathcal{Y}}_T$. The solution can only be defined globally on $[0, \, T]$ if the solution to \eqref{equadiff0} remains inside of $]\varrho_{\min}, \, \varrho_{\max}[$ on the entire time-interval. We will show that this can be ensured if the starting perturbation $w^*$ satisfies an inequality of type
\begin{align*}
\phi_0(T, \, \|w^*\|_{W^{2,1}_p(Q_T)}) \, \|w^*\|_{W^{2,1}_p(Q_T)} \leq a_0 \, , 
\end{align*}
in which $a_0 > 0$ is a fixed number depending on the distance of the initial data to the critical values $\{\varrho_{\min}, \, \varrho_{\max}\}$, and $\phi_0$ is a continuous function on $\overline{\mathbb{R}}_+^2$, which increases in both arguments.
We then prove a continuity estimate of the type
\begin{align}\label{continuity2}
\|(r, \,  w)\|_{\mathcal{Y}_T} \leq \Psi(T, \, R_0, \, \|(r^*, \,  w^*)\|_{\mathcal{Y}_T}) \, R_1 \, .
\end{align}
Here $R_0$ stands for the magnitude of initial data ($q^0$, $\varrho_0$ and $v^0$) and external forces $b$. The parameter $R_1$ expresses the distance of the initial data to a stationary/equilibrium solution (def. in \eqref{massstat}, \eqref{momentumsstat}). Defining $\eta_0$ to be the smallest positive solution to the equation $\phi_0(T, \, \eta_0) \, \eta_0 = a_0$, we will show that $\mathcal{T}^1$ maps the ball of radius $\eta_0$ in $\phantom{}_{0}{\mathcal{Y}}_T$ for initial data satisfying $R_1 \leq \eta_0/\Psi(T, \, R_0, \, \eta_0)$.
In order to apply the contraction principle and prove the theorems, we shall therefore prove the continuity estimate \eqref{continuity1}, \eqref{continuity2}. This is the main object of the next sections.

\section{Estimates of the linearised problems}\label{ESTI}

In this section, we present the estimates on which our main results in Theorem \ref{MAIN}, \ref{MAIN2} are footing. The preliminary work done in the paper \cite{bothedruet} shall, in many points, allow to abridge the calculations. The main novelty is the inversion of the parabolic--elliptic subsystem, which shall be dealt with in all details.

To achieve more simplicity in the notation, we introduce both for a function or vector field $f \in W^{2,1}_p(Q_T; \, \mathbb{R}^k)$ ($k\in \mathbb{N}$) and $t \leq T$ the notation
\begin{align}\label{Vfunctor}
 \mathscr{V}(t; \, f) :=  \|f\|_{W^{2,1}_p(Q_t; \, \mathbb{R}^k)} + \sup_{\tau \leq t} \|f(\cdot, \, \tau)\|_{W^{2-\frac{2}{p}}_p(\Omega; \, \mathbb{R}^k)} \, .
\end{align}
Recall that $W^{2-2/p}_p(\Omega)$ is the trace space for $f \in W^{2,1}_p(Q_T)$, $f \mapsto f(\cdot, t)$. Moreover we will need H\"older half-norms. For $\alpha, \, \beta \in [0,\, 1]$ and $f$ scalar valued, we denote
\begin{align*}
[f]_{C^{\alpha}(\Omega)} := \sup_{x \neq y \in \Omega} \frac{|f(x) - f(y)|}{|x-y|^{\alpha}}, \quad [f]_{C^{\alpha}(0,T)} := \sup_{t \neq s \in [0,T]} \frac{|f(t) - f(s)|}{|t-s|^{\alpha}}\\
[f]_{C^{\alpha,\beta}(Q_T)} := \sup_{t \in [0, \, T]} [f(\cdot, \, t)]_{C^{\alpha}(\Omega)} +  \sup_{x \in \Omega} [f(x, \cdot)]_{C^{\beta}(0,T)}  \, .
\end{align*}
The corresponding H\"older norms $\|f\|_{C^{\alpha}(\Omega)}$, $\|f\|_{C^{\alpha}(0,T)}$ and $f \in C^{\alpha,\beta}(Q_T)$ are defined by adding the corresponding $L^{\infty}-$norm to the half-norm.

\subsection{Estimates of a linearised problem for the variables $q$ and $\zeta$}

We first formulate some global assumptions and notations. Recall that $I = ]\varrho_{\min}, \, \varrho_{\max}[$. In this section, the maps $R_q, \, \widetilde{M}: \, I \times \mathbb{R}^{N-2} \rightarrow \mathbb{R}^{(N-2)\times (N-2)}$ are assumed to be of class $C^1(I \times \mathbb{R}^{N-2})$ into the set of symmetric, positive definite matrices. Furtheron, $A: \, I \times \mathbb{R}^{N-2} \rightarrow \mathbb{R}^{N-2}$, and $d: \, I \times \mathbb{R}^{N-2} \rightarrow \mathbb{R}_+$ are of class $C^1$ too. We fix $p > 3$, and we consider given $q^{*} \in W^{2,1}_p(Q_T; \, \mathbb{R}^{N-2})$ and $\varrho^* \in W^{1,1}_{p,\infty}(Q_T)$ such that $\varrho^*(x, \, t) \in ]\varrho_{\min}, \, \varrho_{\max}[$ for all $(x, \, t) \in \overline{Q_T}$. We then denote $R_q^* := R_q(\varrho^*, \, q^{*})$, $ \widetilde{M}^* := \widetilde{M}(\varrho^*, \, q^{*})$, $A^* := A(\varrho^*, \, q^*)$ and $d^* := d(\varrho^*, \, q^*)$. For $t \leq T$, we introduce the positive functions
\begin{align}\label{infimumrho}
m^*(t) := & m(\varrho^*, \, t) := \inf_{(x,\tau) \in Q_t} \min\left\{\frac{\varrho^*(x, \, \tau)}{ \varrho_{\min}} - 1, \, 1-\frac{\varrho^*(x, \, \tau)}{\varrho_{\max}}\right\} \, ,\\
\label{supremumrho}
M^*(t) := & M(\varrho^*, \, t) := \max\left\{\frac{1}{\inf_{(x,\tau) \in Q_t} (\frac{\varrho^*(x, \, \tau)}{ \varrho_{\min}} - 1)},\, \frac{1}{\inf_{(x,\tau) \in Q_t} (1-\frac{\varrho^*(x, \, \tau)}{\varrho_{\max}})  }\right\} \, .
\end{align}
We let $g \in L^p(Q_T; \, \mathbb{R}^{N-2})$, $q^0 \in W^{2-2/p}_p(\Omega; \, \mathbb{R}^{N-2})$ such that $\nu \cdot \nabla q^0(x) = 0$ on $\partial \Omega$ in the sense of traces, and $h \in W^{1,0}_p(Q_T; \, \mathbb{R}^3)$. 

For a pair $(q, \,  \zeta): \, Q_T \rightarrow \mathbb{R}^{N-2} \times \mathbb{R}$ we consider the linear parabolic--elliptic auxiliary problem
\begin{align}
\label{parabolicsystem} R_q^* \,  q_t - \divv (\widetilde{M}^{*} \, \nabla q + A^* \, \nabla \zeta) = & g \, \text{ in } Q_T, \,\, \nu \cdot \nabla q = 0 \text{ on } S_T, \, \, q(x, \, 0) = q^0(x) \text{ in } \Omega \, ,\\
\label{ellipticsystem} -\divv (d^* \, \nabla \zeta + A^* \, \nabla q) = & -\divv h \, \text{ in } Q_T, \,\, \nu \cdot \nabla \zeta = 0 \text{ on } S_T \, ,
\end{align}
and we want to obtain an estimate in the norm of $W^{2,1}_p(Q_T; \, \mathbb{R}^{N-2}) \times W^{2,0}_p(Q_T)$ for the solution. To this aim we first show that \eqref{parabolicsystem}, \eqref{ellipticsystem} can be equivalently reformulated as a system coupled only in the lower order.
\begin{lemma}
We adopt the general assumptions and notations formulated at the beginning of this section. A pair $(q, \,  \zeta) \in W^{2,1}_p(Q_T; \, \mathbb{R}^{N-2}) \times W^{2,0}_p(Q_T)$ is a solution to the problem \eqref{parabolicsystem}, \eqref{ellipticsystem} if the identity \eqref{ellipticsystem} and the initial and boundary condition are satisfied, and if instead of \eqref{parabolicsystem} we have
\begin{align}\label{parabolicsystemreduced}
&  R_{q}^* \, \partial_t q  - \divv ( [\widetilde{M}^*  -\frac{A^* \otimes A^*}{d^*} ] \, \nabla q) = \\
  & \qquad g + \nabla  \zeta \cdot [  \nabla A^* - \frac{A^*}{d^*} \, \nabla d^*] + \nabla (\frac{A^*}{d^*}) \cdot \nabla q \, A^* + \frac{A^*}{d^*} \, \divv h \nonumber \, .
  \end{align}
\end{lemma}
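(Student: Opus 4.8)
The plan is to show that, once the elliptic identity \eqref{ellipticsystem} is imposed, the parabolic equations \eqref{parabolicsystem} and \eqref{parabolicsystemreduced} are equivalent; since the initial and boundary conditions are carried over unchanged, this yields the claim. All of it reduces to algebraic rearrangements of divergence terms, and — because $d^{*}$ is bounded below away from zero on $\overline{Q_{T}}$, so that dividing by $d^{*}$ is harmless — the chain of identities will be reversible, hence it suffices to run it once.

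First I would treat the mixed term $\divv(A^{*}\nabla\zeta)$ of \eqref{parabolicsystem} componentwise. For each $k$ the Leibniz rule gives
\begin{align*}
\divv(A^{*}_{k}\nabla\zeta) = \divv\!\Big(\tfrac{A^{*}_{k}}{d^{*}}\,d^{*}\nabla\zeta\Big) = \Big(\nabla A^{*}_{k} - \tfrac{A^{*}_{k}}{d^{*}}\nabla d^{*}\Big)\!\cdot\!\nabla\zeta + \tfrac{A^{*}_{k}}{d^{*}}\,\divv(d^{*}\nabla\zeta),
\end{align*}
and I would then substitute \eqref{ellipticsystem} in the form $\divv(d^{*}\nabla\zeta) = \divv h - \divv(A^{*}\cdot\nabla q)$. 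This step already produces two of the lower-order contributions in \eqref{parabolicsystemreduced}, namely $\nabla\zeta\cdot[\nabla A^{*} - \tfrac{A^{*}}{d^{*}}\nabla d^{*}]$ and $\tfrac{A^{*}}{d^{*}}\,\divv h$, and it leaves the second-order remainder $-\tfrac{A^{*}_{k}}{d^{*}}\,\divv(A^{*}\cdot\nabla q)$ to be dealt with. I would absorb the latter into the principal part via the second Leibniz identity
\begin{align*}
\tfrac{A^{*}_{k}}{d^{*}}\,\divv(A^{*}\cdot\nabla q) = \divv\!\Big(\tfrac{A^{*}\otimes A^{*}}{d^{*}}\nabla q\Big)_{k} - \nabla\!\Big(\tfrac{A^{*}_{k}}{d^{*}}\Big)\!\cdot\!(A^{*}\cdot\nabla q),
\end{align*}
so that, combined with $-\divv(\widetilde M^{*}\nabla q)$, the modified operator $-\divv\big([\widetilde M^{*}-\tfrac{A^{*}\otimes A^{*}}{d^{*}}]\nabla q\big)$ of \eqref{parabolicsystemreduced} emerges and the correction $\nabla(\tfrac{A^{*}}{d^{*}})\cdot\nabla q\,A^{*}$ moves to the right-hand side. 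Collecting the four lower-order terms reproduces \eqref{parabolicsystemreduced} exactly; running the same chain of identities in reverse gives the implication actually required here, that \eqref{parabolicsystemreduced} together with \eqref{ellipticsystem} implies \eqref{parabolicsystem}, and hence that $(q,\zeta)$ solves \eqref{parabolicsystem}, \eqref{ellipticsystem}.

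There is no genuine obstacle in this computation; the one point that deserves a line of justification is that all the Leibniz manipulations are licit at the stated regularity, i.e.\ that every product and divergence appearing lies in $L^{p}(Q_{T})$. This follows from $p>3$ (so $\nabla q$ and $\nabla\zeta$ are bounded in space for a.e.\ time and $\nabla q^{*}\in C(\overline{Q_{T}})$), from $d^{*}\ge d_{0}>0$, and from the chain-rule representations $\nabla A^{*} = A_{q}\nabla q^{*}+A_{\varrho}\nabla\varrho^{*}$, $\nabla d^{*} = d_{q}\nabla q^{*}+d_{\varrho}\nabla\varrho^{*}$, which — using the assumed $C^{1}$-regularity of $A$, $d$ together with $q^{*}\in W^{2,1}_{p}(Q_{T})$ and $\varrho^{*}\in W^{1,1}_{p,\infty}(Q_{T})$ — place the coefficient gradients in $L^{p,\infty}(Q_{T})$.
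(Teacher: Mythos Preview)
Your proof is correct and follows essentially the same approach as the paper. The only cosmetic difference is that the paper first expands $\divv(A^{*}\nabla\zeta)=A^{*}\triangle\zeta+\nabla A^{*}\cdot\nabla\zeta$ and then substitutes the pointwise form $-d^{*}\triangle\zeta=\nabla d^{*}\cdot\nabla\zeta+\divv(A^{*}\nabla q)-\divv h$ of the elliptic equation, whereas you keep everything in divergence form via $A^{*}_{k}\nabla\zeta=\tfrac{A^{*}_{k}}{d^{*}}(d^{*}\nabla\zeta)$ and substitute $\divv(d^{*}\nabla\zeta)=\divv h-\divv(A^{*}\cdot\nabla q)$; the subsequent Leibniz step for $\tfrac{A^{*}}{d^{*}}\divv(A^{*}\cdot\nabla q)$ is identical in both arguments.
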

\begin{proof}
Computing the derivatives in the elliptic equation \eqref{ellipticsystem}, we obtain that
\begin{align}\label{umform}
- d^* \, \triangle \zeta = \nabla d^* \cdot \nabla \zeta + \divv(A^* \, \nabla q) - \divv h  \, .
\end{align}
Thus, under the side-condition \eqref{umform}, the parabolic equations \eqref{parabolicsystem} are equivalent to 
\begin{align}\begin{split}\label{a1equiv0}
&  R_{q}^* \, \partial_t q - \divv (\widetilde{M}^* \, \nabla q) = g+ A^* \, \triangle \zeta + \nabla A^* \cdot \nabla  \zeta \\
& \quad = g + \nabla A^* \cdot \nabla  \zeta - \frac{1}{d^*} \,  A^* \, [\nabla d^* \cdot \nabla \zeta + \divv(A^* \, \nabla q) - \divv h ] \, .
\end{split}
 \end{align}
Use of $\frac{A^*}{d^*} \,  \divv(A^* \, \nabla q) = \divv( \frac{A^* \otimes A^*}{d^*} \, \nabla q) - A^* \, \nabla q \cdot \nabla(\frac{A^*}{d^*})$ yields the claim.
\end{proof}
Using this lemma, we next prove an estimate for the solution to the linearised parabolic--elliptic problem.
\begin{prop}\label{qzetasystem}
Under the general assumptions and notations of this section, there is a unique pair $(q, \,  \zeta) \in W^{2,1}_p(Q_T; \, \mathbb{R}^{N-2})\times W^{2,0}_p(Q_T)$, solution to the problem \eqref{parabolicsystem}, \eqref{ellipticsystem}, such that $\int_{\Omega} \zeta(x, \, t) \, dx = 0$ for all $t \in ]0, \, T[$. Moreover, there are a constant $C$ depending only on $\Omega$, and continuous functions $\Psi_1 =  \Psi_1(t, \, a_1, \ldots, a_5)$ and $\Phi = \Phi(t, \, a_1, \ldots, a_5)$ defined for all $t \geq 0$ and all numbers $a_1, \ldots, a_5 \geq 0$, such that for all $t \leq T$ and for $0 < \beta \leq 1$ arbitrary:
\begin{align*} 
& \mathscr{V}(t; \, q) + \|\zeta\|_{W^{2,0}_p(Q_t)} \leq  C \, \Psi_{1,t}\, (1+[\varrho^*]_{C^{\beta,\frac{\beta}{2}}(Q_t)})^{\tfrac{2}{\beta}} \, (\|q^0\|_{W^{2-\frac{2}{p}}_p(\Omega)} + \|g\|_{L^p(Q_t)} +\|h\|_{W^{1,0}_{p}(Q_t)})\\
& \phantom{ \mathscr{V}(t; \, q) + \|\zeta\|_{W^{2,0}_p(Q_t)} \leq  C } + C \, \Phi_t \, \|h\|_{L^p(Q_t)} \, ,\\[0.1cm]
& \Psi_{1,t} = \Psi_1(t, \, M^*(t),  \, \|q^*(0)\|_{C^{\beta}(\Omega)}, \, 
\mathscr{V}(t; \, q^*), \, [\varrho^*]_{C^{\beta,\frac{\beta}{2}}(Q_t)}, \, \|\nabla \varrho^*\|_{L^{p,\infty}(Q_t)}) \, ,\\
& \Phi_t = \Phi(t, \, M^*(t),  \, \|q^*(0)\|_{C^{\beta}(\Omega)}, \, 
\mathscr{V}(t; \, q^*), \, [\varrho^*]_{C^{\beta,\frac{\beta}{2}}(Q_t)}, \, \|\nabla \varrho^*\|_{L^{p,\infty}(Q_t)}) \, .
\end{align*}
The function $\Psi_1$ possesses moreover the following two properties: It is increasing in all arguments, and the value of $\Psi_1(0, \, a_1,\ldots,a_5) = \Psi_1^0(a_1, \, a_2)$ is a function independent on the three last arguments. The function $\Phi$ is increasing in all arguments.
\end{prop}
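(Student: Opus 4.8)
The strategy is to exploit the reduced formulation \eqref{parabolicsystemreduced}, in which the top-order coupling between $q$ and $\zeta$ has been eliminated, so that the $q$-equation is a genuine parabolic system with coefficient matrices $R_q^*$ and $\widetilde{M}^* - A^*\otimes A^*/d^*$, while $\zeta$ solves the scalar elliptic equation \eqref{ellipticsystem} with $q$ entering only at first order on the right-hand side. I would first record that $\widetilde{M}^*-A^*\otimes A^*/d^*$ is symmetric positive definite on $S_{\bar V}$ (this is the Schur complement of the block matrix $\begin{pmatrix}\widetilde M^* & A^*\\ (A^*)^{\sf T} & d^*\end{pmatrix}$, which is positive definite because $M(\rho)$ restricted to $\{1^N\}^\perp$ is), and that $d^*$ is bounded below by a positive constant depending on $m^*(t)$. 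Then I set up a fixed-point / iteration scheme on the pair $(q,\zeta)$: given $\zeta$, solve \eqref{parabolicsystemreduced} for $q$ by the $L^p$-theory of linear parabolic systems with Hölder-continuous leading coefficients (Ladyzhenskaya–Solonnikov–Ural'tseva / Solonnikov), under the Neumann condition $\nu\cdot\nabla q=0$; given $q$, solve \eqref{ellipticsystem} for $\zeta$ by the $W^{2,p}$ elliptic theory with Neumann data and zero mean. One checks the map is a contraction on a short time interval or, better, that a single \emph{a priori} estimate closes, since the $\zeta$-to-$q$ coupling is lower order: $\|\nabla\zeta\|_{L^p(Q_t)}$ is controlled by $\|\nabla q\|_{L^p(Q_t)}+\|h\|_{L^p(Q_t)}$ from elliptic regularity, and this term appears in the parabolic estimate multiplied by a factor that is $o(1)$ as $t\to 0$ after interpolation, so it can be absorbed.

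The heart of the matter is to track the constants explicitly in terms of the quantities listed, in particular to produce the factor $(1+[\varrho^*]_{C^{\beta,\beta/2}(Q_t)})^{2/\beta}$ and the dependence on $M^*(t)$. For the parabolic estimate one uses that the leading coefficients $R_q^*$, $\widetilde M^*$, $A^*$, $d^*$ are $C^1$ functions of $(\varrho^*,q^*)$; their moduli of continuity in $Q_t$ are governed by the Hölder norms of $\varrho^*$ and $q^*$. Since $q^*\in W^{2,1}_p(Q_t)$ with $p>3$ embeds into $C^{\alpha,\alpha/2}(Q_t)$ with $\alpha=1-5/(2p)$ up to a constant times $\mathscr V(t;q^*)$, and $\varrho^*$ is only assumed Hölder in $Q_t$ with exponent $\beta$, the composite coefficients lie in $C^{\gamma,\gamma/2}$ with $\gamma=\min(\alpha,\beta)$ and modulus controlled by $\mathscr V(t;q^*)$ and $[\varrho^*]_{C^{\beta,\beta/2}}$. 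The $L^p$ parabolic estimate with variable coefficients is then obtained by the standard localization/freezing argument: partition $Q_t$ into cylinders small enough that the oscillation of the leading coefficients is below a threshold fixed by the ellipticity constant — the number of such cylinders scales like (oscillation control)$^{d/\beta+2/\beta}\sim$(something)$^{2/\beta}$ in the time direction after rescaling, which is exactly the source of the exponent $2/\beta$ — and summing the local estimates. The dependence on $M^*(t)$ enters through the lower bound on $d^*$ and hence on the ellipticity constant of the Schur complement; all of $R_q^*,\widetilde M^*,A^*,1/d^*$ are bounded on $Q_t$ by a continuous function of $M^*(t)$ and $\|(\varrho^*,q^*)\|_{L^\infty}$, the latter being controlled by $\mathscr V(t;q^*)$ and $\|q^*(0)\|_{C^\beta}$. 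This is how $\Psi_1$ and $\Phi$ acquire precisely the five listed arguments.

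Next I would handle the lower-order right-hand side of \eqref{parabolicsystemreduced}: the terms $\nabla\zeta\cdot[\nabla A^*-(A^*/d^*)\nabla d^*]$, $\nabla(A^*/d^*)\cdot\nabla q\,A^*$ and $(A^*/d^*)\divv h$ involve $\nabla A^*,\nabla d^*$, i.e. spatial gradients of the composite coefficients, which are $\nabla\varrho^*$ and $\nabla q^*$ multiplied by bounded functions — this is where $\|\nabla\varrho^*\|_{L^{p,\infty}(Q_t)}$ and $\mathscr V(t;q^*)$ appear again, via Hölder's inequality $\|(\nabla q^*)(\nabla\zeta)\|_{L^p}\le\|\nabla q^*\|_{L^{\infty,p}}\,\|\nabla\zeta\|_{L^{p,\infty}}$ or similar, combined with the interpolation \eqref{gagliardo} to gain a small power of $t$. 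The term $(A^*/d^*)\divv h$ is the only one that is genuinely only in $L^p$ with no hope of a smallness factor, and it is precisely the term carried by the separate function $\Phi$ with the plain factor $\|h\|_{L^p(Q_t)}$; all other $h$-dependence comes through $\|h\|_{W^{1,0}_p(Q_t)}=\|h\|_{L^p}+\|\nabla h\|_{L^p}$ in the elliptic estimate for $\nabla\zeta$ and is multiplied by $\Psi_{1,t}(1+[\varrho^*]_{C^{\beta,\beta/2}})^{2/\beta}$. Finally, the special structural property "$\Psi_1(0,a_1,\dots,a_5)=\Psi_1^0(a_1,a_2)$ depends only on $M^*(0)$ and $\|q^*(0)\|_{C^\beta}$" follows because at $t=0$ the parabolic cylinder count and all the $o(1)$-in-$t$ interpolation factors degenerate: the only surviving constant is the pure $L^p$-parabolic constant with coefficients frozen at the initial trace, whose size depends only on the ellipticity bound (through $M^*(0)$) and the initial Hölder norm of $q^*$. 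Monotonicity of $\Psi_1,\Phi$ in all arguments is then built in by replacing each estimate by its obvious monotone envelope.

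**Main obstacle.** The delicate point is the bookkeeping that produces the \emph{exact} exponent $2/\beta$ and guarantees that every $\zeta$-dependent and every top-order-coefficient-oscillation contribution either gets absorbed or is packaged with a factor that vanishes as $t\to 0$ — in other words, making the freezing-of-coefficients argument quantitative with a constant that is a \emph{continuous, monotone} function of the five displayed quantities and collapses to $\Psi_1^0(M^*(0),\|q^*(0)\|_{C^\beta})$ at $t=0$. The conceptual steps (Schur complement, parabolic $L^p$-theory, elliptic $W^{2,p}$-theory, interpolation) are standard; the work is entirely in the uniform tracking of constants, which is why the paper states this as a Proposition with such a precise conclusion.
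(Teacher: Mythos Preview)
Your plan matches the paper's proof closely: the Schur-complement reduction to the parabolic system for $q$ with matrix $K^*=\widetilde M^*-A^*\otimes A^*/d^*$, the $W^{1,p}$ and then $W^{2,p}$ elliptic estimates for $\zeta$, the Gagliardo--Nirenberg interpolation \eqref{gagliardo} to absorb the cross terms $\nabla\varrho^*\cdot\nabla\zeta$, $\nabla q^*\cdot\nabla\zeta$, and a final Gronwall step are exactly what the paper does (Steps~1--4). The freezing-of-coefficients origin of the factor $(1+[\varrho^*]_{C^{\beta,\beta/2}})^{2/\beta}$ and the collapse of $\Psi_1$ at $t=0$ are also identified correctly.

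One concrete correction: you misattribute the source of the separate term $C\,\Phi_t\,\|h\|_{L^p(Q_t)}$. The contribution $(A^*/d^*)\divv h$ in \eqref{parabolicsystemreduced} is bounded by $c_1^*\,\|\nabla h\|_{L^p}$, so it feeds into the $\|h\|_{W^{1,0}_p}$ part carried by $\Psi_{1,t}$, not into $\|h\|_{L^p}$. The genuine $\|h\|_{L^p}$ term arises instead from the \emph{weak} elliptic estimate \eqref{zetalowerorder}, namely $\|\nabla\zeta\|_{L^p(Q_t)}\le c_1^*\phi_t^*(\|\nabla q\|_{L^p}+\|h\|_{L^p})$, which is then used to bound $\|\zeta\|_{L^p}$ in the lower-order slot of the interpolation; the resulting prefactor $c_1^*\phi_t^*\Phi_{1,t}^*$ is what the paper calls $\Phi_t$. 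Also, the H\"older splitting you propose, $\|(\nabla q^*)(\nabla\zeta)\|_{L^p}\le\|\nabla q^*\|_{L^{\infty,p}}\|\nabla\zeta\|_{L^{p,\infty}}$, is the wrong way round for $3<p\le 5$: the paper puts $\nabla\zeta$ in $L^\infty$ in space via \eqref{gagliardo} and keeps $\nabla q^*,\nabla\varrho^*$ in $L^{p,\infty}$, which is the combination that actually closes with the available regularity.
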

\begin{proof}
The existence and uniqueness can be easily obtained by means of the uniform estimates. We thus suppose first that $(q, \, \zeta) \in W^{2,1}_p(Q_T; \, \mathbb{R}^{N-2})\times W^{2,0}_p(Q_T)$ is a given solution, and we prove the claimed estimate. In order to simplify the discussion, we adopt the following convention: When computing the derivative of a coefficient, like $\nabla d^* = d^*_{\varrho} \, \nabla \varrho^* + d^*_{q} \, \nabla q^*$, there occur different functions $d_{\varrho}^* := d_{\varrho}(\varrho^*, \, q^*)$ or $d_{q_j}^* = d_{q_j}(\varrho^*, \, q^*)$ of the variables $\varrho^*, \, q^*$. We denote $c_1^* = c_1(M^*(t), \,\|q^*\|_{L^{\infty}(Q_t)})$ a generic continuous function depending only on $M^*(t)$ and $\|q^*\|_{L^{\infty}(Q_t)}$, and increasing in these arguments. We then bound the $L^{\infty}(Q_T)$ norms of all non-linear functions depending on $\varrho^*$, $q^*$ by this generic $c_1^*$. 
\\[1ex]

\textbf{Step 1}: First estimate for the variable $\zeta$.

For almost all $s \leq t$, the function $\zeta$ satisfies the weak Neumann problem
\begin{align*}
 \int_{\Omega} d^* \, \nabla \zeta(x, \, s) \cdot \nabla \phi(x) \, dx = \int_{\Omega} (-A^*\, \nabla q + h)(x, \, s)  \cdot \nabla \phi(x) \, dx \, .
\end{align*}
By well-known weak elliptic theory, there is a unique solution $\zeta(s) \in W^{1,p}(\Omega)$ with $\int_{\Omega} \zeta(x, \,  s) \, dx = 0$. Moreover, for all $0<\beta < 1$, perturbation techniques shortly recalled in the Appendix, Lemma \ref{ellipticlemma} yield the estimate
\begin{align*}
 \|\nabla \zeta(s)\|_{L^p(\Omega)} \leq & c(\Omega,p,\, \inf_{\Omega} d^*(s), \, \sup_{\Omega} d^*(s)) \, (1+[d^*(s)]_{C^{\beta}(\Omega)})^{\frac{1}{\beta}} \times\\
 & \times (\| A^*\, \nabla q(s)\|_{L^p(\Omega)} + \|h(s)\|_{L^p(\Omega)})\\
\leq & c_1^* \, (1+[d^*(s)]_{C^{\beta}(\Omega)})^{\frac{1}{\beta}} \, ( \|A^*(s)\|_{L^{\infty}(\Omega)} \,  \|\nabla q(s)\|_{L^p(\Omega)} + \|h(s)\|_{L^{p}(\Omega)}) \, .
 \end{align*}
We define $ \phi^*_t := \sup_{s \leq t} (1+[d^*(s)]_{C^{\beta}(\Omega)})^{\frac{1}{\beta}}$. We bound $ \sup_{s\leq t} \|A^*(s)\|_{L^{\infty}(\Omega)}$ with a generic $c_1^*$, and it follows that
\begin{align}\label{zetalowerorder}
 \|\zeta\|_{W^{1,0}_p(Q_t)} \leq &  c_1^* \, \phi^*_t \, (\|\nabla q\|_{L^p(Q_t)} + \|h\|_{L^{p}(Q_t)}) \, . 
\end{align}

\textbf{Step 2}: First bound for the variable $q$.

We start from \eqref{parabolicsystemreduced}, and we define $K(\varrho, \, q) := \widetilde{M}(\varrho, \, q) -A(\varrho, \, q) \otimes A(\varrho, \, q)/d(\varrho, \, q)$. In view of the definitions \eqref{trafom}, \eqref{trafoa}, \eqref{trafod}, $K \in \mathbb{R}^{(N-2) \times (N-2)}$ is obviously symmetric, and obeys $$K = \Pi^{\sf T} \, M \, \Pi - \frac{\Pi^{\sf T} \, M \bar{V} \otimes \Pi^{\sf T} \, M \bar{V}}{M \bar{V} \cdot \bar{V}} \, .$$ For all $y \in \mathbb{R}^{N-2}$, $K \, y \cdot y = M \, \Pi y \cdot \Pi y - (M \bar{V} \cdot \Pi y)^2/M \bar{V} \cdot \bar{V} \geq 0$, because $M$ is positive semi--definite.
By the Cauchy-Schwarz inequality, $K y \cdot y = 0$ is possible only if either $\Pi y$ and $\bar{V}$ are parallel, or if $\Pi y$ and $1^N$ are parallel. Recall in this place that $\Pi y = \sum_{k=1}^{N-2} y_k \, \xi^k$. By the choice of the $\xi^k$s, we know that $\{\xi^1,\ldots,\xi^{N-2}, \, \bar{V}, \, 1^N\}$ is a basis of $\mathbb{R}^N$. Thus, $\Pi y = \lambda \, \bar{V}$ or $\Pi y = \lambda \, 1^N$ both would imply that $y = 0$. This shows that $K y \cdot y > 0$ unless $y = 0$, hence $K$ is positive definite.

Defining $K^* := K(\varrho^*, \, q^*)$, we rephrase \eqref{parabolicsystemreduced} as 
\begin{align}\label{parabolicsystemreducedpr}
  R_{q}^* \, \partial_t q - \divv ( K^* \, \nabla q) = & g + \tilde{g} \, ,
  \end{align}
in which $\tilde{g} :=  \nabla  \zeta \cdot [  \nabla A^* - \frac{A^*}{d^*} \, \nabla d^*] + \nabla (\frac{A^*}{d^*}) \cdot \nabla q \, A^* + \frac{A^*}{d^*} \, \divv h$ is bounded via 
\begin{align}\label{tildegnumber}
|\tilde{g}| \leq c_1^* \, ( |\nabla \zeta \cdot \nabla \varrho^*| + |\nabla \zeta \cdot \nabla q^*| + |\nabla q \cdot \nabla \varrho^*| + |\nabla q \cdot \nabla q^*| +  |\nabla h|) \, .
\end{align}
%
We now apply Appendix, Lemma \ref{qsystem}, which basically recalls the result of \cite{bothedruet}, Prop. 7.1 for a similar parabolic system. With $D_0(t) := (1+[\varrho^*]_{C^{\beta,\beta/2}(Q_t)})^{2/\beta} \, \|q^0\|_{W^{2-2/p}_p(\Omega)} + \|g\|_{L^p(Q_t)}$, and using \eqref{tildegnumber} to bound the norm of $\tilde{g}$, we obtain for the solution to \eqref{parabolicsystemreducedpr}
\begin{align}\label{vergessen}
 & \qquad \mathscr{V}(t; \, q)  \leq  C \, \bar{\Psi}_{1,t}\, \big[  D_0(t) + c_1^* \,   \|\nabla h\|_{L^p(Q_t)}  \\
 & + c_1^* \, (\|\nabla \zeta \cdot \nabla \varrho^*\|_{L^p(Q_t)} + \|\nabla \zeta \cdot \nabla q^*\|_{L^p(Q_t)} + \|\nabla q \cdot \nabla \varrho^*\|_{L^p(Q_t)} + \|\nabla q \cdot \nabla q^*\|_{L^p(Q_t)} )       \big] \, ,\nonumber
\end{align}
where $\bar{\Psi}_{1,t} = \bar{\Psi}_1( t, \, M^*(t),  \, \|q^*(0)\|_{C^{\beta}(\Omega)}, \, 
\mathscr{V}(t; \, q^*), \, [\varrho^*]_{C^{\beta,\beta/2}(Q_t)}, \, \|\nabla \varrho^*\|_{L^{p,\infty}(Q_t)}  )$, and the function $\bar{\Psi}_1$ fulfills all structural assumptions stated for $\Psi_{1}$. 

\textbf{Step 3}: Main estimate for the variable $\zeta$.

Since $\zeta \in W^{2,0}_p(Q_T)$, we can employ the pointwise identity \eqref{umform}. Since $\zeta$ has mean--value zero for all times, the full $W^{2,p}$ norm can be estimated by the Neumann-Laplacian, and we obtain that
\begin{align}\label{ESTIMzeta}
 \|\zeta\|_{W^{2,0}_p(Q_t)} \leq & c(\Omega,p) \, \|- \triangle \zeta\|_{L^p(Q_t)} \nonumber\\
= & c(\Omega,p) \, \|(d^*)^{-1} \, (\nabla d^*\cdot \nabla \zeta+  \divv (A^* \, \nabla  q-h))\|_{L^p(Q_t)} \nonumber\\
\leq & c \, \frac{1}{\inf_{(x,s) \in Q_t} d^*(x,s)} \, (\|\nabla d^*\cdot \nabla \zeta\|_{L^p(Q_t)} +   \|\divv (A^* \, \nabla q - h)\|_{L^p(Q_t)}) \, .
\end{align}
Computing the derivatives of the coefficients, and using the same conventions as above, we derive from \eqref{ESTIMzeta} the inequality
\begin{align*}
& \|\zeta\|_{W^{2,0}_p(Q_t)} \leq  c_1^* \, (\|\triangle q\|_{L^p(Q_t)} + \|\divv h\|_{L^p(Q_t)})\\
 & + c_1^* \, (\|\nabla \varrho^*\cdot \nabla \zeta\|_{L^p(Q_t)} + \|\nabla q^*\cdot \nabla \zeta\|_{L^p(Q_t)} + \|\nabla \varrho^*\cdot \nabla q\|_{L^p(Q_t)} + \|\nabla q^*\cdot \nabla q\|_{L^p(Q_t)}) \, .\nonumber
\end{align*}
We estimate $ \|\triangle q\|_{L^p(Q_t)} \leq \mathscr{V}(t; \, q)$, then we employ the inequality \eqref{vergessen} to see that\\[1ex]
\begin{align}\label{ESTIMzeta22}
\|\zeta\|_{W^{2,0}_p(Q_t)} \leq & C \, \bar{\Psi}_{1,t}\,  D_0(t) + c_1^* \, (1 + C\bar{\Psi}_{1,t}) \, \|\nabla h\|_{L^p(Q_t)}\nonumber\\
& + c_1^* \, (1 + C\bar{\Psi}_{1,t}) \,(\|\nabla \varrho^*\cdot \nabla \zeta\|_{L^p(Q_t)} + \|\nabla q^*\cdot \nabla \zeta\|_{L^p(Q_t)} ) \\
 & + c_1^* \, (1 + C\bar{\Psi}_{1,t}) \, ( \|\nabla \varrho^*\cdot \nabla q\|_{L^p(Q_t)} + \|\nabla q^*\cdot \nabla q\|_{L^p(Q_t)}) \, .\nonumber
\end{align}
\textbf{Step 4}: Combined estimates.

We add \eqref{vergessen} to \eqref{ESTIMzeta22} to obtain that
\begin{align}\label{ESTIMcombined}
\mathscr{V}(t; \, q) + \|\zeta\|_{W^{2,0}_p(Q_t)} \leq & 2C \, \bar{\Psi}_{1,t}\,  D_0(t) + c_1^* \, (1 + 2C\bar{\Psi}_{1,t}) \, \|\nabla h\|_{L^p(Q_t)}\nonumber\\
& + c_1^* \, (1 + 2C\bar{\Psi}_{1,t}) \,(\|\nabla \varrho^*\cdot \nabla \zeta\|_{L^p(Q_t)} + \|\nabla q^*\cdot \nabla \zeta\|_{L^p(Q_t)} ) \\
 & + c_1^* \, (1 + 2C\bar{\Psi}_{1,t}) \, ( \|\nabla \varrho^*\cdot \nabla q\|_{L^p(Q_t)} + \|\nabla q^*\cdot \nabla q\|_{L^p(Q_t)}) \, .\nonumber
\end{align}
In order to control the factors on the right-hand, we first apply \eqref{gagliardo} to find that $$\|\nabla \zeta(s)\|_{L^{\infty}(\Omega)} \leq C_1 \, \|D^2\zeta(s)\|_{L^p(\Omega)}^{\alpha} \, \|\zeta(s)\|_{L^p(\Omega)}^{1-\alpha} + C_2 \, \|\zeta(s)\|_{L^p(\Omega)} \, , \quad \alpha := \frac{1}{2}+\frac{3}{2p} \, ,$$
with $C_i = C_i(\Omega)$, $i=1,2$. We can bound $a \, b \leq \epsilon \, a^{1/\alpha} + c_{\alpha} \, \epsilon^{-\alpha/(1-\alpha)} \, b^{1/(1-\alpha)}$ (Young's inequality), for all $\epsilon > 0$ and $a, \, b > 0$. By these means, it follows that
\begin{align}\label{ESTIMzeta3} 
\|\nabla & \varrho^*  \cdot\nabla \zeta\|_{L^p(Q_t)}^p \leq \int_{0}^t |\nabla \varrho^*(s)|_{p}^p \, |\nabla \zeta(s)|_{\infty}^p \, ds\\
& \leq C_1 \, \int_{0}^t |\nabla \varrho^*(s)|_{p}^p \, |D^2\zeta(s)|_{p}^{p\alpha} \, |\zeta(s)|_{p}^{p(1-\alpha)} \, ds + C_2 \,\int_{0}^t |\nabla \varrho^*(s)|_{p}^p \, |\zeta(s)|_{p}^p \, ds\nonumber\\
& \leq  \epsilon \, \int_{0}^t  |D^2\zeta(s)|_{p}^{p} \, ds + c_{\alpha} \, \epsilon^{-\frac{\alpha}{1-\alpha}} \, \int_{0}^t |\nabla \varrho^*(s)|_{p}^{\frac{p}{1-\alpha}} \, |\zeta(s)|_{p}^{p} \, ds + C_2 \,\int_{0}^t |\nabla \varrho^*(s)|_{p}^p \, |\zeta(s)|_{p}^p \, ds\nonumber\\
& \leq \epsilon \, \int_{0}^t  |D^2\zeta(s)|_{p}^{p} \, ds + \int_{0}^t |\zeta(s)|_{p}^{p} \, (c_{\alpha} \, \epsilon^{-\frac{\alpha}{1-\alpha}}\, |\nabla \varrho^*(s)|_{p}^{\frac{p}{1-\alpha}} + C_2\,  |\nabla \varrho^*(s)|_{p}^p) \, ds  \, .\nonumber
\end{align}
Here we use the abbreviation $| \cdot |_r$ for $\|\cdot\|_{L^r(\Omega)}$. Just in the same way, we show that
\begin{align}\label{ESTIMzeta4}
 \|\nabla & q^*\cdot\nabla \zeta\|_{L^p(Q_t)}^p \nonumber\\
&  \leq \epsilon \, \int_{0}^t  |D^2\zeta(s)|_{p}^{p} \, ds  + \int_{0}^t |\zeta(s)|_{p}^{p} \, (c_{\alpha} \, \epsilon^{-\frac{\alpha}{1-\alpha}} \,  |\nabla q^*(s)|_{p}^{\frac{p}{1-\alpha}} + C_2\,  |\nabla q^*(s)|_{p}^p) \, ds \, .
\end{align}
We let $F^*(t) := \sup_{s\leq t}( \|\nabla q^*(s)\|_{L^p(\Omega)}^p+  \|\nabla \varrho^*(s)\|_{L^p(\Omega)}^p) $ and $X^*(t; \, \zeta) :=  \|\nabla \varrho^*\cdot\nabla \zeta\|^p_{L^p(Q_t)} + \|\nabla q^* \cdot\nabla \zeta\|^p_{L^p(Q_t)}$. With the help of \eqref{ESTIMzeta3} and of \eqref{ESTIMzeta4}, it follows that
 \begin{align*}
X^*(t; \, \zeta) \leq  2 \, \epsilon \, \|D^2\zeta(s)\|_{L^p(Q_t)}^{p} + [ c_{\alpha} \, \epsilon^{-\frac{\alpha}{1-\alpha}} \,  (F^*(t))^{\frac{1}{1-\alpha}} + C_2\,  F^*(t)]  \, \|\zeta\|_{L^{p}(Q_t)}^p \, . 
 \end{align*}
 We choose $\epsilon = 2^{-2-1/p}\,(c_1^*(1+2C\bar{\Psi}_{1,t}))^{-p}$, where $c_1^*$, $C$ and $\bar{\Psi}_{1,t}$ are the numbers occurring in the relation \eqref{ESTIMcombined}. Then\\[1ex]
 \begin{align}
 (c_1^* \, (1+2C\bar{\Psi}_{1,t}))^p \, X^*(t; \, \zeta) \leq  \frac{1}{2^{1+1/p}} \,  \|\zeta\|_{W^{2,0}_p(Q_t)}^p \label{ESTIMzeta5}\\
 + (c_1^* \,  (1+2C\bar{\Psi}_{1,t}))^p\, [ c_{\alpha} \, (2^{2+1/p}(c_1^*(1+2C\bar{\Psi}_{1,t}))^{p})^{\frac{\alpha}{1-\alpha}} \,  (F^*(t))^{\frac{1}{1-\alpha}} + C_2\,  F^*(t)] \, \|\zeta\|_{L^{p}(Q_t)}^p\, .\nonumber
 \end{align}
 Due to our conventions, we can bound every power of $c_1^*$ and the maximum of $1$ and $c_1^*$ again by another such function. Introducing a factor
 \begin{align*}
  (\Phi_{1,t}^*)^p := & c_1^* \, (1+2C\bar{\Psi}_{1,t})^p \, \max\{ c_{\alpha} \, (2^{2+1/p} \, (1+2C\bar{\Psi}_{1,t})^{p})^{\frac{\alpha}{1-\alpha}}, \, C_2\}\\
 & \times \{ (\mathscr{V}^p(t; \, q^*) + \|\nabla \varrho^*\|_{L^{p,\infty}(Q_t)}^p)^{\frac{1}{1-\alpha}} + (\mathscr{V}^p(t; \, q^*) + \|\nabla \varrho^*\|_{L^{p,\infty}(Q_t)}^p)\} \, ,
 \end{align*}
we can rephrase \eqref{ESTIMzeta5} as
\begin{align}\label{ESTIMzeta5neu}
 &  (c_1^* \, (1+2C\bar{\Psi}_{1,t}))^p \, X^*(t; \, \zeta) \leq  \frac{1}{2^{1+1/p}} \,  \|\zeta\|_{W^{2,0}_p(Q_t)}^p + (\Phi_{1,t}^*)^p \,  \|\zeta\|_{L^{p}(Q_t)}^p\, .
 \end{align}
By means of \eqref{zetalowerorder}, we bound $\|\zeta\|_{L^{p}(Q_t)}\leq c_1^* \, \phi^*_t \, (\|\nabla q\|_{L^p(Q_t)} + \|h\|_{L^p(Q_t)})$. 
Raising \eqref{ESTIMzeta5neu} to the power $1/p$, we show that
\begin{align*}
& c_1^* \, (1+2C\bar{\Psi}_{1,t}) \, (\|\nabla \varrho^*\cdot\nabla \zeta\|_{L^p(Q_t)} + \|\nabla q^*\cdot\nabla \zeta\|_{L^p(Q_t)} ) \nonumber\\
& \qquad \leq  \frac{1}{2} \,  \|\zeta\|_{W^{2,0}_p(Q_t)}+ C_p \, c_1^* \, \phi^*_t\, \Phi_{1,t}^* \, (\|\nabla q\|_{L^p(Q_t)} + \|h\|_{L^p(Q_t)})  \, .
\end{align*}
We insert the latter result into \eqref{ESTIMcombined}, obtaining
\begin{align}\label{ESTIMcombined2}
 & \qquad \mathscr{V}(t; \, q) +\frac{1}{2}\,  \|\zeta\|_{W^{2,0}_p(Q_t)}  \leq  2C \, \bar{\Psi}_{1,t}\,  D_0(t) \nonumber\\
 & + c_1^* \, (1 + 2C\bar{\Psi}_{1,t}) \, \|\nabla h\|_{L^p(Q_t)} + C_p \,  c_1^* \, \phi^*_t\, \Phi_{1,t}^* \, \|h\|_{L^p(Q_t)}\\
 & + c_1^* \, (1 + 2C\bar{\Psi}_{1,t}) \, ( \|\nabla \varrho^*\cdot \nabla q\|_{L^p(Q_t)} + \|\nabla q^*\cdot \nabla q\|_{L^p(Q_t)}) +  C_p \,c_1^* \, \phi^*_t\, \Phi_{1,t}^*  \,\|\nabla q\|_{L^p(Q_t)}  \, .\nonumber
\end{align}
In order to estimate $X^*(t, \, q)$, we apply the same steps as for $X^*(t, \, \zeta)$ (cf.\ \eqref{ESTIMzeta5neu}). Hence
\begin{align*}
 &  (c_1^* \, (1+2C\bar{\Psi}_{1,t}))^p \, X^*(t; \, q) \leq  \frac{1}{2^{1+1/p}} \,  \|q\|_{W^{2,0}_p(Q_t)}^p + (\Phi_{1,t}^*)^p \,  \|q\|_{L^{p}(Q_t)}^p\, ,
 \end{align*}
which, after raising to the power $1/p$, yields
\begin{align*}
 c_1^* \, (1+2C\bar{\Psi}_{1,t}) \, (\|\nabla \varrho^*\cdot\nabla q\|_{L^p(Q_t)} + \|\nabla q^*\cdot\nabla q\|_{L^p(Q_t)} ) 
\leq  \frac{1}{2} \,  \|q\|_{W^{2,0}_p(Q_t)} + C_p \,  \Phi_{1,t}^* \, \|q\|_{L^p(Q_t)}   \, .
\end{align*}
Since $\|q\|_{W^{2,0}_p(Q_t)} \leq \mathscr{V}(t; \, q)$, the latter and \eqref{ESTIMcombined2} imply that
\begin{align}\label{ESTIMcombined3}
&  \frac{1}{2}\,(\mathscr{V}(t; \, q) + \|\zeta\|_{W^{2,0}_p(Q_t)})  \leq C_p \,  \Phi_{1,t}^* \,( 1 +  c_1^* \, \phi^*_t)  \,\|q\|_{W_p^{1,0}(Q_t)}\\
  & \quad + 2C \, \bar{\Psi}_{1,t}\,  D_0(t)  + c_1^* \, (1 + 2C\bar{\Psi}_{1,t}) \,\|\nabla h\|_{L^p(Q_t)} + C_p\, c_1^* \, \phi^*_t\, \Phi_{1,t}^* \, \|h\|_{L^p(Q_t)}\nonumber\, .
\end{align}
In order to finally get rid of the factors with $q$ on the right-hand side, we introduce
\begin{align*}
 [A(t)]^{\frac{1}{p}} := & 2C \, \bar{\Psi}_{1,t}\,  D_0(t)  + c_1^* \, (1 + 2C\bar{\Psi}_{1,t}) \,\|\nabla h\|_{L^p(Q_t)} + C_p\, c_1^* \, \phi^*_t\, \Phi_{1,t}^* \, \|h\|_{L^p(Q_t)}\, ,
 \end{align*}
 and
 \begin{align*}
 [B(t)]^{\frac{1}{p}} := & C_p \,  \Phi_{1,t}^* \, (1+c_1^* \, \phi^*_t) \, , \qquad  f(t) := \sup_{\tau \leq t} \|q(\tau)\|_{W^{2-\frac{2}{p}}_p(\Omega)}^p\, .
\end{align*}
We raise \eqref{ESTIMcombined3} to the $p^{\rm th}$ power. We use $f(t) \leq\mathscr{V}^p(t; \, q)$ and $\|q\|_{W^{1,0}_p(Q_t)}^p \leq \int_0^t f(\tau) \, d\tau$. In this way, we obtain the inequality $f(t) \leq 2^p \, A(t) + 2^p\,  B(t) \, \int_{0}^t f(\tau) \, d\tau$. Using that $A$ and $B$ are monotone increasing by construction, the Gronwall Lemma yields $f(t) \leq 2^p \, A(t) \, \exp(2^p\, t \, B(t))$. In particular, we conclude that
\begin{align*}
 \|q\|_{W^{1,0}_p(Q_t)} \leq [f(t) \, t]^{\frac{1}{p}} \leq c_p \,  t^{\frac{1}{p}} \,  [A(t)]^{\frac{1}{p}} \, \exp(\frac{2^p}{p} \, t \, B(t)) \, .
\end{align*}
Combining the latter with \eqref{ESTIMcombined3}, it follows that
\begin{align}\label{ESTIMcombined4}
& \mathscr{V}(t; \, q) + \|\zeta\|_{W^{2,0}_p(Q_t)}  \leq  2 \, \{1 +  \tilde{c}_p \, t^{\frac{1}{p}} \, \exp(\frac{2^p}{p} \, t \, B(t)) \, [B(t)]^{\frac{1}{p}} \}\nonumber\\
 &\quad  \times   \, \{2C \, \bar{\Psi}_{1,t}\,  D_0(t)  + c_1^* \, (1 + 2C\bar{\Psi}_{1,t}) \, \|\nabla h\|_{L^p(Q_t)}+ C_p \, c_1^* \, \phi^*_t\, \Phi_{1,t}^* \, \|h\|_{L^p(Q_t)}\} \, .
\end{align}
In order to verify that the factors occurring in the latter inequality possess the structure as claimed in the statement, we note that occurrences of  $B(t)$ in \eqref{ESTIMcombined4} are multiplied by a power of $t$, so that they do not occur at $t = 0$. Moreover, the factor $\bar{\Psi}_{1,t}$ possesses the structure required for $\Psi_{1,t}$ in the statement. In order 
to estimate the dependence of $\|q^*\|_{L^{\infty}(Q_t)}$ on the coefficients $c_1^*$, we apply the same strategy as in the section 7 of \cite{bothedruet}: $\|q^*\|_{L^{\infty}(Q_t)} \leq \|q^0\|_{L^{\infty}(\Omega)} + t^{\gamma} \, \mathscr{V}(t; \, q^*)$ (Lemma \ref{HOELDER}). Setting $\Phi_t := C_p \, c_1^* \, \phi^*_t\, \Phi_{1,t}^*$, we are done.
\end{proof}

\subsection{Estimates for linearised problems for the variables $v$ and $\varrho$}

First we state the estimate for the linearised momentum equation. The proof follows the lines of the corresponding result in \cite{bothedruet}. (Since we can assume $\varrho^* \in [\varrho_{\min}, \, \varrho_{\max}]$, the proof is actually simpler.)
\begin{prop}\label{vsystem}
Assume that $\varrho^* \in C^{\alpha,0}(Q_t)$ ($0< \alpha \leq 1$) attains values in $]\varrho_{\min}, \, \varrho_{\max}[$,
that $f \in L^p(Q_T; \, \mathbb{R}^3)$, and that $v^0 \in W^{2-2/p}_p(\Omega; \,  \mathbb{R}^3)$ is such that $v^0 = 0$ on $\partial \Omega$. Then, there is a unique solution $v \in W^{2,1}_p(Q_T; \, \mathbb{R}^3)$ to $\varrho^* \, \partial_t v - \divv \mathbb{S}(\nabla v) = f$ in $Q_T$ with the boundary conditions $v = 0$ on $S_T$ and $v(x, \, 0) = v^0(x)$ in $\Omega$. Moreover, there is $C$ independent on $t$, $\varrho^*$, $v^0$, $f$ and $v$ such that
 \begin{align*}
\mathscr{V}(t; \, v) \leq & C \, \Psi_2(t, \, \sup_{\tau\leq t} [\varrho^*(\tau)]_{C^{\alpha}(\Omega)}) \, (1+ \sup_{\tau\leq t} [\varrho^*(\tau)]_{C^{\alpha}(\Omega)})^{\frac{2}{\alpha}} \, (\|f\|_{L^p(Q_t)} + \|v^0\|_{W^{2-2/p}_p(\Omega)})\, .
\end{align*}
The function $\Psi_2$ is continuous and increasing in both arguments, and it can be chosen such that $\Psi_2(0, \, a)  = (\min\{1, \, \varrho_{\min}\})^{-\frac{2}{\alpha}} \, (\varrho_{\max}/\varrho_{\min})^{\frac{p+1}{p}}$ is independent of $a$.
\end{prop}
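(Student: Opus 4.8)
The plan is to treat the equation $\varrho^* \, \partial_t v - \divv \mathbb{S}(\nabla v) = f$ as a perturbation of the constant-coefficient parabolic system $\varrho_* \, \partial_t v - \divv \mathbb{S}(\nabla v) = f$, where $\varrho_*$ is a frozen reference value, say $\varrho_* := \varrho_{\min}$ (or any constant in $[\varrho_{\min},\varrho_{\max}]$). First I would recall the classical maximal-regularity result for the Lamé-type parabolic operator $\partial_t v - \varrho_*^{-1}\divv\mathbb{S}(\nabla v)$ with homogeneous Dirichlet boundary conditions on $Q_T$: for $f \in L^p(Q_T;\mathbb{R}^3)$ and $v^0 \in W^{2-2/p}_p(\Omega;\mathbb{R}^3)$ with $v^0 = 0$ on $\partial\Omega$ and $p > 3$ (so no further compatibility conditions are needed), there is a unique solution in $W^{2,1}_p$ with the a priori bound $\mathscr{V}(T;v) \le C_0 (\|f\|_{L^p(Q_T)} + \|v^0\|_{W^{2-2/p}_p(\Omega)})$, with $C_0 = C_0(\Omega,p)$ — this is exactly the Solonnikov-type estimate quoted from \cite{solocompress} and already used in \cite{bothedruet}. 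The constant here can be tracked to depend on $\varrho_*$ only through $\varrho_{\min}$ and $\varrho_{\max}$, which is why the claimed $\Psi_2(0,a)$ has the stated explicit form.

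Next I would run the standard localisation/freezing-coefficient argument to pass from $\varrho_*$ to the variable coefficient $\varrho^*(x,t)$. Rewrite the equation as $\varrho_* \, \partial_t v - \divv\mathbb{S}(\nabla v) = f + (\varrho_* - \varrho^*)\,\partial_t v$ and absorb the perturbation: on a space-time cylinder of size $\delta$, the oscillation of $\varrho^*$ is controlled by $[\varrho^*]_{C^{\alpha,0}} \delta^{\alpha}$ (only spatial Hölder regularity is needed since the perturbing term multiplies $\partial_t v$, not a second-order term — actually the $C^{\alpha,0}$ hypothesis suffices, and one may even exploit a short-time-interval smallness in $t$ for the temporal oscillation), so choosing $\delta$ so that $C_0 \, \|\varrho^*\|_{\mathrm{osc}} \le 1/2$ lets one absorb $\|(\varrho_*-\varrho^*)\partial_t v\|_{L^p}$ into the left-hand side on each patch. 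Summing over a finite cover whose cardinality is $\sim \delta^{-(\text{dim})} \sim (1 + [\varrho^*]_{C^{\alpha}})^{(3+2)/\alpha}$-ish and handling the overlap/cutoff error terms with lower-order interpolation (Ehrling / the interpolation inequality \eqref{gagliardo} type bounds for $\|v\|_{W^{1}}$ between $\|v\|_{W^{2,1}_p}$ and $\|v\|_{L^p}$) gives a global estimate with a constant that is polynomial in $(1+\sup_\tau [\varrho^*(\tau)]_{C^\alpha(\Omega)})$ of degree $2/\alpha$ times a continuous increasing function $\Psi_2$ of the same quantity, which absorbs the lower-order remainder via a final Gronwall step in $t$. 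Uniqueness follows from the same a priori estimate applied to the difference of two solutions with zero data.

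The main obstacle is the bookkeeping of the constant's dependence on $[\varrho^*]_{C^\alpha}$: one must verify that the number of patches in the cover, and the size of the cutoff-commutator errors, combine to give exactly the power $2/\alpha$ claimed (and not something worse), and that the remaining $t$-dependence can be packaged into a $\Psi_2$ with $\Psi_2(0,a)$ independent of $a$ and equal to the stated explicit expression $(\min\{1,\varrho_{\min}\})^{-2/\alpha}(\varrho_{\max}/\varrho_{\min})^{(p+1)/p}$. Since this is the verbatim analogue of the corresponding argument in \cite{bothedruet}, with the simplification that $\varrho^*$ is bounded away from $0$ and $\infty$ uniformly (no weight degeneration), I would largely refer to that paper for the patching details and only spell out how the explicit value of $\Psi_2(0,a)$ arises from the constant-coefficient estimate with $\varrho_* = \varrho_{\min}$ together with the scaling $v \mapsto v$, $f \mapsto f/\varrho_*$.
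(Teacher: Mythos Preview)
Your approach is exactly what the paper does: it gives no self-contained proof but simply states that ``the proof follows the lines of the corresponding result in \cite{bothedruet}'' with the simplification that $\varrho^* \in [\varrho_{\min},\varrho_{\max}]$, which is precisely the freezing-coefficient/localisation argument you outline. Your identification of both the method and the simplification (uniform bounds on $\varrho^*$ removing any weight degeneration) matches the paper's treatment.
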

For the linearised continuity equation, we must acknowledge the main difference with respect to the analysis of the compressible models.
\begin{prop}\label{contieq}
Assume that $v^* \in W^{2,1}_p(Q_T; \, \mathbb{R}^3)$ and that $\varrho_0 \in W^{1,p}(\Omega)$ satisfies $\varrho_{\min} < \varrho_0(x) < \varrho_{\max}$ in $\overline{\Omega}$. We define $M_0 = M(\varrho_0, \, 0) := [\inf_{x \in \Omega} \{\varrho_0(x)/\varrho_{\min} - 1, \, 1-\varrho_0(x)/\varrho_{\max}\}]^{-1}$.
Then the problem $\partial_t \varrho + \divv(\varrho \, v^*) = 0$ in $Q_T$ with $\varrho(x, \, 0) = \varrho_0(x)$ in $\Omega$ possesses a unique strictly positive solution of class $W^{1,1}_{p,\infty}(Q_T)$. Define also $M(t) := M(\varrho, \, t)$ (cf. \eqref{supremumrho}). Then, we can find a constant $c$ depending only on $\Omega$ and a function $\Psi_3 = \Psi_3(t, \, a_1, \, a_2)$ continuous and finite in the set 
\begin{align*}
\{t, \, a_1, \, a_2 \geq 0 \, : \, c \, a_1 \, t^{1-\frac{1}{p}} \, a_2 \, e^{c \, t^{1-\frac{1}{p}} \, a_2} \, < \,1\} \, ,
\end{align*}
such that $M(t) \leq  \Psi_3(t,  \, M_0, \, \mathscr{V}(t; \, v^*))$. Moreover, for $\beta = 1-3/p$, there are $\Psi_4, \, \Psi_5$ depending on $t$, $\|\nabla \varrho_0\|_{L^{p}(\Omega)}$ and $\mathscr{V}(t; \, v^*)$ such that
 \begin{align*}
 \|\nabla \varrho\|_{L^{p,\infty}(Q_t)} \leq & \Psi_4(t,\,  \|\nabla \varrho_0\|_{L^{p}(\Omega)}, \, \mathscr{V}(t; \, v^*)), \quad 
 [\varrho]_{C^{\beta,\frac{\beta}{2}}(Q_t)} \leq & \Psi_5(t, \, \|\nabla \varrho_0\|_{L^{p}(\Omega)}, \, \mathscr{V}(t; \, v^*)) \, .
\end{align*}
For $i=3,4,5$, $\Psi_i$ is continuous and increasing in all variables, and $\Psi_i(0, \, a_1, a_2) = \Psi^0_i(a_1)$ is independent on the last variable. The identity $\Psi_4(0, \, a_1, \, a_2) = a_1$, and the inequality $\Psi_5(0, \, a_1, \, a_2) \leq C \, a_1$, are also valid.
\end{prop}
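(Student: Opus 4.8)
The backbone is the Lagrangian representation of the transport solution. Since $p>3$, for a.e.\ $s$ we have $v^*(\cdot,s)\in W^{2,p}(\Omega)\hookrightarrow C^{1,1-3/p}(\overline{\Omega})$ with Lipschitz constant bounded by $c\,\|v^*(\cdot,s)\|_{W^{2,p}(\Omega)}\in L^p(0,T)$; together with $v^*=0$ on $S_T$, Carath\'eodory theory gives a unique flow $X(\cdot\,;x)$ with $\dot X=v^*(X,\cdot)$, $X(0;x)=x$, which is for every $t$ a bijection of $\overline{\Omega}$ with Jacobian $J(t;x)=\exp\int_0^t(\divv v^*)(X(\sigma;x),\sigma)\,d\sigma$. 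Existence, uniqueness, strict positivity and the $W^{1,1}_{p,\infty}(Q_T)$ regularity of $\varrho$ come from Theorem~2 of \cite{solocompress}, and along characteristics the (renormalised) solution satisfies $\varrho(X(t;x),t)=\varrho_0(x)\exp\!\big(-\int_0^t(\divv v^*)(X(\sigma;x),\sigma)\,d\sigma\big)$. I record the elementary estimate $E(t):=\int_0^t\|\divv v^*(s)\|_{L^\infty(\Omega)}\,ds\le c\int_0^t\|v^*(s)\|_{W^{2,p}(\Omega)}\,ds\le c\,t^{1-1/p}\,\mathscr{V}(t;v^*)$, using $W^{1,p}(\Omega)\hookrightarrow L^\infty(\Omega)$ and H\"older's inequality in time; in particular $\|\varrho(\cdot,s)\|_{L^\infty(\Omega)}\le\varrho_{\max}\,e^{E(s)}$ for all $s\le T$.

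For the bound on $M(t)$ I propagate the two distances to the thresholds along the flow. With $u:=\varrho_0(x)/\varrho_{\min}-1\ge 1/M_0$, the representation formula gives $1+\big(\varrho(X(t;x),t)/\varrho_{\min}-1\big)=(1+u)\exp(-\int_0^t\divv v^*)$, whence $\big|\varrho(X(t;x),t)/\varrho_{\min}-1-u\big|\le(1+u)(e^{E(t)}-1)\le(\varrho_{\max}/\varrho_{\min})(e^{E(t)}-1)$, and identically for $1-\varrho/\varrho_{\max}$. Since $X(\tau;\cdot)$ is onto $\Omega$ and $E$ is nondecreasing, both infima $\inf_{Q_t}(\varrho/\varrho_{\min}-1)$ and $\inf_{Q_t}(1-\varrho/\varrho_{\max})$ are $\ge 1/M_0-(\varrho_{\max}/\varrho_{\min})(e^{E(t)}-1)$; by $e^a-1\le a\,e^a$ and the bound on $E(t)$ this is $\ge M_0^{-1}\big(1-c\,M_0\,t^{1-1/p}\,\mathscr{V}(t;v^*)\,e^{c\,t^{1-1/p}\mathscr{V}(t;v^*)}\big)$ after renaming constants. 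Inverting, $M(t)\le\Psi_3(t,M_0,\mathscr{V}(t;v^*))$ with $\Psi_3(t,a_1,a_2):=a_1\big/\big(1-c\,a_1\,t^{1-1/p}\,a_2\,e^{c\,t^{1-1/p}a_2}\big)$; one checks directly that $\Psi_3$ is finite, continuous and increasing in each argument exactly on the stated set (its derivative in $a_1$ equals the reciprocal square of the denominator) and that $\Psi_3(0,a_1,a_2)=a_1$. \emph{This is the genuinely new step compared with the compressible analysis of \cite{bothedruet}}: the exponential factors and constants must be arranged so that the bound deteriorates precisely when $c\,M_0\,t^{1-1/p}\,\mathscr{V}(t;v^*)\,e^{c\,t^{1-1/p}\mathscr{V}(t;v^*)}$ reaches $1$, since this controlled-growth threshold is exactly the quantity ``$a_0$'' invoked in Lemma~\ref{estimfinale} and in the self-mapping step for $\mathcal{T}$.

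The gradient bound follows from the equation for $w:=\nabla\varrho$, namely $\partial_tw+(v^*\!\cdot\!\nabla)w=-(\nabla v^*)^{\sf T}w-(\divv v^*)w-\varrho\,\nabla\divv v^*$. Either by an $L^p$ energy estimate (test with $|w|^{p-2}w$, integrate by parts using $v^*=0$ on $S_T$; the manipulation is justified by mollification) or directly from the representation of $w$ along the flow together with the change of variables $y=X(s;x)$, one obtains
\[
\|\nabla\varrho(\cdot,t)\|_{L^p(\Omega)}\le e^{E(t)/p+c\int_0^t\|\nabla v^*\|_{L^\infty}}\Big(\|\nabla\varrho_0\|_{L^p(\Omega)}+e^{E(t)/p}\!\int_0^t\|\varrho(\cdot,s)\|_{L^\infty}\|D^2v^*(s)\|_{L^p}\,ds\Big).
\]
Inserting $\|\varrho(\cdot,s)\|_{L^\infty}\le\varrho_{\max}e^{E(t)}$, $\int_0^t\|D^2v^*(s)\|_{L^p}ds\le t^{1-1/p}\mathscr{V}(t;v^*)$ and $\int_0^t\|\nabla v^*\|_{L^\infty}\le c\,t^{1-1/p}\mathscr{V}(t;v^*)$, and taking the supremum over $t$, gives $\|\nabla\varrho\|_{L^{p,\infty}(Q_t)}\le\Psi_4(t,\|\nabla\varrho_0\|_{L^p(\Omega)},\mathscr{V}(t;v^*))$ with $\Psi_4$ continuous, increasing, finite for all arguments; the key point is that the Gronwall step is run in the $L^p$-triangle form, not via $(a+b)^p\le 2^{p-1}(a^p+b^p)$, so that no spurious constant survives and $\Psi_4(0,a_1,a_2)=a_1$.

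Finally, for the H\"older seminorm with $\beta=1-3/p$: the spatial part is Morrey's inequality, $\sup_{\tau\le t}[\varrho(\cdot,\tau)]_{C^{\beta}(\Omega)}\le c_\Omega\sup_{\tau\le t}\|\nabla\varrho(\cdot,\tau)\|_{L^p(\Omega)}\le c_\Omega\Psi_4(t,\cdot,\cdot)$. For the time part, given $s<\tau\le t$, $x\in\Omega$, let $\Phi_{s,\tau}$ be the flow from time $s$ to time $\tau$ and split $\varrho(x,\tau)-\varrho(x,s)=[\varrho(x,\tau)-\varrho(\Phi_{s,\tau}(x),\tau)]+[\varrho(\Phi_{s,\tau}(x),\tau)-\varrho(x,s)]$; the first bracket is $\le[\varrho(\cdot,\tau)]_{C^{\beta}(\Omega)}(\sup_{Q_t}|v^*|)^\beta(\tau-s)^\beta$ with $\sup_{Q_t}|v^*|\le c\,\mathscr{V}(t;v^*)$ since $W^{2-2/p}_p(\Omega)\hookrightarrow C(\overline{\Omega})$, and the second equals $\varrho(x,s)\big(e^{-\int_s^\tau(\divv v^*)(\Phi_{s,\sigma}(x),\sigma)d\sigma}-1\big)$, bounded by $\varrho_{\max}e^{E(t)}\cdot c\,(\tau-s)^{1-1/p}\mathscr{V}(t;v^*)\,e^{E(t)}$. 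As $\beta=1-3/p<1-1/p$, both contributions are $\le C(\tau-s)^\beta\le C\,T^{\beta/2}(\tau-s)^{\beta/2}$, so $[\varrho]_{C^{\beta,\beta/2}(Q_t)}\le\Psi_5(t,\|\nabla\varrho_0\|_{L^p(\Omega)},\mathscr{V}(t;v^*))$ with the asserted monotonicity and $\Psi_5(0,a_1,a_2)=c_\Omega a_1$. The remaining verifications (continuity and monotonicity of the $\Psi_i$, independence of the last argument at $t=0$) are read off the explicit formulae; beyond the bookkeeping of Step~2 the argument offers no further difficulty.
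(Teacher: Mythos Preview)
Your proof is correct and follows essentially the same route as the paper. For the new ingredient $\Psi_3$ you use the characteristic representation, the elementary bound $e^a-1\le a\,e^a$, and $\|\divv v^*\|_{L^{\infty,1}(Q_t)}\le c\,t^{1-1/p}\mathscr{V}(t;v^*)$ to arrive at exactly the paper's formula $\Psi_3(t,a_1,a_2)=a_1/(1-c\,a_1\,t^{1-1/p}a_2\,e^{c\,t^{1-1/p}a_2})$; for $\Psi_4,\Psi_5$ the paper simply cites \cite{bothedruet}, Corollary~7.8, while you spell out the gradient Gronwall and the Morrey/flow-splitting arguments, which is a welcome addition and entirely compatible with what is quoted.
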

\begin{proof}
The existence statement as well as the construction of the functions $\Psi_4$ and $\Psi_5$ is proved in \cite{bothedruet}, Corollary 7.8. The critical point is the construction of the function $\Psi_3$. We start from the well--known representation of the solution to the continuity equation (see a. o.\ \cite{solocompress}) 
\begin{align*}
 \varrho(x, \, t) := \varrho_0(y(0; \, x, \, t)) \, \exp\left(-\int_{0}^t \divv v^*(y(\tau; \, x,t), \, \tau) \, d\tau\right) \, ,
 \end{align*}
 where $y(\tau; \, x, \, t)$ is the characteristic curve with speed $v^*$ through $(x, \, t)$. Therefore,
 \begin{align*}
  \varrho_{\max} - \varrho = & \varrho_{\max} - \varrho_0(y(0; \, x, \, t)) + \varrho_0(y(0; \, x, \, t)) \, \left(1 - \exp\left(-\int_{0}^t \divv v^*(y(\tau; \, x,t), \, \tau) \, d\tau\right)\right)\\
  \geq&  \varrho_{\max}  \, \left(\frac{1}{M_0} - \left|1 - \exp\left(-\int_{0}^t \divv v^*(y(\tau; \, x,t), \, \tau) \, d\tau\right)\right|\right) \, .
 \end{align*}
Use of $|1 - e^b| \leq e^{|b|} \, |b|$ allows to bound
\begin{align*}
&   \left|1 - \exp\left(-\int_{0}^t \divv v^*(y(\tau; \, x,t), \, \tau) \, d\tau\right)\right| \\
& \quad  \leq \exp\left(\int_{0}^t \|\divv v^*(\tau)\|_{L^{\infty}(\Omega)} \, d\tau\right) \, \int_{0}^t \|\divv v^*(\tau)\|_{L^{\infty}(\Omega)} \, d\tau \, .
\end{align*}
Owing to the continuity of $W^{1,p}(\Omega) \subset L^{\infty}(\Omega)$ and H\"older's inequality
\begin{align*}
\|\divv v^*\|_{L^{\infty,1}(Q_t)} \leq c_{\Omega} \, \int_{0}^t \|\divv v^*(\tau)\|_{W^{1,p}(\Omega)} \, d\tau \leq c_{\Omega} \, t^{1-\frac{1}{p}} \, \|v^*\|_{W^{2,0}_p(Q_t)} \, .
\end{align*}
 Thus $1- \varrho/\varrho_{\max} \geq 1/M_0 -  c_{\Omega} \, t^{1-\frac{1}{p}} \,  \mathscr{V}(t; \, v^*) \,  \exp(c_{\Omega} \, t^{1-\frac{1}{p}} \,  \mathscr{V}(t; \, v^*))$.  Thanks to a similar argument applied to $\varrho_{\min} - \varrho$, we find that
\begin{align}\label{Psi3}
M(t) \leq \frac{M_0}{1 - c_{\Omega} \, M_0 \, t^{1-\frac{1}{p}} \,  \mathscr{V}(t; \, v^*) \, e^{c_{\Omega} \, t^{1-\frac{1}{p}} \,  \mathscr{V}(t; \, v^*)}}
\end{align}
and define the function $\Psi_3$ to be the right-hand of the latter relation. 
\end{proof}

\section{The continuity estimate for $\mathcal{T}$}\label{contiT}

We now want to combine the Propositions \ref{qzetasystem} and \ref{vsystem} with the linearisation of the continuity equation in Proposition \ref{contieq} to study the fixed point map $\mathcal{T}$ described at the beginning of Section \ref{twomaps} and defined by the equations \eqref{linearT1}, \eqref{linearT2}, \eqref{linearT3}, \eqref{linearT4} for given $v^* \in W^{2,1}_p(Q_T; \, \mathbb{R}^3)$ and $q^* \in W^{2,1}_p(Q_T; \, \mathbb{R}^{N-2})$. We define $\mathscr{V}^*(t) := \mathscr{V}(t; \, q^*) +  \mathscr{V}(t; \, v^*)$. At first we state estimates for the lower--order nonlinearities \eqref{gri}, \eqref{fri}. 
\begin{lemma}\label{fandg}
For $u^* = (q^*, \, \zeta^*, \, \varrho^*, \,  v^*) \in \mathcal{X}_{T,I}$, define $g^* := g(x, \, t, \, u^*,\, D^1_xu^*)$ and $f^* := f(x, \, t, \, u^*,\, D^1_xu^*)$ via \eqref{gri} and \eqref{fri}. There are continuous $\Psi_{g}, \, \Psi_{f} = \Psi(t, \, a_1,\ldots,a_4)$ defined for all $t \geq 0$ and $a_1,\ldots,a_4 \geq 0$ such that
\begin{align*}
 \|g^*\|_{L^p(Q_t)} \leq \Psi_{g}(t, \, M^*(t), \, \|(q^*(0), \, v^*(0))\|_{W^{2-2/p}_p(\Omega)}, \, \|\nabla \varrho^*\|_{L^{p,\infty}(Q_t)}, \, \mathscr{V}^*(t)) \, ,\\
 \|f^*\|_{L^p(Q_t)} \leq \Psi_{f}(t, \, M^*(t), \, \|(q^*(0), \, v^*(0))\|_{W^{2-2/p}_p(\Omega)}, \, \|\nabla \varrho^*\|_{L^{p,\infty}(Q_t)}, \, \mathscr{V}^*(t)) \, .
\end{align*}
$\Psi_g$ and $\Psi_f$ are increasing in all arguments with $\Psi_{g}(0, \, a_1,\ldots,a_4) = 0 = \Psi_{f}(0, \, a_1,\ldots,a_4)$.
\end{lemma}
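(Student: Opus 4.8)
The plan is to estimate each summand of \eqref{gri} and \eqref{fri} separately in $L^p(Q_t)$, writing every term as the product of a nonlinear coefficient evaluated at $(\varrho^*, q^*)$ -- bounded by a generic continuous increasing function -- with a ``field factor'' built from $v^*$, $\nabla v^*$, $\nabla q^*$, $\nabla\varrho^*$, $\tilde b$, $\hat b$, $\bar b$, each of which carries either an explicit vanishing power of $t$ or a modulus of continuity tending to $0$ as $t \searrow 0$.

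First I would record the coefficient bound. On the compact subset of $I\times\mathbb{R}^{N-2}$ where $\varrho_{\min}(1+1/M^*(t)) \le \varrho \le \varrho_{\max}(1-1/M^*(t))$ and $|q| \le \|q^*\|_{L^\infty(Q_t)}$, the $C^2$ regularity of $R,\widetilde M,A,d,P$ and the $C^1$ regularity of $\tilde r$ give a common pointwise bound $c_1^* = c_1(M^*(t), \|q^*\|_{L^\infty(Q_t)})$ for all these maps together with their first-order derivatives, with $c_1$ continuous and nondecreasing in both arguments. As in the proof of Proposition \ref{qzetasystem} (using \cite{bothedruet}, Lemma \ref{HOELDER}), the bound $\|q^*\|_{L^\infty(Q_t)} \le C\|q^*(0)\|_{W^{2-2/p}_p(\Omega)} + t^{\gamma}\mathscr{V}(t;q^*)$, valid for $p>3$ with some $\gamma>0$, then lets me replace $c_1^*$ by a continuous increasing function of $t$, $M^*(t)$, $\|q^*(0)\|_{W^{2-2/p}_p(\Omega)}$ and $\mathscr{V}(t;q^*)$.

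Next I would bound the field factors using the $p>3$ parabolic embeddings $\nabla v^*, \nabla q^* \in C([0,T];L^p(\Omega))$ and $v^* \in C([0,T];C(\overline\Omega))$, so that $\|\nabla v^*\|_{L^p(Q_t)} + \|\nabla q^*\|_{L^p(Q_t)} \le Ct^{1/p}\mathscr{V}^*(t)$ and $\|v^*\|_{L^\infty(Q_t)} \le C\|v^*(0)\|_{W^{2-2/p}_p(\Omega)} + t^{\gamma}\mathscr{V}(t;v^*)$; this disposes of $(R_\varrho\varrho^* - R)\divv v^*$, $R_q\, v^*\cdot\nabla q^*$, $P_q\nabla q^*$ and $\varrho^*(v^*\cdot\nabla)v^*$, while $P_\varrho\nabla\varrho^*$ is controlled by $c_1^* t^{1/p}\|\nabla\varrho^*\|_{L^{p,\infty}(Q_t)}$. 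For the forcing terms I use $W^{1,p}(\Omega)\hookrightarrow L^\infty(\Omega)$, hence $\|\tilde b\|_{L^{\infty,p}(Q_t)} + \|\hat b\|_{L^{\infty,p}(Q_t)} \le C(\|\tilde b\|_{W^{1,0}_p(Q_t)} + \|\hat b\|_{W^{1,0}_p(Q_t)})$; expanding $\divv(\widetilde M\tilde b + A\hat b)$ by the chain rule produces $\widetilde M_\varrho\nabla\varrho^*\tilde b$, $\widetilde M_q\nabla q^*\tilde b$, $\widetilde M\divv\tilde b$ and the analogous $A$-terms, bounded by $c_1^*$ times $\|\nabla\varrho^*\|_{L^{p,\infty}(Q_t)}\|\tilde b\|_{L^{\infty,p}(Q_t)}$, $\|\nabla q^*\|_{L^{p,\infty}(Q_t)}\|\tilde b\|_{L^{\infty,p}(Q_t)}$, or $\|\tilde b\|_{W^{1,0}_p(Q_t)}$ respectively; finally $\tilde r(\varrho^*,q^*)$ is bounded by $c_1^*(t|\Omega|)^{1/p}$, and $R\cdot\tilde b$, $\hat b$, $\varrho^*\bar b$ by $c_1^*\|\tilde b\|_{L^p(Q_t)}$, $\|\hat b\|_{L^p(Q_t)}$, $\varrho_{\max}\|\bar b\|_{L^p(Q_t)}$. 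Each field factor is nondecreasing in $t$ and vanishes as $t\searrow0$ (powers of $t$, or absolute continuity of $\tau\mapsto\|\tilde b(\tau)\|_{W^{1,p}(\Omega)}^p$, etc.). Defining $\Psi_g$ resp. $\Psi_f$ as the sum of these bounds, regarded as a function of $t$, $M^*(t)$, $\|(q^*(0),v^*(0))\|_{W^{2-2/p}_p(\Omega)}$, $\|\nabla\varrho^*\|_{L^{p,\infty}(Q_t)}$ and $\mathscr{V}^*(t)$, yields a continuous function, increasing in all arguments (each summand being a product of nonnegative nondecreasing factors) and vanishing at $t=0$ since every summand contains a factor that does.

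The only genuinely delicate point is keeping all dependence on $q^*$ within the permitted list of arguments: the nonlinear coefficients are only locally bounded on $I\times\mathbb{R}^{N-2}$, so the estimate must route the $q^*$-dependence of $c_1^*$ through $\|q^*\|_{L^\infty(Q_t)} \le C\|q^*(0)\|_{W^{2-2/p}_p(\Omega)} + t^{\gamma}\mathscr{V}(t;q^*)$, and, for each product of a non-vanishing rough factor ($\nabla\varrho^*$ or $v^*$) with a vanishing one, it must exhibit a genuinely $L^p(Q_t)$-summable bound; this is exactly where the trace and embedding theorems for $W^{2,1}_p(Q_t)$ and $W^{1,p}(\Omega)$ together with Hölder's inequality in time enter. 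The remaining work is bookkeeping, entirely parallel to the lower-order estimates in \cite{bothedruet}.
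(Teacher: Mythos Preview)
Your proposal is correct and follows essentially the same approach as the paper: the paper does not spell out the proof of this lemma but refers to \cite{bothedruet} and points to the proof of Lemma~\ref{h} as an illustration, and your argument---bounding nonlinear coefficients by a generic $c_1^* = c_1(M^*(t),\|q^*\|_{L^\infty(Q_t)})$, routing $\|q^*\|_{L^\infty(Q_t)}$ through Lemma~\ref{HOELDER}, and extracting a vanishing factor $t^{1/p}$, $t^\gamma$, or $\|b\|_{L^p(Q_t)}$ from each field term---matches that template exactly.
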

These estimates were proved in \cite{bothedruet} for the case that the non-linear coefficients $R, \, \widetilde{M}$ are defined for $\varrho^*$ taking values in $]0, \, + \infty[$. The proof is exactly the same for $\varrho^* $ taking values in $I$, provided that we adapt the definition of $m^*(t), \, M^*(t)$ via \eqref{supremumrho}. Moreover, the arguments are very similar to the ones used to bound the right-hand vector field $h$. This statement, that we next prove in detail, might serve as an illustration.
\begin{lemma}\label{h}
Consider $u^* = (q^*, \, \zeta^*, \, \varrho^*, \,  v^*) \in \mathcal{X}_{T,I}$. Define $h^* := h(x, \, t, \, u^*)$ via \eqref{hri}. Then there is a continuous function $\Psi_{h} = \Psi_{h}(t, \, a_1,\ldots,a_4)$ defined for all $t \geq 0$ and $a_1,\ldots,a_4 \geq 0$ such that
\begin{align*}
\|h^*\|_{W^{1,0}_p(Q_t)} \leq & \Psi_{h}(t, \, M^*(t), \, \|q^*(0)\|_{W^{2-2/p}_p(\Omega)}, \, \|\nabla \varrho^*\|_{L^{p,\infty}(Q_t)}, \, \mathscr{V}^*(t)) \, .
\end{align*}
The function $\Psi_h$ is increasing in all arguments. Moreover $\Psi_{h}(0, \, a_1, \ldots ,a_4) = 0$.
\end{lemma}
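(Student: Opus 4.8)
The plan is to estimate the two summands of $h^* = d(\varrho^*,q^*)\,\hat b + A(\varrho^*,q^*)\,\tilde b$ (cf.\ \eqref{hri}) and their spatial gradients separately. First I would record that $\varrho^*$ maps into $I$ with $\inf_{Q_t}\dist(\varrho^*,\{\varrho_{\min},\varrho_{\max}\})$ bounded below by a positive quantity controlled by $1/M^*(t)$ (this is exactly the content of the definitions \eqref{infimumrho}, \eqref{supremumrho}), while $q^*\in W^{2,1}_p(Q_t)\hookrightarrow C(\overline{Q_t})$ for $p>3$; hence $(\varrho^*,q^*)$ ranges in a compact subset of the domain $I\times\mathbb{R}^{N-2}$ on which the $C^1$ coefficients $d,A$ are defined. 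Consequently $d$, $A$ and their first-order partials $d_\varrho,d_q,A_\varrho,A_q$, all evaluated at $(\varrho^*,q^*)$, are bounded in $L^\infty(Q_t)$ by a generic function $c_1^*=c_1(M^*(t),\|q^*\|_{L^\infty(Q_t)})$, continuous and increasing in its arguments, exactly as in the proof of Proposition \ref{qzetasystem}. Since $\|q^*\|_{L^\infty(Q_t)}\le C\,\|q^*(0)\|_{W^{2-2/p}_p(\Omega)}+t^{\gamma}\,\mathscr{V}(t;q^*)$ for some $\gamma>0$ by the trace embedding and Lemma \ref{HOELDER}, we may regard $c_1^*$ as an increasing function of $t$, $M^*(t)$, $\|q^*(0)\|_{W^{2-2/p}_p(\Omega)}$ and $\mathscr{V}(t;q^*)\le\mathscr{V}^*(t)$.

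For the zero-order part this yields at once $\|h^*\|_{L^p(Q_t)}\le c_1^*\,(\|\hat b\|_{L^p(Q_t)}+\|\tilde b\|_{L^p(Q_t)})$. For the gradient I would apply the chain rule, $\nabla\big(d(\varrho^*,q^*)\big)=d_\varrho(\varrho^*,q^*)\,\nabla\varrho^*+d_q(\varrho^*,q^*)\cdot\nabla q^*$ and similarly for $A$, giving the pointwise bound
\[
|\nabla h^*|\le c_1^*\,(|\nabla\varrho^*|+|\nabla q^*|)\,(|\hat b|+|\tilde b|)+c_1^*\,(|\nabla\hat b|+|\nabla\tilde b|).
\]
The last two terms contribute only $c_1^*\,(\|\hat b\|_{W^{1,0}_p(Q_t)}+\|\tilde b\|_{W^{1,0}_p(Q_t)})$ after integration. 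The terms that need a moment of care are the products $|\nabla\varrho^*|\,|\hat b|$, $|\nabla q^*|\,|\hat b|$ and their $\tilde b$-analogues: the trick is to put the forcing factor $\hat b(s)$ into $L^\infty(\Omega)$ through the embedding $W^{1,p}(\Omega)\hookrightarrow L^\infty(\Omega)$ (valid since $p>3$), leaving $\nabla\varrho^*(s)$ and $\nabla q^*(s)$ in $L^p(\Omega)$. Then for a.e.\ $s\le t$,
\[
\big\|(\nabla\varrho^*(s)+\nabla q^*(s))\,\hat b(s)\big\|_{L^p(\Omega)}\le c\,\big(\|\nabla\varrho^*(s)\|_{L^p(\Omega)}+\|\nabla q^*(s)\|_{L^p(\Omega)}\big)\,\|\hat b(s)\|_{W^{1,p}(\Omega)},
\]
and one estimates $\|\nabla\varrho^*(s)\|_{L^p(\Omega)}\le\|\nabla\varrho^*\|_{L^{p,\infty}(Q_t)}$ and $\|\nabla q^*(s)\|_{L^p(\Omega)}\le C\sup_{\tau\le t}\|q^*(\tau)\|_{W^{2-2/p}_p(\Omega)}\le C\,\mathscr{V}^*(t)$; raising to the power $p$ and integrating over $s\in(0,t)$ converts the remaining factor into $\|\hat b\|_{W^{1,0}_p(Q_t)}^p$ (and likewise for $\tilde b$).

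Collecting these estimates gives
\[
\|h^*\|_{W^{1,0}_p(Q_t)}\le c_1^*\,\big(1+\|\nabla\varrho^*\|_{L^{p,\infty}(Q_t)}+\mathscr{V}^*(t)\big)\,\big(\|\hat b\|_{W^{1,0}_p(Q_t)}+\|\tilde b\|_{W^{1,0}_p(Q_t)}\big),
\]
and I would then define $\Psi_h(t,a_1,\ldots,a_4):=c_1\big(a_1,\,C a_2+t^{\gamma}a_4\big)\,(1+a_3+a_4)\,\omega(t)$, with $\omega(t):=\|\hat b\|_{W^{1,0}_p(Q_t)}+\|\tilde b\|_{W^{1,0}_p(Q_t)}$. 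Because $\hat b,\tilde b\in W^{1,0}_p(Q_T)$, the function $\omega$ is continuous, nondecreasing, and $\omega(0)=0$ by absolute continuity of the Lebesgue integral, while the remaining factors are nonnegative and nondecreasing in each variable; hence $\Psi_h$ is continuous, increasing in all arguments, and $\Psi_h(0,a_1,\ldots,a_4)=0$, as claimed. There is no genuine obstacle in this lemma; the only thing to watch is which factor of each product absorbs the $L^\infty$ estimate, so that the time integrability never has to exceed the $L^p$ level — this is precisely the bookkeeping already used for the analogous products appearing in $g$ and $f$ (Lemma \ref{fandg}).
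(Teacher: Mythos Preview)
Your proof is correct and follows essentially the same approach as the paper: bound the coefficients $d,A$ and their derivatives by a generic $c_1^*=c_1(M^*(t),\|q^*\|_{L^\infty(Q_t)})$, control $\|q^*\|_{L^\infty(Q_t)}$ via Lemma~\ref{HOELDER}, and handle the gradient products by putting the forcing factor into $L^\infty(\Omega)$ through the embedding $W^{1,p}(\Omega)\hookrightarrow L^\infty(\Omega)$. The paper organises the bookkeeping by splitting $\Psi_h$ into two pieces $\Psi_h^1$ (for $\|h^*\|_{L^p}$) and $\Psi_h^2$ (for $\|h^*_x\|_{L^p}$), but the substance is identical to what you wrote.
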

\begin{proof}
 Recall that $h := A(\varrho^*, \, q^*) \, \tilde{b}(x, \,t) + d(\varrho^*, \, q^*) \, \hat{b}(x, \, t)$. With $c_1^*$ as in the proof of Prop. \ref{qzetasystem}, we bound $ |d(\varrho^*, \, q^*) \, \hat{b}| \leq c_1^* \, |\hat{b}|$ and $|A(\varrho^*, \, q^*) \, \tilde{b}| \leq  c_1^* \, |\tilde{b}|$. Hence $\|h\|_{L^{p}(Q_t)} \leq c_1^* \, (\|\tilde{b}\|_{L^{p}(Q_t)} +\|\hat{b}\|_{L^{p}(Q_t)})$. Lemma \ref{HOELDER} allows to bound $\|q^*\|_{L^{\infty}(Q_t)} \leq \|q^0\|_{L^{\infty}(\Omega)} + t^{\gamma} \, \mathscr{V}^*(t)$ and, evidently, $\|q^0\|_{L^{\infty}(\Omega)} \leq C \, \|q^0\|_{W^{2-2/p}_p(\Omega)}$. 
 
 We then define a function $\Psi^1_h(t, \, a_1, \, \ldots, \, a_4) :=  (\|\tilde{b}\|_{L^{p}(Q_t)} +\|\hat{b}\|_{L^{p}(Q_t)}) \, c_1(a_1, \, a_2 + t^{\gamma} \,a_4 )$. We see that $\Psi^{1}_h$ satisfies $\Psi_h^{1}(0, \, a_1,\ldots,a_4) = 0$, and $\|h\|_{L^p(Q_t)} \leq \Psi^1_{h,t}$.

We compute $h^*_x$, and readily show a bound $ |h^*_x| \leq c_1^* \, ((|\varrho^*_x| +|q^*_x|)  \, (|\tilde{b}| + |\hat{b}|) + |\tilde{b}_x| + |\hat{b}_x|)$. Hence
\begin{align*}
 & \|h^*_x\|_{L^p(Q_t)} \\
&  \leq  c_1^* \, (\|\varrho^*_x\|_{L^{p,\infty}(Q_t)} + \|q^*_x\|_{L^{p,\infty}(Q_t)}) \, (\|\tilde{b}\|_{L^{\infty,p}(Q_t)} + \|\hat{b}\|_{L^{\infty,p}(Q_t)}) + c_1^* \,  (\|\tilde{b}_x\|_{L^p(Q_t)} + \|\hat{b}_x\|_{L^p(Q_t)})\\
 & \leq  c_1^* \, [(\|\tilde{b}\|_{L^{\infty,p}(Q_t)} + \|\hat{b}\|_{L^{\infty,p}(Q_t)}) \, (\|\varrho^*_x\|_{L^{p,\infty}(Q_t)} + \mathscr{V}^*(t)) + \|\tilde{b}_x\|_{L^p(Q_t)} + \|\hat{b}_x\|_{L^p(Q_t)}] =:  \Psi^{2}_h \, .
\end{align*}
We use again Lemma \ref{HOELDER} to control $c_1^*$, seeing thus that the function $\Psi^{2}_h$ also possesses the desired structure ($\Psi^{2}_h = 0$).
\end{proof}
We are now ready to establish the final estimate that allows to obtain the self-mapping property.
\begin{prop}\label{estimateself}
For $(q^*, \, v^*) \in \mathcal{Y}_T$, the solution $(q, \, v) = \mathcal{T}(q^*, \, v^*)$ to the equations \eqref{linearT1}, \eqref{linearT2}, \eqref{linearT3}, \eqref{linearT4} exists and is unique in the class $\mathcal{Y}_t$ for all $t$ subject to
\begin{align}\label{existinterval}
 c \, M_0 \,  t^{1-\frac{1}{p}} \, \mathscr{V}^*(t) \, e^{ c \, t^{1-\frac{1}{p}} \, \mathscr{V}^*(t)} <1 \, ,
\end{align}
where $c= c(\Omega)$ and $M_0$ are the same as in Prop. \ref{contieq}. There is a continuous function $\Psi_6 = \Psi_6(t, \, a_1,\ldots,a_4)$ defined for all $t\geq 0$ and $a_1 \ldots a_4 \geq 0$ subject to the restriction
\begin{align}\label{RESTRIC}
 c \, a_1 \,  t^{1-\frac{1}{p}} \, a_4 \,  e^{ c \, t^{1-\frac{1}{p}} \, a_4} <1 \, ,
\end{align}
such that $\mathscr{V}(t; \, q) + \mathscr{V}(t; \, v) \leq \Psi_6(t, \, M_0, \,  \|(q^0, \, v^0)\|_{W^{2-2/p}_p(\Omega)}, \, \|\nabla \varrho_0\|_{L^p(\Omega)}, \, \mathscr{V}^*(t))$. The function $\Psi_6$ is increasing in all arguments and
\begin{align*}
& \Psi_6(0, \,  M_0,  \,  \|(q^0, \, v^0)\|_{W^{2-2/p}_p(\Omega)}, \, \|\nabla \varrho_0\|_{L^p(\Omega)}, \, \eta)  = \Psi_6^0(M_0, \,  \|(q^0, \, v^0)\|_{W^{2-2/p}_p(\Omega)}, \,   \|\nabla \varrho_0\|_{L^p(\Omega)}) \, 
\end{align*}
for all $\eta > 0$.
\end{prop}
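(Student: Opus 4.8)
The plan is to assemble the estimate for the full fixed-point map $\mathcal{T}$ by chaining together the three already-established a priori estimates: Proposition \ref{contieq} for the continuity equation (controlling $\varrho$ in terms of $v^*$), Proposition \ref{qzetasystem} for the parabolic--elliptic $(q,\zeta)$-subsystem, and Proposition \ref{vsystem} for the linearised momentum equation, while feeding in Lemmas \ref{fandg} and \ref{h} to bound the lower-order right-hand sides $g^*$, $f^*$, $h^*$. The restriction \eqref{existinterval} is precisely the condition under which Proposition \ref{contieq} guarantees that $\varrho = \mathscr{C}(v^*)$ stays in the open interval $I$ on $[0,t]$, hence all the nonlinear coefficients $R_q^*, \widetilde M^*, A^*, d^*, P_\varrho^*$, etc., evaluated at $(\varrho, q^*)$, are well defined there and the operators of the three propositions can indeed be applied.

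First I would invoke Proposition \ref{contieq}: under \eqref{existinterval}, the solution $\varrho\in W^{1,1}_{p,\infty}(Q_t; I)$ exists and is unique, and we get $M^*(t)=M(\varrho,t)\leq \Psi_3(t, M_0, \mathscr V(t;v^*))$, $\|\nabla\varrho\|_{L^{p,\infty}(Q_t)}\leq \Psi_4(t,\|\nabla\varrho_0\|_{L^p(\Omega)},\mathscr V(t;v^*))$, and $[\varrho]_{C^{\beta,\beta/2}(Q_t)}\leq \Psi_5(t,\|\nabla\varrho_0\|_{L^p(\Omega)},\mathscr V(t;v^*))$ with $\beta=1-3/p>0$ since $p>3$. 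Next, with this $\varrho$ fixed, I would apply Lemma \ref{fandg} and Lemma \ref{h} to bound $\|g^*\|_{L^p(Q_t)}$, $\|f^*\|_{L^p(Q_t)}$, $\|h^*\|_{W^{1,0}_p(Q_t)}$ by functions of $t$, $M^*(t)$, $\|(q^0,v^0)\|_{W^{2-2/p}_p(\Omega)}$, $\|\nabla\varrho\|_{L^{p,\infty}(Q_t)}$ and $\mathscr V^*(t)$ — all of which, after substituting the bounds from Proposition \ref{contieq}, become functions of $t$, $M_0$, $\|(q^0,v^0)\|_{W^{2-2/p}_p(\Omega)}$, $\|\nabla\varrho_0\|_{L^p(\Omega)}$ and $\mathscr V^*(t)$ only. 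Then I feed $(q^0,g^*,h^*)$ into Proposition \ref{qzetasystem} (using $\varrho^*\leftarrow\varrho$, $q^*$ given) to obtain $\mathscr V(t;q)+\|\zeta\|_{W^{2,0}_p(Q_t)}$, and $(v^0,f^*)$ together with $\varrho^*\leftarrow\varrho\in C^{\beta,0}(Q_t)$ (so its $C^\beta(\Omega)$-half-norm is $\leq\Psi_5$) into Proposition \ref{vsystem} to obtain $\mathscr V(t;v)$. Adding the two and composing all the monotone continuous functions defines $\Psi_6$.

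The structural claims about $\Psi_6$ follow by tracking the corresponding claims in the component propositions. Monotonicity in all arguments is inherited because each of $\Psi_3,\Psi_4,\Psi_5,\Psi_g,\Psi_f,\Psi_h,\Psi_1,\Phi,\Psi_2$ is increasing in all its arguments, and the composition of increasing functions is increasing; the restriction \eqref{RESTRIC} is exactly \eqref{existinterval} with $a_1=M_0$, $a_4=\mathscr V^*(t)$. For the value at $t=0$: by Proposition \ref{contieq}, $M^*(0)=M_0$, $\|\nabla\varrho\|_{L^{p,\infty}(Q_0)}=\|\nabla\varrho_0\|_{L^p(\Omega)}$ (since $\Psi_4(0,a_1,a_2)=a_1$), $[\varrho]_{C^{\beta,\beta/2}(Q_0)}\leq C\|\nabla\varrho_0\|_{L^p(\Omega)}$; by Lemmas \ref{fandg},\ref{h}, $\|g^*\|_{L^p(Q_0)}=\|f^*\|_{L^p(Q_0)}=\|h^*\|_{W^{1,0}_p(Q_0)}=0$; hence in Proposition \ref{qzetasystem} the right-hand side at $t=0$ reduces to a function of $\|q^0\|_{W^{2-2/p}_p(\Omega)}$ and $M_0$ times $\Psi_1^0$, and in Proposition \ref{vsystem} the prefactor at $t=0$ is the explicit constant, multiplied by $\|v^0\|_{W^{2-2/p}_p(\Omega)}$. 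None of these depends on $\eta=\mathscr V^*(t)|_{t=0}$, so $\Psi_6(0,\cdot)=\Psi_6^0(M_0,\|(q^0,v^0)\|_{W^{2-2/p}_p(\Omega)},\|\nabla\varrho_0\|_{L^p(\Omega)})$ as claimed.

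The main obstacle I anticipate is the bookkeeping of \emph{which} function each bound is allowed to depend on, so that the final $\Psi_6$ genuinely has only the four stated arguments and the stated $t=0$ behaviour. In particular, Proposition \ref{qzetasystem} outputs bounds in terms of $M^*(t)$, $\|q^*(0)\|_{C^\beta(\Omega)}$, $\mathscr V(t;q^*)$, $[\varrho^*]_{C^{\beta,\beta/2}(Q_t)}$, $\|\nabla\varrho^*\|_{L^{p,\infty}(Q_t)}$ — one must substitute $\varrho^*=\mathscr C(v^*)$ and use Proposition \ref{contieq} to re-express $M^*(t)$, $[\varrho^*]_{C^{\beta,\beta/2}}$, $\|\nabla\varrho^*\|_{L^{p,\infty}}$ through $M_0$, $\|\nabla\varrho_0\|_{L^p(\Omega)}$, $\mathscr V(t;v^*)$; and use the embedding $W^{2-2/p}_p(\Omega)\hookrightarrow C^\beta(\overline\Omega)$ with $\beta=1-3/p$ to absorb $\|q^*(0)\|_{C^\beta(\Omega)}\leq C\|q^0\|_{W^{2-2/p}_p(\Omega)}$. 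One must also verify that the composition of \eqref{existinterval}-type constraints is consistent, i.e.\ that the domain on which Proposition \ref{contieq} gives finite output is not shrunk by the subsequent applications — it is not, since Propositions \ref{qzetasystem} and \ref{vsystem} impose no further smallness on $t$. The remaining arguments are the routine composition-of-monotone-continuous-functions manipulations, which I would not spell out in full.
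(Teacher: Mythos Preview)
Your approach is correct and matches the paper's proof closely. Two coupling terms you omitted need to be tracked explicitly: first, the right-hand side of \eqref{linearT3} is $-\divv(v^* + h^*)$, so the $h$-input to Proposition~\ref{qzetasystem} is $h^*+v^*$, not $h^*$ alone; the paper handles the extra piece via $\|v^*\|_{W^{1,0}_p(Q_t)}\leq t^{1/p}\,\mathscr V(t;v^*)$, which vanishes at $t=0$ and depends only on $\mathscr V^*(t)$. Second, equation \eqref{linearT4} carries $\nabla\zeta$ on the left, so the $f$-input to Proposition~\ref{vsystem} is $f^*-\nabla\zeta$, and you must add $\|\nabla\zeta\|_{L^p(Q_t)}$ to the right-hand side of the $v$-estimate; this is already controlled by your application of Proposition~\ref{qzetasystem}, so the $\Psi_6^2$-part for $v$ inherits a contribution from $\Psi_6^1$. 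With these two insertions your argument goes through exactly as outlined, and the $t=0$ analysis remains unchanged since both extra terms vanish there.
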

\begin{proof}
Applying Prop. \ref{contieq}, we first find the global solution $\varrho$ to the continuity equation \eqref{linearT1} with data $v^*$ on $[0,T]$. The number $M(t)$ expressing the distance of the solution $\varrho$ to the thresholds $\{\varrho_{\min}, \, \varrho_{\max}\}$ remains finite for all $t$ subject to the restriction \eqref{existinterval} (see Prop. \ref{contieq}). On this time interval, we can therefore insert $(\varrho, \, q^*)$ into the coefficients of the system \eqref{linearT2}, \eqref{linearT3}. Applying Prop. \ref{qzetasystem}, we find a unique solution $(q, \, \zeta) \in W^{2,1}_p(Q_t; \, \mathbb{R}^{N-2}) \times W^{2,0}_p(Q_t)$. We then use $(\varrho, \, q^*) $ and $\zeta$ as data of the system \eqref{linearT4}. Applying Proposition \ref{vsystem}, we obtain a solution $v \in W^{2,1}_p(Q_t; \, \mathbb{R}^{3})$ for all $t$ subject to \eqref{existinterval}. This shows that $(q, \, v) := \mathcal{T}(q^*, \, v^*)$ is well defined in $\mathcal{Y}_t$ for all $t$ subject to \eqref{existinterval}.

In order to verify the estimates, we first recall the outcome of Proposition \ref{qzetasystem} applied with $\varrho^* = \varrho$. It follows that 
\begin{align}\label{ESTIMqzeta}
 & \mathscr{V}(t; \, q) + \|\zeta\|_{W^{2,0}_p(Q_t)} \nonumber\\
&  \leq C \, \Psi_1(t, \,  M(t),  \, \|q^*(0)\|_{ C^{\beta}(\Omega)}, \, 
\mathscr{V}(t; \, q^*), \, [\varrho]_{C^{\beta,\frac{\beta}{2}}(Q_t)}, \, \|\nabla \varrho\|_{L^{p,\infty}(Q_t)}) \,  \times\nonumber\\ 
& \qquad \times (1+[\varrho]_{C^{\beta,\frac{\beta}{2}}(Q_t)})^{\tfrac{2}{\beta}} \, (\|q^0\|_{W^{2-2/p}_p(\Omega)} + \|g^*\|_{L^p(Q_t)} +\|h^*+v^*\|_{W^{1,0}_p(Q_t)})\nonumber\\
& + C \, \Phi(t, \,  M(t),  \, \|q^*(0)\|_{ C^{\beta}(\Omega)}, \, 
\mathscr{V}(t; \, q^*), \, [\varrho]_{C^{\beta,\frac{\beta}{2}}(Q_t)}, \, \|\nabla \varrho\|_{L^{p,\infty}(Q_t)}) \, \|h^*+v^*\|_{L^p(Q_t)} \, .
\end{align}
Evidently $\|v^*\|_{W^{1,0}_p(Q_t)} \leq t^{\frac{1}{p}} \, \sup_{\tau \leq t} \|v(\tau)\|_{W^{1,p}(\Omega)} \leq t^{\frac{1}{p}} \,  \mathscr{V}(t; \, v^*)$. 
For the choices $\varrho^* = \varrho$ and $\beta := 1-3/p$, Proposition \ref{contieq} yields 
\begin{align*}
M(t) \leq & \Psi_3(t, \, M_0, \, \mathscr{V}(t; \, v^*)) =: \Psi_3(t, \ldots) \, , \\
  \|\nabla \varrho\|_{L^{p,\infty}(Q_t)} \leq & \Psi_4(t, \, \|\nabla \varrho_0\|_{L^p(\Omega)}, \, \mathscr{V}(t; \, v^*)) =: \Psi_4(t, \ldots)\, ,\\
 [\varrho]_{C^{\beta,\frac{\beta}{2}}(Q_t)}  \leq & \Psi_5(t, \, \, \|\nabla \varrho_0\|_{L^p(\Omega)}, \, \mathscr{V}(t; \, v^*)) =: \Psi_5(t, \ldots) \, .
 \end{align*}
Moreover, due to the Lemma \ref{fandg} and due to Lemma \ref{h},
\begin{align*}
 \|g^*\|_{L^p(Q_t)} \leq & \Psi_g\big( t, \,\Psi_3(t, \, \ldots), \, \|(q^0, \, v^0)\|_{W^{2-2/p}_p(\Omega)},\, \Psi_4(t, \, \ldots), \, \mathscr{V}^*(t)\big)  =: \Psi_g(t, \ldots) \, ,\\
 \|h^*\|_{W^{1,0}_p(Q_t)}\leq & \Psi_{h}(t, \, \Psi_3(t, \, \ldots), \, \|q^0\|_{W^{2-2/p}_p(\Omega)}, \, \Psi_4(t, \, \ldots), \, \mathscr{V}^*(t)) \, =: \Psi_h(t, \ldots) \, .
\end{align*}
Combining all these estimates we can bound the quantity $ \mathscr{V}(t; \, q)+ \|\zeta\|_{W^{2,0}_p(Q_t)} $ by some independent constant times the function
\begin{align*}
\Psi_6^{1} & :=  \Psi_1( t, \,  \Psi_3(t, \, \ldots)   ,  \, \|q^0\|_{C^{\beta}(\Omega)}, \, 
\mathscr{V}(t; \, q^*), \,  \Psi_5(t, \,\ldots), \, \Psi_4(t, \, \ldots)) \, \times \\
& \quad \times (1+\Psi_5(t, \, \ldots))^{\tfrac{2}{\beta}} \, ( \|q^0\|_{W^{2-2/p}_p(\Omega)}  + \Psi_g(t, \ldots) + \Psi_h(t, \ldots) + t^{\frac{1}{p}} \, \mathscr{V}(t; \, v^*))\\
& + \Phi( t, \,  \Psi_3(t, \, \ldots)   ,  \, \|q^0\|_{C^{\beta}(\Omega)}, \, 
\mathscr{V}(t; \, q^*), \,  \Psi_5(t, \,\ldots), \, \Psi_4(t, \, \ldots)) \, (\Psi_h(t, \ldots) + t^{\frac{1}{p}} \, \mathscr{V}(t; \, v^*)) \, .
\end{align*}
Applying the inequalities $\mathscr{V}(t; \, v^*), \, \mathscr{V}(t; \, q^*) \leq \mathscr{V}^*(t)$, and $\|q^0\|_{C^{\beta}(\Omega)} \leq c \,\|q^0\|_{W^{2-2/p}_p(\Omega)} $, we reinterpret the latter expression as a function $\Psi^{1}_{6}$ of the arguments $t$, $M_0$,  $\|(q^0, \, v^0)\|_{W^{2-2/p}_p(\Omega)}$, $\|\nabla \varrho_0\|_{L^p(\Omega)}$ and $\mathscr{V}^*(t)$.

At $t = 0$, we can use the estimates proved in the Propositions \ref{qzetasystem}, \ref{vsystem} and the Prop. \ref{contieq}. Recall in particular that $\Psi_1(0, \, a_1, \ldots, a_4) = \Psi_1^0(a_1, \, a_2)$. Moreover, $\Psi_3(0, \, M_0, \, a_4) = M_0$ (cf. \eqref{Psi3}). Thus, since $\Psi_5(0, \, a_1, \, a_4) \leq C \, a_1$ is bounded independently of $a_4$, since $\Psi_g(0, \ldots) = 0 = \Psi_h(0, \, \ldots)$ (see Lemma \ref{fandg}, \ref{h}), we can compute that 
\begin{align}\label{formulatpsi6null}
& \Psi^{1}_6(0, \, M_0,  \,  \|(q^0, \, v^0)\|_{W^{2-2/p}_p(\Omega)}, \, \|\nabla \varrho_0\|_{L^p(\Omega)},\, \mathscr{V}^*(t)) \\
& \quad= \Psi^0_1(M_0, \, \|q^0\|_{C^{1-3/p}(\Omega)}) \, (1+\|\nabla \varrho_0\|_{L^p(\Omega)})^{\tfrac{2p}{p-3}} \, \|q^0\|_{W^{2-2/p}_p(\Omega)} \nonumber\, .
\end{align}
We next apply Proposition \ref{vsystem} with $\varrho^* = \varrho$ and $f = f^*$ to obtain
\begin{align*}
  & \mathscr{V}(t; \, v) \\
   &  \leq  C \, \Psi_2(t, \, \sup_{\tau \leq t} [\varrho(\tau)]_{C^{\alpha}(\Omega)}) \, (1+\sup_{\tau \leq t} [\varrho(\tau)]_{C^{\alpha}(\Omega)})^{\frac{2}{\alpha}}\,  ( \|v^0\|_{W^{2-2/p}_p(\Omega)}+ \|f^*\|_{L^p(Q_t)} + \|\nabla \zeta\|_{L^p(Q_t)})   \, .
\end{align*}
For $\alpha = 1-3/p$, the norm $\mathscr{V}(t; \, v)$ is estimated above by the quantity
\begin{align*}
\Psi_2(t, \, \Psi_5(t, \, \ldots)) \, (1+ \Psi_5(t, \, \ldots))^{\frac{2}{\alpha}}\, (\|v^0\|_{W^{2-2/p}_p(\Omega)}+ \Psi_f(t,\ldots) + \|\nabla \zeta\|_{L^p(Q_t)}  ) \, .  
\end{align*}
Recalling that \eqref{ESTIMqzeta} and the subsequent arguments also provide an estimate for $\|\nabla \zeta\|_{L^p(Q_t)}$ by $\Psi^{1}_6$, we reinterpret the latter function as a $\Psi^{2}_{6}$ of the same arguments, and we note that
\begin{align*}
 & \Psi^{2}_6(0, \,M_0,  \,  \|(q^0, \, v^0)\|_{W^{2-2/p}_p(\Omega)},\, \|\nabla \varrho_0\|_{L^p(\Omega)}, \, \mathscr{V}^*(t)) \\
 & = \Psi^0_2 \times  (1+\|\nabla \varrho_0\|_{L^p(\Omega)})^{\tfrac{2p}{p-3}} \, (\|v^0\|_{W^{2-2/p}_p(\Omega)} + \Psi^{1}_6(0, \,\ldots)) \\
 & = \left( \frac{1}{\min\{1, \, \varrho_{\min}\}} \right)^{\tfrac{2p}{p-3}}\, \left( \frac{\varrho_{\max}}{\varrho_{\min}} \right)^{\tfrac{p+1}{p}} \, (1+\|\nabla \varrho_0\|_{L^p(\Omega)})^{\frac{2p}{p-3}} \, (\|v^0\|_{W^{2-2/p}_p(\Omega)}+ \Psi^{1}_6(0, \, \ldots)) \, .
\end{align*}
The value of $\Psi^{1}_6(0, \, \ldots)$ is given in \eqref{formulatpsi6null}. We define $\Psi_6 := \Psi_6^{1} + \Psi_6^{2}$. Due to Proposition \ref{contieq}, the function $\Psi_3(t, \, M_0, \, \mathscr{V}(t; \, v^*))$ is finite for all arguments satisfying \eqref{RESTRIC}, and therefore $\Psi_6$ is finite under the same condition. The claim follows.
\end{proof}
We sum up the continuity estimates in the following statement.
\begin{prop}\label{Tbounded}
We adopt the assumptions of Theorem \ref{MAIN2}.
Given $(q^*, \, v^*) \in \mathcal{Y}_T$, we define a map $\mathcal{T}(q^*, \, v^*) = (q, \, v)$ via solution to the equations \eqref{linearT1}, \eqref{linearT2}, \eqref{linearT3}, \eqref{linearT4} with homogeneous boundary conditions \eqref{lateralq}, \eqref{lateralv} and initial conditions $(q^0, \, \varrho_0, \, v^0)$. Then, there are $0 < T_0 \leq T$ and $\eta_0 > 0$ depending on the data $R_0 := (M_0, \, \|(q^0, \, v^0)\|_{W^{2-2/p}_p(\Omega)}, \, \|\nabla \varrho_0\|_{L^p(\Omega)})$ such that $\mathcal{T}$ maps the ball with radius $\eta_0$ in $\mathcal{Y}_{T_0}$ into itself.
\end{prop}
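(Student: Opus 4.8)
The plan is to obtain the statement by feeding the continuity estimate of Proposition \ref{estimateself} into the abstract self-mapping Lemma \ref{estimfinale}. Set $R_0 := (M_0, \, \|(q^0, \, v^0)\|_{W^{2-2/p}_p(\Omega)}, \, \|\nabla \varrho_0\|_{L^p(\Omega)})$ and let the scalar variable $\eta$ of Lemma \ref{estimfinale} stand for $\|(q^*, \, v^*)\|_{\mathcal{Y}_t}$. First I would recast the bound of Proposition \ref{estimateself}, which is phrased with the time-uniform functional $\mathscr{V}$, purely in terms of $\mathcal{Y}_t$-norms: since the competitors $(q^*, \, v^*)$ all carry the fixed initial data $(q^0, \, v^0)$, the trace theorem for $W^{2,1}_p(Q_t)$ gives $\mathscr{V}^*(t) \leq C \, \|(q^*, \, v^*)\|_{\mathcal{Y}_t} + C \, \|(q^0, \, v^0)\|_{W^{2-2/p}_p(\Omega)}$ with a constant independent of $t \in (0, \, T]$, while trivially $\|(q, \, v)\|_{\mathcal{Y}_t} \leq \mathscr{V}(t; \, q) + \mathscr{V}(t; \, v)$ for the output. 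Since $\Psi_6$ is increasing in its $R_0$-arguments and in its last argument, substituting these bounds yields an inequality of the form \eqref{continuity1}, with a new continuous function $\Psi$ sharing the structural properties (monotonicity in $t$ and in $\eta$, independence of $\Psi(0, \, R_0, \, \cdot)$ of its last argument) established for $\Psi_6$ in Proposition \ref{estimateself}.

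Next I would check the hypotheses of Lemma \ref{estimfinale} for this $\Psi$. For the finiteness domain, note that for fixed $R_0$ the restriction \eqref{RESTRIC} is $c \, M_0 \, s \, e^{c \, s} < 1$ in the single variable $s = t^{1-1/p} \, \mathscr{V}^*(t)$, and $s \mapsto c \, M_0 \, s \, e^{c \, s}$ is continuous and strictly increasing from $0$ to $+\infty$; hence it is equivalent to $s < s_0(M_0)$ for a unique $s_0(M_0) > 0$. After inserting $\mathscr{V}^*(t) \leq C \, \|(q^*, \, v^*)\|_{\mathcal{Y}_t} + C(R_0)$ and restricting to $t \leq t_1(R_0)$ so small that $t_1^{1-1/p} \, C(R_0) < s_0(M_0)/2$, it suffices to have $t^{1-1/p} \, \|(q^*, \, v^*)\|_{\mathcal{Y}_t} < a_0$ with $a_0 := s_0(M_0)/(2C)$, an admissible $R_0$-dependent threshold. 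The monotonicity properties and the identity $\Psi(0, \, R_0, \, \eta) = \Psi^0(R_0)$ come straight from Proposition \ref{estimateself}; and $\Psi^0(R_0) > 0$ holds by the explicit formula there (or, in the degenerate case $q^0 = v^0 = 0$, after replacing $\Psi$ by $\Psi + \varepsilon_0$ for a fixed $\varepsilon_0 > 0$, which leaves \eqref{continuity1} valid).

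Lemma \ref{estimfinale} then delivers $t_0 = t_0(R_0) > 0$ and $\eta_0 = \eta_0(R_0) > 0$ such that $\mathcal{T}$ maps the ball of radius $\eta_0$ in $\mathcal{Y}_{t_0}$ into itself; here one also observes that on this ball the bound $t_0^{1-1/p} \, \eta_0 < a_0$ forces \eqref{existinterval}, so that $\mathcal{T}(q^*, \, v^*)$ is indeed defined on all of $[0, \, t_0]$ in the first place. Taking $T_0 := \min\{t_0, \, T\}$ finishes the proof. I expect the only real obstacle to be the bookkeeping described above — reconciling the functional $\mathscr{V}^*$ with the $\mathcal{Y}_t$-norm required by the abstract lemma, and making sure the self-mapping interval lies inside the interval on which $\mathcal{T}$ is a priori well defined; all the analytically substantive estimates are already contained in Propositions \ref{qzetasystem}, \ref{vsystem}, \ref{contieq} and \ref{estimateself}.
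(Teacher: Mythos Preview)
Your approach is essentially the same as the paper's: define $a_0$ from the finiteness restriction \eqref{RESTRIC} and feed the continuity estimate of Proposition \ref{estimateself} into Lemma \ref{estimfinale}. The paper's own proof is a two-liner that simply sets $a_0$ to be the solution of $c\,M_0\,x\,e^{c x}=1$ and invokes Lemma \ref{estimfinale} with $\Psi=\Psi_6$; your version fills in the bookkeeping (passing from the $\mathscr{V}$-functional to the $\mathcal{Y}_t$-norm via the trace bound, splitting off the additive $C(R_0)$ by a preliminary restriction $t\le t_1(R_0)$, and guarding against $\Psi^0(R_0)=0$) that the paper tacitly absorbs into the identification of $\eta$ with $\mathscr{V}^*(t)$.
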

\begin{proof}
We define $a_0 > 0$ to be the solution to the equation $ c \, M_0 \,  x \, e^{ c \, x}  = 1$ associated with the numbers in \eqref{RESTRIC}. We apply the Lemma \ref{estimfinale} with $\Psi(t, \, R_0, \, \eta) := \Psi_6(t, \, R_0, \, \eta)$ from Prop. \ref{estimateself}, and the claim follows. 
\end{proof}
\section{Proof of the theorem on short-time well-posedness}\label{FixedPointIter}

\subsection{Existence and uniqueness}

We choose $T_0, \, \eta_0 > 0$ according to Proposition \ref{Tbounded}. 
Starting from $(q^1, \, v^1) = 0$, we consider a fixed point iteration $(q^{n+1}, \, v^{n+1}) := \mathcal{T} \, (q^{n}, \, v^{n})$ for $ n \in \mathbb{N}$. 
Recalling \eqref{Vfunctor}, we define $\mathscr{V}^{n+1}(t) := \mathscr{V}(t; \, q^{n+1}) + \mathscr{V}(t; \, v^{n+1})$. Since obviously $\mathscr{V}^1(t) \equiv 0$, Proposition \ref{Tbounded} guarantees that
\begin{align}\label{uniformestimates}
\sup_{n \in \mathbb{N}} \mathscr{V}^{n}(T_0) \leq \eta_0, \qquad \sup_{n \in \mathbb{N}} \|\varrho^{n} \|_{W^{1,1}_{p,\infty}(Q_{T_0})} + \|\zeta^n\|_{W^{2,0}_{p}(Q_{T_0})} < + \infty \, .
\end{align}
From Lemma \ref{iter} hereafter, we infer that the fixed-point iteration therefore yields strongly convergent subsequences in $L^2(Q_{T_0})$ for the components of $q^n$, $\zeta_n$, $\varrho_n$ and $v^n$ and for the gradients $q^n_x, \, \zeta_{x}^n$ and $v^n_x$. The passage to the limit in the approximation scheme is then a straightforward exercise, since we can rely on a uniform bound in $\mathcal{X}_{T_0}$.
The proofs are almost identical with the fixed-point iteration in \cite{bothedruet}. We leave the minor changes to the interested reader, and state without proof the following iteration lemma.
\begin{lemma}\label{iter}
For $n \in \mathbb{N}$, we define
\begin{align*}
 r^{n+1} & :=  q^{n+1} - q^n, \quad \chi^{n+1} := \zeta^{n+1} - \zeta^n, \,     \quad \sigma^{n+1} := \varrho^{n+1} - \varrho^{n}, \quad w^{n+1} := v^{n+1} - v^n \, .
 \end{align*}
 Then there are $k_0, \, p_0 > 0$ and $0 < t_1 \leq T_0$ such that for all $t \in [0, \, T_0 -t_1]$, the quantity
\begin{align*} 
 E^{n+1}(t) := & k_0 \, \sup_{\tau \in [t, \, t+t_1]}  (\|r^{n+1}(\tau)\|_{L^2(\Omega)}^2 + \|w^{n+1}(\tau)\|_{L^2(\Omega)}^2 + \|\sigma_{n+1}\|_{L^2(\Omega)}^2) \\
 &+ p_0 \, \int_{Q_{t,t+t_1}}  (|\nabla r^{n+1}|^2 +|\nabla \chi^{n+1}|^2 + |\nabla w^{n+1}|^2)  \, dxd\tau
 \end{align*}
satisfies $ E^{n+1}(t) \leq \frac{1}{2} \, E^{n}(t)$ for all $n \in \mathbb{N}$.
\end{lemma}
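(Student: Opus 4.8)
The plan is to write down the system satisfied by the differences $(r^{n+1},\chi^{n+1},\sigma^{n+1},w^{n+1})$ and to run a $L^2$-energy estimate over a sliding window $[t,t+t_1]$, exactly in the spirit of the contraction argument in \cite{bothedruet}. Subtracting the equations \eqref{linearT1}--\eqref{linearT4} written for the iterates $n+1$ and $n$, one obtains a linearised system of the same structure as \eqref{equadiff1}--\eqref{equadiff4}: a transport equation $\partial_t\sigma^{n+1}+\divv(\sigma^{n+1}v^{n})+\divv(\varrho^n w^{n})=0$ (after also using the difference of the continuity equations, the coefficient jumps $\varrho^{n}-\varrho^{n-1}=\sigma^n$ producing source terms), a parabolic equation for $r^{n+1}$ coupled to $\chi^{n+1}$, the elliptic relation for $\chi^{n+1}$, and the momentum-type equation for $w^{n+1}$; on the right-hand sides all differences of the nonlinear coefficients $R_q,\widetilde M,A,d,P$ and of $g,f,h$ evaluated at $(\varrho^n,q^{n-1})$ versus $(\varrho^{n-1},q^{n-1})$ etc. appear, and by the $C^1$-regularity of these maps and the uniform bound \eqref{uniformestimates} they are controlled by $|\sigma^n|+|r^n|+|w^n|$ and their first spatial derivatives, with constants depending only on $\eta_0$, $M_0$ and the data $R_0$.

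First I would test the parabolic equation for $r^{n+1}$ with $r^{n+1}$, the momentum equation for $w^{n+1}$ with $w^{n+1}$, and the elliptic equation for $\chi^{n+1}$ with $\chi^{n+1}$ (using its zero mean value), and handle $\sigma^{n+1}$ by the standard $L^2$ estimate for the transport equation (multiply by $\sigma^{n+1}$, integrate, use $\divv v^n\in L^1(0,T;L^\infty)$ from the embedding $W^{1,p}\hookrightarrow L^\infty$). The coupling term $\nabla\chi^{n+1}$ in the $r$- and $w$-equations is absorbed using the elliptic estimate, which gives $\|\nabla\chi^{n+1}\|_{L^2(\Omega)}\le c(\|\nabla r^{n+1}\|_{L^2(\Omega)}+\|w^{n+1}\|_{L^2(\Omega)}+\text{(lower order in }\sigma^{n},r^{n},w^{n}))$, and the cross term $\int\nabla\chi^{n+1}\cdot w^{n+1}$ cancels against the term appearing when testing the momentum equation, precisely as the structure of \eqref{equadiff1}--\eqref{equadiff4} is designed to allow. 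The remaining quadratic terms on the right are estimated by Hölder and the interpolation inequality \eqref{gagliardo} / Gagliardo--Nirenberg to move top-order derivatives of $r^n,w^n$ onto a small multiple of the dissipation plus the $L^2$ quantities, at the cost of a factor that is a power of $\mathscr V^n(T_0)\le\eta_0$. Summing these, integrating in time over $[t,\tau]$ with $\tau\in[t,t+t_1]$, and choosing $k_0,p_0$ so that the dissipation controls the coupling, yields
\begin{align*}
E^{n+1}(t)\le C_0\,t_1^{\gamma}\,E^{n}(t)+C_0\!\int_{Q_{t,t+t_1}}\!(|\sigma^{n}|^2+|r^{n}|^2+|w^{n}|^2)\,dxd\tau
\end{align*}
for some $\gamma>0$ coming from the Hölder-in-time gain on the short window; the second term is in turn $\le C_0\,t_1\,E^{n}(t)$ since $E^n$ controls the sup of the $L^2$ norms over the window.

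Hence, choosing $t_1>0$ small enough that $2C_0(t_1^{\gamma}+t_1)\le\tfrac12$ (with $C_0$ depending only on $\eta_0,M_0,R_0$, not on $n$ nor on $t$), we get $E^{n+1}(t)\le\tfrac12E^{n}(t)$ uniformly in $n$ and in $t\in[0,T_0-t_1]$. The main obstacle I anticipate is \textbf{the treatment of the transport equation for $\sigma^{n+1}$}: it has no smoothing, so its contribution must be closed purely at the $L^2$ level, and the coefficient-difference $\sigma^n$ multiplying $\nabla\varrho^n$-type terms (which are only in $L^{p,\infty}$, not $L^\infty$) in the source of the $r^{n+1}$- and $w^{n+1}$-equations has to be absorbed carefully — here one uses $\|\sigma^n\nabla(\cdot)\|_{L^2}\le\|\sigma^n\|_{L^{2p/(p-2)}}\|\nabla(\cdot)\|_{L^p}$ together with $W^{1,2}\hookrightarrow L^{2p/(p-2)}$ in three dimensions (valid since $p>3$), spending a further small factor of the dissipation for $\sigma^n$; this is exactly the point where the bound \eqref{uniformestimates} on $\|\varrho^n\|_{W^{1,1}_{p,\infty}}$ is indispensable. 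All remaining estimates are routine and parallel to \cite{bothedruet}, which is why the statement is given there without proof.
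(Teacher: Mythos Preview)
Your plan is essentially the one the paper intends: the authors explicitly decline to give a proof and refer to the identical argument in \cite{bothedruet}, and what you outline --- subtract the iterated systems, run an $L^2$ energy estimate over a short sliding window, absorb the parabolic--elliptic coupling via the elliptic identity, and exploit the uniform $\mathcal{X}_{T_0}$-bound \eqref{uniformestimates} on the coefficients --- is precisely that argument.

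Two points in your handling of $\sigma$ need correcting. First, the indexing: the coefficients in the equation for $q^{n+1}$ are evaluated at $(\varrho^{n+1},q^n)$, since $\varrho^{n+1}$ is the output of \eqref{linearT1} with velocity $v^n$. Hence the coefficient differences are controlled by $|\sigma^{n+1}|+|r^n|$, not $|\sigma^n|+|r^n|$. The $\sigma^{n+1}$-contributions are therefore \emph{left-hand-side} terms to be absorbed for small $t_1$, while only $r^n,w^n$ (and, from the transport equation, $w^n$ driving $\sigma^{n+1}$) feed the contraction. Second, your proposed bound $\|\sigma\|_{L^{2p/(p-2)}}\le C\|\sigma\|_{W^{1,2}}$ cannot be closed from $E$, because $E$ contains no $\nabla\sigma$ dissipation. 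The correct substitute is to place $\sigma^{n+1}$ in $L^2$ and put the remaining factor in $L^\infty$ via the uniform high-regularity bounds: for instance
\[
\int_{Q_{t,t+t_1}}|\sigma^{n+1}|\,|\nabla q^n|\,|\nabla r^{n+1}|
\le \sup_{s}\|\sigma^{n+1}(s)\|_{L^2}\Bigl(\int_t^{t+t_1}\|\nabla q^n\|_{L^\infty}^2\Bigr)^{1/2}\Bigl(\int_t^{t+t_1}\|\nabla r^{n+1}\|_{L^2}^2\Bigr)^{1/2},
\]
and $\|\nabla q^n(s)\|_{L^\infty}\le C\|q^n(s)\|_{W^{2,p}}$ together with H\"older in time produces the factor $t_1^{(p-2)/(2p)}$. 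The same device handles the $r^n$-contributions, and the $\nabla\sigma^{n+1}$ that appears in $g^n-g^{n-1}$ through $\nabla\varrho^{n+1}$ is dealt with either by integration by parts or by the uniform $W^{1,1}_{p,\infty}$-bound on $\varrho^{n+1}$, again yielding a term absorbable on the left. With these adjustments your scheme goes through.
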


\subsection{Verification of continuation criteria}

In order to complete the proof of the Theorems \ref{MAIN}, \ref{MAINPr} it remains to investigate the claimed characterisations of the maximal existence interval.
\begin{lemma}\label{MAXEX}
 Suppose that $u =(q, \, \zeta,\, \varrho,\, v) \in \mathcal{X}_{t}$ is a solution to $\widetilde{\mathscr{A}}(u) = 0$ and $u(0) = u_0$ for all $0 < t < T^*$. Then the two following statements are valid:
 \begin{enumerate}[(1)]
\item \label{firstcrit} If $\mathscr{N}(t) := \|q\|_{C^{\alpha,\alpha/2}(Q_{t})} + \|\nabla q\|_{L^{\infty,p}(Q_{t})} + \|v\|_{L^{z\, p, \,p}(Q_{t})} + \int_{0}^{t} [\nabla v(\tau)]_{C^{\alpha}(\Omega)} \, d\tau$ with $\alpha > 0$ arbitrary and $z = z(p)$ defined in Theorem \ref{MAIN}, and $M(\varrho, \, t)$ (cf. \eqref{supremumrho}) are finite for $t \nearrow T^*$, then it is possible to extend the solution to a larger time interval. 
 
\item \label{secondcrit} If the tensor $M$ occurring in \eqref{DIFFUSFLUX} satisfies the additional conditions stated in Theorem \ref{MAINPr}, and if $\mathcal{K}(t) :=
  \|q\|_{W^{2,1}_p(Q_t; \, \mathbb{R}^{N-2})} + \|\zeta\|_{W^{2,0}_p(Q_t)} + \|v\|_{ W^{2,1}_p(Q_t; \, \mathbb{R}^{3})} $ remains finite for $t \nearrow T^*$, then the solution can be extended without additional condition concerning $M(\varrho, \, t)$.
\end{enumerate}
\end{lemma}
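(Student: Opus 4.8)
The plan is to prove both statements by a single scheme. Under the respective hypotheses I will show that the solution norm $\|u\|_{\mathcal{X}_{t}}$ stays bounded as $t\nearrow T^*$ and that $\varrho$ remains in a fixed compact subinterval $K\Subset I$. This gives that $u(t)$ converges as $t\nearrow T^*$ to a limit $u(T^*)$ with $(q(T^*),\varrho(T^*),v(T^*))\in \mathrm{Tr}_{\Omega\times\{0\}}\mathcal{X}$, $\varrho(T^*)\in W^{1,p}(\Omega; K)$, while the compatibility conditions $\nu\cdot\nabla q(T^*)=0$ and $v(T^*)=0$ on $\partial\Omega$ are inherited from the solution on $[0,T^*)$ by continuity of the relevant time traces. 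Theorem \ref{MAIN2}, applied with $u(T^*)$ as initial datum, then produces a solution on $[T^*,T^*+\delta_0]$ for some $\delta_0>0$, and uniqueness of the local solution allows us to glue it with $u$, extending $u$ to $[0,T^*+\delta_0)$.

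For the first statement, the key point is that finiteness of $\mathscr{N}(t)+M(\varrho,t)$ is precisely what makes the a priori estimates close uniformly on $[0,T^*)$. Boundedness of $M(\varrho,t)$ keeps $\varrho$ in a compact $K\Subset I$, so all coefficients $R, R_\varrho, R_q, \widetilde M, A, d, P$ and their first derivatives are bounded along the solution (with a bound depending only on $M(\varrho,t)$ and $\|q\|_{L^\infty(Q_t)}$, the latter controlled by $\|q\|_{C^{\alpha,\alpha/2}(Q_t)}$). Combining $\int_0^{T^*}[\nabla v(\tau)]_{C^{\alpha}(\Omega)}\,d\tau<\infty$ with the characteristic representation of the continuity equation (as in Proposition \ref{contieq}) yields a uniform bound on $[\varrho]_{C^{\beta,\frac{\beta}{2}}(Q_t)}$, while the gradient bound $\|\nabla\varrho\|_{L^{p,\infty}(Q_t)}$ follows after absorbing the term $\|\varrho\|_{L^\infty(Q_t)}\,\|\nabla\divv v\|_{L^1(0,t;L^p(\Omega))}\le C\, t^{1-1/p}\,\|v\|_{W^{2,1}_p(Q_t)}$ on short time intervals. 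Feeding these quantities into Propositions \ref{qzetasystem} and \ref{vsystem} with $\varrho^*=\varrho$, $q^*=q$, and bounding the lower-order right-hand sides $g,h,f$ as in Lemmata \ref{fandg} and \ref{h} — here $\|v\|_{L^{zp,p}(Q_t)}$ controls the convective contributions $\varrho\,(v\cdot\nabla)v$ and $v\cdot\nabla q$, $\|\nabla q\|_{L^{\infty,p}(Q_t)}$ and $\|q\|_{C^{\alpha,\alpha/2}(Q_t)}$ the remaining $q$-nonlinearities and the H\"older dependence of the coefficients needed in the elliptic perturbation step — one arrives at a system of integral inequalities for $\|q\|_{W^{2,1}_p(Q_t)}$, $\|\zeta\|_{W^{2,0}_p(Q_t)}$, $\|v\|_{W^{2,1}_p(Q_t)}$ in which these top-order norms reappear on the right only with a prefactor of the form $t^{1-1/p}$. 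Splitting $[0,T^*)$ into finitely many subintervals of a length $\delta$ which, by the preceding, is bounded below in terms of the fixed quantities $M(\varrho,T^*)$, $\mathscr{N}(T^*)$ and the data, one absorbs those terms and chains the estimates, obtaining $\sup_{t<T^*}\|u\|_{\mathcal{X}_t}<\infty$ and $\varrho(Q_{T^*})\subset K$.

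For the second statement I reduce to the first. Since $p>5$, the parabolic embeddings give $W^{2,1}_p(Q_t)\hookrightarrow C^{\alpha,\frac{\alpha}{2}}(\overline{Q_t})$ with in addition $\nabla(\cdot)\in C^{\alpha,\frac{\alpha}{2}}(\overline{Q_t})\cap L^{\infty}(Q_t)$ for some $\alpha>0$, so $\mathcal{K}(t)<\infty$ already forces $\mathscr{N}(t)<\infty$ (with $z=1$). It therefore only remains to show that $M(\varrho,t)$ stays finite, i.e. that $\varrho$ cannot reach $\{\varrho_{\min},\varrho_{\max}\}$ as $t\nearrow T^*$; because $\rho=\mathscr{R}(\varrho,q)$ with $q$ bounded, this is equivalent to a lower bound, uniform on $[0,T^*)$, for the number fractions $y_i=n_i/\sum_j n_j$. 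Here the structural hypotheses of Theorem \ref{MAINPr} enter: with $k$ as in \eqref{kfunktion} one has $\partial_{\rho_i}k=\frac{k_B\theta}{m_i}\ln y_i+(\text{terms bounded as long as } y \text{ stays bounded})$, while writing $M_{i,j}=\rho_j\, B_{i,j}$ with $B_{i,j}$ and $\rho_k\,\partial_{\rho_k}B_{i,j}$ bounded expresses that the diffusive flux of a vanishing species degenerates linearly. Using these two facts, one derives from the mass balances \eqref{mass} a differential inequality, valid in the a.e.\ sense, of the form $\frac{d}{dt}\max_i\|(-\ln y_i)_+\|_{L^\infty(\Omega)}\le \Theta(t)\,\bigl(1+\max_i\|(-\ln y_i)_+\|_{L^\infty(\Omega)}\bigr)$, where $\Theta$ is built from $\|(q,\zeta,v)(\cdot,t)\|_{W^{2,p}(\Omega)}$ and is integrable on $(0,T^*)$ because $\mathcal{K}(T^*)<\infty$; boundedness of $B_{i,j}$ and $\rho_k\,\partial_{\rho_k}B_{i,j}$ is exactly what keeps the quasilinear operator acting on $\ln y_i$ uniformly parabolic with controlled lower-order coefficients, so that the spatial supremum of $(-\ln y_i)_+$ obeys this inequality. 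Gronwall then yields the uniform lower bound on $y_i$, hence $M(\varrho,t)<\infty$, and the first statement applies.

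The main obstacle is the maximum-principle estimate in the second statement: establishing the differential inequality for $-\ln y_i$ rigorously requires unwinding the change of variables of Section \ref{changevarsec} to relate the transformed unknowns back to the individual $\rho_i$, keeping precise track of how the singularity of $\nabla_\rho k$ at the relative boundary of $S_1$ is compensated by the degeneracy $M_{i,j}=\rho_j\,B_{i,j}$, and it relies on the pointwise control of $\nabla v$, $\nabla q$ and $\varrho$ that is available only for $p>5$. By comparison, the absorption-and-Gronwall bookkeeping in the first statement is technical but runs parallel to the corresponding argument in \cite{bothedruet}.
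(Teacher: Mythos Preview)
Your treatment of part \eqref{firstcrit} is essentially the paper's: bound $[\varrho]_{C^{\alpha}}$ and $\|\nabla\varrho\|_{L^{p,\infty}}$ from the characteristics of the continuity equation via $\int_0^t[\nabla v]_{C^{\alpha}}\,d\tau$, feed this into the linear estimates of Propositions~\ref{qzetasystem} and~\ref{vsystem}, control the right-hand sides $f,g,h$ using $\mathscr{N}(t)$ and $M(\varrho,t)$, and close by Gronwall. The paper uses a direct Gronwall on $\mathscr{V}(t;v)^p$ rather than time-splitting, but the content is the same.

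For part \eqref{secondcrit} your approach diverges from the paper's and the argument you sketch does not close. You propose a maximum-principle inequality for $-\ln y_i$. Writing $M_{ij}=\rho_i B_{ji}$ (by symmetry of $M$), the flux factors as $J^i=-\rho_i\,G^i$ with $G^i=\sum_j B_{ji}(\nabla\mu_j-b^j)$, so $\partial_t\ln\rho_i+(v-G^i)\cdot\nabla\ln\rho_i=\divv G^i-\divv v+r_i/\rho_i$. The difficulty is the source $\divv G^i$: it contains $\sum_k B_{ji,\rho_k}\nabla\rho_k$, and the hypothesis $|\rho_k\,\partial_{\rho_k}B_{ji}|\le C$ only gives $|\nabla B_{ji}|\le C\sum_k|\nabla\rho_k|/\rho_k$. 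Thus the evolution of $\min_x\rho_i$ is coupled to $|\nabla\ln\rho_k|$ for \emph{all} $k$, not just $k=i$, and no individual species inequality closes without an a priori lower bound on the others --- which is precisely what you are trying to prove. Your claim that the operator on $\ln y_i$ is ``uniformly parabolic with controlled lower-order coefficients'' hides this cross-coupling and is not justified.

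The paper takes a different and cleaner route: it controls the pressure $P(\varrho,q)$ itself in $L^\infty$. From the momentum balance \eqref{momentumprime} one reads off $\|\nabla P\|_{L^p(Q_t)}\le\Psi(\mathcal{K}(t))$ directly, \emph{independently of} $M(\varrho,t)$. For $\partial_t P=P_\varrho\,\partial_t\varrho+P_q\,\partial_t q$ one uses the continuity equation and Lemma~\ref{special}: the singular factor $P_\varrho\sim m(\varrho)^{-1}$ multiplies $\nabla\varrho$ (which is absorbed via $|P_\varrho\,\nabla\varrho|\le|\nabla P|+c|\nabla q|$) and $\divv v$; for the latter one expands \eqref{massprime2} and uses the crucial structural bounds $|d|,|A|,|d_q|,|A_q|\le c\,m(\varrho)$ and $|d_\varrho|,|A_\varrho|\le c$ from Lemma~\ref{special} to show $m(\varrho)^{-1}|\divv v|\in L^{p,p/2}$ with a bound depending only on $\mathcal{K}(t)$. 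This yields $P_x\in L^p$, $P_t\in L^{p,p/2}$, hence $P\in L^\infty$ by anisotropic Sobolev embedding for $p>5$. Since, for the choice \eqref{kfunktion}, $|P(\varrho,q)|\ge c\ln M(\varrho,t)-C(1+|q|)$, this bounds $M(\varrho,t)$ and one is back to part \eqref{firstcrit}. The key idea you are missing is to work with the scalar $P$ rather than the individual $\rho_i$, and to exploit the $m(\varrho)$-degeneracy of $d,A$ (Lemma~\ref{special}) to cancel the $m(\varrho)^{-1}$ singularity of $P_\varrho$.
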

\begin{proof}
\textbf{First criterion \eqref{firstcrit}.} We must show that the quantity $ \mathscr{V}(t; \, q) +  \mathscr{V}(t; \, v)$ is bounded by a continuous function of $t, \, M(\varrho, \, t), \, \mathcal{N}(t)$. We will only sketch this point, which relies on going carefully through the proofs of the estimates in the Propositions \ref{qzetasystem}, \ref{vsystem} in the spirit of \cite{bothedruet}. 

To begin with, we notice that the components of $v_x$ have all spatial mean-value zero over $\Omega$ due to the boundary condition \eqref{lateralv}. Hence, for $\alpha >0$, inequalities $\|v_x(\tau)\|_{L^{\infty}(\Omega)} \leq c_{\Omega} \, [v_x(\tau)]_{C^{\alpha}(\Omega)}$ and $\|v_x\|_{L^{\infty,1}(Q_t)} \leq c_{\Omega} \, \int_0^t [ v_x(\tau)]_{C^{\alpha}(\Omega)} \, d\tau$ are available. For the solution to the continuity equation, Theorem 2 of \cite{solocompress} (see also Proposition 7.7 in \cite{bothedruet}) implies that $\sup_{\tau \leq t} [\varrho(\tau)]_{C^{\alpha}(\Omega)}$ is bounded by a function of $\int_0^t [ v_x(\tau)]_{C^{\alpha}(\Omega)} \, d\tau$, thus also by a function of $\mathcal{N}(t)$. Moreover, as in the same references, we show for all $t \geq 0$ that
\begin{align*}
\|\varrho_x(t)\|_{L^p(\Omega)} \leq & \phi(R_0, \, \|v_x\|_{L^{\infty,1}(Q_{t})}) \, (1+ \int_{0}^t\|v_{x,x}(\tau)\|_{L^{p}(\Omega)} \, d\tau) \\
\leq & \phi(R_0, \, \mathcal{N}(t)) \, (1+ \mathscr{V}(t; \, v)) \, .
\end{align*}
Here and throughout the proof, we denote by $\phi$ some generic continuous function increasing in its arguments, and $R_0$ stands for the initial data and the external forces.

We next exploit the momentum balance equation for $v$. We apply Proposition \ref{vsystem}, hence $\mathscr{V}(t; \, v) \leq  \phi(t, \, \mathcal{N}(t)) \, (\|f\|_{L^p(Q_t)} +\|\nabla \zeta\|_{L^p(Q_t)} + \|v^0\|_{W^{2-2/p}_p(\Omega)})$. The function $f$ obeys \eqref{fri} and therefore
\begin{align*}
 |f(x, \, t)| \leq & |\nabla \varrho(x, \, t)| \, \sup_{Q_t} |P_{\varrho}(\varrho, \, q)| +|\nabla q(x, \, t)| \,  \sup_{Q_t} |P_{q}(\varrho, \, q)| \\
 & + c \, (|v(x, \, t)| \, |\nabla v(x, \, t)| + |\bar{b}(x, \, t)| + |\tilde{b}(x, \, t)|) \,  \sup_{Q_t}  \varrho + |\hat{b}(x, \, t)| \, .
\end{align*}
Coefficients depending on $\varrho$ and $q$ can in general be bounded following the example of $$\sup_{Q_t} |P_{\varrho}(\varrho, \, q)| \leq \phi(M(\varrho, \, t), \, \|q\|_{L^{\infty}(Q_t)}) \leq \phi(M(\varrho, \, t), \, \mathcal{N}(t)) \, .$$ 

\pagebreak

Therefore, we show that
\begin{align*}
 \|f\|_{L^p(Q_t)}^p \leq \phi(M(\varrho, \, t), \, \mathcal{N}(t)) \, (& \|\nabla \varrho\|_{L^p(Q_t)}^p + \|\nabla q\|_{L^p(Q_t)}^p + \|v \, \nabla v\|_{L^p(Q_t)}^p \\
 & + \|\tilde{b}\|_{L^p(Q_t)}^p + \|\bar{b}\|_{L^p(Q_t)}^p + \|\hat{b}\|_{L^p(Q_t)}^p) \, . 
\end{align*}
We define $A_0(t) :=  \|\tilde{b}\|_{L^p(Q_t)}^p + \|\bar{b}\|_{L^p(Q_t)}^p + \|\hat{b}\|_{L^p(Q_t)}^p  + \|v^0\|_{W^{2-2/p}_p(\Omega)}$, hence
\begin{align*}
 &\mathscr{V}^p(t; \, v) \leq\\
 & \quad \phi(M(\varrho, \, t), \, \mathcal{N}(t)) \, ( \|\nabla \zeta\|_{L^p(Q_t)}^p + \|\nabla \varrho\|_{L^p(Q_t)}^p +  \|v \, \nabla v\|_{L^p(Q_t)}^p +   \|\nabla q\|_{L^p(Q_t)}^p + A_0(t))  \, .
\end{align*}
As shown, $\|\nabla \varrho\|_{L^p(Q_t)}^p \leq \phi(R_0, \, \|v_x\|_{L^{\infty,1}(Q_t)})  \, \int_0^t (1+ \mathscr{V}(\tau; \, v))^p \, d\tau$, and $\zeta$ satisfies the weak Neumann problem \eqref{linearT3}, hence 
\begin{align*}
 \|\nabla \zeta\|_{L^p(Q_t)} \leq \phi(M(\varrho, \, t), \, \mathcal{N}(t)) \, (\|\nabla q\|_{L^p(Q_t)} + \|v\|_{L^p(Q_t)} + \|\tilde{b}\|_{L^p(Q_t)} + \|\hat{b}\|_{L^p(Q_t)}) \, .
\end{align*}
We define $z = \frac{3}{p-2}$ if $3 < p < 5$, $z > 1$ arbitrary if $p = 5$ and $z = 1$ if $p > 5$. Recalling the continuity of the embedding $W^{1-2/p}_p \subset L^{3p/(5-p)^+}$, we show by means of H\"older's inequality that $\|v \,  v_x\|_{L^p(Q_t)}^p \leq c_{\Omega}\,  \int_0^t \|v(\tau)\|^p_{L^{z\, p}} \, \mathscr{V}^p(\tau; \, v) \, d\tau$. Therefore, combining the latter bounds yields
\begin{align*}
 \mathscr{V}^p(t; \, v) \leq  \phi(t, \, M(\varrho, \, t),\, \mathcal{N}(t)) \, \big(\int_0^t (1+\|v(\tau)\|^p_{L^{z\, p}}) \, \mathscr{V}^p(\tau; \, v) \, d\tau +   \|\nabla q\|_{L^p(Q_t)}^p + A_0(t)\big)  \, .
\end{align*}
We invoke the Gronwall Lemma, hence $\mathscr{V}^p(t; \, v) \leq \phi(t, \, M(\varrho, \, t),\, \mathcal{N}(t))  \,  (\|\nabla q\|_{L^p(Q_t)}^p + A_0(t))$. Since $\|\nabla q\|_{L^p(Q_t)}$ is also controlled by a function of $t$ and $\mathcal{N}(t)$, so does $\mathscr{V}^p(t; \, v)$. It follows that $\|\nabla \varrho\|_{L^{p,\infty}(Q_t)}^p \leq \phi(t, \, R_0, \,\mathcal{N}(t)) $. For $\beta = 1-3/p$, the Proposition \ref{contieq} yields that $\|\varrho\|_{C^{\beta, \beta/2}(Q_t)} \leq \phi(t, \, R_0, \,\mathcal{N}(t))$. Recalling that $q$ satisfies \eqref{parabolicsystemreducedpr}, we can now finish the proof as in \cite{bothedruet}, Lemma 9.2.

\textbf{Second criterion \eqref{secondcrit}.} The more interesting point is to get rid of the dependence on the distance $M(\varrho, \, t)$ to the density thresholds in the estimates. First we note that the relation \eqref{momentumprime} implies for the gradient of the pressure
\begin{align}\label{spacegradient}
\nabla P(\varrho, \, q) = F := &  -\nabla \zeta -\varrho \, (\partial_t v + (v \cdot \nabla)v) + \divv \mathbb{S}(\nabla v) \nonumber\\
&+ R(\varrho, \, q) \cdot \tilde{b} + \hat{b} + \varrho \, \bar{b} \, . 
\end{align}
Clearly, $\|F\|_{L^p(Q_t)}$ is bounded by a function of $b$ and the norms of $\zeta$ and $v$ occurring in the quantity $\mathcal{K}(t)$. We notice in particular that this function is independent on $M(\varrho, \, t)$.

In order to obtain a bound on the entire pressure gradient, we employ the continuity equation \eqref{massprime3}. We compute that
\begin{align*}
 \partial_t P(\varrho, \, q) = &  P_{\varrho}(\varrho, \, q) \,  \partial_t \varrho  + P_{q}(\varrho, \, q) \, \partial_t q\\
 = & P_{\varrho}(\varrho, \, q) \, (-v\cdot \nabla \varrho - \varrho \, \divv v) + P_{q}(\varrho, \, q) \, \partial_t q \, .
\end{align*}
Define $m(\varrho, \, t)  := \min_{Q_t}\{1-\varrho/\varrho_{\max}, \, \varrho/\varrho_{\min}-1\}$. Thanks to Lemma \ref{special}, the properties of the pressure function guarantee that $|P_{\varrho}(\varrho, \, q)| \leq c_4 \, m(\varrho, \, t)^{-1}$ in $Q_t$. Since $|P_{\varrho}(\varrho, \, q)| \, |\nabla \varrho| = | \nabla P(\varrho, \, q) - P_q(\varrho, \, q) \, \nabla q|$, the same Lemma \ref{special} also implies that $$c_3 \, m(\varrho, \, t)^{-1} \, |\nabla \varrho| \leq |\nabla P(\varrho, \, q)| + c_5 \, |\nabla q|) \, .$$ By these means, the time derivative of pressure is bounded via
\begin{align}\label{timegrad}
 |\partial_t P(\varrho, \, q) | \leq & c_4 \,  m^{-1} \, ( |v| \, |\nabla \varrho| + |\varrho| \, |\divv v|) + |P_q| \, |\partial_t q| \nonumber\\
  \leq & c_4 \, \frac{1+c_5}{c_3} \,  |v| \,(|\nabla P(\varrho, \, q)| + |\nabla q|) +c_4 \,   \varrho_{\max} \, m^{-1} \, |\divv v| + c_5 \, |\partial_t q| \, . 
\end{align}
We want to obtain a control on $m(\varrho, \, t)^{-1} \, |\divv v| $. To this aim, we recall the relation \eqref{massprime2}, which allows us to compute
\begin{align}\label{divergence}
& \qquad \divv v = \divv( d(\varrho, \, q) \, (\nabla \zeta-\hat{b}) + A(\varrho, \, q) \, (\nabla q - \tilde{b}))\\
= &  d(\varrho, \, q) \, \divv  (\nabla \zeta-\hat{b}) + A(\varrho, \, q) \, \divv(\nabla q - \tilde{b})  + (\nabla \zeta-\hat{b}) \cdot \nabla d(\varrho, \, q)  + \nabla A(\varrho, \, q)  \cdot (\nabla q - \tilde{b}) \, \nonumber\\
= & d \, \divv  (\nabla \zeta-\hat{b}) + A \, \divv(\nabla q - \tilde{b}) + [(\nabla \zeta-\hat{b}) \, d_q + (\nabla q - \tilde{b}) \, A_q] \, \nabla q\nonumber\\
& + \nabla \varrho \, [d_{\varrho} \, (\nabla \zeta-\hat{b}) + A_{\varrho} \, (\nabla q - \tilde{b})]\nonumber \, .
\end{align}
We recall Lemma \ref{special}, which shows for a constant $c_1$, depending only on the kinetic matrix $M$ and the free energy function $k$, that
\begin{align*}
 |d(\varrho, \, q)| + |A(\varrho, \, q)| + |d_q(\varrho, \, q)| + |A_q(\varrho, \, q)| & \leq c_1 \, m(\varrho) \, ,
\end{align*}
and, moreover, that $| d_{\varrho}(\varrho, \, q)| + |A_{\varrho}(\varrho, \, q)| \leq c_2$. Applying these estimates to \eqref{divergence}, we obtain that
\begin{align*}
 \frac{1}{m(\varrho, \, t)} \, |\divv v| \leq& c_1 \, \underbrace{[|D^2\zeta| + |D^2q| + |\tilde{b}_x| +|\hat{b}_x| + |\nabla q| \, (|\nabla \zeta| + |\nabla q| + |\tilde{b}|+|\hat{b}|)]}_{=:G}\\
 & + c_2 \, \frac{|\nabla \varrho| }{m(\varrho, \, t)} \, (|\nabla \zeta| + |\nabla q| + |\tilde{b}|+|\hat{b}|) \, .
\end{align*}
Recalling again that $m(\varrho)^{-1} \, |\nabla \varrho| \leq c \, (|\nabla P(\varrho, \, q)| + |\nabla q|)$, we get the bound
\begin{align*}
  \frac{1}{m(\varrho, \, t)} \, |\divv v| \leq |G| + c \, (|\nabla P(\varrho, \, q)| + |\nabla q|) \, (|\nabla \zeta| + |\nabla q| + |\tilde{b}|+|\hat{b}|) \,.
\end{align*}
It is readily verified that $G$ is continuously bounded in $L^p(Q_t)$ by the quantity $\mathcal{K}(t)$, independently of $M(\varrho, \, t)$. Since $|\zeta_x| + |q_x| + |\tilde{b}|+|\hat{b}|$ is bounded in $L^{\infty,p}(Q_t)$, we recall \eqref{spacegradient} to finally obtain
\begin{align}\label{weighteddiv}
 \|m(\varrho, \, t)^{-1} \, \divv v\|_{L^{p,\frac{p}{2}}(Q_t)} \leq \Psi(t, \, \mathcal{K}(t)) \, .
\end{align}
By means of \eqref{weighteddiv}, \eqref{spacegradient}, \eqref{timegrad} we see that also $\|\partial_t P(\varrho, \, q)\|_{L^{p,p/2}(Q)}$ is bounded by a function of $ t$ and $\mathcal{K}(t)$, independently of $M(\varrho, \, t)$. Overall we have $\|P_x\|_{L^p(Q_t)} + \|P_t\|_{L^{p,p/2}(Q_t)} \leq \Psi$. For $p > 5$, we can show that this implies a bound $\|P\|_{L^{\infty}(Q_t)} \leq C(t) \, \Psi$, where $C(t)$ is the embedding constant of an anisotropic Sobolev space into $L^{\infty}(Q_t)$.
It remains to recall that for the choice \eqref{kfunktion}, the function $P$ satisfies (cf. \cite{druetmixtureincompweak}, Proposition 5.3)
\begin{align*}
| P(\varrho, \, q)| \geq c \, \ln \max\{\frac{1}{\varrho_{\max}-\varrho}, \, \frac{1}{\varrho-\varrho_{\min}}\} - C \, (1 +|q|) \, .
\end{align*}
This implies that $M(\varrho, \, t) \leq C_1 \, e^{C_2 \, (\|P(\varrho, \, q)\|_{L^{\infty}(Q_t)} + \|q\|_{L^{\infty}(Q_t)})}$, and the claim follows.
\end{proof}

\section{Global well-posedness}\label{contiT1}

\subsection{The map $\mathcal{T}^1$ is well defined}\label{welldef}

We consider the equations \eqref{equadiff0}, \eqref{equadiff1}, \eqref{equadiff2}, \eqref{equadiff3}, \eqref{equadiff4} characteristic of the definition of the map $\mathcal{T}^1$. We recall that these equations are obtained by comparing a solution to some suitable extension $(\hat{q}^0, \, \hat{v}^0) \in \mathcal{Y}_T$, to be constructed here below, of the initial data. The initial density $\varrho_0$ is extended by a function $\hat{\varrho}^0$ obtained via the solution of \eqref{Extendrho0}. We moreover introduce the function $\hat{\zeta}^0$, solution to \eqref{Extendzeta0}.

In order to define $\mathcal{T}^1$ we must make sense of the linear operators $(g^1)^{\prime}(u^*, \, \hat{u}^0)$, $(h^1)^{\prime}(u^*, \, \hat{u}^0)$ and $(f^1)^{\prime}(u^*, \, \hat{u}^0)$. The density components in the vectors $\hat{u}^0 = (\hat{q}^0, \, 1, \, \hat{\varrho}^0, \, \hat{v}^0)$ and $u^*$ (def. in \eqref{ustar}) must therefore assume values in $I$ \emph{up to} time $T > 0$!
This property is to be expected if the initial data are close enough to an equilibrium solution $(\rho^{\text{eq}}, \, p^{\text{eq}}, \, v^{\text{eq}})$ defined by the relations \eqref{massstat}, \eqref{momentumsstat}. The distance of the initial data to this solution is expressed by the number
\begin{align}\label{R1}
R_1 := \|q^0 - q^{\text{eq}}\|_{W^{2-2/p}_p(\Omega; \, \mathbb{R}^{N-2})} + \|v^0 - v^{\text{eq}}\|_{W^{2-2/p}_p(\Omega; \, \mathbb{R}^3)} + \|\varrho^0-\varrho^{\text{eq}}\|_{W^{1,p}(\Omega)}\, ,
\end{align}
in which $\varrho^{\text{eq}} := \sum_{i=1}^N \rho^{\text{eq}}_i$ and $q^{\text{eq}}_{\ell} = \eta^{\ell} \cdot \nabla_{\rho}k(\rho^{\text{eq}})$ for $\ell =1,\ldots,N-2$. Throughout this section, we moreover employ the abbreviation
\begin{align}\label{R0}
 R_0 := & \|\hat{u}^0\|_{\mathcal{X}_T} + \|\hat{\varrho}^0\|_{W^{2,0}_p(Q_T)} + \|\tilde{b}\|_{W^{1,0}_p(Q_T)} + \|\hat{b}\|_{W^{1,0}_p(Q_T)} +\|\bar{b}\|_{L^p(Q_T)}) \, .
 \end{align}
Observe the occurrence of the higher--order $W^{2,0}_p$--norm of $\hat{\varrho}^0$ in the definition of $R_0$. 

To commence with, we recall a result of \cite{bothedruet} for estimating the gradient of solutions to a perturbed continuity equation. 
The proof in \cite{bothedruet} is given for zero initial conditions, but the extension to the nonzero case is completely straightforward.
\begin{lemma}\label{PerturbConti}
Assume that $\sigma \in W^{1,1}_{p,\infty}(Q_T)$ satisfies $\partial_t \sigma +\divv(\sigma \, v) = - \divv(\hat{\varrho}^0 \, w)$ with $\hat{\varrho}^0 \in W^{1,1}_{p,\infty}(Q_T) \cap W^{2,0}_p(Q_T)$ and $v, \,w \in W^{2,1}_p(Q_T; \, \mathbb{R}^3)$. Then there are constants $C, \, c$, depending only on $\Omega$, such that 
\begin{align*}
 \|\sigma(t)\|_{W^{1,p}(\Omega)}^p \leq & C \, \exp\big(c \, \int_0^t [\|v_x(\tau)\|_{L^{\infty}(\Omega)} + \|v_{x,x}(\tau)\|_{L^p(\Omega)} + 1]  d\tau\big) \, \times \\
 & \times  (\|\sigma(0)\|_{W^{1,p}(\Omega)}^p + \|\hat{\varrho}^0\|_{W^{2,0}_p(Q_t)}^p \, \|w\|_{L^{\infty}(Q_t)}^p + \|\hat{\varrho}^0\|_{W^{1,1}_{p,\infty}(Q_t)}^p \,  \|w\|_{W^{2,0}_p(Q_t)}^p) \, 
\end{align*}
for all $t \leq T$.
\end{lemma}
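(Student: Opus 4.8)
The plan is to derive an energy-type estimate for $\sigma$ in $W^{1,p}(\Omega)$ by differentiating the equation spatially, testing with $|\nabla \sigma|^{p-2}\nabla\sigma$, and controlling the resulting terms with a Gronwall argument. First I would rewrite the transport equation in non-conservative form: since $\partial_t\sigma + v\cdot\nabla\sigma + \sigma\,\divv v = -\divv(\hat\varrho^0 w)$, applying $\nabla$ gives
\begin{align*}
\partial_t(\nabla\sigma) + (v\cdot\nabla)\nabla\sigma = -(\nabla v)^{\sf T}\nabla\sigma - \nabla\sigma\,\divv v - \sigma\,\nabla\divv v - \nabla\divv(\hat\varrho^0 w)\,.
\end{align*}
Testing this with $|\nabla\sigma|^{p-2}\nabla\sigma$ and integrating over $\Omega$, the transport term $\int_\Omega (v\cdot\nabla)\nabla\sigma\cdot|\nabla\sigma|^{p-2}\nabla\sigma\,dx = \frac1p\int_\Omega v\cdot\nabla|\nabla\sigma|^p\,dx = -\frac1p\int_\Omega (\divv v)|\nabla\sigma|^p\,dx$ is absorbed, so one obtains
\begin{align*}
\frac{d}{dt}\|\nabla\sigma(t)\|_{L^p(\Omega)}^p \leq C\,(\|v_x(t)\|_{L^\infty(\Omega)}+1)\,\|\nabla\sigma(t)\|_{L^p(\Omega)}^p + C\,\|\nabla\sigma(t)\|_{L^p(\Omega)}^{p-1}\,\big(\|\sigma\,\nabla\divv v(t)\|_{L^p} + \|\nabla\divv(\hat\varrho^0 w)(t)\|_{L^p}\big)\,.
\end{align*}
One also needs the companion bound for $\|\sigma(t)\|_{L^p(\Omega)}$, obtained by testing the original equation with $|\sigma|^{p-2}\sigma$; this is of the same type and simpler.

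Next I would estimate the two inhomogeneous terms. For $\|\sigma\,\nabla\divv v\|_{L^p(\Omega)}$, one should combine $\|\nabla\divv v(t)\|_{L^p(\Omega)}\le \|v_{x,x}(t)\|_{L^p(\Omega)}$ with a bound on $\|\sigma(t)\|_{L^\infty(\Omega)}$; to avoid the $L^\infty$ norm of $\sigma$ (which is not in the stated norms), I would rather keep $\sigma$ inside the $L^p$ integral and use that $\divv v$ at fixed time can be handled by the $L^p$ self-map machinery, or — cleaner — note that a term $\|\sigma(t)\|_{L^p}\,\|v_{x,x}(t)\|_{L^\infty}$ is not wanted either, so the right move is to keep the product structure and use $\|\sigma\nabla\divv v\|_{L^p}\le \|\sigma\|_{L^\infty}\|v_{xx}\|_{L^p}$ only after first establishing $\|\sigma\|_{L^\infty(Q_t)}$ is controlled — but since this lemma is quoted from \cite{bothedruet}, I would instead follow that reference's bookkeeping, where the $\sigma\,\nabla\divv v$ contribution is grouped into the Gronwall coefficient via $\int_0^t\|v_{x,x}(\tau)\|_{L^p(\Omega)}\,d\tau$ appearing in the exponential. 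For the forcing $\nabla\divv(\hat\varrho^0 w)$, I would expand by the Leibniz rule into terms $\hat\varrho^0\,\nabla\divv w$, $\nabla\hat\varrho^0\,D^2 w$ (and lower), $D^2\hat\varrho^0\,w$, $\nabla\hat\varrho^0 \nabla w$; the first two groups are bounded in $L^{p,1}(Q_t)$ by $\|\hat\varrho^0\|_{W^{1,1}_{p,\infty}(Q_t)}\,\|w\|_{W^{2,0}_p(Q_t)}$ using $W^{1,p}\hookrightarrow L^\infty$ in the space variable, and the $D^2\hat\varrho^0\,w$ group by $\|\hat\varrho^0\|_{W^{2,0}_p(Q_t)}\,\|w\|_{L^\infty(Q_t)}$.

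Finally, applying Young's inequality to split the $\|\nabla\sigma\|_{L^p}^{p-1}\cdot(\cdots)$ products into $\|\nabla\sigma\|_{L^p}^p$ plus the $p$-th power of the forcing, and then invoking Gronwall's lemma in integral form, yields
\begin{align*}
\|\sigma(t)\|_{W^{1,p}(\Omega)}^p \le C\,\exp\Big(c\int_0^t[\|v_x(\tau)\|_{L^\infty(\Omega)}+\|v_{x,x}(\tau)\|_{L^p(\Omega)}+1]\,d\tau\Big)\big(\|\sigma(0)\|_{W^{1,p}(\Omega)}^p + \|\hat\varrho^0\|_{W^{2,0}_p(Q_t)}^p\|w\|_{L^\infty(Q_t)}^p + \|\hat\varrho^0\|_{W^{1,1}_{p,\infty}(Q_t)}^p\|w\|_{W^{2,0}_p(Q_t)}^p\big)\,,
\end{align*}
which is the claim. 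The main obstacle is the careful treatment of the term $\sigma\,\nabla\divv v$: it is genuinely borderline because $\nabla\divv v$ only lies in $L^p$ in space, so one must either pay with an $L^\infty$ bound on $\sigma$ (not available a priori in the target norm) or, as in \cite{bothedruet}, absorb it into the exponential Gronwall factor by treating $\int_0^t\|v_{x,x}(\tau)\|_{L^p}\,d\tau$ as part of the growth rate rather than as a source; getting this grouping right, together with the regularity of the characteristic flow needed to justify the formal energy identities, is where the real work lies. Everything else is routine product estimates and the embedding $W^{1,p}(\Omega)\hookrightarrow L^\infty(\Omega)$ valid for $p>3$.
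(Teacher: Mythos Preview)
The paper does not actually prove this lemma: it merely cites the companion paper \cite{bothedruet} (where the case $\sigma(0)=0$ is treated) and remarks that the extension to nonzero initial data is straightforward. Your plan---differentiate the equation in space, test with $|\nabla\sigma|^{p-2}\nabla\sigma$, add the companion estimate for $\|\sigma\|_{L^p}$, and close by Gronwall---is the standard route and is what the cited reference does; so in that sense your proposal matches the paper.

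There is, however, one point where you tie yourself in knots unnecessarily. You call the term $\sigma\,\nabla\divv v$ ``genuinely borderline'' and say an $L^\infty$ bound on $\sigma$ is ``not available a priori in the target norm''---but it is, and you yourself use the relevant fact two sentences later: since $p>3$, the embedding $W^{1,p}(\Omega)\hookrightarrow L^\infty(\Omega)$ gives $\|\sigma(t)\|_{L^\infty(\Omega)}\leq C\,\|\sigma(t)\|_{W^{1,p}(\Omega)}$. Hence
\[
\|\nabla\sigma(t)\|_{L^p}^{p-1}\,\|\sigma\,\nabla\divv v(t)\|_{L^p}
\;\leq\; C\,\|\sigma(t)\|_{W^{1,p}(\Omega)}^{p}\,\|v_{x,x}(t)\|_{L^p(\Omega)},
\]
and this goes straight into the Gronwall coefficient, producing exactly the $\int_0^t\|v_{x,x}(\tau)\|_{L^p(\Omega)}\,d\tau$ in the exponential. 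There is no obstacle here; the only real work is the bookkeeping in expanding $\nabla\divv(\hat\varrho^0 w)$ and sorting the pieces into the two source groups, which you do correctly.
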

\textbf{Construction of global extensions.}
Under the assumptions of Theorem \ref{MAIN3}, the trivial extensions $q^{\text{eq}}(x, \, t) := q^{\text{eq}}(x)$ and $v^{\text{eq}}(x, \, t) := v^{\text{eq}}(x)$ are such that $q^{\text{eq}} \in W^{2,\infty}_{p,\infty}$ and $v^{\text{eq}} \in W^{3,\infty}_{p,\infty}(Q_T)$. Introduce on $\Omega$ the differences $q^1(x) := q^0(x) - q^{\text{eq}}(x)$ and $v^1(x) := v^0(x) - v^{\text{eq}}(x)$. We extend $q^0$ and $v^0$ via
\begin{align}\label{dextend}
 \hat{v}^0(x, \, t) := v^{\text{eq}}(x) + \underbrace{\mathcal{E}(v^1)(x, \, t)}_{=: \hat{v}^1(x, \, t)}, \quad  \hat{q}^0(x, \, t) := q^{\text{eq}}(x) +\underbrace{ \mathcal{E}(q^1)(x, \, t)}_{=: \hat{q}^1(x, \, t)} \, , 
\end{align}
in which $\mathcal{E}: \, W^{2-2/p}_p(\Omega) \rightarrow W^{2,1}_p(Q_T)$ is a linear, bounded extension operator. Typically, the components of $q^1, \, v^1$ defined in $\Omega$ are first extended to elements of $W^{2-2/p}_{p}(\mathbb{R}^3)$ with bounded support. Then, we solve Cauchy-problems for the heat equation to extend the functions into $\mathbb{R}^3 \times [0, \, T]$ or even $\mathbb{R}^4$. As the assumptions in Theorem \ref{MAIN3} moreover guarantee that $v^1 \in W^{2,p}(\Omega)$, this procedure even yields the additional regularity $\hat{v}^1 \in W^{4,2}_p(Q_T; \, \mathbb{R}^3)$ (cf. \cite{ladu}, Ch. 4, Par. 3, inequality (3.3)). 

Then, the extensions defined in \eqref{dextend} satisfy 
\begin{align}\label{distance}
 \|\hat{q}^0 - q^{\text{eq}}\|_{W^{2,1}_p(Q_T)} + \|\hat{v}^0 - v^{\text{eq}}\|_{W^{2,1}_p(Q_T)} \leq & C_{\mathcal{E}} \, (\|q^1\|_{W^{2-2/p}_p(\Omega)} +  \|v^1\|_{W^{2-2/p}_p(\Omega)})\nonumber\\
\leq & C_{\mathcal{E}} \, R_1 \, ,\\
 \label{extendbetter}
 \|\hat{v}^0\|_{W^{3,0}_p(Q_T)} \leq & C \, (\|v^{\text{eq}}\|_{W^{3,p}(\Omega)} + \|v^0\|_{W^{2,p}(\Omega)}) \, .
\end{align}
In order to extend $\varrho^0$, we solve $\partial_t \hat{\varrho}^0 + \divv( \hat{\varrho}^0 \, \hat{v}^0) = 0$ with initial condition $\hat{\varrho}^0 = \varrho^0$. By these means, $\hat{\varrho}^0 \in W^{1,1}_{p,\infty}(Q_T)$. Due to \eqref{extendbetter}, we can even show that $\hat{\varrho}^0 \in W^{2,0}_p(Q_T)$. We next extend the equilibrium solution via $\varrho^{\text{eq}}(x, \, t) := \varrho^{\text{eq}}(x) \in W^{2,\infty}_{p,\infty}(Q_T)$. Then, by definition, $\divv (\hat{\varrho}^{\text{eq}} \, \hat{v}^{\text{eq}}) = 0$ in $Q_T$ (cp. \eqref{massstat}), and $\partial_t \hat{\varrho}^{\text{eq}} = 0$. Thus, the difference $\hat{\varrho}^1 := \hat{\varrho}^0 - \varrho^{\text{eq}}$ is a solution to
\begin{align*}
 \partial_t \hat{\varrho}^1 + \divv (\hat{\varrho}^1 \, \hat{v}^0) = - \divv(\hat{\varrho}^0 \, \hat{v}^{1}) \, , \quad \hat{\varrho}^1(x, \, 0) = \varrho^1(x) := \varrho^0(x) - \varrho^{\text{eq}}(x) \, .
\end{align*}
Since $\hat{\varrho}^0 \in W^{1,1}_{p,\infty}(Q_T) \cap W^{2,0}_p(Q_T)$ by construction, the estimate of Lemma \ref{PerturbConti} applies (with the choices $\sigma = \hat{\varrho}^1$, $v = \hat{v}^0$ and $w := \hat{v}^1$). Hence, invoking also \eqref{distance},
\begin{align*}
 \|\hat{\varrho}^1\|_{W^{1,1}_{p,\infty}(Q_T)}^p \leq & C \, \exp\big(c \, \int_0^{T} [\|\hat{v}^0_x(\tau)\|_{L^{\infty}(\Omega)} + \|\hat{v}^0_{x,x}(\tau)\|_{L^p(\Omega)} + 1]  d\tau\big) \times\\
 & \times
  [\|\varrho^1\|_{W^{1,p}(\Omega)}^p + \|\hat{\varrho}^0\|_{W^{2,0}_p(Q_T)}^p \, \|\hat{v}^1\|_{L^{\infty}(Q_T)}^p + \|\hat{\varrho}^0\|_{W^{1,1}_{p,\infty}(Q_T)}^p \,  \|\hat{v}^1\|_{W^{2,0}_p(Q_T)}^p]\\
   \leq & C_T \, (\|\varrho^1\|_{W^{1,p}(\Omega)}^p +\|\hat{v}^1\|_{W^{2,1}_p(Q_T)}^p) \leq C(R_0,T) \, \, R_1^p \, .
\end{align*}
The latter and \eqref{distance} now entail
\begin{align}\label{distanceglobal}
 \|\hat{q}^0 - q^{\text{eq}}\|_{W^{2,1}_p(Q_T)} + \|\hat{v}^0 - v^{\text{eq}}\|_{W^{2,1}_p(Q_T)} + \|\hat{\varrho}^0-\varrho^{\text{eq}}\|_{W^{1,1}_{p,\infty}(Q_T)} \leq C \, R_1 \, .
\end{align}
Thus, it also follows that $\|\hat{\varrho}^0-\varrho^{\text{eq}}\|_{L^{\infty}(Q_T)} \leq C \, R_1$. Therefore
\begin{align*}
 \varrho_{\max} - \hat{\varrho}^0(x, \, t) \geq  \varrho_{\max} - \varrho^{\text{eq}}(x) - C \, R_1, \quad \hat{\varrho}^0(x, \, t) -\varrho_{\min} \geq \varrho^{\text{eq}}(x) - \varrho_{\min} - C \, R_1 \, .
\end{align*}
By definition, the equilibrium density remains in the thresholds, that is, $M(\varrho^{\text{eq}}, \, 0) < + \infty$ (see \eqref{supremumrho}). If $R_1$ is small enough, for instance if it satisfies the condition
\begin{align}\label{smalldata}
R_1 \leq \frac{1}{2C} \, \min_{x \in \overline{\Omega}}\{\varrho_{\max} - \varrho^{\text{eq}}(x), \, \varrho^{\text{eq}}(x)-\varrho_{\min}\} \, , 
\end{align}
we can show that
\begin{align}\label{globaldrin}
 M(\hat{\varrho}^0, \, T)  = \text{esssup}_{Q_T} \max\{\frac{1}{\varrho_{\max} - \hat{\varrho}^0}, \, \frac{1}{ \hat{\varrho}^0 -\varrho_{\min}}\} \leq 2 \, M(\varrho^{\text{eq}}, \, 0) < + \infty \, .
\end{align}
We define $\zeta^{\text{eq}}(x) := \eta^{N-1} \cdot \nabla_{\rho}k(\rho^{\text{eq}}(x))$. Multiplying \eqref{massstat} with $\bar{V}$, we see that $\zeta^{\text{eq}}$ satisfies
\begin{align*}
 \divv(v^{\text{eq}} - d(\varrho^{\text{eq}}, \, q^{\text{eq}}) \, (\nabla \zeta^{\text{eq}} - \hat{b}(x)) + A(\varrho^{\text{eq}}, \, q^{\text{eq}}) \, (\nabla q^{\text{eq}}- \tilde{b}(x))) = 0 \, .
\end{align*}
Since $\hat{\zeta}^0$ is constructed solving \eqref{Extendzeta0}, the difference $y := \hat{\zeta}^0 - \zeta^{\text{eq}}$ satisfies
\begin{align*}
- \divv(d^0 \, \nabla y) = & - \divv( \hat{v}^{0} - v^{\text{eq}} + (d^0 - d^{\text{eq}}) \, (\hat{b}(x) - \nabla \zeta^{\text{eq}}))\\
 & - \divv( A^{\text{eq}} \cdot (\nabla q^{\text{eq}}-\tilde{b}(x)) -  A^{0} \, (\nabla \hat{q}^{0}-\tilde{b}(x))) \, 
\end{align*}
where zero superscript of a coefficient means evaluation at $(\hat{\varrho}^0, \, \hat{q}^0)$, while \emph{eq} superscript means evaluation at $(\hat{\varrho}^{\text{eq}}, \, \hat{q}^{\text{eq}})$. Thus, elementary calculations show that also
\begin{align}\label{distanceglobal2}
\| \hat{\zeta}^0- \zeta^{\text{eq}}\|_{W^{2,0}_p(Q_T)} & \leq  C \, (\|\hat{q}^0 - q^{\text{eq}}\|_{W^{2,1}_p(Q_T)} + \|\hat{v}^0 - v^{\text{eq}}\|_{W^{2,1}_p(Q_T)} + \|\hat{\varrho}^0-\varrho^{\text{eq}}\|_{W^{1,1}_{p,\infty}(Q_T)})\nonumber \\
& \leq C \, R_1 \, .
\end{align}
\textbf{The nonlinear map.} Consider now $(r^*, \, w^*)$ given in $\phantom{}_0{\mathcal{Y}}_T$. We define $q^* := \hat{q}^0 + r^*$ and $v^* = \hat{v}^0+ w^*$. Following \eqref{equadiff0}, we introduce $\varrho^* := \mathscr{C}(v^*)$. Then, the difference $\sigma^* := \varrho^* - \hat{\varrho}^0$ is a solution to
\begin{align*}
 \partial_t \sigma^* + \divv(\sigma^* \, v^*) = - \divv(\hat{\varrho}^0 \, w^*) \, , \quad \sigma^*(x, \, 0) = 0 \, .
\end{align*}
Making use of Lemma \ref{PerturbConti} ($\sigma^* = \sigma$ and $v^* = v$, $w^* = w$ therein), we get
\begin{align*}
  \|\varrho^* - \hat{\varrho}^0\|_{W^{1,1}_{p,\infty}(Q_T)}^p \leq & C \, \exp\big(c \, \int_0^{ T} [\|v^*_x(\tau)\|_{L^{\infty}(\Omega)} + \|v^*_{x,x}(\tau)\|_{L^p(\Omega)} + 1]  d\tau\big) \, \times \\
 & \times  (\|\hat{\varrho}^0\|_{W^{2,0}_p(Q_T)}^p \, \|w^*\|_{L^{\infty}(Q_T)}^p + \|\hat{\varrho}^0\|_{W^{1,1}_{p,\infty}(Q_T)}^p \,  \|w^*\|_{W^{2,0}_p(Q_T)}^p)\\
 \leq & \phi_0(T, \, R_0, \, \|w^*\|_{W^{2,1}_p(Q_T)}) \, \|w^*\|_{W^{2,1}_p(Q_T)}^p \, ,
\end{align*}
with a certain continuous function $\phi_0$ increasing of its arguments. Hence, use of the continuous embedding $W^{1,1}_{p,\infty} \subset L^{\infty}$ yields $  \|\varrho^* - \hat{\varrho}^0\|_{L^{\infty}(Q_T)} \leq \phi_0(T, \, R_0, \, \|w^*\|_{W^{2,1}_p(Q_T)}) \, \|w^*\|_{W^{2,1}_p(Q_T)}$. We recall \eqref{globaldrin} to show that, under the condition
\begin{align}\label{smalldata2}
 \phi_0(T, \, R_0, \, \mathscr{V}(T; \, w^*)) \,\mathscr{V}(T; \, w^*)  \leq \frac{1}{4} \, \min_{x \in \overline{\Omega}}\{\varrho_{\max} - \varrho^{\text{eq}}(x), \, \varrho^{\text{eq}}(x)-\varrho_{\min}\} =: a_0 \, ,
\end{align}
we can guarantee that $M(\varrho^*, \, T) < + \infty$ globally. 

The vector $\hat{u}_0 := (\hat{q}^0, \, 1, \, \hat{\varrho}^0, \, \hat{v}^0)$ is in $\mathcal{X}_{T,I}$ under the condition \eqref{smalldata}. Given $(r^*, \, w^*) \in \phantom{}_{0}{\mathcal{Y}}_T$ satisfying \eqref{smalldata2}, we define $u^* := (\hat{q}^0 + r^*, \, 1, \,  \mathscr{C}(\hat{v}^0+w^*), \, \hat{v}^0+w^*)$ (cp. \eqref{ustar}), and we see by the latter arguments that $u^* \in \mathcal{X}_{T,I}$ too. Thus, we can make sense of the operators $(g^1)^{\prime}(u^*, \, \hat{u}^0)$, $(h^1)^{\prime}(u^*, \, \hat{u}^0)$ and $(f^1)^{\prime}(u^*, \, \hat{u}^0)$ in the right-hand of the equations \eqref{equadiff1}, \eqref{equadiff2}, \eqref{equadiff4} on the entire interval $[0, \, T]$.

If we can solve the linear system \eqref{equadiff1}, \eqref{equadiff2}, \eqref{equadiff3}, \eqref{equadiff4} for $(r, \, \chi, \, \zeta,\, w)$, we obtain a \emph{globally defined} solution in $\phantom{}_{0}{\mathcal{X}}_T$, and we can meaningfully define $\mathcal{T}^1(r^*, \, w^*) := (r, \, w)$. We shall prove the solvability by linear continuation on the base of the continuity estimates, that we are in the position to prove next.

\subsection{Continuity estimates}

We need at first an estimate for the operators $(g^1)^{\prime}$, $(h^1)^{\prime}$ and $(f^1)^{\prime}$. We shall prove it for general body forces $b = b(x, \, t)$, even if the statement of Theorem \ref{MAIN3} requires only $b = b(x)$.
\begin{lemma}\label{RightHandControlsecond}
Assume that the initial data satisfy \eqref{smalldata}. Consider $\hat{u}^0 := (\hat{q}^0, \, 1, \, \hat{\varrho}^0, \, \hat{v}^0) \in \mathcal{X}_{T,I}$ with $\hat{\varrho}^0 \in W^{2,0}_p(Q_T)$ constructed in the Section \ref{welldef}. For a given $(r^*, \, w^*) \in \phantom{}_{0}{\mathcal{Y}}_T$ satisfying \eqref{smalldata2}, we define $u^* := (\hat{q}^0 + r^*, \, 1, \,  \mathscr{C}(\hat{v}^0+w^*), \, \hat{v}^0+w^*) \in \mathcal{X}_{T,I}$ (cf. \eqref{ustar}, Section \ref{welldef}). We further consider $(r, \, w) \in \phantom{}_{0}{\mathcal{Y}}_T$, and we denote by $\sigma $ the function obtained via solution of \eqref{equadiff3}. We define $\bar{u} := (r, \, 1, \, \sigma, \, w) \in \phantom{}_{0}{\mathcal{X}}_T$. Then the operators $(g^1)^{\prime}$, $(h^1)^{\prime}$ and $(f^1)^{\prime}$ on the right-hand side of \eqref{equadiff1}, \eqref{equadiff2}, \eqref{equadiff3} satisfy
\begin{align*}
& \|(g^1)^{\prime}(u^*,\, \hat{u}^0) \, \bar{u}\|_{L^p(Q_t)}^p  + \|(h^1)^{\prime}(u^*,\, \hat{u}^0) \, \bar{u}\|_{W^{1,0}_p(Q_t)}^p + \|(f^1)^{\prime}(u^*, \, \hat{u}^0) \, \bar{u}\|_{L^p(Q_t)}^p \\
& \quad \leq K_2^*(t) \,  \int_{0}^t \mathscr{V}^p(s) \, K^*_1(s) \, ds 
\end{align*}
with functions $K^*_1 \in L^1(0,T)$ and $K_2^* \in L^{\infty}(0,T)$. There is a continuous function $\Phi^*(t, \, a_1,a_2)$ defined for all $t, \, a_1, a_2 \geq 0$, such that
\begin{align*}
 \|K^*_1\|_{L^1(0,t)}, \,   \|K^*_2\|_{L^{\infty}(0,t)} \leq & \Phi^*(t, \, \mathscr{V}^*(t),\,  R_{0})
\end{align*}
 for all $t \leq T$, where $\mathscr{V}(t) := \mathscr{V}(t; \, r) + \mathscr{V}(t; \, w)$, $\mathscr{V}^*(t) := \mathscr{V}(t; \, r^*) + \mathscr{V}(t; \, w^*)$ and $R_0$ is defined in \eqref{R0}.
\end{lemma}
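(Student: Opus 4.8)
The statement is a purely technical perturbation estimate for the linearised operators $(g^1)'$, $(h^1)'$, $(f^1)'$ evaluated at the ``frozen'' vector $u^*$ and applied to the increment $\bar u$. The overall strategy is to read off the explicit structure of $g^1$, $h^1$, $f^1$ from \eqref{gri}, \eqref{hri}, \eqref{fri} together with their definitions in \eqref{difference1}, \eqref{difference2}, \eqref{difference4}, differentiate along the segment joining $u^*$ and $\hat u^0$ as in \eqref{glinear1}--\eqref{glinear2}, and bound each term by a product of an $L^\infty$-coefficient (a generic $c_1^*$ depending only on $M(\varrho^*,t)$, $M(\hat\varrho^0,t)$ and the sup-norms of $q^*$, $\hat q^0$, which under \eqref{smalldata}, \eqref{smalldata2} are both finite for all $t\le T$) times factors of $\bar u$, $D_x\bar u$ and of the data $\hat u^0$, $D_x\hat u^0$, $b$. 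The point of the ``time-weighted'' form $K_2^*(t)\int_0^t\mathscr V^p(s)\,K_1^*(s)\,ds$ is that every occurrence of $\bar u$ or $D_x\bar u$ appears multiplied by a factor that lies in $L^{p,\infty}$ or $L^\infty$ over $Q_t$, so that H\"older's inequality in space leaves a purely temporal integrand controlled by $\mathscr V^p(s)$ times an $L^1(0,T)$ weight $K_1^*$.

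First I would treat $(h^1)'$, which is the cleanest: from \eqref{hri} and \eqref{difference2}, $h^1 = d\,\hat b + A\,\tilde b + \hat v^0 - d\,\nabla\hat\zeta^0 - A\,\nabla\hat q^0$ with $d=d(\varrho,q)$, $A=A(\varrho,q)$; its derivative with respect to $u$ in the direction $\bar u$ produces terms of the shape (smooth coefficient)$\cdot(\sigma + r)$ and (smooth coefficient)$\cdot(\nabla\sigma + \nabla r)$ multiplied by $\hat b$, $\tilde b$, $\nabla\hat\zeta^0$, $\nabla\hat q^0$; taking one more spatial derivative for the $W^{1,0}_p$-norm also brings in $D^2$ of $\hat q^0$, $\hat\zeta^0$ and second derivatives of the data, all of which sit in $L^p(Q_T)$ by the regularity of $\hat u^0$ established in Section~\ref{welldef} and \eqref{distanceglobal}, \eqref{distanceglobal2}. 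One then uses $\|\sigma\|_{W^{1,1}_{p,\infty}(Q_t)}\le\phi_0(\ldots)\,\mathscr V(t)$ from Lemma~\ref{PerturbConti} applied to \eqref{equadiff3}, and $\|r\|_{W^{2,1}_p}$, $\|w\|_{W^{2,1}_p}\le\mathscr V(t)$, the Gagliardo--Nirenberg inequality \eqref{gagliardo} and Lemma~\ref{HOELDER} to convert sup-norms of $\bar u$ into $\mathscr V$, and finally Fubini to arrive at the claimed form with $K_1^*$ collecting the (data)$^p$-weights and $K_2^*$ the bounded coefficient factors.

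Next I would do $(g^1)'$ and $(f^1)'$ in the same spirit. For $g^1$, \eqref{gri} and \eqref{difference1} give terms $(R_\varrho\varrho-R)\divv v$, $R_q v\cdot\nabla q$, $\divv(\widetilde M\tilde b + A\hat b)$, $\tilde r$, minus $R_q\partial_t\hat q^0$, plus $\divv(\widetilde M\nabla\hat q^0 + A\nabla\hat\zeta^0)$; its linearisation yields products of $c_1^*$ with pairs among $\{\bar u, D_x\bar u\}$ and $\{u^* \text{-quantities}, \hat u^0\text{-quantities}, b\}$, the only mildly delicate ones being the bilinear $v\cdot\nabla q$-type terms, which after freezing become $w^*\cdot\nabla r + (\hat v^0+w^*)\cdot\nabla\hat q^0$-type expressions handled by H\"older ($L^{p,\infty}$ for the gradient, $L^\infty$ for the velocity via \eqref{gagliardo}). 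For $f^1$, \eqref{fri} and \eqref{difference4} give $-P_\varrho\nabla\varrho - P_q\nabla q - \varrho(v\cdot\nabla)v + R\cdot\tilde b + \hat b + \varrho\bar b$, minus $\varrho\partial_t\hat v^0$, plus $\divv\mathbb S(\nabla\hat v^0) - \nabla\hat\zeta^0$; again every term linearises into coefficient$\times\bar u$ or coefficient$\times D_x\bar u$, with the pressure-gradient pieces $P_\varrho\nabla\varrho$, $P_q\nabla q$ and the convective piece $\varrho(v\cdot\nabla)v$ requiring the now-standard $L^{p,\infty}$-on-gradient / $L^\infty$-on-velocity splitting plus \eqref{gagliardo}. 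Assembling the three bounds and taking the continuous function $\Phi^*$ to majorise all the generic $c_1^*$'s, all data norms inside $R_0$, all $\phi_0$-factors from Lemma~\ref{PerturbConti}, and the Gagliardo constants, one obtains $\|K_1^*\|_{L^1(0,t)} + \|K_2^*\|_{L^\infty(0,t)}\le\Phi^*(t,\mathscr V^*(t),R_0)$.

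\textbf{Main obstacle.} The genuinely delicate point is not the algebra but keeping the coefficient bounds $c_1^*$ \emph{uniform up to time $T$}: the nonlinear coefficients $R$, $\widetilde M$, $A$, $d$, $P$ and their derivatives blow up as $\varrho$ approaches $\{\varrho_{\min},\varrho_{\max}\}$, so one must invoke precisely the smallness conditions \eqref{smalldata} and \eqref{smalldata2}, together with the Lemma~\ref{PerturbConti} estimate on $\sigma^* = \varrho^* - \hat\varrho^0$, to ensure that \emph{both} $\varrho^*$ (inside $u^*$) and $\hat\varrho^0$ stay in a compact subinterval of $I$ on all of $[0,T]$, so that $M(\varrho^*,t)$ and $M(\hat\varrho^0,t)$ are bounded by a constant depending only on $M(\varrho^{\text{eq}},0)$ and hence absorbed into $\Phi^*$ via its $R_0$-argument; along the segment $\theta u^* + (1-\theta)\hat u^0$ this then carries over by convexity of $I$. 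A secondary bookkeeping nuisance is that $h^1$ must be controlled in $W^{1,0}_p$ rather than $L^p$, which is exactly why $\hat\varrho^0\in W^{2,0}_p(Q_T)$ was built into the construction and into $R_0$ in \eqref{R0}; one must make sure every spatial derivative that lands on $\hat q^0$, $\hat\zeta^0$ or $\hat v^0$ is covered by their established $W^{2,1}_p$ (or better) regularity from \eqref{distanceglobal}, \eqref{extendbetter}, \eqref{distanceglobal2}.
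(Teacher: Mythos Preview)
Your proposal is correct and follows essentially the same approach as the paper's proof. The paper likewise treats $(h^1)'$ in detail (delegating $(g^1)'$ and $(f^1)'$ to its companion compressible paper), computes $h^1_q$, $h^1_\varrho$ and their spatial gradients explicitly, bounds them by a generic $c_1^*$ times the data factors $|\hat b|+|\tilde b|+|\nabla\hat\zeta^0|+|\nabla\hat q^0|$, writes $(h^1)'(u^*,\hat u^0)\,\bar u = h^1_q\,r + h^1_\varrho\,\sigma$, and handles $\sigma$ via Lemma~\ref{PerturbConti} exactly as you outline; the uniform-in-$T$ finiteness of $M(\varrho^*,T)$ and $M(\hat\varrho^0,T)$ under \eqref{smalldata}, \eqref{smalldata2} is also the key point the paper singles out.
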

\begin{proof}
The estimates of $(g^1)^{\prime}, \, (f^1)^{\prime}$ were performed in \cite{bothedruet} for the corresponding norms. They can be translated one to one to the present context. In adapting the proof, recall also that the numbers $M(\varrho^*, \, T)$ and $M(\hat{\varrho}^0, \, T)$ are finite by construction. We consider here the factor $(h^1)^{\prime}$, which is treated with similar arguments. We recall that
\begin{align*}
h^1 =& h^1(x, \, t,\, q, \, \varrho)\\
= &d(\varrho,\, q) \, (\hat{b}(x, \, t) - \nabla \hat{\zeta}^0(x, \, t)) + A(\varrho, \, q) \, (\tilde{b}(x, \, t) - \nabla \hat{q}^0(x, \, t)) \, ,
\end{align*}
where $\hat{\zeta}^0$ is constructed solving \eqref{Extendzeta0}. The derivatives of $h^1$ are given by the following expressions:
\begin{align*}
h^1_q = d_q \, (\hat{b} - \nabla \hat{\zeta}^0) + A_q \, (\tilde{b} - \nabla \hat{q}^0), \quad h^1_{\varrho} = d_{\varrho} \, (\hat{b} - \nabla \hat{\zeta}^0) + A_{\varrho} \, (\tilde{b} - \nabla \hat{q}^0) \, ,
\end{align*}
while the gradients in $x$ obey
\begin{align*}
 \nabla h^1_q = & (d_{q,q} \, \nabla q  + d_{q,\varrho} \, \nabla \varrho)\, (\hat{b} - \nabla \hat{\zeta}^0)\\
& + (A_{q,q} \, \nabla q + A_{q,\varrho} \, \nabla \varrho) \, (\tilde{b} - \nabla \hat{q}^0) + d_q\, (\nabla \hat{b} - D^2 \hat{\zeta}^0) + A_q \, (\nabla \tilde{b} - D^2 \hat{q}^0) \, , \\
\nabla h^1_{\varrho} =& (d_{\varrho,q} \, \nabla q  + d_{\varrho,\varrho} \, \nabla \varrho)\, (\hat{b} - \nabla \hat{\zeta}^0)\\
& + (A_{\varrho,q} \, \nabla q + A_{\varrho,\varrho} \, \nabla \varrho) \, (\tilde{b} - \nabla \hat{q}^0) + d_{\varrho}\, (\nabla \hat{b} - D^2 \hat{\zeta}^0) + A_{\varrho} \, (\nabla \tilde{b} - D^2 \hat{q}^0) \, .
\end{align*}
Denote by $c_1^*$ a generic function depending on $M(\varrho, \, T)$ and $\|q\|_{L^{\infty}(Q_T)}$. Then the following estimates are obviously valid:
\begin{align*}
|h^1_q| +|h^1_{\varrho}| \leq & c_1^* \, (|\hat{b}| + |\tilde{b}| + |\nabla \hat{\zeta}^0| + |\nabla \hat{q}^0|) \, ,\\
|\nabla h^1_q| + |\nabla h^1_{\varrho}| \leq & c_1^* \, (|\nabla q| + |\nabla  \varrho|) \, (|\hat{b}| + |\tilde{b}| + |\nabla \hat{\zeta}^0| + |\nabla \hat{q}^0|)\\
& + c_1^* \, (|\hat{b}_x| + |\tilde{b}_x| + |D^2_{x,x}\hat{\zeta}^0| + |D^2_{x,x}\hat{q}^0|) \, .
\end{align*}
Using that $W^{1,p}(\Omega) \subset L^{\infty}(\Omega)$, it follows that
\begin{align}\label{K1}
\|h^1_q\|_{L^{\infty,p}(Q_t)} + \|h^1_{\varrho}\|_{L^{\infty,p}(Q_t)} \leq & c_1^* \, (\|\hat{b}\|_{W^{1,0}_p(Q_t)} + \|\tilde{b}\|_{W^{1,0}_p(Q_t)} + \|\hat{\zeta}^0\|_{W^{2,0}_p(Q_t)} + \|\hat{q}^0\|_{W^{2,0}_p(Q_t)}) \, , \\
\label{K2}
\|\nabla h^1_q\|_{L^p(Q_t)} + \|\nabla h^1_{\varrho}\|_{L^p(Q_t)} \leq & c_1^* \, (\|\nabla q\|_{L^{p,\infty}(Q_t)} + \|\nabla \varrho\|_{L^{p,\infty}(Q_t)}+1) \times \nonumber \\ 
& \times (\|\hat{b}\|_{W^{1,0}_p(Q_t)} + \|\tilde{b}\|_{W^{1,0}_p(Q_t)} + \|\hat{\zeta}^0\|_{W^{2,0}_p(Q_t)} + \|\hat{q}^0\|_{W^{2,0}_p(Q_t)}) \, .
\end{align}
Next we turn to estimate $(h^1)^{\prime}(u^*, \, \hat{u}^0) \, \bar{u}$ in $W^{1,0}_p(Q_t)$. At first we notice that
$(h^1)^{\prime}(u^*, \, \hat{u}^0) \, \bar{u} = h^1_q(u^*,\, \hat{u}^0) \, r + h^1_{\varrho}(u^*,\, \hat{u}^0) \, \sigma$. Thus
\begin{align}\label{prl1}
\|(h^1)^{\prime}(u^*, \, \hat{u}^0) \, \bar{u} \|_{L^p(Q_t)}^p \leq &\int_0^t (|h^1_q(u^*,\, \hat{u}^0)|_{L^{\infty}}^p +| h^1_{\varrho}(u^*,\, \hat{u}^0)|_{L^{\infty}}^p) \, (|r|_{L^p}^p + |\sigma|_{L^p}^p) \, d\tau \,\nonumber \\
\leq & c_1^* \, \int_0^t K_1^*(\tau) \, (|r|_{L^p}^p + |\sigma|_{L^p}^p) \, d\tau \,
\end{align}
with $c_1^* = c_1(M(\varrho^*, \, T), \, M(\hat{\varrho}^0, \, T), \, \|q^*\|_{L^{\infty}(Q_T)}, \, \|\hat{q}^0\|_{L^{\infty}(Q_T)})$, and
\begin{align*}
K_1^*(\tau) := \|\hat{b}(\tau)\|_{W^{1,p}(\Omega)} + \|\tilde{b}(\tau)\|_{W^{1,p}(\Omega)} + \|\hat{\zeta}^0(\tau)\|_{W^{2,p}(\Omega)} + \|\hat{q}^0(\tau)\|_{W^{2,p}(\Omega)} \, .
\end{align*}
The function $K^*_1$ is integrable on $(0, \, t)$ with norm bounded by a function $\Phi^*_t$ of the required structure. Estimating $\|q^*\|_{L^{\infty}(Q_t)} \leq \|\hat{q}^0\|_{L^{\infty}(Q_t)} + C \, t^{\gamma} \, \mathscr{V}(r^*; \, t)$, we see that
\begin{align*}
\|(h^1)^{\prime}(u^*, \, \hat{u}^0) \, \bar{u} \|_{L^p(Q_t)}^p \leq & \phi(t, \, \mathscr{V}^*(t), \, R_0) \, \int_{0}^t K_1^*(\tau) \, (\|r(\tau)\|_{L^p(\Omega)}^p + \|\sigma(\tau)\|_{L^p(\Omega)}^p) \, d\tau \\
\leq & \phi(t, \, \mathscr{V}^*(t), \, R_0) \, \int_{0}^t K_1^*(\tau) \, (\|r(\tau)\|_{L^p(\Omega)}^p + c_0 \, \|\sigma_x(\tau)\|_{L^p(\Omega)}^p) \, d\tau \, .
\end{align*}
For the terms containing $\sigma_x$, we use the result of Lemma \ref{PerturbConti}. It yields for $\tau \leq t$ that, in particular,
\begin{align}\label{prl2}
 & &  \|\sigma(\tau)\|_{W^{1,p}(\Omega)} \leq &  K_3(\tau) \, \|w\|_{L^{\infty}(Q_{\tau})}^p + K_4(\tau) \,  \|w\|_{W^{2,0}_p(Q_{\tau})}^p\\
\text{ with } & &  K_3(\tau) := & C \, e^{c \, \int_0^{\tau} [\|v^*_x\|_{L^{\infty}(\Omega)} + \|v^*_{x,x}\|_{L^p(\Omega)} + 1] \, ds} \,  \|\hat{\varrho}^0\|_{W^{2,0}_p(Q_{\tau})}^p \, ,\nonumber\\
 & & K_4(\tau) := & C \, e^{c \, \int_0^{\tau} [\|v^*_x\|_{L^{\infty}(\Omega)} + \|v^*_{x,x}\|_{L^p(\Omega)} + 1] \, ds} \,  \|\hat{\varrho}^0\|_{W^{1,1}_{p,\infty}(Q_{\tau})}^p\nonumber \, .
\end{align}
Since $\|w\|_{L^{\infty}(Q_{\tau})} \leq \bar{c} \, \sup_{s \leq \tau} \|w(s)\|_{W^{2-2/p}_p(\Omega)}$, we obtain that
\begin{align*}
\int_{0}^t K_1^*(\tau) \, \|\sigma_x(\tau)\|_{L^p(\Omega)}^p \, d\tau \leq & \max\{K_3(t), \,K_4(t)\} \, \int_{0}^t K_1^*(\tau) \, [\|w\|_{L^{\infty}(Q_{\tau})}^p + \|w\|_{W^{2,0}_p(Q_{\tau})}^p] \, d\tau \, \\
\leq & \max\{K_3(t), \,K_4(t)\} \, (1+\bar{c})^p \, \int_{0}^t K_1^*(\tau) \, \mathscr{V}^p(w; \, \tau) \, d\tau \, .
\end{align*}
Thus for $K_2^*(t) := C \, \phi(t, \, \mathscr{V}^*(t), \, R_0) \, \max\{K_3(t), \,K_4(t), \, 1\}$, it follows that
\begin{align*}
& \|(h^1)^{\prime}(u^*, \, \hat{u}^0) \, \bar{u} \|_{L^p(Q_t)}^p \leq  \phi(t, \, \mathscr{V}^*(t), \, R_0) \, \int_{0}^t K_1^*(\tau) \, \|r(\tau)\|_{L^p(\Omega)}^p \, d\tau \\
& \quad +\phi(t, \, \mathscr{V}^*(t), \, R_0) \, c_0 \, \max\{K_3(t), \,K_4(t)\} \, (1+\bar{c})^p \, \int_{0}^t K_1^*(\tau)   \, \mathscr{V}^p(w; \, \tau) \, d\tau \,\\
& \phantom{\|(h^1)^{\prime}(u^*, \, \hat{u}^0) \, \bar{u} \|_{L^p(Q_t)}^p }   \leq  K_2^*(t) \, \int_{0}^t K_1^*(\tau)   \, \mathscr{V}^p(\tau) \, d\tau\, .
\end{align*}
We can prove a similar estimate for $\|\nabla ((h^1)^{\prime}(u^*, \, \hat{u}^0) \, \bar{u}) \|_{L^p(Q_t)}^p$. First we notice that
\begin{align*}
\nabla ((h^1)^{\prime}(u^*, \, \hat{u}^0) \, \bar{u}) =& \nabla (h^1_q(u^*,\, \hat{u}^0) \, r + h^1_{\varrho}(u^*,\, \hat{u}^0) \, \sigma)\\
= &\nabla h^1_q(u^*,\, \hat{u}^0) \, r + \nabla h^1_{\varrho}(u^*,\, \hat{u}^0) \, \sigma + h^1_q(u^*,\, \hat{u}^0) \, \nabla r + h^1_{\varrho}(u^*,\, \hat{u}^0) \, \nabla \sigma \, .
\end{align*}
As before (see \eqref{prl1}), $\|h^1_q \, \nabla r + h^1_{\varrho} \, \nabla \sigma \|_{L^p(Q_t)}^p \leq c_1^* \, \int_0^t K_1^*(\tau) \, (\|r_x\|_{L^p(\Omega)}^p + \|\sigma_x\|_{L^p(\Omega)}^p) \, d\tau$. Treating $\sigma_x$ as in \eqref{prl2}, we obtain that $\|h^1_q \, \nabla r + h^1_{\varrho} \, \nabla \sigma \|_{L^p(Q_t)}^p \leq K_2^*(t) \, \int_{0}^t K_1^*(\tau)   \, \mathscr{V}^p(\tau) \, d\tau$.

On the other hand, \eqref{K2} yields
\begin{align*}
&\|\nabla h^1_q(u^*,\, \hat{u}^0) \, r + \nabla h^1_{\varrho}(u^*,\, \hat{u}^0) \, \sigma\|_{L^p(Q_t)}^p \\
\leq &c_1^* \,(\|\nabla q^*\|_{L^{p,\infty}(Q_t)} + \|\nabla \hat{q}^0\|_{L^{p,\infty}(Q_t)} + \|\nabla \varrho^*\|_{L^{p,\infty}(Q_t)} + \|\nabla \hat{\varrho}^0\|_{L^{p,\infty}(Q_t)}+1)^p \, \times\\
&\times \int_0^t (\|\hat{b}\|_{W^{1,p}(\Omega)} + \|\tilde{b}\|_{W^{1,p}(\Omega)} + \|\hat{\zeta}^0\|_{W^{2,p}(\Omega)} + \|\hat{q}^0\|_{W^{2,p}(\Omega)})^p \, (\|r\|_{L^{\infty}(\Omega)}^p + \|\sigma\|_{L^{\infty}(\Omega)}^p) \, d\tau\, .
\end{align*}
This implies that
\begin{align*}
\|\nabla h^1_q \, r + \nabla h^1_{\varrho} \, \sigma\|_{L^p(Q_t)}^p \leq \tilde{K}^*_2(t) \,  \int_0^t \tilde{K}^*_1(\tau) \, (\|r(\tau)\|_{L^{\infty}(\Omega)}^p + c_0 \, \|\sigma_x(\tau)\|_{L^{p}(\Omega)}^p) \, d\tau \, .
\end{align*}
Again, we treat $\sigma_x$ by means of \eqref{prl2}. The claim follows.
 \end{proof}
 Next we prove the main continuity estimate.
We apply Proposition \ref{qzetasystem} to \eqref{equadiff1}, \eqref{equadiff2}. Making use of the fact that $r(0, \, x) =0$ in $\Omega$, we get the estimate
\begin{align}\label{ESTIMrchi}
 \mathscr{V}(t; \, r) + \|\chi\|_{W^{2,0}_p(Q_t)} \leq & C \, \tilde{\Psi}_{1,T}  \, (\|g^1\|_{L^p(Q_t)} + \|(h^1)^{\prime}(u^*,\hat{u}^0) \, \bar{u}\|_{W^{1,0}_p(Q_t)} +\|w\|_{W^{1,0}_p(Q_t)})\\ 
 \leq &  C \, \tilde{\Psi}_{1,T} \, (\|\hat{g}^0\|_{L^p(Q_t)}  + \|w\|_{W^{1,0}_{p}(Q_t)}) \nonumber\\
 & + C \, \tilde{\Psi}_{1,T} \, (\|(g^1)^{\prime}(u^*,\hat{u}^0) \, \bar{u}\|_{L^p(Q_t)} +  \|(h^1)^{\prime}(u^*,\hat{u}^0) \, \bar{u}\|_{W^{1,0}_p(Q_t)}) \nonumber  \, .
\end{align}
Here $\tilde{\Psi}_{1,T} = \max\{\Psi_{1,T}, \, \Phi_T\}$ depends continuously on the data. We then apply Proposition \ref{vsystem} to \eqref{equadiff4} and obtain
\begin{align}\label{ESTIMw}
 \mathscr{V}(T; \, w) \leq & C \, \tilde{\Psi}_{2,T} \, (\|f^1\|_{L^p(Q_t)} + \|\nabla\chi\|_{L^p(Q_t)}) \nonumber\\
 \leq & C \, \tilde{\Psi}_{2,T} \, (\|\hat{f}^0\|_{L^p(Q_t)} + \|\nabla\chi\|_{L^p(Q_t)} + \|(f^1)^{\prime}(u^*,\hat{u}^0) \, \bar{u}\|_{L^p(Q_t)}) \, ,
\end{align}
again with some $\tilde{\Psi}_{2,T}$ depending on $T$ and $\sup_{s\leq t} [\varrho^*(s)]_{C^{\alpha}(\Omega)})$. We estimate $\|\nabla\chi\|_{L^p(Q_t)}$ by means of \eqref{ESTIMrchi}. We next raise both \eqref{ESTIMrchi} and \eqref{ESTIMw} to the $p^{\text{th}}$ power, add both inequalities, and get for the function $\mathscr{V}(t) := \mathscr{V}(t; \, r) + \mathscr{V}(t; \, w) +  \|\chi\|_{W^{2,0}_p(Q_t)} $ the inequality 
\begin{align*}
 \mathscr{V}^p(t) \leq& C \, (\tilde{\Psi}_{1,T}^p + \tilde{\Psi}_{2,T}^p) \, ( \|\hat{g}^0\|^p_{L^p(Q_T)} + \|\hat{f}^0\|^p_{L^p(Q_T)} + \|w\|_{W^{1,0}_p(Q_t)}^p \\
 & +  \|(g^1)^{\prime}(u^*, \, \hat{u}^0) \, \bar{u}\|_{L^p(Q_t)}^p + \|(f^1)^{\prime}(u^*, \, \hat{u}^0) \, \bar{u}\|_{L^p(Q_t)}^p+  \|(h^1)^{\prime}(u^*,\hat{u}^0) \, \bar{u}\|_{W^{1,0}_p(Q_t)}^p)  \, .
\end{align*}
Then we make use of $\|w\|_{W^{1,0}_p(Q_t)}^p \leq \int_0^t \mathscr{V}^p(s) \, ds$, and we apply the Lemma \ref{RightHandControlsecond} to find that
\begin{align}\label{candidateforgronwall}
  \mathscr{V}^p(t) \leq & C \, (\tilde{\Psi}_{1,T}^p + \tilde{\Psi}_{2,T}^p) \, \left(\|\hat{g}^0\|^p_{L^p(Q_T)} + \|\hat{f}^0\|^p_{L^p(Q_T)} + K_2^*(t) \, \int_0^t K^*_1(s) \,   \mathscr{V}^p(s) \, ds\right) \, .
\end{align}
The Gronwall inequality implies that
\begin{align*}
\mathscr{V}^p(t) \leq & C \, (\tilde{\Psi}_{1,T}^p + \tilde{\Psi}_{2,T}^p) \, \exp\left[C \, (\tilde{\Psi}_{1,T}^p + \tilde{\Psi}_{2,T}^p) \, K^*_{2}(t) \, \int_0^t K^*_1(s)\, ds\right] \times\\
& \times (\|\hat{g}^0\|^p_{L^p(Q_T)} + \|\hat{f}^0\|^p_{L^p(Q_T)}) \, .
\end{align*}
We thus have proved the following continuity estimate.
\begin{prop}\label{fixednew}
We define $R_0$ via \eqref{R0}.
 Suppose that $(r^*, \, w^*) \in \phantom{}_{0}{\mathcal{Y}}_T$ satisfy the condition \eqref{smalldata2}. Then $(r, \, w) = \mathcal{T}^1(r^*, \, w^*)$ is well defined in $\phantom{}_{0}{\mathcal{Y}}_T$. Moreover, there is a continuous function $\Psi_7 = \Psi_7(T, \, R_0, \, \eta)$, increasing of all arguments and finite for all $\phi_0(T, \, R_0,\, \eta) \, \eta < \frac{1}{4M(\varrho^{\text{eq}}, \, 0)}$ such that
 \begin{align*}
  \mathscr{V}(T) \leq & \Psi_7(T, \, R_0, \, \mathscr{V}^*(T)) \,  (\|\hat{g}^0\|_{L^p(Q_T)} + \|\hat{f}^0\|_{L^p(Q_T)}) \, .
 \end{align*}
\end{prop}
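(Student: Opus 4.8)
The statement has, in effect, already been assembled in the two preceding subsections; the plan is to organise those pieces into a proof. \textbf{Well-definedness.} First I would invoke Section~\ref{welldef}: under \eqref{smalldata} the extension $\hat u^0 = (\hat q^0, 1, \hat\varrho^0, \hat v^0)$ lies in $\mathcal{X}_{T,I}$ with $M(\hat\varrho^0, T) \le 2 M(\varrho^{\text{eq}}, 0) < +\infty$ by \eqref{globaldrin}, and under the additional smallness \eqref{smalldata2} the vector $u^* = (\hat q^0 + r^*, 1, \mathscr{C}(\hat v^0 + w^*), \hat v^0 + w^*)$ likewise lies in $\mathcal{X}_{T,I}$ with $M(\varrho^*, T) < +\infty$. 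Consequently the affine maps $(g^1)'(u^*, \hat u^0)$, $(h^1)'(u^*, \hat u^0)$, $(f^1)'(u^*, \hat u^0)$ are bounded on all of $\phantom{}_0\mathcal{X}_T$, with operator norms controlled on $[0,T]$ by Lemma~\ref{RightHandControlsecond}. The linear system \eqref{equadiff1}--\eqref{equadiff4} is then solved on the whole of $[0,T]$: \eqref{equadiff3} determines $\sigma$ linearly from $w$ via Proposition~\ref{contieq}, the pair $(r, \chi)$ solves the linear parabolic--elliptic system \eqref{equadiff1}, \eqref{equadiff2} handled by Proposition~\ref{qzetasystem} for given $w, \sigma$, and $w$ solves the linear momentum equation \eqref{equadiff4} handled by Proposition~\ref{vsystem} for given $\chi$; since the coupling through $(g^1)', (h^1)', (f^1)'$ carries a time integral $\int_0^t(\cdot)\,d\tau$, a contraction on a short subinterval yields a local solution, and the uniform a priori bound proved below permits its continuation up to $T$ (linear continuation, as announced after \eqref{distanceglobal2}). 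Uniqueness follows from the same a priori bound with $\hat g^0 = \hat f^0 = 0$.

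\textbf{The continuity estimate.} Next I would reproduce the chain \eqref{ESTIMrchi}--\eqref{candidateforgronwall}: apply Proposition~\ref{qzetasystem} to \eqref{equadiff1}, \eqref{equadiff2} with coefficients frozen at $(\mathscr{C}(v^*), q^*)$ and with $r(0) = 0$, obtaining \eqref{ESTIMrchi} with $\tilde\Psi_{1,T} := \max\{\Psi_{1,T}, \Phi_T\}$; apply Proposition~\ref{vsystem} to \eqref{equadiff4}, obtaining \eqref{ESTIMw} with $\tilde\Psi_{2,T}$ depending on $T$ and on $\sup_{\tau\le t}[\varrho^*(\tau)]_{C^{\alpha}(\Omega)}$; insert the $\|\nabla\chi\|_{L^p(Q_t)}$ bound from \eqref{ESTIMrchi} into \eqref{ESTIMw}; raise both inequalities to the $p$-th power and add; bound $\|w\|_{W^{1,0}_p(Q_t)}^p \le \int_0^t \mathscr{V}^p(s)\,ds$; and use Lemma~\ref{RightHandControlsecond} to bound $\|(g^1)'\bar u\|_{L^p}^p + \|(h^1)'\bar u\|_{W^{1,0}_p}^p + \|(f^1)'\bar u\|_{L^p}^p$ by $K_2^*(t)\int_0^t K_1^*(s)\mathscr{V}^p(s)\,ds$. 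This gives \eqref{candidateforgronwall}, and since $K_1^* \in L^1(0,T)$, $K_2^* \in L^\infty(0,T)$ and the $\tilde\Psi_{i,T}$ are controlled by continuous functions of the data, the Gronwall lemma closes the estimate in the displayed form $\mathscr{V}^p(t) \le C(\tilde\Psi_{1,T}^p+\tilde\Psi_{2,T}^p)\exp[\,\cdots\,](\|\hat g^0\|_{L^p}^p + \|\hat f^0\|_{L^p}^p)$.

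\textbf{Definition of $\Psi_7$.} Finally I would set $\Psi_7$ to be the $1/p$-th power of the factor produced by the Gronwall step, namely $\big[C(\tilde\Psi_{1,T}^p + \tilde\Psi_{2,T}^p)\exp\big(C(\tilde\Psi_{1,T}^p + \tilde\Psi_{2,T}^p)\,K_2^*(T)\,\|K_1^*\|_{L^1(0,T)}\big)\big]^{1/p}$, re-expressed as a function of $T$, $R_0$ and $\mathscr{V}^*(T)$ (the fixed quantity $M(\varrho^{\text{eq}},0)$ being absorbed into the constants). Monotonicity in all arguments is inherited from that of $\Psi_1, \Phi, \Psi_2, \Psi_3, \Psi_4, \Psi_5$ and of the function $\Phi^*$ of Lemma~\ref{RightHandControlsecond}; finiteness holds precisely on the set $\{\phi_0(T,R_0,\eta)\,\eta < \tfrac{1}{4M(\varrho^{\text{eq}},0)}\}$, since (cf.\ \eqref{smalldata2}, \eqref{globaldrin}) that is exactly the range on which $M(\varrho^*,T)$ stays finite, which is what keeps the generic coefficient functions $c_1^*$, and hence $\Psi_1, \Phi, \Phi^*$, finite.

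\textbf{Main obstacle.} The one genuinely delicate matter -- already absorbed into Proposition~\ref{qzetasystem} and Lemma~\ref{RightHandControlsecond} -- is that every nonlinear coefficient degenerates as $\varrho^* \to \{\varrho_{\min}, \varrho_{\max}\}$, so the whole estimate is conditional on the smallness hypothesis \eqref{smalldata2} keeping $M(\varrho^*, T)$ and $M(\hat\varrho^0, T)$ bounded on the \emph{entire} interval $[0,T]$, rather than only locally as for the map $\mathcal{T}$. A second point to keep in view is purely organisational: unlike in the estimate for $\mathcal{T}$, the source terms $\hat g^0, \hat f^0$ do not vanish, so they must be retained as explicit multiplicative factors on the right-hand side; this is what later permits, via \eqref{distanceglobal}, \eqref{distanceglobal2} and the fact that $\hat g^0, \hat f^0$ measure the defect of $\hat u^0$ from the exact equilibrium solution, the conclusion $\mathscr{V}(T) \lesssim R_1$ needed to run the contraction argument.
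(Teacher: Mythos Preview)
Your proposal is correct and follows essentially the same route as the paper: the proposition is stated there as a summary of the chain \eqref{ESTIMrchi}--\eqref{candidateforgronwall} followed by Gronwall, with well-definedness furnished by Section~\ref{welldef} and the bounds on the perturbation operators by Lemma~\ref{RightHandControlsecond}. Your additional remarks on linear continuation for existence and on the role of the smallness condition \eqref{smalldata2} in keeping $M(\varrho^*, T)$ finite are exactly the points the paper leaves implicit or merely announces.
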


\subsection{Existence of a unique fixed-point of $\mathcal{T}^1$}\label{FixedPointT1}

We are now in the position to prove a self-mapping property for sufficiently 'small data'. We recall the definitions \eqref{R1}, \eqref{R0} of the critical norms $R_0, \, R_1$.
We denote $u^{\text{eq}} = (q^{\text{eq}}, \, \zeta^{\text{eq}}, \, \varrho^{\text{eq}}, \, v^{\text{eq}})$ and let $\hat{u}^0 := (\hat{q}^{0}, \, \hat{\zeta}^{0}, \, \hat{\varrho}^{0}, \, \hat{v}^{0})$ and $\hat{u}^1 :=  u^{\text{eq}}-\hat{u}^0$. 
In \eqref{distanceglobal}, \eqref{distanceglobal2}, we just proved that $\|\hat{u}^1\|_{\mathcal{X}_T} \leq C \, R_1$.
Recalling that the operator $\widetilde{\mathscr{A}}$ is continuously differentiable into the space $\mathcal{Z}_T$ defined in \eqref{IMAGESPACE}, and that $\widetilde{A}(u^{\text{eq}}) = 0$ by the definition of an equilibrium solution, we can verify that
\begin{align*}
\widetilde{ \mathscr{A}}(\hat{u}^0) =  & \widetilde{ \mathscr{A}}(\hat{u}^{\text{eq}} + \hat{u}^1) = \widetilde{ \mathscr{A}}(\hat{u}^{\text{eq}} + \hat{u}^1) - \widetilde{ \mathscr{A}}(\hat{u}^{\text{eq}})= \int_{0}^1  \widetilde{ \mathscr{A}}^{\prime}(\hat{u}^{\text{eq}} + \theta \, \hat{u}^1) \, d\theta \, \hat{u}^1 \, .
\end{align*}
Thus $ \|\widetilde{ \mathscr{A}}(\hat{u}^0)\|_{Z_T} \leq C \, R_1$. The definitions of $\hat{g}^0$ and $\hat{f}^0$ in \eqref{glinear2} show that
\begin{align}\label{finalepertub}
 & \|\hat{g}^0\|_{L^p(Q_T)}  +\|\hat{f}^0\|_{L^p(Q_T)}  =  \|\widetilde{ \mathscr{A}}^1(\hat{u}^0)\|_{L^p(Q_T)}  + \|\widetilde{ \mathscr{A}}^4(\hat{u}^0)\|_{L^p(Q_T)} \leq \bar{C} \, R_1 \, . 
\end{align}
These considerations allow to state and prove the main properties of $\mathcal{T}^1$.
\begin{lemma}
We define $R_0$ via \eqref{R0} and $R_1$ via \eqref{R1}. For $\phi_0$ and $a_0$ defined in \eqref{smalldata2}, we define $\eta_0 >0$ as the smallest positive number such that $\phi_0(T, \, R_0, \, \eta_0) \, \eta_0 = a_0$. We define $\bar{R}_1 = \min\{1/(2CM(\varrho^{\text{eq}},0)), \, \eta_0/(\bar{C} \, \Psi_7(T, \, R_0, \, \eta_0))\}$ with $\Psi_7$ from Proposition \ref{fixednew}, $C$ from \eqref{smalldata}, and $\bar{C}$ from \eqref{finalepertub}. If $R_1 \leq \bar{R}_1$, the map $\mathcal{T}^1$ is well defined and possesses a unique fixed-point.
\end{lemma}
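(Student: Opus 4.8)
The plan is to run a Picard/contraction scheme for $\mathcal{T}^1$ on a closed ball of $\phantom{}_{0}\mathcal{Y}_T$, using the continuity estimate of Proposition~\ref{fixednew} together with the residual bound \eqref{finalepertub} for the self-mapping, and an $L^2$ energy argument in the spirit of the iteration Lemma~\ref{iter} for the convergence of the iterates. First I would record that, since $\phi_0$ is continuous and increasing, the map $\eta\mapsto\phi_0(T,R_0,\eta)\,\eta$ is increasing, vanishes at $\eta=0$ and tends to $+\infty$; hence $\eta_0>0$ is well defined as its smallest root of value $a_0$, and $\phi_0(T,R_0,\eta)\,\eta\le a_0$ whenever $0\le\eta\le\eta_0$. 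Since $\Psi_7(T,R_0,\eta_0)$ is finite by Proposition~\ref{fixednew}, the number $\bar R_1$ is strictly positive; assume from now on $R_1\le\bar R_1$.

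Because $R_1\le\bar R_1$, in particular the smallness condition \eqref{smalldata} holds, so the global extensions $\hat q^0,\hat v^0,\hat\varrho^0,\hat\zeta^0$ of Section~\ref{welldef} are admissible, $\hat u^0\in\mathcal{X}_{T,I}$, and \eqref{globaldrin} gives $M(\hat\varrho^0,T)\le 2\,M(\varrho^{\text{eq}},0)<\infty$. I then set
\[
B:=\{(r^*,w^*)\in\phantom{}_{0}\mathcal{Y}_T\ :\ \mathscr{V}(T;r^*)+\mathscr{V}(T;w^*)\le\eta_0\}.
\]
For $(r^*,w^*)\in B$ one has $\mathscr{V}(T;w^*)\le\eta_0$, hence $\phi_0(T,R_0,\mathscr{V}(T;w^*))\,\mathscr{V}(T;w^*)\le a_0$, i.e.\ \eqref{smalldata2} is satisfied; together with \eqref{smalldata} this places $u^*=(\hat q^0+r^*,1,\mathscr{C}(\hat v^0+w^*),\hat v^0+w^*)$ in $\mathcal{X}_{T,I}$ with $M(\mathscr{C}(\hat v^0+w^*),T)<\infty$, and Proposition~\ref{fixednew} applies: $\mathcal{T}^1(r^*,w^*)=(r,w)$ is well defined in $\phantom{}_{0}\mathcal{Y}_T$. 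Using monotonicity of $\Psi_7$, the estimate of Proposition~\ref{fixednew}, \eqref{finalepertub} and $R_1\le\bar R_1\le\eta_0/(\bar C\,\Psi_7(T,R_0,\eta_0))$,
\[
\mathscr{V}(T;r)+\mathscr{V}(T;w)\ \le\ \Psi_7(T,R_0,\eta_0)\big(\|\hat g^0\|_{L^p(Q_T)}+\|\hat f^0\|_{L^p(Q_T)}\big)\ \le\ \bar C\,\Psi_7(T,R_0,\eta_0)\,R_1\ \le\ \eta_0,
\]
so $(r,w)\in B$: the map $\mathcal{T}^1$ is well defined on $B$ and maps $B$ into itself.

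To produce the fixed point I would run the iteration $(r^{n+1},w^{n+1}):=\mathcal{T}^1(r^n,w^n)$ with $(r^1,w^1)=0$, which stays in $B$ and is thus uniformly bounded in $\mathcal{X}_T$ (adding the $W^{2,0}_p$-bound on $\chi^n$ and the $W^{1,1}_{p,\infty}$-bound on $\sigma^n$ from Proposition~\ref{fixednew}). The consecutive differences $(r^{n+1}-r^n,\chi^{n+1}-\chi^n,\sigma^{n+1}-\sigma^n,w^{n+1}-w^n)$ solve the linear system \eqref{equadiff1}--\eqref{equadiff4} with vanishing $\hat g^0,\hat f^0$ and a right-hand side depending linearly on $(r^n-r^{n-1},w^n-w^{n-1})$, through the Lipschitz dependence of $R,\widetilde M,A,d,P$ and of $(g^1)',(h^1)',(f^1)'$ on $(\varrho,q)$ (all coefficients frozen and the densities kept inside $I$ by the bounds above) and through Lemma~\ref{PerturbConti} for \eqref{difference3}. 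An $L^2$ energy estimate on short slices $[t,t+t_1]$, with $t_1>0$ fixed depending only on $R_0$ and $\eta_0$, then produces a contraction factor $\tfrac12$ for the associated energy functional, exactly as in Lemma~\ref{iter} and \cite{bothedruet}; patching the slices over $[0,T]$ shows $(r^n,w^n)$ to be Cauchy in $L^2(Q_T)$ together with its first spatial derivatives. The uniform $\mathcal{X}_T$-bound identifies the limit $(r,w)$ as an element of $B$, and passing to the limit in \eqref{equadiff1}--\eqref{equadiff4} gives $(r,w)=\mathcal{T}^1(r,w)$; uniqueness in $B$ follows from the same energy estimate applied to the difference of two fixed points. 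Finally, as in \cite{bothedruet}, Remark~6.1, $u:=\hat u^0+(r,\chi,\sigma,w)$ solves $\widetilde{\mathscr{A}}(u)=0$ on $[0,T]$, which establishes Theorem~\ref{MAIN3}.

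The hard part will be the contraction step: one must verify that every coefficient difference entering the linearised operators $(g^1)',(h^1)',(f^1)'$ and the perturbed continuity equation \eqref{difference3} is Lipschitz in the state-space topology with constants that remain bounded on $B$ — which is where finiteness of $M(\mathscr{C}(\hat v^0+w^*),T)$ and $M(\hat\varrho^0,T)$ is used — and that the slice length $t_1$ can be chosen uniformly over $B$; the computations parallel \cite{bothedruet} but must honour the constraint $\varrho\in I$ throughout.
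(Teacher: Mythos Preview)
Your proposal is correct and follows essentially the same route as the paper: self-mapping of the ball of radius $\eta_0$ via Proposition~\ref{fixednew} combined with \eqref{finalepertub}, followed by the Picard iteration started at zero and the $L^2$ contraction on time slices in the style of Lemma~\ref{iter}. Your write-up is in fact somewhat more explicit than the paper's (e.g.\ the justification that $\eta_0$ is well defined and the closing remark on where the density constraint $\varrho\in I$ enters the contraction estimate), but the strategy and the key ingredients coincide.
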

\begin{proof}
If $w^*$ satisfies \eqref{smalldata2} and if $R_1$ satisfies \eqref{smalldata}, $\mathcal{T}^1(r^*, \, w^*)$ is well defined in $\mathcal{Y}_T$. We apply Proposition \ref{fixednew}, use \eqref{finalepertub} and obtain
\begin{align*}
\|\mathcal{T}^1(r^*, \, w^*)\|_{\mathcal{Y}_T} \leq & \Psi_7(T, \,R_0 , \, \|(r^*, \, w^*)\|_{\mathcal{Y}_T} ) \, (\|\hat{g}^0\|_{L^p(Q_T)} + \|\hat{f}^0\|_{L^p(Q_T)}) \leq  \eta_0 \,.
\end{align*}
We consider the iteration $\bar{u}^{n+1} := \mathcal{T}^1(\bar{u}^n)$, starting at zero. The sequence $(q^{n}, \, \zeta^n, \, \varrho^n, \, v^n)$ is then uniformly bounded in $\mathcal{X}_T$. We show the contraction property with respect to the same lower--order norm than in Lemma \ref{iter}. There are $k_0, \, p_0 > 0$ such that the quantities 
 \begin{align*}
 &  E^n(t) :=   p_0 \, \int_{t}^{t+t_1} \{|\nabla (r^{n}-r^{n-1})|^2 +|\nabla (\chi^{n} - \chi^{n-1})|^2 + |\nabla (w^{n}-w^{n-1})|^2\} \, dxds\\
  &+  k_0 \, \sup_{\tau \in [t, \, t + t_1]} \{\|(r^n-r^{n-1})(\tau)\|_{L^2(\Omega)}^2 + \|(\sigma^n-\sigma^{n-1})(\tau)\|_{L^2(\Omega)}^2 +\|(w^n-w^{n-1})(\tau)\|_{L^2(\Omega)}^2\}
 \end{align*}
satisfy $E^{n+1}(t) \leq\tfrac{1}{2} \, E^{n}(t)$ for some fixed $t_1 > 0$ and every $t \in [0, \, T-t_1]$. 
\end{proof}

\appendix

\section{Properties of the free energy density}\label{FE}

In this section we prove the statements of the section \ref{PrelimFE} devoted to the convex conjugate of the free energy density: Lemma \ref{smoothness}, Lemma \ref{Hessianandgradient} and Lemma \ref{functionf2}. 

We assume that $k$ satisfies the assumptions of Lemma \ref{smoothness}. Notice that requiring $k$ essentially smooth on $S_1$, while positive homogeneous, induces that $k$ is also essentially smooth on $S_{\bar{V}}$. To see this, we consider any sequence $\{r^m\} \subset S_{\bar{V}}$ such that $r^m \rightarrow \bar{r}$ for $m \rightarrow \infty$, and $\bar{r}$ belongs to the relative boundary of $S_{\bar{V}}$, which means that there is $i \in \{1,\ldots,N\}$ such that $\bar{r}_i = 0$. Then we define $y^m := \bar{r}_m/\sum_{i=1}^N r^m_i$ which belongs to $S_1$ for all $m$, and satisfies $y^m_i \rightarrow 0$ for $m \rightarrow \infty$. Since $k$ is positively homogeneous, we have $\nabla_{\rho} k(r^m) = \nabla_{\rho} k(y^m)$. Thus, by the assumptions of Lemma \ref{smoothness}, we see that $|\nabla_{\rho}k(r^m)| \rightarrow + \infty$, which is the essential smoothness on $S_{\bar{V}}$.

Consider now $\mu \in \mathbb{R}^N$ arbitrary. Then we claim first that there exists a unique $\bar{r} \in S_{\bar{V}}$ such that
\begin{align*}
f(\mu) = \sup_{r\in S_{\bar{V}}} \{\mu \cdot r - k(r)\} =  \mu \cdot \bar{r} - k(\bar{r}) \, .
\end{align*}
Since $S_{\bar{V}}$ is bounded, we first notice that $ \sup_{r\in S_{\bar{V}}} \{\mu \cdot r - k(r)\} = \max_{r \in \overline{S_{\bar{V}}}}  \{\mu \cdot r - k(r)\}$. Thus, there is $\bar{r} \in \overline{S_{\bar{V}}}$ such that $\sup_{r\in S_{\bar{V}}} \{\mu \cdot r - k(r)\} = \mu \cdot \bar{r} - k(\bar{r})$. We want to show that $\bar{r}$ is an interior point. Since $S_{\bar{V}}$ is a convex set, we can find for every $a \in S_{\bar{V}}$ a $h > 0$ such that $\bar{r} + h \, (a - \bar{r}) \in S_{\bar{V}}$. Due to the choice of $\bar{r}$
\begin{align*}
 \mu \cdot (\bar{r} + h \, (a - \bar{r})) - k(\bar{r} + h \, (a - \bar{r})) \leq \mu \cdot \bar{r} - k(\bar{r}) \, ,
\end{align*}
which yields $k(\bar{r} + h \, (a - \bar{r})) - k(\bar{r}) \geq h \, \mu \cdot (a-\bar{r})$ and $ \lim_{h \searrow 0} \frac{k(\bar{r} + h \, (a - \bar{r})) - k(\bar{r})}{h} > -\infty$. The latter however contradicts the fact that $k$ is \emph{essentially smooth} on $S_{\bar{V}}$ (cf. \cite{rockafellar}, Lemma 26.2). Thus, $\bar{r} \in S_{\bar{V}}$ is an interior point.

The uniqueness of $\bar{r}$ follows from the strict convexity of $k$ on $S_{\bar{V}}$.

Since $k$ is differentiable, and since $r \mapsto \mu \cdot r - k(r)$ attains its maximum in $\bar{r}$, we must have $(\nabla k(\bar{r}) - \mu) \cdot \xi = 0$ for every tangential vector $\xi \in \mathbb{R}^N$ such that $\xi \cdot \bar{V} = 0$. Thus, there is $p \in \mathbb{R}$ such that $\mu = \nabla_{\rho} k(\bar{r}) + p \, \bar{V}$. Multiplying with $\bar{r}$, use of the homogeneity of degree one implies that $\bar{r} \cdot \nabla_{\rho} k(\bar{r}) = k(\bar{r})$, hence
\begin{align}\label{mch}
\sup_{r\in S_{\bar{V}}} \{\mu \cdot r - k(r)\}  =  \mu \cdot \bar{r} - k(\bar{r}) = p \, \bar{r} \cdot \bar{V} = p \, , 
\end{align}
showing that $p = f(\mu)$. Due to the structure $f(\mu) = \mu \cdot \bar{r} - k(\bar{r}) = \max_{r\in S_{\bar{V}}} \{\mu \cdot r - k(r)\}$, we easily show that $f$ is differentiable in $\mu$ with $\nabla_{\mu}f(\mu) = \bar{r}$. In order to show the differentiability of higher order, we can exploit the identities
\begin{align*}
 \mu - f(\mu) \, \bar{V} = \nabla_{\rho}k(\nabla_{\mu}f(\mu)), \quad \quad \bar{V} \cdot \nabla_{\mu} f(\mu) = 1 \, .
\end{align*}
For a system of orthonormal vectors $\xi^1,\ldots,\xi^{N-1}$ for $\{\bar{V}\}^{\perp}$, and $\xi^N := \bar{V}/|\bar{V}|$, we then have
\begin{align*}
 \mu\cdot \xi^j = & \xi^j \cdot \nabla_{\rho}k( \sum_{i=1}^{N-1} \xi^i \cdot \nabla_{\mu}f(\mu) \, \xi^i+ \frac{\xi^N}{|\bar{V}|}) \text{ for } j =1,\ldots,N-1, \\
\frac{1}{|\bar{V}|} =& \xi^N \cdot   \nabla_{\mu} f(\mu)  \, .
\end{align*}
The latter can be viewed as an algebraic system of the form $F(X) = (\mu \cdot \xi^1, \ldots,\mu\cdot \xi^{N-1},\, \frac{1}{|\bar{V}|})$ for the unknowns $X := (\xi^1 \cdot \nabla_{\mu}f(\mu), \ldots, \xi^N  \cdot \nabla_{\mu}f(\mu)) \in \mathbb{R}^N$. The Jacobian of this system obeys
\begin{align*}
 \frac{\partial F_j}{\partial X_{i}} = \begin{cases}
 D^2k \xi^i \cdot \xi^j  & \text{ for } i=1,\ldots,N-1, \, j = 1,\ldots,N-1\, ,\\
 0 & \text{ for } i = N, \, j = 1,\ldots,N-1 \, , \\
 \delta_{i, \,N} & \text{ for }i = 1,\ldots,N, \, j = N \, ,\\
  \end{cases}
\end{align*}
where $D^2k$ is evaluated at $\nabla_{\mu} f(\mu)$. We can easily verify that $\{D^{2}k(\nabla_{\mu} f(\mu)) \xi^i \cdot \xi^j\}_{i,j=1,\ldots,N-1}$ is strictly positive definite: A vector of the form $\sum_{j=1}^{N-1} \xi^j \, a_j$, $a \neq 0$ can never be parallel to $\nabla_{\mu}f(\mu)$, since multiplying with $\bar{V}$ yields a contradiction. On the other hand, the properties of $k$ guarantee that the kernel of $D^{2}k(\nabla_{\mu} f(\mu))$ is the one-dimensional span of $\nabla_{\mu} f(\mu)$.

Thus, the equations $F(X(\mu)) = (\mu \cdot \xi^1, \ldots,\mu\cdot \xi^{N-1},\, \frac{1}{|\bar{V}|})$ define implicitly a map $\mu \mapsto X(\mu)$ of class $C^1(\mathbb{R}^N)$. This clearly implies that $f \in C^2(\mathbb{R}^N)$, and we obtain the formula
\begin{align}\label{mch2}
D^2_{\mu_k,\mu_i} f(\mu) = \sum_{j=1}^N \frac{\partial X_j}{\partial \mu_k} \, \xi^j_i \, .
\end{align}
If $k \in C^3(\mathbb{R}^N_+)$, we then differentiate again to obtain that $f$ is $C^3(\mathbb{R}^N)$. This proves the claims of Lemma \ref{smoothness}. The claims of Lemma \ref{Hessianandgradient} and \ref{functionf2} are also readily established (use \eqref{mch2} and \eqref{mch}).

\section{Auxiliary statements}

For the proof of the following Lemma, we need the variable transformation in the section \ref{changevariables}.
\begin{lemma}\label{special}
We adopt the assumptions of Theorem \ref{MAIN} for the tensor $M: \, \mathbb{R}^N_+ \rightarrow \mathbb{R}^{N\times N}$, and we assume that $k: \, \mathbb{R}^N_+ \rightarrow \mathbb{R}$ is given by \eqref{kfunktion}. We assume moreover that there is a continuous function $C = C(|\rho|)$, bounded on compact subsets of $\overline{\mathbb{R}}^N \setminus \{0\}$, such that $B_{i,j}(\rho) := M_{i,j}(\rho) /\rho_j$, with entries belonging to $C^1(\mathbb{R}^N_+)$, satisfies for all $\rho \in \mathbb{R}^N_+$ the conditions
\begin{align*}
|B_{i,j}(\rho)| + \rho_k \, |B_{i,j,\rho_k}(\rho)| \leq C(|\varrho|) \text{ for all } i, \, j, \, k \in \{1,\ldots,N\} \,.
\end{align*}
For $\varrho \in I$ and $q \in \mathbb{R}^{N-2}$, we denote $M(\varrho, \, q) := M(\sum_{\ell=1}^{N-2} R_{\ell}(\varrho, \, q) \, \eta^{\ell} + \varrho \, \eta^N)$, and recall the definitions \eqref{trafom}, \eqref{trafoa}, \eqref{trafod} of the objects $\widetilde{M}(\varrho, \, q)$, $A(\varrho, \, q)$, $d(\varrho, \, q)$ and the definition \eqref{pressuredef} of the non-linear part $P(\varrho, \, q)$ of the pressure. For $\varrho \in I$, we define $m(\varrho) := \min\{1 - \varrho/\varrho_{\max}, \, \varrho/\varrho_{\min}-1\}$. Then the following statements are valid: For all $\varrho \in I$ and $q \in \mathbb{R}^{N-2}$
\begin{itemize}
 \item $ |d(\varrho, \, q)| + |A(\varrho, \, q)| + |d_q(\varrho, \, q)| + |A_q(\varrho, \, q)| \leq c_1 \, m(\varrho)$;
\item $| d_{\varrho}(\varrho, \, q)| + |A_{\varrho}(\varrho, \, q)| \leq c_2$;
\item The function $P_{\varrho}$ is positive and $c_3 \, (m(\varrho))^{-1} \leq P_{\varrho}(\varrho, \, q) \leq c_4 \,(m(\varrho))^{-1}  $ with $c_3 > 0$; Moreover $|P_q(\varrho, \, q)| \leq c_5$.
\end{itemize}
\end{lemma}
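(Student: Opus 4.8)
The proof proceeds from pointwise algebraic estimates on $M$ and its derivatives, through the geometry of the surface $S_{\bar V}$, to the behaviour of the inverse maps $\mathscr{R}$, $\mathscr{M}$ near the thresholds. First I would record the consequences of the hypothesis on $B_{i,j}(\rho)=M_{i,j}(\rho)/\rho_j$: since $M$ is symmetric, $B_{i,j}(\rho)\,\rho_j = M_{i,j}(\rho)=M_{j,i}(\rho)=B_{j,i}(\rho)\,\rho_i$, so from $|B_{i,j}|\le C(|\varrho|)$ we get $|M_{i,j}(\rho)|\le C(|\varrho|)\,\min\{\rho_i,\rho_j\}$; and from $|\rho_k\,\partial_{\rho_k}B_{i,j}|\le C(|\varrho|)$ together with $\partial_{\rho_k}M_{i,j}=(\partial_{\rho_k}B_{i,j})\,\rho_j+B_{i,j}\,\delta_{jk}$ one obtains $|\rho_k\,\partial_{\rho_k}M_{i,j}(\rho)|\le C(|\varrho|)\,\rho_j$, hence also $\le C(|\varrho|)\,\min\{\rho_i,\rho_j\}$ by symmetry. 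The second ingredient is purely geometric: for $\rho=\mathscr{R}(\varrho,q)\in S_{\bar V}$ and any index $i$ with $\bar V_i\ne \max_j\bar V_j$ one has $\rho_i\le \frac{\varrho_{\min}^{-1}}{\bar V_i-\varrho_{\min}^{-1}}\,(\varrho-\varrho_{\min})$ (compare $1=\sum_j\rho_j\bar V_j$ with $\varrho_{\min}^{-1}\sum_j\rho_j$), and symmetrically $\rho_i\le C\,(\varrho_{\max}-\varrho)$ when $\bar V_i\ne\min_j\bar V_j$. Consequently, whenever $\bar V_i\ne\bar V_j$ one cannot have both $\bar V_i,\bar V_j$ equal to $\max\bar V$ nor both equal to $\min\bar V$, and a short case distinction gives $\min\{\rho_i,\rho_j\}\le C(|\varrho|)\,m(\varrho)$.

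Next I would combine these with the constraint $M(\rho)\,1^N=0$. Using $\sum_j M_{i,j}=0$ I rewrite $d(\varrho,q)=\bar V\cdot M\bar V=\sum_{i,j}\bar V_i\,M_{i,j}\,(\bar V_j-\bar V_i)$, and similarly $A_k(\varrho,q)=\sum_i\xi^k_i\sum_j M_{i,j}(\bar V_j-\bar V_i)$; in both sums the terms with $\bar V_i=\bar V_j$ drop out, and for the remaining terms $|M_{i,j}|\le C(|\varrho|)\min\{\rho_i,\rho_j\}\le c_1\,m(\varrho)$. This yields $|d(\varrho,q)|+|A(\varrho,q)|\le c_1\,m(\varrho)$. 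The derivatives in $q$ and $\varrho$ are handled by the chain rule through $\rho=\mathscr{R}(\varrho,q)=\nabla_\mu f(\mu(\varrho,q))$ with $\mu=\sum_\ell q_\ell\xi^\ell+\mathscr{M}(\varrho,q)\,1^N$. Differentiating $M(\mathscr{R}(\varrho,q))\,1^N\equiv 0$ shows that $\partial_{q_\ell}M$ and $\partial_\varrho M$ again have vanishing row sums, so the same rewriting applies; it then suffices to show $|\partial_{q_\ell}\mathscr{R}_k|+|\partial_\varrho\mathscr{R}_k|\le C(|\varrho|)\,\rho_k$. Writing $\partial_{q_\ell}M_{i,j}=\sum_k\frac{\rho_k\partial_{\rho_k}M_{i,j}}{\rho_k}\,\partial_{q_\ell}\mathscr{R}_k$ and using $|\rho_k\partial_{\rho_k}M_{i,j}|\le C\rho_j$ and $|\partial_{q_\ell}\mathscr{R}_k|\le C\rho_k$ gives $|\partial_{q_\ell}M_{i,j}|\le C(|\varrho|)\min\{\rho_i,\rho_j\}$, whence $|d_q|+|A_q|\le c_1\,m(\varrho)$ exactly as for $d,A$; the analogous computation with $\partial_\varrho\mathscr{R}_k$ (which only needs $|\partial_\varrho\mathscr{R}_k|\le C\rho_k$, without using $\bar V_i\ne\bar V_j$) yields merely the bound $|d_\varrho|+|A_\varrho|\le c_2$.

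For the claimed estimate $|\partial_{q_\ell}\mathscr{R}_k|+|\partial_\varrho\mathscr{R}_k|\le C(|\varrho|)\rho_k$ I would invoke the explicit characterisation of $f$, $\mathscr{R}$ and $\mathscr{M}$ for the mixing free energy \eqref{kfunktion}, as worked out in Section~4 of \cite{druetmixtureincompweak}: there $\rho_k=\mathscr{R}_k$ depends on $\mu$ through number fractions of exponential type, so $|\partial_{\mu_m}\mathscr{R}_k|=|(D^2f)_{km}|\le C(|\varrho|)\,\rho_k$, and the implicit function $\mathscr{M}$ has bounded first derivatives $\mathscr{M}_\varrho=(D^2f\,1^N\cdot 1^N)^{-1}$, $\mathscr{M}_{q_\ell}=-(D^2f\,1^N\cdot\xi^\ell)(D^2f\,1^N\cdot 1^N)^{-1}$ on $I\times\mathbb{R}^{N-2}$. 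Finally, for the pressure one computes from \eqref{pressuredef} that $P_\varrho=\nabla_\mu f\cdot(\mathscr{M}_\varrho\,1^N)=\varrho\,\mathscr{M}_\varrho$ and $P_{q_\ell}=\nabla_\mu f\cdot(\xi^\ell+\mathscr{M}_{q_\ell}1^N)=R_\ell(\varrho,q)+\varrho\,\mathscr{M}_{q_\ell}$; since $R_\ell$ is bounded on $S_{\bar V}$ and $\mathscr{M}_{q_\ell}$ is bounded, $|P_q|\le c_5$ is immediate, $P_\varrho=\varrho\,(D^2f\,1^N\cdot1^N)^{-1}>0$, and the two-sided bound $c_3\,m(\varrho)^{-1}\le P_\varrho\le c_4\,m(\varrho)^{-1}$ follows from the comparability $D^2f\,1^N\cdot 1^N\asymp m(\varrho)$, which is the differentiated form of the logarithmic growth $|P(\varrho,q)|\ge c\ln\max\{(\varrho_{\max}-\varrho)^{-1},(\varrho-\varrho_{\min})^{-1}\}-C(1+|q|)$ established in \cite{druetmixtureincompweak}, Proposition~5.3. \textbf{The main obstacle} is precisely this last point — quantifying the degeneration rate of $D^2f$ in the $1^N$ direction (equivalently, the precise blow-up rate of $P_\varrho$) uniformly up to the boundary of $I$; here one cannot avoid the explicit formulae for the example \eqref{kfunktion}, and it is for this reason that the additional structural hypotheses on $M$ and the restriction to the mixing entropy are imposed in the statement.
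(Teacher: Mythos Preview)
Your approach is essentially the paper's, packaged indexwise rather than vectorially. The paper exploits $M1^N=0$ by writing $M\bar V=M(\bar V-c\,1^N)=-Bu$ with $u_j=\rho_j(c-\bar V_j)\ge 0$ for $c=1/\varrho_{\min}$ (resp.\ $c=1/\varrho_{\max}$), so that $\sum_j u_j=\varrho/\varrho_{\min}-1$ (resp.\ $1-\varrho/\varrho_{\max}$) and $|M\bar V|\le C|u|\le Cm(\varrho)$; this shift is exactly your geometric bound $\rho_i(1/\varrho_{\min}-\bar V_i)\le \varrho/\varrho_{\min}-1$, and your antisymmetrisation of the double sum is the component form of the same identity. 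For $d_q,A_q$ the paper differentiates the shifted representation and cancels the $1/\rho_k$ in $|B_{i,j,\rho_k}|$ against $|\mathscr{R}_{k,q_\ell}|\le C\rho_k$, as you do. The $P$ estimates and the boundedness of $\mathscr{M}_q$, $\mathscr{M}_\varrho$ are likewise deferred to Section~4 of \cite{druetmixtureincompweak}.

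One overstatement needs correcting. You claim $|\partial_\varrho\mathscr{R}_k|\le C\rho_k$; but from $\partial_\varrho\mathscr{R}_k=(D^2f\,e^k\cdot1^N)/(D^2f\,1^N\cdot1^N)$, the numerator bound $|D^2f\,e^k\cdot1^N|\le c\rho_k$ and your own observation $D^2f\,1^N\cdot1^N\asymp m(\varrho)$ (equivalently $P_\varrho\asymp m(\varrho)^{-1}$) yield only $|\partial_\varrho\mathscr{R}_k|\le C\rho_k/m(\varrho)$. With this corrected bound your chain rule gives $|\partial_\varrho M_{i,j}|\le C\min\{\rho_i,\rho_j\}/m(\varrho)$, and then --- contrary to your parenthetical remark --- you \emph{do} need the $\bar V_i\neq\bar V_j$ reduction (or the paper's shift) to absorb the $m(\varrho)^{-1}$ via $\min\{\rho_i,\rho_j\}\le Cm(\varrho)$ and conclude $|d_\varrho|+|A_\varrho|\le c_2$. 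The repair is routine and the strategy survives.
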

\begin{proof}
For $\rho \in S_{\bar{V}}$ we consider the vector $ u_j = - \rho_j \, (\bar{V}_j -1/\varrho_{\min})$ for $j = 1,\ldots,N$. By the definition of $\varrho_{\min}$, all components of $u$ are positive. Moreover $\sum_{j=1}^N u_j = \varrho/\varrho_{\min} - 1$ by the definition of $S_{\bar{V}}$. Since $M \, 1^N = 0$, the identity $ M(\rho) \, \bar{V} = M(\rho) \, (\bar{V} - 1^N/\varrho_{\min})= - B(\rho) \, u $ holds. By assumption $|B(\rho)| \leq C(|\rho|)$, and therefore
\begin{align*}
 | M(\rho) \, \bar{V}| \leq C(|\rho|) \, |u| \leq C_0 \, (\frac{\varrho}{\varrho_{\min}} - 1) \, .
\end{align*}
Analogously, considering next $u_j := \rho_j \, (\bar{V}_j - 1/\varrho_{\max})$, we obtain that $| M(\rho) \, \bar{V}| \leq C_1 \, (1-\varrho/\varrho_{\max})$, and overall that $| M(\rho) \, \bar{V}|  \leq C \, m(\varrho)$.

We next investigate the derivatives. To do so, we recall two properties of the map $\mathscr{R}(\varrho, \, q)$ (Section \ref{changevariables}, \eqref{rhomap}). For $\ell = 1,\ldots,N-2$ direct computations yield for $i=1,\ldots,N$
\begin{align*}
 \partial_{q_{\ell}} \mathscr{R}_i(\varrho, \, q) =  &  D^2f e^i \cdot \xi^{\ell} - \frac{D^2 f e^i \cdot 1^N \, D^2f \xi^{\ell} \cdot 1^N}{D^2f 1^N \cdot 1^N} \quad \text{ for } \ell = 1,\ldots, N-2\,  ,\\
 \partial_{\varrho} \mathscr{R}_i(\varrho, \, q) = & \frac{D^2 f e^i \cdot 1^N}{D^2f 1^N \cdot 1^N} \, .
 \end{align*}
In these formula, we evaluate $D^2f$ at $\mu = \sum_{\ell = 1}^{N-2} q_{\ell} \, \xi^{\ell} + \mathscr{M}(\varrho, \, q) \, 1^N$. In the Section 4 of \cite{druetmixtureincompweak}, we prove that $D^2 f e^i \cdot 1^N \leq C_0 \, D^2f 1^N \cdot 1^N$ for $i = 1,\ldots,N$ (Lemma 4.3 (e)). Moreover, $|D^2f e^i \cdot a| \leq c_a \, \rho_i$ for any vector $a$ (cf. Lemma 4.3 (a)). From these properties, we infer that
\begin{align*}
\frac{1}{\rho_i} \, |\partial_{q} \mathscr{R}_i(\varrho, \, q)| \leq \frac{|D^2f e^i|}{\rho_i} \, (1 + C_0 \, \max_{\ell = 1,\ldots, N-2} |\xi^{\ell}|) \leq c \, , \qquad |\partial_{\varrho} \mathscr{R}_i(\varrho, \, q) | \leq c \, .
\end{align*}
We again express $M_{i,j}(\rho) \, \bar{V}_j = - B_{i,j}(\rho) \, \rho_j \, (\bar{V}_j - 1/\varrho_{\min})$, hence
\begin{align*}
 \partial_{\rho_k} M_{i,j}(\rho) \, \bar{V}_j =  - B_{i,j,\rho_k}(\rho) \, \rho_j \, (\bar{V}_j - \frac{1}{\varrho_{\min}}) - B_{i,k} \, (\bar{V}_k - \frac{1}{\varrho_{\min}}) \, ,
\end{align*}
and therefore, it follows for $\ell = 1,\ldots,N-2$ that
\begin{align*}
 \partial_{q_{\ell}} M_{i,j}(\mathscr{R}(\varrho, \, q)) \, \bar{V}_j = \sum_{k=1}^N (- B_{i,j,\rho_k}(\rho) \, \rho_j \, (\bar{V}_j - \frac{1}{\varrho_{\min}})- B_{i,k} \, (\bar{V}_k - \frac{1}{\varrho_{\min}})) \, \mathscr{R}_{k,q_{\ell}} \, .
\end{align*}
Since $ - B_{i,j,\rho_k}(\rho) \, \rho_j \, (\bar{V}_j - 1/\varrho_{\min}) = - B_{i,j,\rho_k} \, u_j$, and, by assumption, $|B_{i,j,\rho_k}| \leq C(|\rho|)/\rho_k$, we invoke that $|\mathscr{R}_{k,q_{\ell}}| \leq C \, \rho_{k}$ to show that
\begin{align*}
 | B_{i,j,\rho_k}(\rho) \, u_j \, \mathscr{R}_{k,q_{\ell}}| \leq C_0 \, |u| \leq C_0 \, (\frac{\varrho}{\varrho_{\min}} - 1) \, .
\end{align*}
Moreover, by the same means
\begin{align*}
|B_{i,k} \, (\bar{V}_k - \frac{1}{\varrho_{\min}})) \, \mathscr{R}_{k,q_{\ell}}| \leq |B_{i,k} \, u_k \, (\mathscr{R}_{k,q_{\ell}}/\rho_k)| \leq C_1 \, |u| \leq  C_1 \, (\frac{\varrho}{\varrho_{\min}} - 1) \, .
\end{align*}
Arguing the same for the other choice of $u$, it follows that $|  \partial_{q_{\ell}} M_{i,j}(\mathscr{R}(\varrho, \, q)) \, \bar{V}_j| \leq C \, m(\varrho)$.
The other estimates claimed have been verified for this special case of the function $k$ in the Section 4 of \cite{druetmixtureincompweak}.
\end{proof}
\begin{rem}
In the case that the matrix $M$ results from inversion of the Maxwell-Stefan equations, we notice that the matrix $B$ of Lemma \ref{special} is nothing else but the pseudo-inverse of the Maxwell-Stefan matrix. It is shown in the paper \cite{bothedruetMS} that natural assumptions on the binary diffusivities are sufficient for proving that the entries of $B$ consist of regular functions of the state variables. In particular, they satisfy the assumptions of Lemma \ref{special}.
\end{rem}

The following statement is directly taken from our paper \cite{bothedruet}. There we must only adapt the definition of the parameters $m^*$ and $M^*$ according to \eqref{supremumrho}, \eqref{infimumrho} in order to account for different density thresholds in the incompressible model.
 \begin{prop}\label{qsystem}
Assume that $R_q, \, \widetilde{M}: \, I \times \mathbb{R}^{N-2} \rightarrow \mathbb{R}^{(N-2)\times (N-2)}$ are maps of class $C^1$ into the set of symmetric, positive definite matrices. 
Consider given $q^{*} \in W^{2,1}_p(Q_T; \, \mathbb{R}^{N-2})$ and $\varrho^* \in W^{1,1}_{p,\infty}(Q_T)$ ($p > 3$) such that the values of $\varrho^*$ are strictly contained in $I$ in $\overline{Q_T}$. Let $g \in L^p(Q_T; \, \mathbb{R}^{N-2})$ and $q^0 \in W^{2-2/p}(\Omega)$ such that $\nu \cdot \nabla q^0(x) = 0$ in the sense of traces on $\partial \Omega$. Then, there is a unique $q \in W^{2,1}_p(Q_T; \, \mathbb{R}^{N-2})$ solution to the problem
\begin{align*}
R_q(\varrho^*, \, q^{*}) \,  q_t - \divv (\widetilde{M}(\varrho^*, \, q^{*}) \, \nabla q) = g \, \text{ in } Q_T, \quad \nu \cdot \nabla q = 0 \text{ on } S_T, \, \quad
q(x, \, 0) = q^0(x) \text{ in } \Omega \, ,
\end{align*}
Moreover there is a constant $C$ independent on $T$, $q$, $\varrho^*$ and $q^*$ such that for all $t \leq T$ and $0 < \beta \leq 1$:
\begin{align*}
& \mathscr{V}(t; \, q) \leq C \, \bar{\Psi}_{1,t}\, \left[(1+[\varrho^*]_{C^{\beta,\frac{\beta}{2}}(Q_t)})^{\tfrac{2}{\beta}} \, \|q^0\|_{W^{2-\frac{2}{p}}_p(\Omega)} + \|g\|_{L^p(Q_t)}\right] \, , \\[0.1cm]
& \bar{\Psi}_{1,t} = \bar{\Psi}_1(t, \, M^*(t),  \, \|q^*(0)\|_{C^{\beta}(\Omega)}, \, 
\mathscr{V}(t; \, q^*), \, [\varrho^*]_{C^{\beta,\frac{\beta}{2}}(Q_t)}, \, \|\nabla \varrho^*\|_{L^{p,\infty}(Q_t)}) \, , 
\end{align*}
with a continuous function $\bar{\Psi}_1$ defined for all $t \geq 0$ and all numbers $a_1, \ldots, a_5 \geq 0$. The function $\bar{\Psi}_1$ is increasing in all arguments and moreover $\bar{\Psi}_1(0, \, a_1,\ldots,a_5) = \bar{\Psi}_1^0(a_1, \, a_2, \, a_3)$ is a function independent on the two last arguments.
%
\end{prop}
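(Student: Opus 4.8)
The statement is a quantitative form of $L^p$--maximal regularity for a linear, strongly parabolic system whose coefficients are merely H\"older continuous in $(x,t)$; the whole difficulty is to make the constant explicit in terms of the ellipticity bounds and of the oscillation of the coefficients. First I would record the regularity of the frozen coefficients. Since $p>3$ and $\varrho^*\in W^{1,1}_{p,\infty}(Q_T)$ has range in a compact subset of $I$, while $q^*\in W^{2,1}_p(Q_T)$, the embeddings $W^{2,1}_p(Q_T)\hookrightarrow C^{\beta,\beta/2}(\overline{Q_T})$ and $W^{1,1}_{p,\infty}(Q_T)\hookrightarrow C^{\beta,\beta/2}(\overline{Q_T})$ (for $0<\beta\le 1-3/p$) imply that $R_q^*:=R_q(\varrho^*,q^*)$ and $\widetilde{M}^*:=\widetilde{M}(\varrho^*,q^*)$ are symmetric, uniformly positive definite matrices whose ellipticity constants are bounded below by a continuous decreasing function of $M^*(t)$ and $\|q^*\|_{L^\infty(Q_t)}$ --- and the latter, by Lemma \ref{HOELDER}, by $\|q^*(0)\|_{L^\infty(\Omega)}+t^\gamma\mathscr{V}(t;q^*)$ --- while the H\"older half-norms $[R_q^*]_{C^{\beta,\beta/2}(Q_t)}$, $[\widetilde{M}^*]_{C^{\beta,\beta/2}(Q_t)}$ are bounded by a continuous function of $M^*(t)$, $\|q^*\|_{L^\infty(Q_t)}$, $[\varrho^*]_{C^{\beta,\beta/2}(Q_t)}$ and $\mathscr{V}(t;q^*)$.

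Next I would reduce to homogeneous initial data by subtracting a bounded extension $\mathcal{E} q^0\in W^{2,1}_p(Q_T)$ of $q^0$ (solve the heat equation with datum $q^0$, respecting the Neumann condition); this turns the problem into one with vanishing trace and right-hand side $g-R_q^*\partial_t(\mathcal{E} q^0)+\divv(\widetilde{M}^*\nabla \mathcal{E} q^0)$, whose $L^p(Q_t)$-norm is $\le\|g\|_{L^p(Q_t)}+c_1^*\,\|q^0\|_{W^{2-2/p}_p(\Omega)}$ with $c_1^*$ of the generic type above. The core step is then the classical freezing-of-coefficients together with a parabolic partition of unity: for coefficients frozen at a point one invokes constant-coefficient $L^p$--maximal regularity on the half-space and the bent half-space (classical $L^p$--parabolic theory, cf.\ \cite{ladu}), with a constant $C_0$ depending only on $p$, $N$, $\Omega$ and the ellipticity bounds; then one covers $Q_t$ by parabolic cylinders on which the oscillation of $R_q^*,\widetilde{M}^*$ is $\le\varepsilon$ with $C_0\varepsilon\le\tfrac{1}{2}$, localises by a smooth parabolic partition of unity, and sums. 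The second-order commutator terms are of lower order; the first- and zeroth-order contributions are absorbed by a Gronwall argument in $\sup_{\tau\le t}\|q(\tau)\|_{W^{2-2/p}_p(\Omega)}^p$, exactly in the spirit of the scheme reproduced in Proposition \ref{qzetasystem}. The number of cylinders needed scales like a fixed power of $([R_q^*]_{C^{\beta,\beta/2}(Q_t)}+[\widetilde{M}^*]_{C^{\beta,\beta/2}(Q_t)})/\varepsilon$, and tracking this scaling in the parabolic length is what produces the explicit power $(1+[\varrho^*]_{C^{\beta,\beta/2}(Q_t)})^{2/\beta}$ multiplying $\|q^0\|_{W^{2-2/p}_p(\Omega)}$, and the announced functional form of $\bar{\Psi}_1$.

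Existence and uniqueness then follow from this a priori estimate by the method of continuity: one joins $(R_q^*,\widetilde{M}^*)$ to $(\mathrm{Id},\mathrm{Id})$ along the convex segment $(1-s)(\mathrm{Id},\mathrm{Id})+s(R_q^*,\widetilde{M}^*)$ of uniformly elliptic, H\"older continuous coefficients; the a priori bound is uniform in $s$, hence the solution operator is onto, and uniqueness is the estimate applied to $g=0$, $q^0=0$. For the value at $t=0$ one notes that no covering in time is required on the degenerate slab, so the estimate there collapses to the constant-coefficient bound for the trace problem, whence $\bar{\Psi}_1(0,\cdot)$ depends only on the ellipticity at $t=0$, i.e.\ on $M^*(0)$, $\|q^*(0)\|_{C^\beta(\Omega)}$ and $\mathscr{V}(0;q^*)=\|q^*(0)\|_{W^{2-2/p}_p(\Omega)}$, and not on $[\varrho^*]_{C^{\beta,\beta/2}(Q_t)}$ or $\|\nabla\varrho^*\|_{L^{p,\infty}(Q_t)}$; monotonicity of $\bar{\Psi}_1$ in all arguments is read off from the construction. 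The main obstacle is precisely the bookkeeping in the core step --- keeping the dependence of the maximal-regularity constant on the ellipticity degeneration as $\varrho^*$ approaches $\{\varrho_{\min},\varrho_{\max}\}$ (encoded in $M^*$) and on the coefficient oscillation sharp enough to land on the stated form, and recovering the exponent $2/\beta$ from the parabolic scaling. As this computation is carried out in detail for the compressible analogue in \cite{bothedruet}, it suffices here to transcribe it, the only change being that the density thresholds $\{\varrho_{\min},\varrho_{\max}\}$ play the role of the single threshold $0$ of the compressible case and enter solely through the redefinition \eqref{supremumrho}, \eqref{infimumrho} of $m^*,M^*$.
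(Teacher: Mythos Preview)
Your proposal is correct and matches the paper's own treatment: the paper does not give an independent proof but states that the result is taken directly from \cite{bothedruet}, Prop.~7.1, with the sole modification that the density thresholds $\{0\}$ of the compressible model are replaced by $\{\varrho_{\min},\varrho_{\max}\}$ via the redefinition \eqref{infimumrho}, \eqref{supremumrho} of $m^*,M^*$. Your sketch of the underlying argument (freezing of coefficients, parabolic partition of unity, constant-coefficient maximal regularity, Gronwall absorption of lower-order terms, method of continuity) is exactly the scheme carried out in \cite{bothedruet}, and your closing remark about transcription with adjusted thresholds is precisely the paper's one-line justification.
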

We also recall some estimates of H\"older norms. This is also proved in \cite{bothedruet}.
\begin{lemma}\label{HOELDER}
 For $0 \leq \beta < \min\{1, \, 2-\frac{5}{p}\}$ we define
\begin{align*}
\gamma := \begin{cases}
            \frac{1}{2} \, (2 - \frac{5}{p} -\beta) & \text{ for } 3<p<5 \, ,\\
            (1-\beta) \,  \frac{p-1}{3+p} & \text{ for } 5 \leq p \, . 
            \end{cases}
            \end{align*}
Then, there is $C = C(t)$ bounded on finite time intervals such that $C(0) = C_0$ depends only on $\Omega$ and for all $q^* \in W^{2,1}_{p}(Q_t)$
 \begin{align*}
 \|q^*\|_{C^{\beta,\frac{\beta}{2}}(Q_t)} \leq \|q^*(0)\|_{C^{\beta}(\Omega)} +  C(t) \,  t^{\gamma} \, [\|q^*\|_{W^{2,1}_{p}(Q_t)} + \|q^*\|_{C([0,t]; \, W^{2-\frac{2}{p}}_p(\Omega)})]  \, .
\end{align*}
\end{lemma}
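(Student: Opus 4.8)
\textbf{Proof proposal for Lemma \ref{HOELDER}.}

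The plan is to bound the H\"older seminorms of $q^*$ on the space-time cylinder $Q_t$ by the two natural "pieces" of the $W^{2,1}_p$-regularity class: the initial trace in $C^{\beta}(\Omega)$ and the solution norm with a favourable power of $t$ in front of it. First I would reduce the statement to the case $q^*(0) = 0$ by subtracting the (time-independent) extension of $q^*(0)$; since $q^*(0) \in W^{2-2/p}_p(\Omega) \hookrightarrow C^{\beta}(\Omega)$ for $\beta < 2-5/p$, this contributes only the term $\|q^*(0)\|_{C^{\beta}(\Omega)}$ on the right, so it suffices to treat $w := q^* - q^*(0)$, which vanishes at $t = 0$. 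For such $w$, the goal is the homogeneous estimate $[w]_{C^{\beta,\beta/2}(Q_t)} \le C(t) \, t^{\gamma} \, \|w\|_{W^{2,1}_p(Q_t)}$, and one should remember that $\|w\|_{W^{2,1}_p(Q_t)}$ controls $\|q^*\|_{W^{2,1}_p(Q_t)} + \|q^*(0)\|_{C^{\beta}(\Omega)}$ up to constants as well, so the final form with $\|q^*\|_{W^{2,1}_p} + \|q^*\|_{C([0,t];W^{2-2/p}_p)}$ on the right is obtained after undoing the reduction.

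The core is an anisotropic Sobolev embedding of parabolic type: for $p > 3$ one has $W^{2,1}_p(Q_\infty) \hookrightarrow C^{\beta,\beta/2}(\overline{Q_\infty})$ for every $\beta < \min\{1, 2-5/p\}$ (this is the standard parabolic embedding; see Ladyzhenskaya--Solonnikov--Uraltseva, and it is invoked implicitly in \cite{bothedruet}). The key point I want to extract is the \emph{scaling} of the embedding constant when restricted to cylinders of height $t$. I would argue by a parabolic rescaling $\hat w(y, s) := w(\sqrt{t}\, y, t s)$, mapping $Q_t$ onto a cylinder of unit time-length over a dilated domain; counting the powers of $t$ picked up by $[w]_{C^\beta_x}$, $[w]_{C^{\beta/2}_t}$, by the volume element, and by the two spatial derivatives and one time derivative in the $W^{2,1}_p$ norm yields exactly the gain $t^{\gamma}$ with $\gamma$ as in the two regimes $3<p<5$ and $p \ge 5$ stated in the lemma; the split into these two cases is precisely the familiar point where the spatial integrability $p=5$ changes which embedding (Sobolev vs. Morrey in the spatial variable combined with the time variable) is the binding constraint, which is why the formula for $\gamma$ bifurcates there. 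Because rescaling deforms $\Omega$, one must either work with an extension operator $W^{2,1}_p(Q_t) \to W^{2,1}_p(\mathbb{R}^3 \times (0,t))$ with $t$-uniform norm (available since $\partial\Omega \in \mathcal{C}^2$) and then use the scale-invariant embedding on the whole space, or keep track of the boundary regularity constant; in either case the resulting constant is $C(t)$, bounded on finite time intervals, with $C(0) = C_0$ depending only on $\Omega$.

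The main obstacle I anticipate is making the $t$-dependence of the embedding constant clean near $t = 0$: the naive rescaling argument shows $[w]_{C^{\beta,\beta/2}(Q_t)} \lesssim t^{\gamma}\|w\|_{W^{2,1}_p(Q_t)}$ only when $w(0) = 0$, and one has to be careful that the extension operator used to pass from $\Omega$ to $\mathbb{R}^3$ commutes well with the vanishing initial condition (i.e.\ preserves $w(0)=0$), so that no constant term reappears. A clean way around this is to use that $W^{2,1}_p(Q_t)$ functions vanishing at $t=0$ embed into $C([0,t]; W^{2-2/p}_p(\Omega))$ with the $t=0$ value zero, interpolate $W^{2-2/p}_p(\Omega) \hookrightarrow C^\beta(\Omega)$, and control the time modulus of continuity by the $L^p$ norm of $\partial_t w$ via $w(\cdot,t_2) - w(\cdot,t_1) = \int_{t_1}^{t_2} \partial_t w\, ds$ together with H\"older in time — this is exactly where the power $t^{\gamma}$ and the restriction $\beta < 2 - 5/p$ enter quantitatively, and it avoids any delicate rescaling of the domain. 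Since this is essentially the computation already carried out in \cite{bothedruet}, I would state it and refer there for the arithmetic of exponents rather than reproduce it.
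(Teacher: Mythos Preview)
The paper does not give a proof of this lemma at all; it simply states ``This is also proved in \cite{bothedruet}'' and moves on. Your proposal is therefore strictly more detailed than the paper's own treatment, and since you likewise conclude by deferring to \cite{bothedruet} for the arithmetic of exponents, the two are aligned in substance.

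One small slip worth flagging: the reduction ``set $w := q^* - q^*(0)$ and use $w \in W^{2,1}_p(Q_t)$'' is not quite right as stated, because the time-constant extension of $q^*(0)$ lies only in $W^{2-2/p}_p(\Omega)$ spatially, not in $W^{2,p}(\Omega)$, so $w$ need not inherit the full $W^{2,1}_p$ regularity. This is not a real gap, since the alternative route you describe in your final paragraph --- writing $w(\cdot,\tau) = \int_0^\tau \partial_t q^*\,ds$ to get a small $L^p$ norm with a power of $t$, then interpolating against the $C([0,t];W^{2-2/p}_p)$ bound to reach $C^\beta$ --- bypasses this issue entirely and is indeed the mechanism that produces the exponent $\gamma$ and explains why both $\|q^*\|_{W^{2,1}_p}$ and $\|q^*\|_{C([0,t];W^{2-2/p}_p)}$ appear on the right-hand side. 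So the cleanest write-up would drop the first reduction and go straight to that interpolation argument.
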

Finally we have a perturbation Lemma for elliptic problems. This property ought to be well known, and we only mention details for more convenience on reading.
\begin{lemma}\label{ellipticlemma}
Let $a \in C^{\beta}(\overline{\Omega})$ ($\beta > 0$) satisfy $0 < a_0 \leq a(x) \leq a_1 < + \infty$ for all $x \in \Omega$. Suppose that $F \in L^p(\Omega)$ with $p > 3$. Then, there is a unique $u \in W^{1,p}(\Omega)$ satisfying $\int_{\Omega} (a(x) \, \nabla u-F(x)) \cdot \nabla \phi \, dx  = 0$ for all $\phi \in C^1(\overline{\Omega})$ and $\int_{\Omega} u \, dx = 0$. Moreover, there is $c = c(\Omega,p,a_0,a_1)$ such that
\begin{align*}
\|\nabla u\|_{L^p(\Omega)} \leq c \, (1 + [a]_{C^{\beta}})^{\frac{1}{\beta}} \, \|F\|_{L^p(\Omega)} \, .
\end{align*}
\end{lemma}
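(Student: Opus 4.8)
The plan is to establish existence and uniqueness by the standard variational (Lax-Milgram) argument, and then to obtain the quantitative $W^{1,p}$-estimate by a freezing-the-coefficient (perturbation) technique that reduces the problem to the known $W^{1,p}$-regularity for the constant-coefficient Neumann-Laplacian. First I would note that existence, uniqueness in $W^{1,2}(\Omega)$ with zero mean value, and the energy estimate $\|\nabla u\|_{L^2(\Omega)} \leq a_0^{-1} \|F\|_{L^2(\Omega)}$ follow immediately from the Lax-Milgram lemma applied on the closed subspace $\{ \phi \in W^{1,2}(\Omega) \, : \, \int_\Omega \phi \, dx = 0\}$, using the Poincar\'e-Wirtinger inequality and the ellipticity bound $a_0 \leq a \leq a_1$. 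Bootstrapping this solution to $W^{1,p}$, and the $p$-dependent estimate, is the heart of the matter.

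\textbf{The perturbation step.} For the $W^{1,p}$ bound I would cover $\overline{\Omega}$ by finitely many balls $B_{r}(x_k)$, with $r$ a radius to be chosen, and use a partition of unity $\{\psi_k\}$ subordinate to this cover. On each patch, write the equation for $u_k := \psi_k u$ as a constant-coefficient equation with frozen coefficient $a(x_k)$,
\begin{align*}
-\divv(a(x_k) \, \nabla u_k) = \divv\big((a(x_k) - a(x)) \, \nabla u_k\big) + \divv\big(\psi_k F\big) + (\text{lower-order terms in } u, \nabla u) \, ,
\end{align*}
and invoke the $L^p$-theory for the Laplacian (interior estimates, plus boundary estimates in the half-ball model obtained by flattening $\partial\Omega$ with the $\mathcal{C}^2$ chart, and reflecting for the Neumann condition). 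The key smallness gain is $\|a(x_k) - a(\cdot)\|_{L^\infty(B_r(x_k))} \leq [a]_{C^\beta} r^\beta$; choosing $r$ so that $C(\Omega,p) \, [a]_{C^\beta} r^\beta \leq \tfrac12$ absorbs the first divergence term into the left-hand side. Summing over $k$ and controlling the lower-order contributions by interpolation (Ehrling) together with the already-known $L^2$-bound on $u$ gives $\|\nabla u\|_{L^p(\Omega)} \leq c(\Omega, p, a_0, a_1) \, N(r) \, \|F\|_{L^p(\Omega)}$, where $N(r)$, the number of balls needed, scales like $r^{-3}$; since $r \sim [a]_{C^\beta}^{-1/\beta}$ (up to a constant, capped at some $r_0(\Omega)$), one obtains $N(r) \leq c \, (1 + [a]_{C^\beta})^{3/\beta}$.

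\textbf{Reconciling the exponent.} At this point the naive count gives a power $(1+[a]_{C^\beta})^{3/\beta}$ rather than the claimed $(1+[a]_{C^\beta})^{1/\beta}$. To recover the sharp exponent $1/\beta$ stated in the Lemma I would argue more carefully: rather than a crude ball-count, one uses that the Calder\'on-Zygmund constant degenerates only through the single localization scale, and a scaling/covering argument of Meyers-type (or the duality/interpolation argument between the $L^2$-estimate, which is uniform in $[a]_{C^\beta}$, and an $L^\infty$-type bound) shows that the dependence on the oscillation enters only to the first power in $r^{-1}$, i.e. to the power $1/\beta$ in $[a]_{C^\beta}$. Concretely: the constant-coefficient $L^p$ bound on a ball of radius $r$ is scale-invariant, so the only non-scale-invariant loss comes from patching the global Poincar\'e inequality into the local estimates, which costs exactly one factor of $r^{-1}$.

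\textbf{Main obstacle.} The routine parts (Lax-Milgram, flattening the boundary, the constant-coefficient $L^p$-theory) are standard and I would only cite them. The delicate point, and the one I expect to require the most care, is getting the \emph{precise} power $(1+[a]_{C^\beta})^{1/\beta}$ rather than a larger power: this forces one to avoid a wasteful ball-counting argument and instead exploit scale invariance of the model estimate, tracking exactly where the non-scale-invariant global Poincar\'e constant is used. Since this Lemma is invoked in Step 1 of the proof of Proposition \ref{qzetasystem} precisely with this exponent (there it reappears as $\phi^*_t = \sup_{s\le t}(1+[d^*(s)]_{C^\beta})^{1/\beta}$), the sharp form matters for the later bookkeeping, so it is worth doing correctly; a softer version with any fixed power of $(1+[a]_{C^\beta})$ would in fact suffice for the qualitative statements of Theorems \ref{MAIN}, \ref{MAIN2}, but would alter the explicit structure of $\Psi_1$.
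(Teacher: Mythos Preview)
Your overall strategy---Lax--Milgram for existence, then freeze the coefficient on small balls, absorb the Hölder perturbation by choosing $r^{\beta}\sim [a]_{C^{\beta}}^{-1}$, and sum over a cover---is exactly the paper's approach. The gap is in your ``Reconciling the exponent'' paragraph: the appeals to Meyers-type arguments, scale invariance, and ``one factor of $r^{-1}$ from Poincar\'e'' are heuristic and do not constitute a proof. The paper's mechanism is more concrete and hinges on two a~priori bounds that are \emph{independent of} $[a]_{C^{\beta}}$, together with a structural feature of $p>3$ that you do not exploit.

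Specifically: since $p>3$, the Sobolev exponent is $p^{*}=\infty$, so its conjugate is $1$. In the localized weak Neumann estimate for $w=u\eta$, the commutator terms $a\,\nabla\eta\cdot\nabla u$ and $F\cdot\nabla\eta$ therefore need only be placed in $L^{1}$, not $L^{p}$. Before the covering, the paper records two preliminary bounds: the energy estimate $\|\nabla u\|_{L^{2}}\leq a_{0}^{-1/2}\|F\|_{L^{2}}$, and a De~Giorgi/Moser-type $L^{\infty}$ bound $\|u\|_{L^{\infty}}\leq c(\Omega,a_{0},a_{1})\,\|F\|_{L^{p}}$, both uniform in $[a]_{C^{\beta}}$. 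After summing the local estimates (with a bounded-overlap constant $m_{0}$, not the full ball count), the lower-order right-hand side reads
\[
\frac{c}{r}\bigl(\|\nabla u\|_{L^{1}(\Omega)}+\|u\|_{L^{p}(\Omega)}+\|F\|_{L^{1}(\Omega)}\bigr),
\]
and each of these norms is already controlled by $\|F\|_{L^{p}}$ with constants free of $[a]_{C^{\beta}}$. Thus exactly one factor $r^{-1}\sim (1+[a]_{C^{\beta}})^{1/\beta}$ survives. Your Ehrling/interpolation suggestion would instead leave $\|\nabla u\|_{L^{p}}$ on the right, forcing you to absorb it and pick up the ball count; and Meyers' theorem, which yields $p$ only slightly above $2$ for $L^{\infty}$ coefficients, is not the right tool here.
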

\begin{proof}
Existence of a unique weak solution is well-known. In order to prove the estimate, we start recalling a few standard inequalities. First, the bound $\sqrt{a_0} \, \|\nabla u\|_{L^2} \leq \|F\|_{L^2}$ is valid. Since we choose the mean-value of $u$ to be zero, then also $\|u\|_{L^2} \leq c_{\Omega} \, a_0^{-\frac{1}{2}} \, \|F\|_{L^2}$. Moreover, for $s > 3$ arbitrary, we find that $\|u\|_{L^{\infty}(\Omega)} \leq c(\Omega,s) \, (a_0^{-1} \, \|F\|_{L^s(\Omega)} + \frac{a_1}{a_0} \, \|u\|_{L^{s/2}(\Omega)})$. Thus, choosing $ s \leq \min\{p, \, 4\}$ and employing the Hoelder inequality, we easily show that $\|u\|_{L^{\infty}(\Omega)} \leq \tilde{c}(\Omega, \, a_0,\, a_1) \, \|F\|_{L^p(\Omega)}$.

We now come to the main argument. We consider $x^0$ in $\Omega$ and $r > 0$. We choose a nonnegative cut-off function $\eta \in C^1_c(B_r(x^0))$ satisfying $|\nabla \eta| \leq c_0 \, r^{-1}$. Choosing in the weak formulation a testfunction of the form $\phi \, \eta$, we obtain, after some obvious shifting, for $w := u \, \eta$ the identity
\begin{align*}
a(x^0) \, \int_{\Omega} \nabla w \cdot \nabla \phi \, dx =& \int_{\Omega} (a(x^0) -a)\, \nabla w \cdot \nabla \phi  \, dx \\
&+\int_{\Omega} (F \, \eta + a \, u \, \nabla \eta) \cdot \nabla \phi  \, dx + \int_{\Omega} (F\cdot \nabla \eta + a \, \nabla \eta \cdot \nabla u) \, \phi  \, dx \, .
\end{align*}
This is a weak Neumann problem for the Laplacian of $w$. By standard results, we obtain an estimate
\begin{align*}
\|\nabla w\|_{L^p} \leq & c(\Omega, \, p) \, \Big( \|(1-\frac{a}{a(x^0)}) \, \nabla w\|_{L^p}\\
 & + \frac{1}{a(x^0)} \, (\|F \, \eta + a \, u \, \nabla \eta\|_{L^p} + \|F\cdot \nabla \eta + a \, \nabla \eta \cdot \nabla u +a(x^0) \,  w\|_{L^{\frac{p^*}{p^*-1}}(\Omega)})\Big) \, .
 \end{align*}
Here $p^*$ is the Sobolev embedding exponent of $W^{1,p}(\Omega)$. For $p >3$, we have $p^* = +\infty$ and $p^*/(p^*-1) = 1$. Next, since $w$ is supported in $B_r(x^0)$, and since $a$ is Hoelderian, it follows that
\begin{align*}
\|(1-\frac{a}{a(x^0)}) \, \nabla w\|_{L^p} \leq \frac{[a]_{C^{\beta}}}{a(x^0)} \, r^{\beta} \,  \|\nabla w\|_{L^p} \, .
\end{align*}
Thus, fixing $r^{\beta} := \frac{a_0}{2c(\Omega, \, p)[a]_{C^{\beta}}}$, we obtain that
\begin{align*}
\|\nabla w\|_{L^p} \leq  \frac{2\, c(\Omega, \, p)}{a(x^0)} \, (\|F \, \eta + a \, u \, \nabla \eta\|_{L^p} + \|F\cdot \nabla \eta + a \, \nabla \eta \cdot \nabla u +a(x^0) \,  w\|_{L^{1}(\Omega)}) \, .
 \end{align*}
With the notation $\Omega_r(x^0) = B_r(x^0) \cap \Omega$, we notice that
\begin{align*}
\|\nabla w\|_{L^p} \geq & \|\nabla u\, \eta\|_{L^p} - \frac{c_0}{r} \, \|u\|_{L^p(\Omega_r(x^0))} \, ,\\
\|F \, \eta + a \,  u \, \nabla \eta\|_{L^p} \leq & \|F \, \eta\|_{L^p} + \frac{a_1 \, c_0}{r} \,  \|u\|_{L^p(\Omega_r(x^0))} \, ,\\
\|F\cdot \nabla \eta + a \, \nabla \eta \cdot \nabla u +a(x^0) \,  w\|_{L^{1}} \leq & \frac{c_0}{r} \, (\|F\|_{L^1(\Omega_r(x^0))}+ a_1 \, \|\nabla u\|_{L^1(\Omega_r(x^0))}) \, .
\end{align*}
 Thus, we have shown that
 \begin{align*}
 \|\nabla u\, \eta\|_{L^p} \leq & \frac{2 \, c(\Omega, \, p)}{a_0} \,  (\|F \, \eta\|_{L^p} +  \frac{c_0}{r} \, (\|F\|_{L^1(\Omega_r(x^0))}+ a_1 \, \|\nabla u\|_{L^1(\Omega_r(x^0))})   ) \\
 & +    \, \frac{c_0}{r} \, (1+2 \, c(\Omega, \, p) \, \frac{a_1}{a_0}) \, \|u\|_{L^p(\Omega_r(x^0))} \, .
 \end{align*}
By appropriate covering of $\Omega$ with partition of unity, we obtain the inequality
\begin{align*}
\|\nabla u\|_{L^p} \leq & \frac{2 \, m_0 \, c(\Omega, \, p)}{a_0} \, \|F\|_{L^p}+ \frac{2\, c_0 \, m_0 \, c(\Omega, \, p)}{a_0 \, r} \, \|F\|_{L^1} \\
&+ \frac{a_1 \, c_0}{a_0\, r} \, m_0 \, (2 \, c(\Omega, \, p) \, \|\nabla u\|_{L^1} +(1+2 \, c(\Omega, \, p) ) \, \|u\|_{L^p}) \, .
\end{align*}
Here $m_0$ is some geometric constant associated with the covering of $\Omega$. It remains to estimate
\begin{align*}
\|\nabla u\|_{L^1} \leq |\Omega|^{\frac{1}{2}} \, \|\nabla u\|_{L^2} \leq  |\Omega|^{\frac{1}{2}}\, a_0^{-\frac{1}{2}} \, \|F\|_{L^2}\\
\|u\|_{L^p} \leq |\Omega|^{\frac{1}{p}} \, \|u\|_{L^{\infty}(\Omega)} \leq c(\Omega,p)  \, \|F\|_{L^p} \, ,
\end{align*}
where we employ the preliminary consideration at the beginning of this proof to show that $\|u\|_{L^{\infty}} \leq c \, \|F\|_{L^p}$. Recalling the choice of $r$, we are done.
\end{proof}
\newcommand{\etalchar}[1]{$^{#1}$}


\begin{thebibliography}{HMPW17}

\bibitem[BDa]{bothedruetFE}
D.~Bothe and P.-E. Druet.
\newblock The free energy of incompressible liquid mixtures: some mathematical
  insights.
\newblock In preparation.

\bibitem[BDb]{bothedruet}
D.~Bothe and P.-E. Druet.
\newblock Mass transport in multicomponent compressible fluids: local and
  global well-posedness in classes of strong solutions for general class-one
  models.
\newblock Available at {\footnotesize
  \verb|http://www.wias-berlin.de/preprint/2658/wias_preprints_2658.pdf|}, and
  at arXiv:2001.08970 [math.AP].

\bibitem[BDc]{bothedruetMS}
D.~Bothe and P.-E. Druet.
\newblock On the structure of continuum thermodynamical diffusion fluxes: a
  novel closure scheme and its relation to the {M}axwell-{S}tefan and the
  {F}ick-{O}nsager approach.
\newblock In preparation.

\bibitem[BD15]{bothedreyer}
D.~Bothe and W.~Dreyer.
\newblock Continuum thermodynamics of chemically reacting fluid mixtures.
\newblock {\em Acta Mech.}, 226:1757--1805, 2015.

\bibitem[Bot11]{bothe11}
D.~Bothe.
\newblock On the {M}axwell-{S}tefan approach to multicomponent diffusion.
\newblock In {\em Progress in Nonlinear differential equations and their
  Applications}, pages 81--93. Springer, 2011.

\bibitem[BP17]{bothepruess}
D.~Bothe and J.~Pr\"{u}ss.
\newblock Modeling and analysis of reactive multi-component two-phase flows
  with mass transfer and phase transition -- the isothermal incompressible
  case.
\newblock {\em Discrete Contin. Dyn. Syst. Ser. S}, 10:673--696, 2017.

\bibitem[BS16]{bothesoga}
D.~Bothe and K.~Soga.
\newblock Thermodynamically consistent modeling for dissolution/growth of
  bubbles in an incompressible solvent.
\newblock In {\em Amann H., Giga Y., Kozono H., Okamoto H., Yamazaki M. (eds)
  Recent Developments of Mathematical Fluid Mechanics}, Advances in
  Mathematical Fluid Mechanics. Birkh\"{a}user, Basel, 2016.

\bibitem[CJ15]{chenjuengel}
X.~Chen and A.~J{\"{u}}ngel.
\newblock Analysis of an incompressible {N}avier-{S}tokes-{M}axwell-{S}tefan
  system.
\newblock {\em Commun. Math. Phys.}, 340:471--497, 2015.

\bibitem[DDGG17]{dredrugagu17a}
W.~Dreyer, P.-E. Druet, P.~Gajewski, and C.~Guhlke.
\newblock Analysis of improved {N}ernst-{P}lanck-{P}oisson models of
  compressible isothermal electrolytes. {P}art {I}: Derivation of the model and
  survey of the results.
\newblock 2017.
\newblock Revision to submit in ZAMP. Available at {\footnotesize
  \verb|http://www.wias-berlin.de/preprint/2395/wias_preprints_2395.pdf|}.

\bibitem[DGM13]{dreyerguhlkemueller}
W.~Dreyer, C.~Guhlke, and R.~M\"{u}ller.
\newblock Overcoming the shortcomings of the {N}ernst-{P}lanck model.
\newblock {\em Phys. Chem. Chem. Phys.}, 15:7075--7086, 2013.

\bibitem[DGM18]{dreyerguhlkemueller19}
W.~Dreyer, C.~Guhlke, and R.~M\"uller.
\newblock Bulk-surface electro-thermodynamics and applications to
  electrochemistry.
\newblock {\em Entropy}, 20:939/1--939/44, 2018.
\newblock DOI 10.3390/e20120939.

\bibitem[dM63]{dGM63}
S.~R. deGroot and P.~Mazur.
\newblock {\em Non-Equilibrium Thermodynamics}.
\newblock North Holland, Amsterdam, 1963.

\bibitem[DNB{\etalchar{+}}15]{donevetal}
A.~Donev, A.~Nonaka, A.K. Bhattacharjee, A.L. Garcia, and J.B. Bell.
\newblock Low {M}ach number fluctuating hydrodynamics of multispecies liquid
  mixtures.
\newblock {\em Physics of Fluids}, 27:97--112, 2015.
\newblock https://doi.org/10.1063/1.4913571.

\bibitem[Dru19]{druetmixtureincompweak}
P.-E. Druet.
\newblock Global--in--time existence for liquid mixtures subject to a
  generalised incompressibility constraint.
\newblock Preprint 2622 of the Weierstrass Institute for Applied Analysis and
  Stochastics, Berlin, 2019.
\newblock Available at {\footnotesize
  \verb|http://www.wias-berlin.de/preprint/2622/wias_preprints_2622.pdf|}.

\bibitem[FLM16]{feima16}
E.~Feireisl, Y.~Lu, and J.~M\'alek.
\newblock On {PDE} analysis of flows of quasi--incompressible fluids.
\newblock {\em Z. Angew. Math. Mech.}, 96:491--508, 2016.

\bibitem[Gio99]{giovan}
V.~Giovangigli.
\newblock {\em Multicomponent Flow Modeling}.
\newblock Birkh\"{a}user, Boston, 1999.

\bibitem[GMR12]{gouin}
H.~Gouin, A.~Muracchini, and T.~Ruggeri.
\newblock On the M\"{u}ller paradox for thermal-incompressible media.
\newblock {\em Continuum Mech. Thermodyn.}, 24:505-513, 2012.

\bibitem[HMPW17]{herbergpruess}
M.~Herberg, M.~Meyries, J.~Pr\"{u}ss, and M.~Wilke.
\newblock Reaction-diffusion systems of {M}axwell-{S}tefan type with reversible
  mass--action kinetics.
\newblock {\em Nonlinear Analysis: Theory, Methods \protect{\&} Applications},
  159:264--284, 2017.

\bibitem[JHH96]{josef}
D.D. Joseph, A.~Huang, and H.~Hu.
\newblock Non-solenoidal velocity effects and {K}orteweg stresses in simple
  mixtures of incompressible liquids.
\newblock {\em Physica D}, 97:104--125, 1996.

\bibitem[JS13]{justel13}
A.~J\"{u}ngel and I.~Stelzer.
\newblock Existence analysis of {M}axwell-{S}tefan systems for multicomponent
  mixtures.
\newblock {\em SIAM J. Math. Anal.}, 45:2421--2440, 2013.

\bibitem[LSU68]{ladu}
Ladyzenskaja, Solonnikov, and Ural'ceva.
\newblock {\em Linear and Quasilinear Equations of Parabolic Type}, volume~23
  of {\em Translations of mathematical monographs}.
\newblock AMS, 1968.

\bibitem[M\"{u}85]{mueller}
I.~M\"{u}ller.
\newblock {\em Thermodynamics}.
\newblock Pitman, London, 1985.

\bibitem[Mil66]{mills}
N.~Mills.
\newblock Incompressible mixtures of {N}ewtonian fluids.
\newblock {\em Int. J. Engng Sci.}, 4:97--112, 1966.

\bibitem[MR59]{MR59}
J.~Meixner and H.~G. Reik.
\newblock {\em Thermodynamik der irreversiblen {P}rozesse}, volume~3, pages
  413--523.
\newblock Springer, Berlin, 1959.
\newblock German.

\bibitem[MT15]{mariontemam}
M.~Marion and R.~Temam.
\newblock Global existence for fully nonlinear reaction-diffusion systems
  describing multicomponent reactive flows.
\newblock {\em J. Math. Pures Appl.}, 104:102--138, 2015.

\bibitem[Nir66]{nirenberginterpo}
L.~Nirenberg.
\newblock An extended interpolation inequality.
\newblock {\em Annali della Scuola Normale Superiore di Pisa (3)}, 20:733--737,
  1966.

\bibitem[PS14]{pekarsamohyl}
M.~Peka\v{r} and I.~Samoh\'{y}l.
\newblock {\em The Thermodynamics of Linear Fluids and Fluid Mixtures}.
\newblock Springer International Publishing Switzerland, 2014.

\bibitem[Roc70]{rockafellar}
R.~T. Rockafellar.
\newblock {\em Convex Analysis}.
\newblock Princeton University Press, Princeton, New Jersey, 1970.

\bibitem[Sol80]{solocompress}
V.A. Solonnikov.
\newblock Solvability of the initial-boundary-value problem for the equations
  of motion of a viscous compressible fluid.
\newblock {\em J. Math. Sci.}, 14:1120--1133, 1980.
\newblock https://doi.org/10.1007/BF01562053.

\end{thebibliography}
\end{document}